\documentclass[10.9pt,a4paper]{amsart}
\usepackage{a4wide}
\usepackage[english]{babel}
\usepackage[normalem]{ulem}

\usepackage[square,sort,comma,numbers]{natbib}

\usepackage[all]{xy}
\usepackage{pstricks}
\usepackage{enumerate}
\usepackage{amsfonts,amssymb,amsmath,pinlabel,array,hhline}
\usepackage{slashed}
\usepackage{tabulary}
\usepackage{fancyhdr}
\usepackage{color}
\usepackage{bbm}
\usepackage[position=b]{subcaption}
\usepackage[colorlinks,
    linkcolor={blue!50!black},
    citecolor={blue!50!black},
    urlcolor={blue!80!black}]{hyperref}

    \usepackage{tikz}
\usetikzlibrary{
  cd,
  calc,
  positioning,
  fit,
  arrows,
  decorations.pathreplacing,
  decorations.markings,
  shapes.geometric,
  backgrounds,
  bending
}
\usepackage{tikzsymbols}

\usepackage{overpic}

\usepackage{todonotes}

\newtheorem{theorem}{Theorem}[section]

\newtheorem{question*}{Question}
\newtheorem*{lemma*}{Lemma}
\newtheorem{corollary}[theorem]{Corollary}
\newtheorem{lemma}[theorem]{Lemma}
\newtheorem{proposition}[theorem]{Proposition}

\theoremstyle{definition}
\newtheorem{definition}[theorem]{Definition}
\newtheorem{construction}[theorem]{Construction}
\newtheorem{notation}[theorem]{Notation}
\newtheorem*{notation*}{Notation}
\newtheorem*{convention}{Convention}
\newtheorem{terminology}[theorem]{Terminology}
\newtheorem{example}[theorem]{Example}

\newtheorem{remark}[theorem]{Remark}
\newtheorem*{remark*}{Remark}
\theoremstyle{remark}
\newtheorem*{claim*}{Claim}

\definecolor{bettergreen}{rgb}{0.0, 0.5, 0.0}

\newcommand{\Z}{\mathbb{Z}}
\newcommand{\Q}{\mathbb{Q}}

\newcommand{\C}{\mathbb{C}}

\newcommand{\bbT}{\mathbb{T}}
\newcommand{\bsm}{\left(\begin{smallmatrix}}
\newcommand{\esm}{\end{smallmatrix}\right)}
\newcommand{\id}{\operatorname{id}}
\newcommand{\Bl}{\operatorname{Bl}}
\newcommand{\ev}{\operatorname{ev}}
\newcommand{\BS}{\operatorname{BS}}
\newcommand{\PD}{\operatorname{PD}}

\newcommand{\coker}{\operatorname{coker}}

\newcommand{\Ext}{\operatorname{Ext}}

\newcommand{\im}{\operatorname{im}}

\newcommand{\Hom}{\operatorname{Hom}}

\newcommand{\lk}{\ell k}
\newcommand{\Ad}{\operatorname{Ad}}
\newcommand{\that}{\hat{t}}
\newcommand{\shat}{\hat{s}}

\newcommand{\sgn}{\operatorname{sgn}}

\newcommand{\T}{{\operatorname{T}}}
\newcommand{\Ann}{\operatorname{Ann}}

\def\aug{\operatorname{aug}}

\newcommand{\onu}{\overline{\nu}}
\newcommand{\Ima}{\operatorname{Im}}

\DeclareSymbolFont{EulerScript}{U}{eus}{m}{n}
\DeclareSymbolFontAlphabet\mathscr{EulerScript}

\title{Algebraic concordance of links}

\author[D.~Cimasoni]{David Cimasoni}
\address{Universit\'e de Gen\`eve,  Suisse}
\email{david.cimasoni@unige.ch }
\author[A.~Conway]{Anthony Conway}
\address{The University of Texas at Austin, Austin TX}
\email{anthony.conway@austin.utexas.edu}
\author[G.~Simian]{Ga\"etan Simian}
\address{Universit\'e de Gen\`eve,  Suisse}
\email{gaetan.simian@unige.ch}

\begin{document}

\maketitle

%
%
\begin{abstract}
Algebraic concordance of knots can be understood from the perspective of Seifert matrices,  Blanchfield forms,  and homology surgery.
We initiate a systematic study of algebraic concordance for links from each of these viewpoints.
The present article is concerned with algebraic concordance from the perspective of homology surgery and Blanchfield forms, whereas a companion article by the third named author focuses on C-complexes and generalised Seifert matrices.
The outcome of the present work consists of two obstructions to~$\mu$-component links being concordant.
The first obstruction, called the {\em homology surgery invariant\/}, takes values in the Witt group of hermitian forms over the field of fractions~$Q$ of~$\Z[\Z^\mu]$.
 The second obtruction, called the {\em Blanchfield invariant\/}, takes values in a Witt group of~$Q/\Z[\Z^\mu]$-valued hermitian linking forms.
For~$\mu\le 2$, we describe these invariants in terms of generalised Seifert matrices.
\end{abstract}

\section{Introduction}

Two knots~$K_1,K_2 \subset S^3$ are \emph{concordant} if they cobound a locally flat annulus in~$S^3 \times [0,1]$.
A knot is \emph{slice} if it bounds a locally flat disc in the~$4$-ball or,  equivalently,  if it is concordant to the unknot.
Invariants of knot concordance can be constructed using a variety of methods ranging from algebraic topology  to gauge theory, Khovanov homology and Heegaard-Floer homology; see e.g.~\cite{OzsvathSzaboTau,Rasmussen,
KronheimerMrowkaGaugeS,HomSurvey}.
However, one of the oldest results in the area states that if a knot is slice, then it is algebraically slice~\cite{LevineKnotCob}, meaning that any of its Seifert matrices is \emph{metabolic} (i.e. congruent to a matrix of the form~$\bsm 0&B\\C &D\esm$, where~$B,C,D$ are square matrices).
In fact, given a knot~$K$, the following conditions are equivalent (see~\cite[Theorem 1.1]{CochranOrrTeichner} and~\cite{KeartonCobordism}).
\begin{enumerate}
\item The knot~$K$ is algebraically slice.
\item For any spin~$4$-manifold~$W$ with boundary the $0$-surgery~$M_K:=S^3_0(K)$ such that the inclusion induces an isomorphism~$H_1(M_K)\cong H_1(W)$,  the~$\Z[\Z]$-intersection form~$\lambda_W$ vanishes on a submodule of~$H_2(W;\Z[\Z])$ whose image in~$H_2(W)$ is a Lagrangian.
\item The Blanchfield form $\Bl_{M_K}$  is metabolic.
\end{enumerate}
For characterisations 
of algebraic sliceness
 involving gropes and Whitney towers, see~\cite{CochranOrrTeichner}.

There are concordance invariants that capture each of these obstructions:  the Witt class~$[A]$ of any Seifert matrix of~$K$, the class~$\lambda(K):=[\lambda_W] \in L^4_S(S^{-1}\Z[\Z] )/L^4(\Z[\Z] )$ of the~$\Z[\Z]$-intersection form of any (spin)~$\Z$-filling~$W$ of the $0$-surgery~$M_K$,  and the Witt class~$[\Bl_{M_K}] \in L^4(\Z[\Z],S)$ for a suitable multiplicative subset $S \subset \Z[\Z]$.
Here and in what follows,  $L^4$ denotes symmetric~$L$-theory; these  groups are reviewed in Appendix~\ref{sub:Ltheory}. 
The details of these constructions are provided in Section~\ref{sub:History},  as is motivation,  historical context,  justification for the terminology, and comparisons with prior work in the area.

The goal of this work and of~\cite{GaetanPhD,Simian} is to initiate a systematic study of algebraic concordance of links. 
Indeed, whereas many ideas in knot concordance have been generalised to links,  algebraic concordance appears to have fallen through the cracks (for algebraic concordance of boundary links, see~\cite{CappellShanesonLinkCobordism, Ko2,LeDimetThesis, SheihamThesis}, for link concordance in the context of the solvable filtration of~\cite{CochranOrrTeichner}, see e.g.~\cite{HarveyHomology,ChaSymmetric}).
The present article is concerned with algebraic concordance from the perspective of homology surgery and Blanchfield forms, whereas~\cite{Simian} focuses on the Seifert matrix theoretic approach.

\subsection{The homology surgery invariant}

Generalising the case of knots, two~$\mu$-component links~$L$ and~$L'$ are \emph{concordant} if they cobound $\mu$ disjoint locally flat annuli in~$S^3 \times [0,1]$.
Here and in what follows, links are assumed to be ordered and oriented.
Set~$M_{L,L'}:=X_L \cup X_{L'}$ (see Construction~\ref{cons:MLL'} for a more precise definition) and call a homomorphism~$\varphi \colon H_1(M_{L,L'}) \to \Z^\mu$ \emph{meridional} if it extends the maps~$H_1(X_L) \to \Z^\mu$ and~$H_1(X_{L'}) \to \Z^\mu$ that send the~$i$-th meridian to the~$i$-th canonical basis element.
When~$L=K$ is a knot and~$L'$ is the unknot, then~$M_{L,L'}$ is the~$0$-surgery on~$K$ and there is a unique meridional homomorphism.
For links,  there is in general no canonical choice of~$\varphi$ and, as a consequence, our invariants will be invariants of triples~$(L,L',\varphi)$.
In favourable cases,  independence of~$\varphi$ can be established.

Our first result concerns a generalisation of the invariant~$\lambda(K)$.
In what follows, given a~$4$-manifold~$W$ together with a map~$\psi \colon H_1(W) \to \Z^\mu$, 
we write~$\lambda_W^\Q$ for its~$\Q$-intersection form and~$\lambda_W^Q$ for its $Q$-intersection form,
where~$Q:=\Q(t_1,\ldots,t_\mu)$ is the field of fractions of the group ring~$\Lambda:=\Z[\Z^\mu]=\Z[t_1^{\pm 1},\ldots,t_\mu^{\pm 1}]$.
As described in more detail in Section~\ref{sec:HomologySurgeryInvariant}, given two~$\mu$-component links~$L,L'$,  a meridional homomorphism~$\varphi \colon H_1(M_{L,L'}) \to \Z^\mu$,  and a~$\Z^\mu$-filling~$W$ of~$M_{L,L'}$,  one can consider the difference
$$\lambda(L,L',\varphi):=[\lambda_W^Q]-[\lambda_W^\Q] \in L^4(Q).$$ 
A filling $W$ as above exists if and only if~$L$ and~$L'$ have the same linking numbers and {\em total Milnor invariant\/}~\cite{DavisNagelOrsonPowell},  a refinement of the set of triple Milnor numbers.
The fact that~$\lambda(L,L',\varphi)$ does not depend on the choice of the filling is proved in Proposition~\ref{prop:WellDef}; the argument is a variation on~\cite[Section 2]{ChaHirzebruch} and~\cite[Proposition 2.10]{ConwayNagelToffoli}.
Our results on~$\lambda(L,L',\varphi)$ can be summarised as follows.

\begin{theorem}
\label{thm:ConcordanceInvarianceGeneralIntro}
Given two~$\mu$-component links~$L$ and~$L'$ and a meridional $\varphi \colon H_1(M_{L,L'}) \to \Z^\mu$,  the \emph{homology surgery class}~$\lambda(L,L',\varphi) \in L^4(Q)$ satisfies the following properties.
\begin{enumerate}
\item (Definedness of the invariant) The class~$\lambda(L,L',\varphi) \in L^4(Q)$ is defined if and only if~$L$ and~$L'$ have the same linking numbers and total Milnor invariant.
\item (Concordance invariance) If~$L$ and~$L'$ are concordant~$\mu$-component links, then there exists a meridional homomorphism~$\varphi\colon H_1(M_{L,L'}) \to \Z^\mu$ such that 
$\lambda(L,L',\varphi)=0$.
\item 
(Independence of the choice of $\varphi$ for~$\mu \leq 2$)
If $\mu \leq 2$,  then~$\lambda(L,L',\varphi) \in L^4(Q)$ does not depend on the choice of $\varphi$ and we write
$$ \lambda(L,L'):=\lambda(L,L,\varphi). $$
\item (Calculation for~$\mu=1$) If~$L=K$ and~$L'=K'$ are knots,  then 
$$\lambda(K,K')=[H_F \oplus -H_{F'}]\,,$$
where the matrix~$H_F$ associated to a Seifert surface~$F$ for a knot~$K$ is defined in Construction~\ref{cons:MatrixH}.
In particular, the class~$[H_F] \in L^4(Q)$ is a concordance invariant.
\item (Calculation for~$\mu=2$)
If~$\mu=2$, then 
$$\lambda(L,L')=[H_F \oplus -H_{F'}]\,,$$
where the matrix~$H_F$ associated to a nice C-complex~$F$ for a~$2$-component link~$L$ is defined in Construction~\ref{cons:MatrixH}.
In particular,  the class~$[H_F] \in L^4(Q)$ is a concordance invariant.
\end{enumerate}
\end{theorem}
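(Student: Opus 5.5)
The theorem has five parts, and I will treat them in order, relying on the well-definedness result (Proposition~\ref{prop:WellDef}) for independence of the filling.

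\textbf{Part (1).} This reduces to the existence of a $\Z^\mu$-filling of $M_{L,L'}$ with map $\varphi$. A $\Z^\mu$-filling exists if and only if the map $M_{L,L'}\to T^\mu=B\Z^\mu$ induced by $\varphi$ is null-bordant in $\Omega_3(T^\mu)$, possibly after adjusting $\varphi$. Via the Atiyah--Hirzebruch spectral sequence, $\Omega_3(T^\mu)\cong H_3(T^\mu)\oplus(\text{lower terms that vanish for oriented bordism in these degrees})$. So the obstruction is the image of the fundamental class in $H_3(\Z^\mu)$. I would then invoke \cite{DavisNagelOrsonPowell}: for some meridional $\varphi$ this class vanishes precisely when the linking numbers and total Milnor invariants agree. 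Equal linking numbers are needed for a meridional $\varphi$ to exist at all, since the longitudes must map consistently.

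\textbf{Part (2).} Given a concordance $C$, I take $W$ to be the exterior of $C$ in $S^3\times I$. Its boundary is $M_{L,L'}$, and Alexander duality gives $H_*(W)\cong H_*(T^\mu\text{-like})$ with $H_1(W)\cong\Z^\mu$ generated by meridians. This defines $\varphi$ and makes $W$ a $\Z^\mu$-filling. Moreover, $H_2(W;\Q)\cong H_2(M;\Q)$-image, so $\lambda^\Q_W$ is zero or metabolic. A chain-homotopy argument shows that $W$ is a $\Lambda$-homology cobordism after tensoring with $Q$, since $Q\otimes_\Lambda C_*(W,\partial)$ is acyclic relative to the boundary pieces. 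Hence $\lambda^Q_W$ is nonsingular and zero, so the class vanishes.

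\textbf{Part (3).} Two meridional homomorphisms differ on the classes in $H_1(M_{L,L'})$ not coming from either exterior. For $\mu\le2$ these are the longitude-gluing classes, and I expect the meridional condition to force $\varphi$ to be unique once linking numbers agree. Concretely, $H_1(M_{L,L'})$ is generated by meridians modulo relations, together with a free part from $H_1$ of the gluing tori, whose images are forced by the linking numbers. Where $\varphi$ is genuinely not unique, I would show that the different choices are related by a self-homeomorphism of $M_{L,L'}$ that carries one filling to another. For $\mu\le2$, the remaining freedom lies in $H^1$ classes pairing trivially with the relevant $H_3(\Z^\mu)=0$ pieces, and the signature-type difference is unaffected.

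\textbf{Parts (4) and (5).} These are the main obstacle. I would construct an explicit filling: push a nice C-complex $F$ for $L$ (respectively $F'$ for $L'$) into $D^4$, and take the complement of the pushed-in surfaces, glued along the boundary. This gives a $\Z^\mu$-cover built from cut-and-paste along $F\times[-1,1]$, as in the classical computation of the Levine--Tristram signature via the branched-cover intersection form. The $Q$-intersection form is then computed by the Mayer--Vietoris method of Conway--Friedl--Toffoli, giving $H_F$ on $H_1(F)$ up to hyperbolic summands. The $\Q$-form is trivial or metabolic after the $M_{L,L'}$ gluing. The hard step is the precise identification of the $Q$-coefficient intersection pairing with the matrix $H_F$ of Construction~\ref{cons:MatrixH}, including the clasp contributions for $\mu=2$. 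Here I would follow the generalised Seifert matrix computation of Cimasoni--Florens, tensored up to $Q$. Additivity under gluing of the two fillings then yields $[H_F\oplus -H_{F'}]$.
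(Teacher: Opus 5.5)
Parts (1) and (2) are essentially the paper's arguments: \cite{DavisNagelOrsonPowell} for (1), and the concordance exterior $X_C$ for (2). In (2) the precise input is $H_*(X_C,X_L;Q)=0$, obtained from $H_*(X_C,X_L)=0$ by chain-homotopy lifting. This makes $H_2(\partial X_C;Q)\to H_2(X_C;Q)$ onto, so both nonsingular forms live on the zero module.

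Part (3) has a genuine gap. Your expectation that the meridional condition forces $\varphi$ to be unique is false for $\mu=2$. Mayer--Vietoris for $M_{L,L'}=X_L\cup -X_{L'}$ produces a free summand $\Z$ of $H_1(M_{L,L'})$ coming from $\tilde H_0$ of the two gluing tori, not from their $H_1$. It is represented by a big loop: an arc in $X_L$ joining the two boundary tori, closed up by an arc in $X_{L'}$. A meridional $\varphi$ may take any value in $\Z^2$ on it. Relating the choices by a self-homeomorphism is the right strategy, but constructing that homeomorphism is the entire content of (3). The remark about $H_3(\Z^\mu)=0$ does not do it; it only shows that every $(M_{L,L'},\varphi)$ bounds. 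The missing idea is as follows. In a collar $S^1\times S^1\times[0,1]$ of $\partial\onu(K_i)$, apply $\id_{S^1}\times\tau$ with $\tau$ a Dehn twist of the meridional annulus. This is the identity near both ends of the collar, so it extends by the identity to an orientation-preserving homeomorphism $h$ of $M_{L,L'}$, and it changes the value of $\varphi$ on the big loop by $t_i^{\pm1}$. Twisting near both components reaches every value in $\Z^2$. Finally, $\lambda$ is unchanged, because reparametrising the boundary of a filling of $(M_{L,L'},\varphi)$ by $h$ gives a filling of $(M_{L,L'},\varphi\circ h_*)$ with the same intersection forms.

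Parts (4)--(5) have a gap at the construction of the filling. The complement $V_F$ of the pushed-in C-complex has $\partial V_F=X_L\cup P_F$. Here $P_F$ is the plumbed manifold whose vertices carry $F_1,F_2$ (of positive genus in general), with one edge per clasp. Since $P_F$ and $P_{F'}$ differ, $V_F$ and $-V_{F'}$ cannot simply be glued into a $\Z^\mu$-manifold with boundary $M_{L,L'}$.

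The paper handles this in three steps. First it attaches $X_k=I\times I\times S^1\times S^1$ to cancel pairs of opposite-sign clasps. Then it attaches round $2$-handles $D^2\times[-1,1]\times S^1$ along the curves $\delta_k^i$ and $\alpha_m$, turning $P_F$ into $P_L$, which depends only on the linking numbers. Along the way it checks that the coefficient system extends: the attaching curves must map trivially, which uses the chosen framing of $\onu(\tilde F)$ and a careful choice of the $\delta_k^i$. Finally it matches the two copies of $P_L$ via Dehn twists (Lemma~\ref{lem:HomeoPL}).

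Your $\Q$-step is also unproved. It needs control of the integral forms of the pieces, plus a Novikov--Wall argument whose Maslov term vanishes because $\ker(H_1(\onu(L);\Q)\to H_1(X_L;\Q))$ and the analogous kernel for $X_{L'}$ coincide; this is where equal linking numbers enter. The identification with $H_F$ is deferred rather than proved. It also needs $\Delta_L,\Delta_{L'}\neq 0$, without which $H_F$ is singular over $Q$.

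Your idea of quoting Conway--Friedl--Toffoli can in fact be made to work for these $L^4(Q)$ statements once the filling is fixed. The pieces attached to $V_F$ are $Q$-acyclic, so $H_*(V_F;Q)\cong H_*(W_F;Q)$ (Proposition~\ref{lem:V-WoverQ}). Moreover $D'$ is a scalar multiple of $\overline{D}$; for instance $D'=\frac{(t_1-1)(t_2-1)}{t_1t_2}\overline{D}$ when $\mu=2$. Hence $H_F=\overline{D}\,N\,D$ is congruent over $Q$ to $N=\sum_\varepsilon\prod_i(1-t_i^{\varepsilon_i})A^\varepsilon$, where $A^\varepsilon$ are the generalised Seifert matrices (so $A^{--}=A$ and $A^{-+}=B$). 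Up to conventions, $N$ is the classical C-complex matrix that Conway--Friedl--Toffoli identify with $\lambda^\Lambda_{V_F}$. The paper instead computes $\lambda^\Lambda_{W_F}$ directly, via immersed spheres and the determinant criterion of Lemma~\ref{lemma:FL2}. It does so because it needs $H_F$ itself over $\Lambda$, for the Blanchfield results and for Corollary~\ref{cor:IntroSeifert}.
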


The first item is a consequence of the definition of $\lambda(L,L',\varphi)$ and~\cite[Theorem 1.1]{DavisNagelOrsonPowell}.
The second item follows relatively promptly from the definition of the invariant, see Theorem~\ref{thm:ConcordanceInvarianceGeneral}.
The third item is proved in Proposition~\ref{prop:mu=2Indep}. The fourth item has not appeared explicitly in print but can be seen as a consequence of work of Litherland~\cite{LitherlandCobordism} and Ko~\cite{Ko2}
(even though our geometric construction is different from theirs; see Remark~\ref{rem:KoComparison}).
Our main contribution is the fifth item of the theorem.
The proof of these latter two statements can be found in Theorem~\ref{thm:Calculation}.

Our computation of~$\lambda(L,L')$ is obtained by calculating a~$\Lambda$-intersection form of auxiliary manifolds~$W$ and~$W'$ with $\pi_1(\partial W)  \to \pi_1(W) \cong \Z^\mu$ surjective and applying Novikov-Wall additivity to obtain the intersection form over~$Q$.
This strategy does not apply for~$\mu > 3$ because for such~$4$-manifolds, the~$\Lambda$-module~$H_2(W;\Lambda)$ is free if and only if~$\mu \leq 3$ (see Lemma~\ref{lem:H_2IsFree}).
We also note that the dependence of the homology surgery class on $\varphi$ is reminiscent of the theory of Casson-Gordon invariants~\cite{CassonGordonCobordism} as well as higher order signatures~\cite{CochranOrrTeichner}.

\subsection{The Blanchfield invariant}

Next, we generalise the invariant~$\Bl(K):=[\Bl_{M_K}]$.
For a knot~$K$,  the Alexander module~$H_1(M_K;\Lambda)$ is torsion and~$\Bl_{M_K}$ is nonsingular.
On the other hand, given two $\mu$-component  links $L$ and $L'$ and a meridional homomorphism~$\varphi \colon H_1(M_{L,L'}) \to \Z^{\mu}$,  the Blanchfield form~$\Bl_{M_{L,L'}}^\varphi$ on the torsion submodule $TH_1(M_{L,L'};\Lambda)$ of the Alexander module might be singular.
A remedy (that goes back to Blanchfield~\cite{Blanchfield}) involves taking the quotient of~$TH_1(M_{L,L'};\Lambda)$ by its {\em maximal pseudonull submodule\/}~$zH_1(M_{L,L'};\Lambda)$,  a notion that we review in Appendix~\ref{sub:PseudoNull}.
The arguments of Blanchfield show that the form~$\Bl_{M_{L,L'}}^\varphi$ descends to~$\hat{t}H_1(M_{L,L'};\Lambda):=TH_1(M_{L,L'};\Lambda)/zH_1(M_{L,L'};\Lambda)$ leading to a nondegenerate linking form
$$ 
\hat \Bl_{M_{L,L'}}^\varphi \colon \hat{t}H_1(M_{L,L'};\Lambda) \times \hat{t}H_1(M_{L,L'};\Lambda) \to Q/\Lambda.
$$
In particular,~$\hat \Bl_{M_{L,L'}}^\varphi$ determines an element in the Witt group~$L^4_\textit{nd}(\Lambda,\Lambda\setminus \lbrace 0 \rbrace)$ of nondegenerate linking forms.
For brevity, we set
$$\Bl(L,L',\varphi):=[\hat \Bl_{M_{L,L'}}^\varphi] \in L^4_\textit{nd}(\Lambda,\Lambda\setminus \lbrace 0 \rbrace).$$
We emphasise that~$L^4_\textit{nd}(\Lambda,\Lambda\setminus \lbrace 0 \rbrace)$ does not agree with the Witt group~$L^4(\Lambda,\Lambda\setminus \lbrace 0 \rbrace)$ of nonsingular linking forms over torsion~$\Lambda$-modules admitting a projective resolution of length~$1$ (i.e.  admitting a square presentation matrix).
As we recall in Section~\ref{sub:Ltheory}, this latter Witt group is the one that occurs when~$\mu=1$, and that is involved in~$L$-theory.

\begin{notation}
A homomorphism~$\varphi \colon H_1(Y) \to \Z^\mu$ is an element of~$\Hom_\Z(H_1(Y),\Z^\mu) \cong [Y,\mathbb{T}^\mu]$.
When~$Y$ is an oriented closed~$3$-manifold and~$\mu=3$,  the \emph{degree of $\varphi$}, denoted $\deg(\varphi)\in \Z$, refers to the degree of the associated homotopy class of a map~$f_Y^\varphi \colon Y\to\mathbb{T}^3$,  i.e.~$\deg(\varphi)$ is the unique integer such that~$(f_Y^\varphi)_*([Y])=\deg(\varphi)[\bbT^3]$.
\end{notation}

Our results on the Blanchfield form can be summarised as follows.
In this statement and in the rest of the article, we denote by~$\Delta_L\in\Lambda=\Z[\Z^\mu]$ the multivariable Alexander
polynomial of a~$\mu$-component link~$L$, an invariant that is well-defined up to multiplication by units of~$\Lambda$.
\begin{theorem}
\label{thm:BlClassProperties}
Given two~$\mu$-component links~$L$ and~$L'$ and a meridional~$\varphi \colon H_1(M_{L,L'}) \to \Z^{\mu}$,  the \emph{Blanchfield class}~$\Bl(L,L',\varphi) \in L^4_\textit{nd}(\Lambda,\Lambda\setminus \{0\})$ is always defined, and
satisfies the following properties.
\begin{enumerate}
\item (Concordance invariance) If~$L$ and~$L'$ are concordant~$\mu$-component links, then there exists a meridional homomorphism~$\varphi\colon H_1(M_{L,L'}) \to \Z^\mu$ such that 
$\Bl(L,L',\varphi)=0$.
\item (Independence of the choice of $\varphi$ for $\mu \leq 2$)
If $\mu \leq 2$,  then~$\Bl(L,L',\varphi)$ does not depend on the choice of $\varphi$ and we write
$$ \Bl(L,L'):=\Bl(L,L',\varphi). $$
\item (Existence of square presentation matrices for $\Bl(L,L,\varphi)$)
Assume~$\Delta_L,\Delta_{L'} \neq 0$.
If~$\mu\le 2$, then~$\Bl^\varphi_{M_{L,L'}}$ (and thus~$\hat{t} H_1(M_{L,L'};\Lambda)$) can be represented by a square hermitian presentation matrix.
If~$\mu>3$, and if~$\mu=3$ and~$\deg(\varphi)\neq \pm 1$, then
 the module~$\that H_1(M_{L,L'};\Lambda)$ (and thus~$\Bl^\varphi_{M_{L,L'}}$) cannot be represented by a square presentation matrix.
\item (Nonsingularity) 
Assume that~$\Delta_L,\Delta_{L'} \neq 0$.
If either~$\mu \neq 3$,  or $\mu =  3$ and~$\deg(\varphi) \neq 0$,
then the Blanchfield form~$\hat \Bl_{M_{L,L'}}^\varphi$ is nonsingular.
Therefore,  if $\mu \leq 2$ and $\Delta_L,\Delta_{L'} \neq 0$, then the Blanchfield class determines an element~$\Bl(L,L') \in L^4(\Lambda,\Lambda\setminus \{0\})$.
\item (Calculation for~$\mu=1$) If~$L=K$ and $L'=K'$ are knots,  then
$$ \Bl(K,K') =\Bl(K) \oplus -\Bl(K').$$
Here~$\Bl(K)$ denotes the Witt class of the Blanchfield form~$\Bl_{M_K}$ and the latter is represented by any hermitian matrix~$H_F(t)$ as in Construction~\ref{cons:MatrixH}.
\item (Calculation for~$\mu=2$) If~$L$ is a $2$-component link with linking number one and~$L'=\mathcal{H}$ is the Hopf link, then the linking form~$\Bl_{M_{L,\mathcal{H}}}^\varphi$ is represented by~$H_F$ for a nice C-complex~$F$ for~$L$.
\end{enumerate}
\end{theorem}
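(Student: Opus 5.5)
The theorem collects six properties, and I would prove them one at a time. Throughout, write $M:=M_{L,L'}$ and $\Lambda=\Z[\Z^\mu]$.

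\emph{Concordance invariance.} Let $C$ be the exterior of the concordance annuli in $S^3\times[0,1]$. Then $\partial C=M$, and Alexander duality gives $H_*(C)\cong H_*(S^1)^{\oplus\mu}$ in degree one, with $H_1(C)\cong\Z^\mu$ generated by meridians. I take $\varphi$ to be the restriction of this isomorphism, which is meridional. The standard Blanchfield argument then applies:
\begin{itemize}
\item $C$ is a $\Lambda$-homology circle in the appropriate sense, so $H_*(C,M;Q)=0$.
\item From the long exact sequence and duality, $P:=\ker\bigl(TH_1(M;\Lambda)\to TH_1(C;\Lambda)\bigr)$ satisfies $P\subseteq P^\perp$.
\item After dividing out the maximal pseudonull submodules, $P$ becomes equal to $P^\perp$. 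This uses the fact that $\Ext^2$ and higher $\Ext$ groups of torsion modules are pseudonull (Appendix~\ref{sub:PseudoNull}), together with Blanchfield's duality $\hat t H_1\cong\overline{\Ext^1(\hat t H_1,\Lambda)}$.
\end{itemize}
Hence the image of $P$ in $\hat tH_1(M;\Lambda)$ is a metaboliser, and $\Bl(L,L',\varphi)=0$.

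\emph{Independence of $\varphi$ for $\mu\le2$.} Two meridional homomorphisms differ by a homomorphism that vanishes on $H_1(X_L)$ and $H_1(X_{L'})$. Such a homomorphism is determined by its values on the classes coming from the gluing tori. I would realise the change of $\varphi$ by reparametrising the gluing: for $\mu\le 2$ these changes are induced by self-homeomorphisms of $M$ (Dehn twists along the gluing tori) that preserve the meridians. The resulting isometry of Blanchfield forms shows the class is unchanged, mirroring Proposition~\ref{prop:mu=2Indep}.

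\emph{Presentation matrices.} For $\mu\le2$, I would build a filling $W$ of $M$ with $\pi_1(W)\cong\Z^\mu$, as in the homology surgery section. By Lemma~\ref{lem:H_2IsFree}, $H_2(W;\Lambda)$ is free, and since the Alexander polynomials are nonzero, $H_1(W,M;\Lambda)$ vanishes. The $\Lambda$-intersection form on $H_2(W;\Lambda)$ is then a square hermitian matrix $A$ with $\det A\neq0$, and the long exact sequence of the pair identifies $\hat tH_1(M;\Lambda)$ with $\coker A$ up to pseudonull modules. The standard argument then shows that $A^{-1}$ represents the linking form.

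For the negative statement when $\mu\geq3$, a square presentation would force projective dimension at most one, so $\Ext^2(\hat tH_1,\Lambda)=0$. I would compute $\Ext^2$ from the chain complex of the $\Z^\mu$-cover, using a Poincaré duality spectral sequence. When $\mu>3$, or $\mu=3$ with $\deg\varphi\neq\pm1$, the top term $H_3$ of the torus (or $\Z/\deg$) contributes a nonzero piece, and this is the obstruction.

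\emph{Nonsingularity.} The form is nondegenerate by construction. To get nonsingularity I would show that the adjoint is surjective, by comparing $\Ext^1(\hat tH_1,\Lambda)$ with $\hat t H^2(M;\Lambda)$ via the universal coefficient spectral sequence. The correction terms are $\Ext^2$ and $\Ext^3$ of $H_0$ and $H_1$. For $\mu\neq3$ these are pseudonull or vanish for dimension reasons. For $\mu=3$ with $\deg\varphi\neq0$, the fundamental class maps nontrivially, which kills the $\Z$-term coming from $H_3(\bbT^3)$.

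\emph{Calculations.} For $\mu=1$, $M$ is homology cobordant to $M_K\# M_{K'}$ (reversed), so the form splits as a direct sum; Construction~\ref{cons:MatrixH} then yields the Seifert-type presentation $H_F(t)$. For $\mu=2$ with $L'=\mathcal H$, I would use the C-complex $4$-manifold from Theorem~\ref{thm:Calculation}, whose $\Lambda$-intersection form is $H_F$, and identify the boundary linking form as above.

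\emph{Main obstacle.} I expect the hardest step to be the Ext and spectral sequence bookkeeping in the nonsingularity and presentation claims, especially tracking the degree contribution when $\mu=3$.
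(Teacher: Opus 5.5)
Your treatment of (2), and your spectral-sequence strategy for (3)--(4), follow the paper. There are, however, genuine gaps, the main ones in items (1) and (3).

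\textbf{Item (1).} You invoke ``Blanchfield's duality $\that H_1\cong\overline{\Ext^1_\Lambda(\that H_1,\Lambda)}$'', that is, nonsingularity of $\hat{\Bl}^\varphi_M$. This fails in cases that (1) must cover. Take $L=L'$ to be any $3$-component link with vanishing linking numbers and $\Delta_L\neq0$. Example~\ref{ex:triple-deg} gives $\deg(\varphi)=0$, so $\hat{\Bl}^\varphi_M$ is singular by Proposition~\ref{prop:NonSingTorsionY}. This is one reason (1) is stated in $L^4_{\textit{nd}}$, and why the paper's proof (Theorem~\ref{theorem:Invariance3ManifoldVersion}) uses only nondegeneracy:
\begin{itemize}
\item For $K=\ker\bigl(TH_1(M;\Lambda)\to TH_1(C;\Lambda)\bigr)$ one has $K\subseteq K^\perp$.
\item Exactness of $TH_2(C,M;\Lambda)\to TH_1(M;\Lambda)\to TH_1(C;\Lambda)$, together with Blanchfield's theorem $\ker(\Ad_{\Bl_C})=zH_1(C;\Lambda)$, gives $K^\perp/K\hookrightarrow zH_1(C;\Lambda)$.
\item The coprime-annihilator description of $z$ (Corollary~\ref{cor:max-pn}, Example~\ref{ex:NoPN}) upgrades this to $K^\perp=K^{\perp\perp}$.
\end{itemize}
The metaboliser is therefore $\that(K^\perp)$, not the image of your $P=K$. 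Using the latter would require $K^\perp\subseteq K+zH_1(M;\Lambda)$, which your sketch does not supply.

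\textbf{Item (3), $\mu\le2$.} The sentence ``the long exact sequence identifies $\that H_1(M;\Lambda)$ with $\coker A$ up to pseudonull modules'' is exactly the step that needs proof. In that form it gives no square presentation. For $\mu=2$, $H_2(W,M;\Lambda)\cong H^2(W;\Lambda)$ contains $\Ext^2_\Lambda(\Z,\Lambda)\cong\Z$, so it is not free. Moreover, $H_1(M;\Lambda)$ itself agrees with $\coker A$ up to this $\Z$, yet has projective dimension $\ge2$ (Proposition~\ref{prop:NoSQP4}).

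What the paper proves in Lemma~\ref{lemma:PresentationExactSequence} is the precise statement:
\begin{itemize}
\item $\ker(\ev)=zH^2(W;\Lambda)$, so $\shat H_2(W,M;\Lambda)$ is free of the same rank as $H_2(W;\Lambda)$;
\item $\partial$ maps $zH_2(W,M;\Lambda)$ onto $zH_1(M;\Lambda)$, by comparing the spectral sequences of $W$ and $M$ and using $z\Ext^1_\Lambda(H_1(M;\Lambda),\Lambda)=0$.
\end{itemize}
Lemma~\ref{lemma:ExactSequencePseudonull} then yields the exact resolution $H_2(W;\Lambda)\to\shat H_2(W,M;\Lambda)\to\that H_1(M;\Lambda)\to0$. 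This resolution underlies Proposition~\ref{prop:BlanchfieldRepresentation}, and hence also the $H_F$ statements in (5) and (6). Separately, $H_1(W,M;\Lambda)=0$ comes from $H_1(\widetilde W)=0$ and connectedness of the cover of $M$, not from $\Delta\neq0$.

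\textbf{Further slips.}
\begin{itemize}
\item For $\mu\ge3$, your $\Ext^2$ computation concerns $H_1$, so you must also show $\that H_1=H_1$. This amounts to $zH_1(M;\Lambda)=\ker(\ev)\cong H^2(\Z^\mu;\Lambda)=0$ for $\mu\neq2$.
\item In (4) with $\mu=3$, the term $\Ext^3_\Lambda(\Z,\Lambda)\cong\Z$ must survive rather than be killed. It has to inject into $H^3(M;\Lambda)$, which is where $\deg\varphi\neq0$ enters, so that $d_2=0$.
\item In (5), $M_{K,K'}$ is not homology cobordant to $M_K\#-M_{K'}$: their first homology groups are $\Z$ and $\Z^2$. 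The paper instead shows by Mayer--Vietoris that inclusion induces $H_1(X_K;\Lambda)\oplus H_1(X_{K'};\Lambda)\cong H_1(M_{K,K'};\Lambda)$, which is an isometry by~\cite{FriedlLeidyNagelPowell}.
\item In (6), you need $M_{L,\mathcal{H}}=M_L=\partial W_F$. This holds because $P_L\cong X_{\mathcal{H}}$ when the linking number is one (Example~\ref{ex:P}).
\end{itemize}
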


The first item is well known for knots~\cite{KeartonCobordism} and for links when the coefficients are taken in the localisation~$\Lambda_S:=\Z[t_1^{\pm 1},\dots,t_\mu^{\pm 1},(t_1-1)^{\pm 1},\dots,(t_\mu-1)^{\pm 1}]$ of~$\Lambda$; see e.g.~\cite{Hillman}.
To the best of our knowledge, there is no published proof involving~$\Lambda$ coefficients; we provide one in Theorem~\ref{thm:ConcordanceInvarianceGeneral}.
The second item is proved in Proposition~\ref{prop:mu=2Indep}.
The third item is a consequence of Proposition~\ref{prop:BlanchfieldRepresentation} for~$\mu\le 2$ and
of Proposition~\ref{prop:NoSQP4} for~$\mu>2$.
A proof of the fourth item can be found in Proposition~\ref{prop:NonSingTorsionY}. 
The direct sum formula in the fifth item is proved in Proposition~\ref{prop:FLNP}; the fact~$\Bl_{M_K}$ is represented by~$H_F$ is implicit in the combination of work of Litherland~\cite{LitherlandCobordism} and~\cite{Ko2} but also follows from our work.
The sixth item is proved in Corollary~\ref{cor:CalculationBlmu=2}.

\begin{remark}
We comment on the relevance of items (3) and (4).
The long exact sequence in~$L$-theory requires one to work with even linking forms~$(T,b)$ where~$T$ has projective dimension one and~$b$ is nonsingular (see Appendix~\ref{sub:Ltheory}).
As will become apparent below,  knowing whether the Blanchfield form satisfies these conditions is therefore relevant to deciding whether the Blanchfield class is related to the homology surgery class.
\end{remark}

\begin{remark}\label{rem:triple-deg}
If~$L$ and~$L'$ are~$3$-component links with vanishing pairwise linking numbers, then the integer~$\deg(\varphi)$ appearing in items~(3) and~(4) coincides with the difference~$\overline{\mu}_{L'}(123)-\overline{\mu}_{L}(123)$ of triple linking numbers, see Example~\ref{ex:triple-deg}.
\end{remark}

By contrast with the first item of Theorem~\ref{thm:BlClassProperties} (which concerns $[\hat \Bl_{M_{L,L'}}^\varphi] \in L^4_\textit{nd}(\Lambda,\Lambda\setminus \lbrace 0 \rbrace)$),  when~$\hat \Bl_{M_{L,L'}}^\varphi$ is nonsingular and is defined on a module that admits a square presentation matrix, we are not able to prove that~$\Bl(L,L',\varphi) \in L^4(\Lambda,\Lambda\setminus \{0\})$ is a concordance invariant: the issue lies in showing that the metaboliser has projective dimension~$1$.
When~$\mu\le 2$, we are able to circumvent this issue using Theorem~\ref{thm:RelatingInvariantsIntro} below.
The case where~$\mu=3$ and~$\deg(\varphi)= \pm 1$ remains open.

\subsection{Relating the invariants}

We describe how the invariants~$\lambda(L,L',\varphi)$ and~$\Bl(L,L',\varphi)$ are related.
For~$\mu=1$,  this is a folklore result,  see e.g.~\cite{LitherlandCobordism} as well as the discussion in Section~\ref{sub:History}.
We refer to Theorem~\ref{thm:RelatingInvariants} for the proof.

\begin{theorem}
\label{thm:RelatingInvariantsIntro}
Fix $\mu \leq 2$ and let $L$ and $L'$ be two $\mu$-component links with equal linking numbers and~$\Delta_L,\Delta_{L'} \neq 0$.
The natural homomorphism~$L^4(Q) \to L^4(\Lambda,\Lambda \setminus \{0\})$ whose definition is recalled in Appendix~\ref{sub:Ltheory}
takes the homology surgery invariant~$\lambda(L,L')$ to the Blanchfield invariant~$\Bl(L,L')$.
In particular, if~$\lambda(L,L')$ vanishes in~$L^4(Q)$,  then so does~$\Bl(L,L')$ in~$L^4(\Lambda,\Lambda \setminus \{0\})$.
\end{theorem}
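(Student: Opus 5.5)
Since $\mu\le 2$, Theorem~\ref{thm:ConcordanceInvarianceGeneralIntro}(3) and Theorem~\ref{thm:BlClassProperties}(2) make both invariants independent of $\varphi$, so I fix one meridional $\varphi$ and work with a single $\Z^\mu$-filling $W$ of $M:=M_{L,L'}$. I would \emph{not} use $W$ directly. Instead I would take the auxiliary manifolds from the computation of $\lambda(L,L')$, namely $W_F$ and $W_{F'}$ built from a nice C-complex (or Seifert surface) $F$ for $L$ and $F'$ for $L'$. These satisfy $\pi_1(W)\cong\Z^\mu$ with $\pi_1(\partial W)\to\pi_1(W)$ surjective, and by Lemma~\ref{lem:H_2IsFree} the module $H_2(W_F;\Lambda)$ is free with intersection matrix $H_F$. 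The calculations in Theorem~\ref{thm:Calculation} give $\lambda(L,L')=[H_F\oplus -H_{F'}]\in L^4(Q)$. The linking numbers are equal, so by Theorem~\ref{thm:ConcordanceInvarianceGeneralIntro}(1) for $\mu\le2$ the invariant is defined.

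The plan is to run the standard argument: for a $4$-manifold $W$ with $H_2(W;\Lambda)$ free and $\lambda_W$ nondegenerate over $Q$, the boundary linking form is presented by the matrix of $\lambda_W$. More precisely, consider
\[
H_2(W;\Lambda)\xrightarrow{\ \lambda_W\ }H_2(W;\Lambda)^*\cong H_2(W,\partial W;\Lambda)\to H_1(\partial W;\Lambda)\to H_1(W;\Lambda).
\]
Using $\pi_1(W)\cong\Z^\mu$, we have $H_1(W;\Lambda)=0$. The cokernel of $\lambda_W$ then maps onto $H_1(\partial W;\Lambda)$. After passing to torsion modulo pseudonull parts, this identifies $\hat t H_1(\partial W;\Lambda)$ with $\coker(H_F)$ and the Blanchfield form with $-H_F^{-1}$, i.e.\ the linking form presented by $H_F$. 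The dual-cohomology part has to be shown pseudonull or zero, using $\Delta_L\ne0$. This is essentially Proposition~\ref{prop:BlanchfieldRepresentation} and Corollary~\ref{cor:CalculationBlmu=2}, which I would invoke.

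By the definition in Appendix~\ref{sub:Ltheory}, the boundary map $\partial\colon L^4(Q)\to L^4(\Lambda,\Lambda\setminus\{0\})$ sends the class of a form $H$ (rescaled to be defined over $\Lambda$ with nonzero determinant) to the class of the linking form it presents. So the remaining task is to show
\[
[\hat\Bl_{M}^\varphi]=[\Bl_{H_F}\oplus -\Bl_{H_{F'}}]\quad\text{in } L^4(\Lambda,\Lambda\setminus\{0\}).
\]
By Theorem~\ref{thm:BlClassProperties}(4), the form $\hat\Bl_M^\varphi$ is nonsingular and presented by a square matrix, so it defines a class there. I would obtain it from $M$ by gluing. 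The manifold $M$ is built from $X_L\cup X_{L'}$, and $\partial W_F,\partial W_{F'}$ are the corresponding pieces $M_{L,U}$ and $M_{L',U}$ with $U$ a trivial (or Hopf-type) reference link. I would then prove a Blanchfield additivity $\Bl(L,L')=\Bl(L,U)\oplus-\Bl(L',U)$ in the Witt group. This generalises Proposition~\ref{prop:FLNP} to $\mu=2$ and works by exhibiting a metaboliser for $\Bl_{M_{L,L'}}\oplus-\Bl_{M_{L,U}}\oplus\Bl_{M_{L',U}}$ from a cobordism built out of $X_L\times I$ and $X_{L'}\times I$.

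The main obstacle is this last step. The metaboliser must have projective dimension one to give vanishing in $L^4$ rather than merely in $L^4_{nd}$. I would handle it by making the cobordism $V$ itself have free $H_2(V;\Lambda)$ with $\pi_1=\Z^\mu$, via Lemma~\ref{lem:H_2IsFree}. The metaboliser is then the image of a map between square-presented modules, so it is presented by the kernel of a map of free modules. Alternatively, one can bypass the issue by working directly with $W_F\cup_{\partial}-W_{F'}$-type fillings and Novikov--Wall. In that approach the Wall correction term dies in $L^4(Q)$ and its image in $L^4(\Lambda,\Lambda\setminus\{0\})$ is metabolic by construction, which again relies on freeness for $\mu\le2$.
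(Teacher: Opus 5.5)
\textbf{There is a genuine gap, at the step you flag yourself.}

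Your first steps are fine: $\lambda(L,L')=[H_F]-[H_{F'}]$, and Proposition~\ref{prop:BlanchfieldRepresentation} identifies boundary linking forms with Blanchfield forms. But applying that proposition to $W_F$ and $W_{F'}$ separately only gives the Blanchfield forms of $\partial W_F=M_L$ and $\partial W_{F'}=M_{L'}$. These generalised Seifert surgeries are not of the form $M_{L,U}$ in general; for instance, when $\lk(K_1,K_2)=0$ the piece $P_L$ is disconnected.

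You then need the additivity $[\hat\Bl^\varphi_{M_{L,L'}}]=[\hat\Bl_{M_L}]-[\hat\Bl_{M_{L'}}]$ in $L^4(\Lambda,\Lambda\setminus\{0\})$. The paper notes this is not straightforward for $\mu=2$ (Remark~\ref{rem:Bl-add}), and your cobordism sketch does not supply it:
\begin{enumerate}
\item The cobordism is never specified.
\item The torsion exactness hypothesis of Theorem~\ref{theorem:Invariance3ManifoldVersion} is not checked.
\item The projective-dimension claim is unsupported. Over $\Z[\Z^2]$, which has global dimension $3$, submodules and images of square-presented torsion modules can have projective dimension $2$. An example is the augmentation ideal of $\mathbb{F}_p[\Z^2]$ inside $\Lambda/(p)$. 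Saying the metaboliser is ``presented by the kernel of a map of free modules'' does not yield a length-one free resolution.
\end{enumerate}
This is exactly the difficulty of Remark~\ref{rem:WittNonSing}, which the paper does not resolve directly and which this theorem is designed to avoid. In fact the additivity you want is a \emph{consequence} of the theorem (combined with Proposition~\ref{prop:BlanchfieldRepresentation} for $W_F$ and $W_{F'}$), not an input to it.

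\textbf{The missing idea} is that no additivity and no metaboliser are needed. Apply Proposition~\ref{prop:BlanchfieldRepresentation} once, to the single filling $W_{F,F'}=W_F\cup_{P_L}-W_{F'}$ of $M_{L,L'}$ from Construction~\ref{cons:WFF}. Check its hypotheses:
\begin{enumerate}
\item By van Kampen, $\pi_1(W_{F,F'})\cong\Z^\mu$ and $\pi_1(M_{L,L'})$ surjects onto it.
\item $H_2(W_{F,F'};\Lambda)$ is free by Lemma~\ref{lem:H_2IsFree}.
\item $\lambda^Q_{W_{F,F'}}$ is nonsingular because $H_2(M_{L,L'};Q)=0$ (Lemma~\ref{lem:H2Vanishes}).
\end{enumerate}
By the Novikov--Wall argument in the proof of Theorem~\ref{thm:Calculation}, $[\lambda^{\Q,\textit{ns}}_{W_{F,F'}}]=0$. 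So $\lambda(L,L')$ is represented by a $\Lambda$-matrix $H$ of $\lambda^\Lambda_{W_{F,F'}}$. By Remark~\ref{rem:AlternativeDefinitionBoundaryLinking}, its image under $L^4(Q)\to L^4(\Lambda,\Lambda\setminus\{0\})$ is the boundary linking form of $H$. Proposition~\ref{prop:BlanchfieldRepresentation} identifies that form, up to isometry, with $\hat\Bl^\varphi_{M_{L,L'}}$ for the meridional $\varphi$ that extends over $W_{F,F'}$.

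Your closing ``alternative'' names the right manifold but misplaces the work. There is no Wall correction term to handle on the linking-form side: the $Q$-form of $W_{F,F'}$ is already an orthogonal sum because $H_*(P_L;Q)=0$, and Novikov--Wall is used only for the $\Q$-form. The Blanchfield identification is an isometry on the nose, not a Witt-class statement requiring a metaboliser.
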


The second point of Theorem~\ref{thm:ConcordanceInvarianceGeneralIntro}  together with the first point of Theorem~\ref{thm:RelatingInvariantsIntro}
immediately yield the following corollary.

\begin{corollary}
\label{cor:Bl=0}
Fix $\mu \leq 2$ and let~$L$ and~$L'$ be two $\mu$-component links with equal linking numbers and~$\Delta_L,\Delta_{L'} \neq 0$.
If~$L$ and~$L'$ are concordant, then~$\Bl(L,L')$ vanishes in~$L^4(\Lambda,\Lambda \setminus \{0\})$.\qed
\end{corollary}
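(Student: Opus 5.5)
The plan is to chain the concordance invariance of the homology surgery class with the compatibility of the two invariants, which is why the corollary is stated with a \qed.

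First, I settle the definedness of $\Bl(L,L')$ in $L^4(\Lambda,\Lambda\setminus\{0\})$. Since $\mu\le 2$ and $\Delta_L,\Delta_{L'}\neq 0$, item~(4) of Theorem~\ref{thm:BlClassProperties} shows that $\hat\Bl^\varphi_{M_{L,L'}}$ is nonsingular. Item~(3) shows that it admits a square hermitian presentation matrix, so $\hat t H_1(M_{L,L'};\Lambda)$ has projective dimension one. Item~(2) shows that the class is independent of $\varphi$. Hence $\Bl(L,L')$ is a well-defined element of $L^4(\Lambda,\Lambda\setminus\{0\})$.

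Next, I use that concordant links have equal linking numbers and equal total Milnor invariants. By item~(1) of Theorem~\ref{thm:ConcordanceInvarianceGeneralIntro}, this means $\lambda(L,L',\varphi)$ is defined for every meridional $\varphi$. By item~(2) of the same theorem, some meridional $\varphi$ satisfies $\lambda(L,L',\varphi)=0$. Item~(3) says the class does not depend on $\varphi$ when $\mu\le 2$, so $\lambda(L,L')=0$ in $L^4(Q)$.

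Finally, Theorem~\ref{thm:RelatingInvariantsIntro} applies: $\mu\le 2$, the linking numbers are equal, and both Alexander polynomials are nonzero. It states that the natural map $L^4(Q)\to L^4(\Lambda,\Lambda\setminus\{0\})$ sends $\lambda(L,L')$ to $\Bl(L,L')$. Since $\lambda(L,L')=0$, it follows that $\Bl(L,L')=0$.

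There is no real obstacle at the level of the corollary. The only point to check is that all hypotheses of the cited results are met, in particular that concordance preserves the total Milnor invariant, so that $\lambda$ is defined. Even this is implicit in item~(2) of Theorem~\ref{thm:ConcordanceInvarianceGeneralIntro}, since the trace of the concordance itself furnishes a $\Z^\mu$-filling. All the substantive difficulty sits in Theorem~\ref{thm:RelatingInvariantsIntro}: there, one must show that the boundary of a $\Lambda$-intersection form on a filling $W$ of $M_{L,L'}$ recovers the Blanchfield form. This uses that $H_2(W;\Lambda)$ is free for $\mu\le 2$ and the square presentation matrix $H_F$.
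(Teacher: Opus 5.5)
Your proof is correct and follows the paper's route: the corollary is deduced immediately from item~(2) of Theorem~\ref{thm:ConcordanceInvarianceGeneralIntro} (so $\lambda(L,L')=0$, for any $\varphi$ since $\mu\le 2$) combined with Theorem~\ref{thm:RelatingInvariantsIntro}. One small wording fix: the $\Z^\mu$-filling in the concordance invariance argument is the exterior $X_C$ of the concordance, not its trace.
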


\begin{remark}
\label{rem:OtherlambdaIntro}
As we discuss in Section~\ref{sec:HomologySurgeryInvariant}, for~$\mu \leq 2$,  it is also possible to consider a variation on the invariant~$\lambda(L,L')$.
Namely,  if the linking numbers of $L$ and $L'$ agree, then one can pick a filling~$(W,\psi)$ of~$(M_{L,L'},\varphi)$ over~$\Z^\mu$ with $\psi$ an isomorphism and set
$$\widetilde{\lambda}(L,L'):=[\lambda_W^Q] \in L^4(Q)/\im(L^4(\Lambda) \to L^4(Q)).$$
This invariant satisfies all the same properties as~$\lambda(L,L')$ with the same proofs (in fact the proofs are slightly simpler since~$\lambda_W^\Q$ need not be taken into account).
The difference between~$\widetilde{\lambda}(L,L')$ and~$\lambda(L,L')$ is that,  since there is an injection~$L^4(Q)/\im(L^4(\Lambda) \to L^4(Q)) \hookrightarrow L^4(\Lambda,\Lambda\setminus \{0\})$,  the class~$\widetilde{\lambda}(L,L')$ is trivial if and only if~$\Bl(L,L')$ is trivial.
One drawback of~$\widetilde{\lambda}(L,L')$ is that we are only able to define it when~$\mu \leq 2$, see Remark~\ref{rem:mu=1Trick}.
\end{remark}

\subsection{Generalised Seifert matrices}
\label{sub:gen-seifert}

Concordant knots have Witt equivalent Seifert matrices.
One of the initial motivations for this project was to understand whether a similar statement holds for the generalised Seifert matrices~$A^\varepsilon$ of~\cite{CooperThesis,Cim04, CimasoniFlorens} or for the matrix $H(t_1,\ldots,t_\mu)$ involved in the definition of the multivariable signature~$\sigma_L\colon (S^1\setminus\{1\})^\mu\to\Z$.
A positive result in this direction is given by the following corollary of Theorem~\ref{thm:ConcordanceInvarianceGeneralIntro}.
\begin{corollary}
\label{cor:IntroSeifert}
Let $\mu \leq 2$ and let $L$ be a $\mu$-component link with~$\Delta_L \neq 0$.
For any nice C-complex~$F$ for $L$, the class~$[H_F] \in L^4(Q)$ is a concordance invariant such that the signature of~$H_F(\omega)$ coincides with the value at~$\omega$ of the extended multivariable
signature~$\sigma_L\colon (S^1)^\mu\to\Z$ defined in~\cite{CMP}.
\end{corollary}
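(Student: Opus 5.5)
The corollary has two parts. The first is concordance invariance of $[H_F]\in L^4(Q)$. The second identifies $\operatorname{sign}H_F(\omega)$ with the extended multivariable signature $\sigma_L(\omega)$ of~\cite{CMP}.

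\emph{Concordance invariance.} Let $L_0$ be a fixed $\mu$-component reference link with the same linking numbers as $L$. For $\mu\le 2$ the total Milnor invariant reduces to the linking numbers, so by item~(1) of Theorem~\ref{thm:ConcordanceInvarianceGeneralIntro} the classes $\lambda(L,L_0)$ are defined. Natural choices are the unknot for $\mu=1$, and for $\mu=2$ a $(2,2\ell)$-torus link or a split link with linking number $\ell=\lk(L)$.

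Now let $L$ and $L'$ be concordant, and pick a nice C-complex $F'$ for $L'$. By items~(4) and~(5) of Theorem~\ref{thm:ConcordanceInvarianceGeneralIntro},
$$\lambda(L,L')=[H_F\oplus -H_{F'}]=[H_F]-[H_{F'}].$$
By items~(2) and~(3) of the same theorem, concordance gives some meridional $\varphi$ with $\lambda(L,L',\varphi)=0$, and for $\mu\le 2$ this class does not depend on $\varphi$. Hence $\lambda(L,L')=0$, so $[H_F]=[H_{F'}]$. Taking $L'=L$ with a second nice C-complex also shows that $[H_F]$ is independent of the choice of $F$.

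One point needs care. Item~(5) is stated as an identity $\lambda(L,L')=[H_F\oplus -H_{F'}]$, and I read this as requiring only equal linking numbers, not nonvanishing Alexander polynomials. I would check that the hypothesis $\Delta_L\neq 0$ is needed only so that $H_F$ is nondegenerate over $Q$ and therefore defines a class in $L^4(Q)$. Since $\det H_F$ agrees with $\Delta_L$ up to units and factors of $(t_i-1)$, the hypothesis $\Delta_L\neq0$ gives this. For $L'$ we have $\Delta_{L'}\neq 0$ as well, since the Alexander polynomial up to units and such factors is a concordance invariant (or simply because $H_{F'}$ is nondegenerate by the same reasoning).

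\emph{Signature.} For $\omega\in(S^1\setminus\{1\})^\mu$ this is the standard C-complex formula. By~\cite{CimasoniFlorens}, $\sigma_L(\omega)$ is the signature of $H(\omega)=\sum_\varepsilon\prod_i(1-\overline{\omega_i}^{\varepsilon_i})A^\varepsilon$, and by Construction~\ref{cons:MatrixH} this is $H_F(\omega)$, possibly up to a positive scalar factor.

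The main obstacle is extending the identity to points $\omega$ with some $\omega_i=1$. There $H_F(\omega)$ may be degenerate, and one must compare with the definition of the extended signature in~\cite{CMP}. That definition is given either via the intersection form of a $4$-manifold bounded by $M_L$, or via the C-complex matrix together with correction terms. My first attempt would be to use the geometric description from the proof of item~(5): $H_F$ is the $\Lambda$-intersection form of the auxiliary $4$-manifold $W_F$. Then $H_F(\omega)$ is the $\C^\omega$-twisted intersection form of $W_F$ for every $\omega\in(S^1)^\mu$, including those with some $\omega_i=1$, because that computation uses that $H_2(W_F;\Lambda)$ is free (Lemma~\ref{lem:H_2IsFree}) and specialising a free module commutes with base change. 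Its signature then matches the $4$-dimensional definition of $\sigma_L(\omega)$ in~\cite{CMP}, provided $W_F$ is of the type used there, so that the untwisted signature correction vanishes. If it is not, one has to compare with~\cite{CMP}'s formula, which may require subtracting the linking-number term that appears when some coordinates equal $1$.
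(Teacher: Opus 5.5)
\textbf{Overall.} Your route is the paper's.

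\textbf{Concordance invariance.} The paper also just combines Theorem~\ref{thm:Calculation} with Theorem~\ref{thm:ConcordanceInvarianceGeneral} and Proposition~\ref{prop:mu=2Indep}. The reference link $L_0$ plays no role. Your reading of item~(5) is off. Theorem~\ref{thm:Calculation} assumes $\Delta_L,\Delta_{L'}\neq 0$, and this is used to identify $H_F$ with the $\Lambda$-intersection form of $W_F$ (Proposition~\ref{prop:BlanchfieldRepresentation} and Lemma~\ref{lemma:FL2}), not only for nondegeneracy. Your first justification of $\Delta_{L'}\neq0$ supplies what is needed: invariance of $\Delta$ up to norms, or $H_*(X_C,X_L;Q)=0$. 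The parenthetical alternative is circular.

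\textbf{Signature: the hedge.} The paper uses only your four-dimensional argument, and your hedge is settled by Theorem~\ref{thm:ExistenceOfW}. Item~(1) says $W_F$ is a $\Z^\mu$-filling of $(M_L,\varphi)$ with $\varphi$ meridional. Item~(2) gives $\sigma(W_F)=0$, since the integral form is zero plus metabolic. So there is no untwisted correction and no linking-number term. The Cimasoni--Florens step then becomes superfluous; in any case it holds only up to congruence by the diagonal matrix $D(\omega)$, not up to a positive scalar.

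\textbf{The genuine problem.} The step that fails is the claim that freeness of $H_2(W_F;\Lambda)$ makes $H_F(\omega)$ the form on $H_2(W_F;\C^\omega)$ for every $\omega$. Freeness only controls the $\operatorname{Tor}_p^\Lambda(H_2,\C^\omega)$ terms. The universal coefficient spectral sequence also contributes $\operatorname{Tor}_2^\Lambda(H_0(W_F;\Lambda),\C^\omega)=\operatorname{Tor}_2^\Lambda(\Z,\C^\omega)$. This vanishes except when $\mu=2$ and $\omega=(1,1)$, where it equals $\C$. Since $H_1(W_F;\Lambda)=0$, one gets
$0\to H_2(W_F;\Lambda)\otimes_\Lambda\C\to H_2(W_F;\C)\to\C\to 0$.
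So $H_F(1,1)$ only sees a codimension-one subspace, and the missing class need not lie in the radical.

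\textbf{Example.} Let $F$ consist of two discs with two clasps of opposite signs, as for the Whitehead link. Then $\mathcal{C}=\{\tilde R_{\delta_1}\}$ and $H_F=(a)$ with $a=\lk(\delta_1,\delta_1^{++})$; one expects $a=\pm1$ for the Whitehead link. On the other hand, $H_2(W_F;\C)$ has basis $T_{c_1},R_{\delta_1}$ with form $\bsm 0&\pm1\\ \pm1&a\esm$ (Lemma~\ref{lemma:Matrix}, Remark~\ref{rem:Q-L}), which has signature $0$. So for $a\neq 0$ the two signatures are $\pm1$ and $0$.

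\textbf{Consequence.} Your argument proves the identity for all $\omega$ when $\mu=1$, and for all $\omega\neq(1,1)$ when $\mu=2$, but not at $(1,1)$. The paper's proof crosses this same point by citing naturality of intersection forms~\cite[Appendix~B]{CMP}. The example shows that $\omega=(1,1)$ needs a separate argument or must be excluded, since with the four-dimensional description of $\sigma_L$ used in the paper the two signatures differ there.
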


However, it is not true that concordant links have Witt equivalent generalised Seifert matrices.
Actually, a given link can admit two C-complexes yielding generalised Seifert matrices which are not Witt equivalent,
as demonstrated by the following example.

\begin{figure}[h]
\centering
\begin{overpic}[width=7cm]{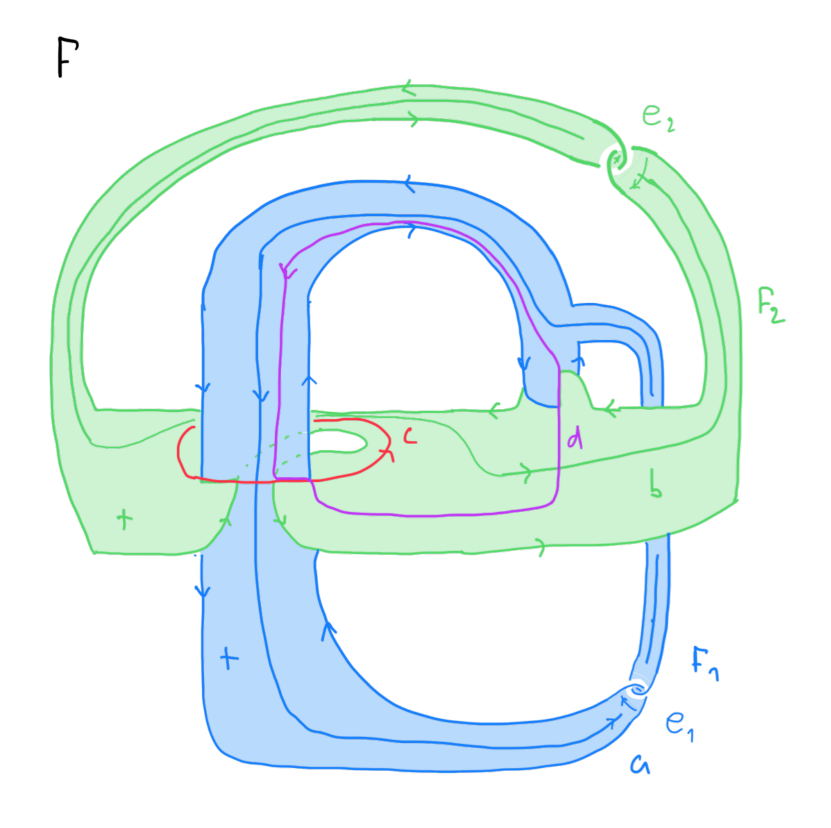}\end{overpic}
\caption{A~C-complex for the Hopf link.}
\label{fig:Counterexample}
\end{figure}

\begin{example}
\label{ex:Hopf}
Consider the~C-complex~$F$ for the Hopf link shown in Figure \ref{fig:Counterexample}. 
In the basis~$\{a, e_1, b, e_2, c, d\}$ depicted therein, the generalized Seifert form~$\alpha_F^{--}\colon H_1(F)\times H_1(F)\to\Z$ defined by~$\alpha_F(x,y)=\lk(x^{--},y)$ is represented by the matrix
$$A_F^{--} = \begin{pmatrix}
0&0&1&0&-1&0\\
1&-1&0&0&0&0\\
1&0&0&0&0&0\\
0&0&1&-1&0&0\\
0&0&0&0&0&0\\
0&0&1&0&-1&0
\end{pmatrix}.$$
One readily check that the real symmetric matrix~$A_F^{--} + (A_F^{--})^{\T}$
is non degenerate and has signature~$-2$, so~$\alpha_F^{--}$ is not metabolic.
On the other hand, the Hopf link also admits a contractible~C-complex~$F'$, whose associated form~$\alpha_{F'}^{--}$ is clearly metabolic.
As a consequence, the generalised Seifert forms~$\alpha_F^{--}$ and~$\alpha_{F'}^{--}$ are not Witt equivalent.
\end{example}

This contrasts with double concordance:
any strongly doubly slice link admits a collection of generalised Seifert matrices~$\{A^\varepsilon\}$ where~$A^\varepsilon$ is hyperbolic for each $\varepsilon$~\cite[Theorem 1.13]{ConwayOrson}.
For a more thorough discussion of generalised Seifert matrices in the context of link concordance, we refer to~\cite{Simian,GaetanPhD}.

\subsection{History and further motivation}
\label{sub:History}

In this section, we describe the geometric content of the homology surgery obstruction and justify the chosen terminology.
For motivation, it is helpful to first recall some history in the case of knots.

\subsubsection*{Algebraic concordance via Seifert matrices}
We begin by recalling the motivation behind the definition of algebraic concordance~\cite{LevineKnotCob}.
The idea is that for~$n\ge 2$,  if an~$n$-dimensional knot~$K \subset S^{2n+1}$ bounds a Seifert hypersurface that admits a metabolic Seifert matrix, then it is possible to ambiently surger this Seifert surface into a disc, thus proving that~$K$ is slice. 
In high dimensions,  the Witt class of the Seifert form is therefore a complete obstruction to sliceness whereas in the classical dimension, it leads to the notion of algebraic concordance.
This is the definition that motivates the search for an analogue of Levine's condition in terms of generalised Seifert matrices, leading to~\cite{Simian} and to the results summarised in Section~\ref{sub:gen-seifert}.

\subsubsection*{Blanchfield forms}
In the seventies, Kearton recast Levine's notion of algebraic concordance using Blanchfield forms~\cite{KeartonCobordism}.
As we have seen,  it is possible to generalise this version of algebraic concordance to links at the price of using a different and less amenable Witt group.
This was the content of Theorem~\ref{thm:BlClassProperties}.

\subsubsection*{Homology surgery}
We now describe Cappel-Shaneson's homology surgery theoretic approach to algebraic knot concordance~\cite{CappellShaneson}.
This is the most technical approach,  so we take the time to survey some of the main ideas in order to provide context and motivation for Theorem~\ref{thm:ConcordanceInvarianceGeneralIntro}.

We begin with knots.
Using Freedman's work~\cite{Freedman,FQ90}, a knot~$K \subset S^3$ is seen to be slice if and only if its~$0$-surgery~$M_K$ bounds a~$4$-manifold~$W$ with~$\pi_1(W)$ normally generated by a meridian and~$H_2(W)=0$.
One incarnation of the homology surgery theory programme consists of first finding a degree one normal map~$f \colon (W,M_K) \to (S^1 \times D^3,S^1 \times S^2)$ with~$f|_{M_K}$ a homology equivalence,
and then studying the obstruction to~$f$ itself being normal bordant to the desired~$\Z$-homology equivalence,
an obstruction denoted by
$$\sigma(f) \in \Gamma_4(\Z[\Z] \to \Z).$$
Here and in what follows,  $\Gamma_4$ (resp.~$L_4$) denotes quadratic~$\Gamma$-groups (resp. quadratic~$L$-theory).
Performing surgeries if necessary,  one can assume that $f$ is a $\pi_1$-isomorphism. 

In high dimensions,  $\sigma(f)$ is the only obstruction for $f$ to be normal bordant to a homology equivalence and it therefore leads to a calculation of the knot concordance group~\cite{CappellShaneson,CappellShanesonLinkCobordism} (though the programme is carried out a little differently and involves a relative~$\Gamma$-group, see e.g.~\cite[Lemma~3.1 and Theorem~4.1]{CappellShanesonLinkCobordism}).

In the classical dimension,  these two steps are part of an infinite process~\cite{CochranOrrTeichner}: as noted in~\cite[Proposition 23.19]{DET}, finding such a degree one normal map is equivalent to requiring that~$\operatorname{Arf}(K)=0$ (this is~$0$-solvability); on the other hand, the vanishing of the surgery obstruction corresponds (in a way that we will outline below) to algebraic concordance (this is~$0.5$-solvability).

\begin{remark}
Roughly speaking, the obstruction~$\sigma(f)$ is represented by the restriction of the~$\Z[\Z]$-intersection form of~$W$ to the surgery kernel~$\ker(H_2(W;\Z[\Z])\to H_2(S^1 \times D^3;\Z[\Z]))$.
The relation to the opening paragraph of this article now becomes apparent: if~$\operatorname{Arf}(K)=0$, then~$W$ can be taken to be spin, and
since~$H_2(S^1 \times D^3;\Z[\Z])=0$, the surgery obstruction is represented by the equivariant intersection form of~$W$.
Here and in what follows, we omit the quadratic data both for simplicity and because it is typically determined by~$\lambda_W$.
\end{remark}

The challenge is then to convert the surgery obstruction~$\sigma(f)=[\lambda_W]$ into a concordance invariant, i.e.  to show that $[\lambda_W]$ does not depend on the choice of spin~$\Z$-filling for~$M_K$; to do so, natural ideas involve considering~$[\lambda_W^Q]-[\lambda_W^\Q] \in L^4(Q)$ or~$[\lambda_W^{\Z[\Z]}] \in \Gamma_4(\Z[\Z] \to \Z)/L_4(\Z[\Z])$. 
Following~\cite{LitherlandCobordism,
CochranOrrTeichner,ChaHirzebruch}, this is what motivates the definition of our homology surgery obstruction (although we note that Cappell-Shaneson proceed differently~\cite[Proposition 3.2]{CappellShaneson}).
This concludes the motivation of the definition of the invariant~$\lambda(K)$ and explains our choice of terminology for it.

\begin{remark}
\label{rem:RelationToAlgConc}
We now briefly comment on the relation between~$\Gamma_4(\Z[\Z] \to \Z)/L_4(\Z[\Z])$ and the algebraic concordance group following~\cite{Ranicki, SheihamThesis}, as it motivates our work in the multivariable setting.
Consider the multiplicative subset~$\Sigma=\{ p(t) \in \Z[t^{\pm 1}] \mid p(1)=\pm 1\}$ and observe that there is a canonical isomorphism
$\Gamma_*(\Z[\Z] \to \Z) = \Gamma_*(\Z[\Z] \to \Sigma^{-1}\Z[\Z])$.
The following long exact sequence of~$\Gamma$ groups~\cite{CappellShaneson,Ranicki} implies that the aforementioned quotient is isomorphic to a Witt group of linking forms:
$$
\xymatrix{
0 \ar[r]& L_4(\Z[\Z]) \ar[r]& \Gamma_4(\Z[\Z]\to \Sigma^{-1}\Z[\Z]) \ar[r]& \Gamma_4(\mathcal{F}) \ar[r]& 0 \\
0 \ar[r]& L_4(\Z[\Z]) \ar[r]\ar[u]^=& L_4(\Sigma^{-1}\Z[\Z]) \ar[r]\ar[u]^\cong& L_4(\Z[\Z],\Sigma) \ar[u]^\cong \ar[r]& 0\,.
}
$$
Here~$\Gamma_4(\mathcal{F})$ denotes the relative~$\Gamma$ group corresponding to the diagram
$$
\xymatrix{
\Z[\Z] \ar[r]^{\id}\ar[d]^{\id}& \Z[\Z]\ar[d]^{\aug} \\
\Z[\Z] \ar[r]^{\aug}& \Z.
}
$$
The final subtlety is that for this localisation, there is no difference between the quadratic and symmetric theories, see~\cite[Proposition 7.9.2 (ii), see also page 831]{Ranicki}.
Since all these diagrams also hold for symmetric $L$-theory, the obstruction can be considered in
\[
\Gamma^4(\Z[\Z] \to \Z)/L^4(\Z[\Z])=\Gamma^4(\Z[\Z] \to \Sigma^{-1}\Z[\Z])/L^4(\Z[\Z])\cong L^4(\Sigma^{-1}\Z[\Z])/L^4(\Z[\Z])\cong L^4(\Z[\Z] ,\Sigma)\,.
\]
If one simplifies the situation by taking~$\Sigma=\Lambda \setminus \{0 \}$ in this final group,  then one obtains what we called the homology surgery theory class.
\end{remark}
%

We now turn to link concordance.
%
For motivation, we begin with a naive generalisation of the process described above, referring to~\cite{LevineOrrSurvey,LeDimetThesis} for more sophisticated approaches.
In order to study concordance to a link $L'$,  one can attempt to build a degree one normal map~$X_L \to X_{L'}$,  extend it to a degree one normal map~$M_{L,L'} \to M_{L',L'}$,  consider the obstruction to finding a degree one normal bordism~$f \colon (W,M_{L,L'}) \to (X_{L'} \times I,M_{L',L'})$ with~$f|_{M_{L,L'}}$  a meridian-preserving homology equivalence,  and finally consider the obstruction 
$$\sigma(f) \in \Gamma_4(\Z[\pi_1(X_{L'})]\to \Z).$$
Multiple roadblocks present themselves: to name but one such challenge,  contrarily to the case where~$L'$ is the unknot, there might not be a degree one map~$X_L \to X_{L'}$, e.g. because such a map is~$\pi_1$-surjective.
For example,  for~$L'$ the $\mu$-component unlink, if $\pi_1(X_L) \to \pi_1(X_{L'}) \cong F_\mu$ is meridian preserving,  then~$L$ is a boundary link.
For the study of (boundary) concordance of boundary links from the perspective of homology surgery, we refer to~\cite{CappellShanesonLinkCobordism, Ko2,LeDimetThesis, SheihamThesis}.

When~$L=K_1 \cup K_2$ is a~$2$-component link with linking number one and~$L'$ is the Hopf link, then~$\pi_1(X_{L'}) \cong \Z^2$,  and there is more hope to purse the homology surgery programme.
In fact, Davis~\cite{DavisConcordant} showed that if the Arf invariants of~$K_1$ and~$K_2$ vanish, then there is a meridian preserving degree one normal map 
$$f \colon (W,M_{L,L'}) \to (X_{L'} \times I,M_{L',L'}) \cong (T^2\times I \times I,T^3).$$
Furthermore, Davis showed that if $\Delta_L(t_1,t_2) \doteq 1$, then $L$ is concordant to $L'$.
Later work approaching concordance to the Hopf link using higher order invariants includes~\cite{FriedlPowellHopf,ChaSymmetric,KimWhitney}, but the analogue of algebraic concordance has not been considered (besides the study of the multivariable signature in~\cite{ConwayNagelToffoli}).
One can actually repeat the discussion from the unknot case: since~$H_2(X_{L'} \times I;\Z[\Z^2])=0$, the homology surgery obstruction takes the form 
$$ \sigma(f)=[\lambda_W] \in \Gamma_4(\Z[\Z^2] \to \Z).$$ 
This is essentially the definition of our homology surgery invariant.

\begin{remark}
Consider the multiplicative subset~$\Sigma=\{ p(t_1,t_2) \in \Z[t_1^{\pm 1},t_2^{\pm 1}] \mid p(1,1)=\pm 1\}$ and note the isomorphism~$\Gamma_*(\Z[\Z^2] \to \Z) = \Gamma_*(\Z[\Z^2] \to \Sigma^{-1}\Z[\Z^2]).$
Replacing $\Sigma^{-1}\Z[\Z^2]$ by the field of fractions $Q$, the definition of $\sigma(f) \in L_4(Q)$, leads to (a quadratic analogue of) our homology surgery obstruction $\lambda(L,L') \in L^4(Q).$
Just as for knots,  this group fits in an exact sequence
\[
L^4(\Z[\Z^2])\to L^4(Q)\to L^4(\Z[\Z^2],\Z[\Z^2]\setminus\{0\})\,,
\]
where the third group is a Witt group of linking forms, see Proposition~\ref{prop:LTheoryMap}.
With the exception of~$L^4(\Z[\Z^2])\cong\Z$ (see e.g.~\cite{MilgramRanicki}),
we do not know how to calculate these groups.
\end{remark}

As mentioned above,  the situation seems to be more complicated when $L'$ is not the Hopf link.
From the point of view of surgery theory, the algebraic concordance group should perhaps be a (quotient of) $\Gamma_4(\Z[\pi_1(X_{L'})]\to \Z)$ or the corresponding relative $\Gamma$ group.
On the other hand, since algebraic concordance typically involves abelian invariants,  just as for the Hopf link, one could consider the multiplicative system~$S=\Z[\Z^\mu] \setminus \{0\}$ and declare the algebraic concordance group to be a (quotient of)
$$\Gamma_4(\Z[\Z^\mu] \to S^{-1}\Z[\Z^\mu])=L_4(Q).$$
Forgetting the quadratic data for simplicity, this is the approach we took in this article.

\subsection{Comparison with $0.5$-solvable cobordism }

Inspired by~\cite{CochranOrrTeichner}, Cha defined a notion of~$0.5$-solvable cobordism for links~\cite[Definition 2.8]{ChaSymmetric} that generalises the notion of~$0.5$-solvability for knots.
We will not repeat the definition  here but note that the argument from~\cite[Proposition 5.10]{ConwayNagelToffoli} (which applies with~$Q$ coefficients equally as well as with~$\C^\omega$ coefficients) implies that~$0.5$-solvable cobordant links have vanishing homology surgery invariant (the converse appears to be unlikely).
This should be compared with the fact that~$0.5$-solvable cobordant links have (generically) equal multivariable signature and nullity~\cite[Theorem 1.5]{ConwayNagelToffoli}.
Relatedly, Kim proved that if two links are
$1$-solvable cobordant,  then their first non-zero Alexander polynomials agree up to norms and the Blanchfield forms of their exteriors (considered over the localised ring~$S^{-1}\Lambda$ where~$S \subset \Lambda$ is the multiplicative system generated by the~$t_i-1$) are Witt equivalent~\cite{KimWhitney}.
The corresponding statements for~$0.5$-solvability remain unknown.
We also do not know whether links with vanishing homology surgery invariant (or Blanchfield class) have (generically) equal multivariable signatures for~$\mu>2$ (recall Corollary~\ref{cor:IntroSeifert} for the case~$\mu\le 2$).
Indeed,  while it appears likely that a~$Q$-lagrangian for a~$\Z^\mu$-filling~$W$ of~$M_{L,L'}$ gives rise to a~$\C^\omega$-lagrangian,  we do not know how to force~$\sigma(W)=0$ without imposing additional constraints on~$\lambda_W^\Z$ (e.g. via the presence of~$0$-duals).

\subsection*{Organisation}
This article is organised as follows.
In Sections~\ref{sec:HomologySurgeryInvariant} and~\ref{sec:Blanchfield} we introduce the homology surgery invariant and Blanchfield invariant in the broader context of closed $3$-manifolds $Y$ endowed with an epimorphism $\varphi \colon H_1(Y) \to \Z^\mu$.
In Section~\ref{sec:LinkConcordance} we specify to the case $Y=M_{L,L'}$ and introduce our concordance invariants.
Section~\ref{sec:Fillings} builds the $4$-manifolds that are required to calculate the homology surgery invariant.
Section~\ref{sec:Relating} relates the two invariants.
The last two sections are slightly technical: Section~\ref{sec:ProjDim} analyses the projective dimension of the Alexander module $H_1(M_{L,L'};\Lambda)$ and the nonsingularity of the corresponding Blanchfield form, whereas Section~\ref{sec:IntersectionForm} carries out the intersection form calculations that are required to prove the main result of Section~\ref{sec:Fillings}.
The article concludes with Appendix~\ref{sub:Ltheory} on $L$-theory and Appendix~\ref{sub:PseudoNull} on pseudonull submodules.

\subsection*{Acknowledments}
The authors thank Peter Feller and Lukas Lewark for sharing parts of their upcoming paper~\cite{FL26}.
DC and GS were supported by the Swiss NSF grant 200021-212085.
AC was partially supported by the NSF grant DMS~2303674.

\subsection*{Conventions}

We work in the topological category.
Manifolds are assumed to be compact, connected and oriented unless otherwise specified.
Links are assumed to be ordered and oriented.
We write~$\overline{\nu}(N)$ (resp.~$\nu(N)$) for a closed (resp. open) tubular neighbourhood of a submanifold~$N$.
We set $\Lambda:=\Z[\Z^\mu]=\Z[t_1^{\pm 1},\ldots,t_\mu^{\pm 1}]$ for the ring of~$\mu$ variable Laurent polynomials
and~$Q:=\Q(t_1,\ldots,t_\mu)$ for its field of fractions.

\section{The homology surgery invariant of a~$3$-manifold over~$\Z^\mu$}
\label{sec:HomologySurgeryInvariant}

This section introduces the homology surgery invariant in the broader context of closed $3$-manifolds over $\Z^\mu$.
We assume some familiarity with the Witt group $L^4(Q)$ of hermitian forms over the field~$Q:=\Q(t_1,\ldots,t_\mu)$; see Appendix~\ref{sub:Ltheory} for some recollections.

\medbreak

Let~$Y$ be a closed~$3$-manifold and let~$\varphi \colon H_1(Y) \to \Z^\mu$ be a homomorphism.
Assume that~$(Y,\varphi)$ bounds over~$\Z^\mu$,  meaning that there is a~$4$-manifold~$W$ and a homomorphism~$\psi \colon H_1(W) \to \Z^\mu$ with~$\partial (W,\psi)=(Y,\varphi)$.
Recall that the adjoint of the~$\Q$-valued intersection form of~$W$ can be defined as the composition
\[
H_2(W;\Q)\to H_2(W,Y;\Q)\xrightarrow{\PD,\cong} H^2(W;\Q)\xrightarrow{\ev,\cong}\Hom_\Q(H_2(W;\Q),\Q)\,,
\]
whose kernel is~$\ker(H_2(W;\Q)\to H_2(W,Y;\Q))=\im(H_2(Y;\Q) \to H_2(W;\Q))$. As a consequence, we can consider the nonsingular intersection form
\[
\lambda_W^{\Q,\textit{ns}} \colon \frac{H_2(W;\Q)}
{\im(H_2(Y;\Q) \to H_2(W;\Q))} \times 
\frac{H_2(W;\Q)}
{\im(H_2(Y;\Q) \to H_2(W;\Q))} 
\to \Q\,.
\]
Similarly, working with (co)homology with twisted coefficients in~$Q$, we have a nonsingular form
\[
\lambda_W^{Q,\textit{ns}} \colon \frac{H_2(W;Q)}
{\im(H_2(Y;Q) \to H_2(W;Q))} \times 
\frac{H_2(W;Q)}
{\im(H_2(Y;Q) \to H_2(W;Q))} 
\to Q\,.
\]
Since both forms are nonsingular and hermitian, they define elements in the Witt group~$L^4(Q)$ of~$Q$-valued hermitian forms over~$Q$.
Taking the difference of the two classes leads  to
$$ 
[\lambda_W^{Q,\textit{ns}}]-[\lambda_W^{\Q,\textit{ns}}] \in L^4(Q)\,.
$$
The next lemma, whose proof involves a combination of ideas from~\cite[Lemma 2.3]{ChaHirzebruch} and~\cite[Corollary~2.11]{ConwayNagelToffoli},  proves that this difference does not depend on the choice~$(W,\psi)$.
When~$\mu \leq 3$, this result follows from work of Cha~\cite[Lemma 2.3]{ChaHirzebruch}.

\begin{proposition}
\label{prop:WellDef}
Let~$Y$ be a closed~$3$-manifold and let~$\varphi \colon H_1(Y) \to \Z^\mu$ be a homomorphism.
If~$(W,\psi)$ and~$(W',\psi')$ are two~$\Z^\mu$-fillings of~$(Y,\varphi)$, then 
$$ 
[\lambda_{W'}^{Q,\textit{ns}}]-[\lambda_{W'}^{\Q,\textit{ns}}]
=
[\lambda_W^{Q,\textit{ns}}]-[\lambda_W^{\Q,\textit{ns}}]
\in L^4(Q).
$$
\end{proposition}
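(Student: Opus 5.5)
The plan is to glue the two fillings into a closed $4$-manifold over $\Z^\mu$ and reduce to a bordism statement. Set $X:=W\cup_Y -W'$. Since $\psi|_{H_1(Y)}=\varphi=\psi'|_{H_1(Y)}$, Mayer--Vietoris on $H_1$ produces a homomorphism $\Phi\colon H_1(X)\to\Z^\mu$ restricting to $\psi$ and $\psi'$. This step requires some care: the image of $H_1(Y)$ has to be handled consistently, and, if necessary, one can pass to the induced map $X\to \bbT^\mu$ directly from maps $W\to\bbT^\mu$, $W'\to\bbT^\mu$ that agree on $Y$ up to homotopy, which can be arranged by homotopy extension.

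Next we apply Novikov--Wall additivity in $L^4(Q)$ and in $L^4(\Q)$. Gluing along the full common boundary $Y$, the Wall non-additivity correction term vanishes because there is no third piece. The standard argument, as in \cite[Lemma 2.3]{ChaHirzebruch}, gives $[\lambda_X^{Q}]=[\lambda_W^{Q,\textit{ns}}]-[\lambda_{W'}^{Q,\textit{ns}}]$, and similarly over $\Q$. Concretely, the images of $H_2(W;Q)$ and $H_2(W';Q)$ in $H_2(X;Q)$ span a sublattice. Modulo the radical, this sublattice carries the orthogonal sum of the two nonsingular forms, and its orthogonal complement is a metabolic part coming from classes that meet $Y$. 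Here we use that $H_2(Y)\to H_2(W)$ is the radical, as noted before the statement. This reduces the proposition to showing that for a closed $4$-manifold $X$ with $\Phi\colon H_1(X)\to\Z^\mu$ we have $[\lambda_X^Q]=[\lambda_X^\Q]\in L^4(Q)$, where $\lambda_X^\Q$ is viewed in $L^4(Q)$ via the inclusion $\Q\subset Q$. Note that the forms on a closed manifold may be degenerate, so we use their nonsingular quotients.

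For the closed case we use bordism, following \cite[Corollary~2.11]{ConwayNagelToffoli}. The difference $[\lambda_X^Q]-[\lambda_X^\Q]$ is invariant under bordism over $\bbT^\mu$ by the same additivity argument: a $5$-dimensional null-bordism $V$ over $\bbT^\mu$ gives a Lagrangian, namely the image of $H_3(V,X;Q)$, for both forms simultaneously, with the usual half-dimensionality coming from duality. Hence the difference defines a homomorphism $\Omega_4^{STOP}(\bbT^\mu)\to L^4(Q)$. By the Atiyah--Hirzebruch spectral sequence, $\Omega_4(\bbT^\mu)\cong\bigoplus_k \Omega_{4-k}\otimes H_k(\bbT^\mu)$, and $\Omega_1=\Omega_2=0$ in the relevant degrees (topologically, $\Omega_4^{TOP}\cong\Z\oplus\Z/2$ and $\Omega_3=0$). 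Therefore the group is generated by closed $4$-manifolds with null-homotopic map, together with $\bbT^4$-type classes pulled back from $H_4(\bbT^\mu)$, i.e.\ products of $4$-tori mapping to coordinate subtori. For a manifold with trivial map, $Q$-coefficients are $\Q\otimes_\Q Q$, so the two classes agree. For $\bbT^4\to\bbT^\mu$ (or $T^4$ mapping onto a $4$-dimensional subtorus), the $Q$-homology vanishes, since some $t_i-1$ acts invertibly, and the rational form of $T^4$ is metabolic (hyperbolic, signature $0$). So both classes vanish. We should also check the $\Z/2$ summand ($E_8$-type, or $*\mathbb{CP}^2$): its contribution has trivial map, so it is handled by the first case.

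The main obstacle is the bordism-invariance step over $Q$ in the closed case. One has to check that the kernel $\im(H_3(V,X;Q)\to H_2(X;Q))$ is exactly a Lagrangian of the nonsingular quotient form and not merely isotropic. This requires Poincaré--Lefschetz duality with twisted coefficients over the field $Q$ together with a dimension count from the long exact sequence of $(V,X)$, which is routine over a field. This is the reason for working over $Q$ rather than $\Lambda$. A second delicate point is the Novikov additivity step when the forms are degenerate, which is the reason the nonsingular quotients were introduced.
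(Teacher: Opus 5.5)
Your proposal is correct and follows essentially the same route as the paper. You glue the two fillings along $Y$, apply Novikov additivity over the fields $Q$ and $\Q$, and show that the difference vanishes on closed $4$-manifolds over $\Z^\mu$. That last step uses the half-lives-half-dies bordism argument, the splitting $\Omega_4(\bbT^\mu)\cong\Omega_4\oplus H_4(\bbT^\mu;\Z)$, and a check on generators with trivial map and on coordinate $4$-subtori. Your separate treatment of the $\Z/2$ summand of $\Omega_4^{STOP}$, handled through the trivial-coefficient case, is slightly more careful than the paper's statement that the first summand is generated by $\C P^2$.
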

\begin{proof}
We claim that if~$(V,\phi)$ is a closed~$4$-manifold over~$\Z^\mu$, then~$[\lambda_V^{Q,\textit{ns}}]-[\lambda_V^{\Q,\textit{ns}}]=0$. 
A half lives half dies argument shows that if~$V$ bounds a~$5$-manifold, then~$[\lambda_V^{\Q,\textit{ns}}]=0$ and similarly,   if~$V$ 
bounds a~$5$-manifold over~$\Z^\mu$, then~$[\lambda_V^{Q,\textit{ns}}]=0$.
We deduce that the assignment~$(V,\phi) \mapsto [\lambda_V^{Q,\textit{ns}}]-[\lambda_V^{\Q,\textit{ns}}]$ induces a well defined map
\begin{align*}
 \Omega_4(\Z^\mu) &\to L^4(Q) \\
(V,\phi) &\mapsto [\lambda_V^{Q,\textit{ns}}]-[\lambda_V^{\Q,\textit{ns}}].
\end{align*}
The claim is equivalent to the assertion that this map is trivial.
The remainder of the proof is now a slight modification of~\cite[Proposition 2.10]{ConwayNagelToffoli} so we proceed briskly. 
An Atiyah-Hirzebruch spectral sequence argument shows that~$\Omega_4(\Z^\mu) \cong \Omega_4 \oplus H_4(T^\mu;\Z)$, where the first summand is generated by~$\C P^2$ and the second summand by the $\Z^\mu$-manifolds given by the~${\mu}\choose{4}$ subtori~$T^4 \to T^\mu$.
It suffices to prove that the map vanishes on these~$\Z^\mu$-manifolds. 
Indeed since~$\C P^2$ is simply-connected, the twisted intersection form agrees with the untwisted one, so their difference is metabolic.
For~$T^4=T^3 \times S^1$, since~$H_2(T^4;Q)=0$,  the twisted form is zero.
The usual intersection form of~$T^4$ is a direct sum of~$3$ hyperbolics forms, so it vanishes in~$L^4(\Q)$ and therefore in~$L^4(Q)$ as well,  proving the claim.

We conclude.
Consider the closed $4$-manifold~$V:=W \cup -W'$ over $\Z^\mu$ and use Novikov addivity, which holds for Witt classes because $Q$ and $\Q$ are both fields:
\begin{align*}
[\lambda_V^{Q,\textit{ns}}]&=[\lambda_{W}^{Q,\textit{ns}}]-[\lambda_{W'}^{Q,\textit{ns}}] \in L^4(Q)  \\
[\lambda_V^{\Q,\textit{ns}}]&=[\lambda_{W}^{\Q,\textit{ns}}]-[\lambda_{W'}^{\Q,\textit{ns}}]  \in L^4(\Q)\,.
\end{align*}
Taking the difference of the first equality with the image of the second one via~$L^4(\Q)\to L^4(Q)$ yields the conclusion.
\end{proof}

Proposition~\ref{prop:WellDef} leads to the main definition of this section.

\begin{definition}
\label{def:WittInvariant3Manifold}
Let~$Y$ be a closed~$3$-manifold and let~$\varphi \colon H_1(Y) \to \Z^\mu$ be a homomorphism such that~$(Y,\varphi)$ bounds over~$\Z^\mu$.
The \emph{homology surgery invariant} of~$(Y,\varphi)$ is 
$$ 
\lambda(Y,\varphi):=[\lambda_W^{Q,\textit{ns}}]-[\lambda_W^{\Q,\textit{ns}}] \in L^4(Q)\,,
$$
where $(W,\psi)$ is a $\Z^\mu$-filling of $(Y,\varphi)$.
\end{definition}

Next,  we discuss the variation on~$\lambda(Y,\varphi)$ that was mentioned in Remark~\ref{rem:OtherlambdaIntro}.
For brevity, given a space $X$ and a map $\phi \colon H_1(X) \to \Z^\mu$, we write $\phi$ instead of $\phi \circ \operatorname{ab} \colon \pi_1(X) \to \Z^\mu$.

\begin{proposition}
\label{prop:WellDef}
Fix $\mu \leq 2$.
Let~$Y$ be a closed~$3$-manifold and let~$\varphi \colon H_1(Y) \to \Z^\mu$ be an epimorphism.
If~$(W,\psi)$ and~$(W',\psi')$ are two~$\Z^\mu$-fillings of~$(Y,\varphi)$ such that $\psi$ and $\psi'$ are isomorphisms on $\pi_1$, then 
$$ 
[\lambda_{W'}^{Q,\textit{ns}}]
=
[\lambda_W^{Q,\textit{ns}}]
\in L^4(Q)/\im(L^4(\Lambda) \to L^4(Q)).
$$
\end{proposition}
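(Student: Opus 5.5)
We follow the strategy of the previous proof: glue the two fillings into a closed $4$-manifold and use bordism over $\Z^\mu$. The difference now is that we may no longer subtract the rational intersection form. We therefore have to show that closed $4$-manifolds over $\Z^\mu$ have twisted $Q$-intersection forms lying in $\im(L^4(\Lambda)\to L^4(Q))$.

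\emph{Step 1: gluing.} Set $V:=W\cup_Y -W'$. Since $\psi|_Y=\varphi=\psi'|_Y$, Mayer--Vietoris shows that $\psi$ and $\psi'$ glue to a map $\phi\colon H_1(V)\to\Z^\mu$. Novikov additivity over the field $Q$, exactly as in the proof of Proposition~\ref{prop:WellDef}, gives
\[
[\lambda_V^{Q,\textit{ns}}]=[\lambda_W^{Q,\textit{ns}}]-[\lambda_{W'}^{Q,\textit{ns}}]\in L^4(Q).
\]
So it suffices to show that $[\lambda_V^{Q,\textit{ns}}]\in\im(L^4(\Lambda)\to L^4(Q))$ for every closed $(V,\phi)$ over $\Z^\mu$.

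\emph{Step 2: reduction to bordism.} The half lives half dies argument shows that $(V,\phi)\mapsto[\lambda_V^{Q,\textit{ns}}]$ is well defined on $\Omega_4(\Z^\mu)$. For $\mu\le 2$ the Atiyah--Hirzebruch computation recalled above gives $\Omega_4(\Z^\mu)\cong\Omega_4\oplus H_4(T^\mu)=\Omega_4\cong\Z$, generated by $\C P^2$ with trivial map to $\Z^\mu$. It therefore suffices to check that $[\lambda^{Q}_{\C P^2}]$ lies in the image. This holds because $\C P^2$ is simply connected, so its $Q$-form is $\langle 1\rangle$. That form is the image of the unimodular $\Lambda$-form $\langle 1\rangle$, which is also $\lambda^\Lambda_{\C P^2}$.

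For a general $V$, one could alternatively note that if $\pi_1(V)\to\Z^\mu$ were an isomorphism, then $H_2(V;\Lambda)$ would be free with nonsingular form. Lemma~\ref{lem:H_2IsFree} is the relevant input here, and $\mu\le 2$ is where it applies. Then $\lambda_V^{Q}$ would be the image of $\lambda_V^\Lambda$ directly. The bordism argument avoids needing this, and it also explains why the hypothesis that $\psi,\psi'$ are $\pi_1$-isomorphisms is not used beyond the gluing.

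\emph{Main obstacle.} The delicate point is Step 2. One must make sure that bordant $\Z^\mu$-manifolds have $Q$-forms that agree modulo metabolic forms, not merely modulo the image of $L^4(\Lambda)$. This is the twisted half lives half dies statement already invoked above. One must also check that the identification of $\Omega_4(\Z^\mu)$ for $\mu\le 2$ really has no torus summand: $H_4(T^\mu)=0$, and the lower Atiyah--Hirzebruch terms $H_p(T^\mu;\Omega_{4-p})$ vanish for $p\le 2$ since $\Omega_1=\Omega_2=\Omega_3=0$. If one wanted a statement valid without Novikov additivity subtleties, one would instead verify additivity directly for the nonsingular quotients over $Q$, which is standard since $Q$ is a field.
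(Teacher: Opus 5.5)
Your proof is correct. Step~1 is exactly what the paper does; the two routes diverge in showing that $[\lambda_V^{Q}]\in\im(L^4(\Lambda)\to L^4(Q))$ for the closed manifold $V=W\cup_Y -W'$.

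The paper uses the surjectivity of $\varphi$ and the $\pi_1$-isomorphism hypotheses to get $\pi_1(V)\cong\Z^\mu$. It then invokes Theorem~B of \cite{HambletonKreckTeichner}: for $\mu\le 2$, $\lambda_V^\Lambda$ descends to a nonsingular form on the free module $H_2(V;\Lambda)/\operatorname{rad}(\lambda_V^\Lambda)$. This gives an explicit, intrinsic $\Lambda$-lift of $\lambda_V^Q$.

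You instead combine bordism invariance of $[\lambda_V^Q]$ over $\Z^\mu$ with $\widetilde\Omega_4(T^\mu)=0$ for $\mu\le 2$. This reduces everything to manifolds with null-homotopic map to $T^\mu$, whose $Q$-form is $\lambda^\Z\otimes Q$ and visibly lifts to $\lambda^\Z\otimes\Lambda$. Your argument is more elementary, and it never uses the $\pi_1$-hypotheses or the surjectivity of $\varphi$. So it proves the statement for arbitrary $\Z^\mu$-fillings.

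It even removes the restriction on $\mu$. One also has $\widetilde\Omega_4(T^3)=0$, and for $\mu\ge 4$ the extra generators $T^4\to T^\mu$ have $H_2(T^4;Q)=0$. Equivalently, the paper's first well-definedness proposition gives $[\lambda_V^Q]=[\lambda_V^\Q]$. This is the image of the unimodular class $[\lambda_V^\Z]\in L^4(\Z)$, and $L^4(\Z)\to L^4(Q)$ factors through $L^4(\Lambda)$. So, as far as well-definedness goes, the restriction $\mu\le 2$ discussed in Remark~\ref{rem:mu=1Trick} is an artifact of the Hambleton--Kreck--Teichner route.

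Two minor points.

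\begin{itemize}
\item In the topological category $\Omega_4\cong\Z\oplus\Z/2$, detected by the signature and the Kirby--Siebenmann invariant, so $\C P^2$ alone does not generate. Your argument is unaffected: it only needs every class to be represented by a closed manifold with constant map to $T^\mu$. The paper makes the same simplification.
\item Your aside that $\pi_1(V)\cong\Z^\mu$ would make $H_2(V;\Lambda)$ free with nonsingular form is false for closed $V$. For $V=T^2\times S^2$ and $\mu=2$ one gets $H_2(V;\Lambda)\cong\Z$. Lemma~\ref{lem:H_2IsFree} concerns manifolds with boundary whose boundary surjects on $\pi_1$, which is why the paper passes to the quotient by the radical. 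Since your proof does not rely on that aside, this is not a gap.
\end{itemize}
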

\begin{proof}
Consider the closed $4$-manifold~$V:=W \cup -W'$ over $\Z^\mu$.
Since $\varphi$ is an epimorphism and~$\psi,\psi'$ are isomorphisms,  we have~$\pi_1(V) \cong \Z^\mu$.
Novikov addivity yields
$$ [\lambda_V^{Q,\textit{ns}}]=[\lambda_{W}^{Q,\textit{ns}}]-[\lambda_{W'}^{Q,\textit{ns}}] \in L^4(Q)\,,$$
and it remains to check that $[\lambda_V^{Q,\textit{ns}}]$ lies in~$\im(L^4(\Lambda) \to L^4(Q))$. 
To do so,  note that since~$\mu \leq 2$,  the form $\lambda_V^\Lambda$ descends to a nonsingular hermitian form on the~$\Lambda$-module~$H_2(V;\Lambda)/\operatorname{rad}(\lambda_V^\Lambda)$, which is free~\cite[Theorem~B]{HambletonKreckTeichner}.
\end{proof}

Observe that if there is a $\Z^\mu$-filling of~$(Y,\varphi)$, then there is one such that the map to~$\Z^\mu$ is an isomorphism on~$\pi_1$: connect sum with copies of~$S^1 \times S^3$ to obtain a~$\pi_1$-surjection and then surger loops representing normal generators of the kernel.

\begin{definition}
\label{def:WittInvariant3ManifoldSimplified}
Fix~$\mu\le 2$.
 Let~$Y$ be a closed~$3$-manifold and let~$\varphi \colon H_1(Y) \to \Z^\mu$ be an epimorphism such that~$(Y,\varphi)$ bounds over~$\Z^\mu$.
The \emph{simplified homology surgery invariant} of~$(Y,\varphi)$ is 
$$ 
\widetilde{\lambda}(Y,\varphi):=[\lambda_W^{Q,\textit{ns}}] \in L^4(Q)/\im(L^4(\Lambda) \to L^4(Q)),
$$
where $(W,\psi)$ is a $\Z^\mu$-filling of $(Y,\varphi)$ with $\psi$ an isomorphism on $\pi_1$.
\end{definition}

\begin{remark}
\label{rem:mu=1Trick}
This idea does not generalise to the case where~$\mu>2$ since,  for a closed~$4$-manifold with~$\pi_1(V) \cong \Z^\mu$, the~$\Lambda$-module~$H_2(V;\Lambda)$ might not be free and~$\lambda^{\Lambda}_V$ might be singular.
The issue with nonsingularity 
is due to the following exact sequence involving $H_2(V;\Lambda) \cong H^2(V;\Lambda)$ that can be deduced from the universal coefficient spectral sequence (UCSS):
$$
H^2(\Z^\mu;\Lambda)
\to
H^2(V;\Lambda)
\xrightarrow{\ev} 
\Hom(H_2(V;\Lambda),\Lambda)
\to
H^3(\Z^\mu;\Lambda).
$$
\end{remark}

We conclude with the following immediate yet useful property of $\lambda(Y,\varphi)$ and~$\widetilde{\lambda}(Y,\varphi)$.

\begin{proposition}
\label{prop:WittCobordismInvariant}
Let~$Y$ be a closed~$3$-manifold and let~$\varphi \colon H_1(Y) \to \Z^\mu$ be a homomorphism.
If~$(Y,\varphi)$ bounds a~$4$-manifold~$(W,\psi)$ such that both~$\lambda_W^{Q,\textit{ns}}$ and~$\lambda_W^{\Q,\textit{ns}}$ are metabolic, then 
$$ \lambda(Y,\varphi)=0.$$
If $\mu \leq 2$ and $\lambda_W^{Q,\textit{ns}}$ is metabolic, then the same holds for $\widetilde{\lambda}(Y,\varphi)$.
\end{proposition}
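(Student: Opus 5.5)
The statement follows essentially from the definitions. For the first assertion, I will use Definition~\ref{def:WittInvariant3Manifold} together with the well-definedness established in Proposition~\ref{prop:WellDef}: since $\lambda(Y,\varphi)$ is independent of the choice of $\Z^\mu$-filling, I may compute it using the given filling $(W,\psi)$. This gives
$$\lambda(Y,\varphi)=[\lambda_W^{Q,\textit{ns}}]-[\lambda_W^{\Q,\textit{ns}}].$$
A nonsingular metabolic hermitian form represents zero in the Witt group; this is the defining relation of $L^4$ recalled in Appendix~\ref{sub:Ltheory}. Hence $[\lambda_W^{Q,\textit{ns}}]=0$ in $L^4(Q)$ and $[\lambda_W^{\Q,\textit{ns}}]=0$ in $L^4(\Q)$. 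The image of the latter under the natural map $L^4(\Q)\to L^4(Q)$ is therefore also zero. The homomorphism $L^4(\Q)\to L^4(Q)$ is induced by extension of scalars, and this map sends a metaboliser $P$ to the metaboliser $P\otimes_\Q Q$, so no issue arises there. The difference therefore vanishes.

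For the second assertion, assume $\mu\le 2$ and that $\lambda_W^{Q,\textit{ns}}$ is metabolic. The definition of $\widetilde\lambda(Y,\varphi)$ (Definition~\ref{def:WittInvariant3ManifoldSimplified}) requires a filling whose map to $\Z^\mu$ is an isomorphism on $\pi_1$, and the given $(W,\psi)$ need not have this property. This mismatch is the only point needing care.

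The cleanest option is to read the hypothesis of the statement as requiring $\psi$ to be an isomorphism on $\pi_1$, which matches the setting of Definition~\ref{def:WittInvariant3ManifoldSimplified}. In that case the simplified version of Proposition~\ref{prop:WellDef} lets me compute $\widetilde\lambda$ with $W$. The class $[\lambda_W^{Q,\textit{ns}}]$ is zero in $L^4(Q)$, and hence zero in the quotient by $\im(L^4(\Lambda)\to L^4(Q))$.

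If instead one wants to allow a general $\psi$, I would modify $W$ by the procedure described after the simplified Proposition~\ref{prop:WellDef}: connect sum with copies of $S^1\times S^3$, then surger circles. I would then check that these operations preserve the Witt class of $\lambda^{Q,\textit{ns}}$.
\begin{itemize}
\item Connect sum with $S^1\times S^3$ mapping to a generator leaves $H_2(-;Q)$ unchanged, because $H_*(S^1\times S^3;Q)=0$.
\item Surgery on a circle in the kernel of $\psi$ changes the $Q$-form only by adding a hyperbolic summand, or by passing to a subquotient by an isotropic subspace. Either way the Witt class is unchanged.
\end{itemize}
This verification is the main potential obstacle. I expect it to be routine and will take the first interpretation as the intended one.
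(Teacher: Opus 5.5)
Your argument is correct and coincides with the paper's, whose proof is simply ``this follows from the two definitions''; for the second assertion, that proof tacitly takes $(W,\psi)$ to be a filling with $\psi$ an isomorphism on $\pi_1$, which is exactly your first reading. Your extension to general $\psi$ is also sound and worth keeping, because the paper later applies this statement to a concordance exterior $X_C$, whose fundamental group is generally not $\Z^\mu$. The justification is that surgery on circles in $\ker\psi$ is a rel-boundary bordism over $\Z^\mu$, which preserves $[\lambda_W^{Q,\textit{ns}}]$ by the same half-lives-half-dies and Novikov additivity argument used for well-definedness; the $S^1\times S^3$ summands are unnecessary since $\varphi$ is already onto.
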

\begin{proof}
This follows immediately from Definitions~\ref{def:WittInvariant3Manifold} and~\ref{def:WittInvariant3ManifoldSimplified}.
\end{proof}

\section{The Blanchfield form of a~$3$-manifold over~$\Z^\mu$}
\label{sec:Blanchfield}

This section introduces the Blanchfield class in the broader context of $3$-manifolds over $\Z^\mu$.
Section~\ref{sub:BlanchfieldDef} reviews the definition of the Blanchfield form whereas Section~\ref{sub:CobordismInvarianceBlanchfield} is concerned with its cobordism invariance over $\Z^\mu$.
Background material on multivariable Blanchfield forms can be found in~\cite{Hillman,BorodzikFriedlPowell}.

\subsection{The definition of the Blanchfield form}
\label{sub:BlanchfieldDef}

This section recalls the definition of the Blanchfield form of a~$3$-manifold over~$\Z^\mu$ and proves the~$\Z^\mu$-cobordism invariance of its Witt class under suitable assumptions on the bounding $4$-manifold.

\begin{construction}
\label{cons:Blanchfield}
Let~$Y$ be an~$n$-manifold and let~$\varphi\colon H_1(Y)\to\Z^\mu$ be an epimorphism.
For every~$0 \leq k \leq n$,  there is a \emph{relative Blanchfield pairing} 
\begin{align*}
\label{eq:Bl}
\Bl_Y^\varphi \colon TH_{n-k-1}(Y, \partial Y;\Lambda) \times TH_k(Y; \Lambda) &\longrightarrow Q \slash \Lambda \\
(x,y) &\longmapsto  \Ad_{\Bl}(y)(x),
\end{align*}
where the adjoint~$\Ad_{\Bl}$ is the composition of the following three maps: 
\[
\begin{tikzcd}
TH_k(Y; \Lambda) \arrow[r,"\mathrm{PD}^{-1}","\cong"'] \arrow[rrr,bend right=13,"\Ad_{\Bl} "]& 
TH^{n-k}(Y, \partial Y; \Lambda)&
\frac{H^{n-k-1}(Y, \partial Y; Q \slash \Lambda)}{\ker(\BS)}\arrow[r,"\ev"]
\arrow[l,"\BS"',"\cong"] 
&\overline{\Hom(TH_{n-k-1}(Y, \partial Y ; \Lambda) ; Q\slash \Lambda)}\,.
\end{tikzcd}
\]
Here, the first map is the inverse of the Poincar\'e duality isomorphism restricted to torsion submodules, the third is the evaluation map,  and the second arises from the Bockstein homomorphism associated to the exact sequence
of coefficients~$0 \to \Lambda \to Q \to Q\slash \Lambda \to 0$. 
In more details,  observe that by exactness 
$$TH^{n-k}(Y, \partial Y; \Lambda)=\ker(H^{n-k}(Y, \partial Y; \Lambda) \to H^{n-k}(Y, \partial Y;Q))=\im(\BS)$$
and use the first isomorphism theorem to see that~$\BS$ induces the announced isomorphism. 
In order to see that elements of~$\ker(\BS)$ evaluate to zero on elements of~$TH_{n-k-1}(Y,\partial Y;\Lambda)$, observe that elements of~$\ker(\BS)=\im(H^{n-k-1}(Y,\partial Y;Q) \to H^{n-k-1}(Y,\partial Y;Q/\Lambda))$ are represented by cocycles that factor through~$Q$-valued homomorphisms and therefore vanish on torsion elements. 
\end{construction}

In what follows, we make use of maximal pseudonull submodules; we refer to Appendix~\ref{sub:PseudoNull} for background on this notion.
First note that the maximal pseudonull submodule~$z H_1(Y;\Lambda)$ is contained in the kernel of~$\Ad_{\Bl}$.
This is a general fact about linking forms that follows from~\cite[Theorem~3.9~(3)]{Hillman} but is also a direct consequence of Lemmas~\ref{lemma:PnMapInduction} and~\ref{lemma:HomPN}(i) (together with Example~\ref{ex:NoPN}).
Since,  for a module~$H$, we set~$\that H:=TH/zH$, it follows that the relative Blanchfield pairing descends to a form 
$$
\hat{\Bl}_Y^\varphi \colon \that H_{n-k-1}(Y,\partial Y;\Lambda) \times \that H_k(Y;\Lambda)  \to Q/\Lambda\,.
$$
One of the main results of Blanchfield's original article implies that~$\hat{\Bl}_Y^\varphi$ is nondegenerate ``in both directions"~\cite[Theorems 3.2 and 3.3]{Blanchfield},  i.e. that~$\ker(\Ad_{\Bl_Y^\varphi})=zH_k(Y;\Lambda)$ and similarly for the other adjoint.
Restricting to the case of closed~$3$-manifolds leads to the following definition.

\begin{definition}
\label{def:BlanchfieldY}
Let~$Y$ be a closed~$3$-manifold and let~$\varphi \colon H_1(Y) \to \Z^\mu$ be an epimorphism.
The \emph{Blanchfield form} of~$(Y,\varphi)$ is the pairing
\[
\hat{\Bl}_Y^\varphi \colon \that H_1(Y;\Lambda) \times \that H_1(Y;\Lambda)  \to Q/\Lambda\,.
\] 
\end{definition}

The Blanchfield form is hermitian (see e.g.~\cite[Theorem 1(d)]{Powell}) and, as mentioned above, it is nondegenerate.
When~$H_1(Y;\Lambda)$ is torsion,  it can be shown that~$\hat{\Bl}_Y^\varphi$ is nonsingular
if and only if~$\mu\neq 3$ or~$\mu=3$ and~$\varphi$ has non-vanishing degree, see Proposition~\ref{prop:NonSingTorsionY}.
We refer to Remark~\ref{rem:BlanchfieldEven} for a condition ensuring that $\hat{\Bl}_Y^\varphi$ is even.
Before focusing on the cobordism invariance of the Blanchfield form, we make a brief remark on the choice of $\varphi$.

\begin{remark}
\label{rem:BlChoicePhi}
In general the linking form~$\hat{\Bl}_Y^\varphi$ depends on the choice of the epimorphism~$\varphi$.
However,  given two epimorphisms~$\varphi,\varphi' \colon H_1(Y) \to \Z^\mu$,  if there exists an orientation-preserving homeomorphism~$h \colon Y \to Y$ with $\varphi'=\varphi \circ h_*$, then~$h$ lifts to an isometry of~$\that H_1(Y;\Lambda)$ with respect to the Blanchfield forms~$\hat{\Bl}_Y^\varphi $ and~$\hat{\Bl}_Y^{\varphi'}$; the proof is analogous to the one from~\cite[Proposition~3.7]{ConwayPowell}.
\end{remark}

\subsection{Cobordism invariance over~$\Z^\mu$}
\label{sub:CobordismInvarianceBlanchfield}

The following theorem is stated in~\cite[Proposition~2.7]{BorodzikFriedlPowell} over~$\Lambda_S:=\Z[t_1^{\pm 1},\dots,t_\mu^{\pm 1},(t_1-1)^{\pm 1},\dots,(t_\mu-1)^{\pm 1}]$ and is proved in~\cite[Theorem~B]{KimWhitney}, also over~$\Lambda_S$.
This latter reference mostly follows~\cite[Theorem 2.4]{Hillman} but corrects a minor error.
We also note that the ideas of the proof go back to work of Letsche~\cite[Proposition 2.8]{Letsche}.
Since the passage from~$\Lambda_S$ to~$\Lambda$ is crucial to this work,  we recall the details of the argument.

\begin{theorem}
\label{theorem:Invariance3ManifoldVersion}
Let~$Y$ be a closed~$3$-manifold and let~$\varphi \colon H_1(Y) \to \Z^\mu$ be an epimorphism.
If~$(Y,\varphi)$ bounds a~$4$-manifold~$(Z,\varphi)$ such that the sequence~$
TH_2(Z, \partial Z; \Lambda) \stackrel{\partial}{\longrightarrow} TH_1(\partial Z; \Lambda) \stackrel{\iota}{\longrightarrow} TH_1(Z; \Lambda)$ is exact,
then~$\hat \Bl_Y^\varphi$ is metabolic.
\end{theorem}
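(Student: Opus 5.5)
Our plan is to take as candidate metaboliser the image of the boundary map, pushed into the quotient by the maximal pseudonull submodule. Set $P:=\im(\partial\colon TH_2(Z,\partial Z;\Lambda)\to TH_1(Y;\Lambda))=\ker(\iota)$ and let $\hat P$ denote its image in $\that H_1(Y;\Lambda)=TH_1(Y;\Lambda)/zH_1(Y;\Lambda)$. The goal is to prove $\hat P=\hat P^\perp$ with respect to $\hat\Bl_Y^\varphi$, which is exactly what metabolic means. We use the relative Blanchfield pairings of Construction~\ref{cons:Blanchfield} on $Z$ (with $n=4$) and the nondegeneracy of $\hat\Bl$ in both directions (Blanchfield's theorem quoted above).

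\emph{Step 1: $\hat P\subseteq \hat P^\perp$.} We will check the standard compatibility
\[
\Bl_Y^\varphi(\partial x,y)=\pm\Bl_Z(x,\iota y)\quad\text{for } x\in TH_2(Z,\partial Z;\Lambda),\ y\in TH_1(Y;\Lambda),
\]
where $\Bl_Z\colon TH_2(Z,\partial Z;\Lambda)\times TH_1(Z;\Lambda)\to Q/\Lambda$ is the relative pairing. This is a diagram chase: Poincar\'e duality, Bockstein and evaluation each commute with the long exact sequences of the pair $(Z,\partial Z)$, up to sign. If moreover $y=\partial x'$ lies in $P$, then $\iota y=0$ and hence $\Bl_Y^\varphi(\partial x,\partial x')=0$. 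Since $zH_1$ lies in the radical, the pairing descends, and we obtain that $\hat P$ is isotropic.

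\emph{Step 2: $\hat P^\perp\subseteq\hat P$.} Let $y\in TH_1(Y;\Lambda)$ pair trivially with all of $P$. By the formula in Step~1, $\Bl_Z(x,\iota y)=0$ for every $x\in TH_2(Z,\partial Z;\Lambda)$. Blanchfield's nondegeneracy, applied to the relative pairing on $Z$, then gives $\iota y\in zH_1(Z;\Lambda)$. We need to upgrade this to $y\in P+zH_1(Y;\Lambda)$. To do this, consider the restriction $\iota\colon TH_1(Y;\Lambda)\to TH_1(Z;\Lambda)$ with kernel $P$. It induces an injection $TH_1(Y;\Lambda)/P\hookrightarrow TH_1(Z;\Lambda)$, and the class of $y$ lands in $zH_1(Z;\Lambda)$. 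Consequently the submodule generated by $[y]$ is pseudonull in $TH_1(Y;\Lambda)/P$. This is the point where the exactness hypothesis in the theorem, i.e.\ $\ker\iota=P$ on torsion parts, is used.

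Pulling back, $\Lambda y+P$ is an extension of a pseudonull module by $P$. We must conclude that $[y]=0$ in $\that H_1(Y)/\hat P$, which requires that a pseudonull class modulo $\hat P$ be zero. We plan to argue this with the lemmas from Appendix~\ref{sub:PseudoNull} (for instance Lemma~\ref{lemma:HomPN}) together with nondegeneracy on $Y$. Pairing $y$ against any $w\in\hat P^\perp$ factors through a homomorphism from a pseudonull module to $Q/\Lambda$ that vanishes on $P$. Such homomorphisms vanish, so $y\in(\hat P^\perp)^\perp$. The analogous argument applied to $\hat P$ itself should give $(\hat P^{\perp})^\perp=\hat P$, because $\hat P^\perp$ is the annihilator of a submodule under a nondegenerate form and pseudonull quotients are invisible.

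\emph{Main obstacle.} We expect the difficulty to be concentrated in this final step: showing that $\hat P$ is ``closed'', i.e.\ $\hat P^{\perp\perp}=\hat P$, for a merely nondegenerate (not nonsingular) form over the non-PID ring $\Lambda$. The inclusion $\hat P\subseteq \hat P^{\perp\perp}$ is formal. For the converse we will first try to show that $\hat P^{\perp\perp}/\hat P$ is pseudonull, using the Ext/duality characterisation of pseudonull modules ($\Ext^i_\Lambda(M,\Lambda)=0$ for $i\le1$). We would then conclude via the injection into $TH_1(Z;\Lambda)$ established above and the fact that $zH_1(Z)$ pairs trivially under $\Bl_Z$.
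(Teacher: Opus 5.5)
\textbf{There is a genuine gap, and it is exactly where you put the main obstacle.}

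With $\hat P$, the image of $\ker\iota=\im\partial$, as candidate metaboliser, Step~2 needs $\hat P^{\perp\perp}=\hat P$, and nothing you have forces this. Your argument shows that every class of $\hat P^\perp/\hat P$ is killed by a coprime pair, i.e.\ this quotient is pseudonull. But a pseudonull module need not vanish. Passing to $\that H_1(Y;\Lambda)$ only kills $zH_1(Y;\Lambda)$; it does not kill pseudonull subquotients of $\that H_1(Y;\Lambda)/\hat P$.

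Here is a concrete example with $\mu=2$. Take $T=\Lambda/(3)\oplus\Lambda/(3)$ with $b((a_1,a_2),(c_1,c_2))=(a_1\overline{c_2}+a_2\overline{c_1})/3$, and let $N=(t_1-1,t_2-1)\cdot(\Lambda/(3)\oplus 0)$. Then:
\begin{itemize}
\item $b$ is hermitian and nonsingular, and $zT=0$;
\item $N$ is isotropic;
\item $N^\perp/N\cong\Lambda/(3,t_1-1,t_2-1)\cong\Z/3$ is pseudonull;
\item yet $N^{\perp}=N^{\perp\perp}=\Lambda/(3)\oplus 0\neq N$.
\end{itemize}
So everything you establish about $\hat P$ is compatible with $\hat P$ failing to be a metaboliser. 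Your suggested last step is also circular: using the injection into $TH_1(Z;\Lambda)$ and the vanishing of $\Bl_Z$ on $zH_1(Z;\Lambda)$ is precisely the information that produced $\iota y\in zH_1(Z;\Lambda)$ in the first place.

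\textbf{The fix is to change the candidate, as the paper does: take $\hat P^\perp$ as the metaboliser.} In the paper's notation this is the image of $K^\perp$, with $K=\ker\iota$.
\begin{itemize}
\item Step~1 gives $\hat P\subseteq\hat P^\perp$, hence formally $\hat P^{\perp\perp}\subseteq\hat P^\perp$.
\item Your observation that a homomorphism to $Q/\Lambda$ vanishes on elements annihilated by two coprime elements gives $\hat P^\perp\subseteq\hat P^{\perp\perp}$. For $w\in\hat P^\perp$, the map $x\mapsto\Bl(x,w)$ factors through $TH_1(Y;\Lambda)/P$.
\item Together these give $(\hat P^\perp)^\perp=\hat P^\perp$, with no closedness statement needed.
\end{itemize}
The paper runs the same argument in $TH_1(Y;\Lambda)$, proving $K^\perp=K^{\perp\perp}$. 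It then descends to $\that H_1(Y;\Lambda)$ using $\that(N)^\perp=\that(N^\perp)$.
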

\begin{proof}
During this proof,  we will omit the~$\varphi$ superscripts for brevity.
The strategy of the proof is to show that~$\Bl_{\partial Z}=\Bl_Y$ is metabolic and to then deduce,  by a purely algebraic argument,  that~$\hat \Bl_{\partial Z}$ is metabolic.
During the proof of this first step,  we will use the hypothesis that the exact sequence of the pair gives rise to the exact sequence~$TH_2(Z, \partial Z; \Lambda) \stackrel{\partial}{\longrightarrow} TH_1(\partial Z; \Lambda) \stackrel{\iota}{\longrightarrow} TH_1(Z; \Lambda)$.
The following claim carries out the first step of our plan.
\begin{claim*}
For~$K:=\ker(\iota)=\Ima(\partial)$, the following is a metabolizer for~$\Bl_{\partial Z}$:
\[
P:=K^\perp=\{ x\in TH_1(\partial Z;\Lambda)\mid \Bl_{\partial Z}(x,y)=0 \text{ for all } y\in K\}. 
\]
\end{claim*}
\noindent To prove this claim, consider the Blanchfield pairing
$
\Bl_Z \colon TH_2(Z,\partial Z; \Lambda) \times TH_1(Z;\Lambda) \to Q/\Lambda.
$
A now standard diagram chase yields the inclusion $K \subset K^\perp$, which implies~$P^\perp=K^{\perp\perp}\subset K^\perp=P$.

We now check the opposite inclusion~$K^{\perp}\subset K^{\perp\perp}$,
including the details to verify that the argument carries over from~$\Lambda_S$ to~$\Lambda$.
To do so, first observe that for any~$y\in K^\perp$, its image~$\iota(y)\in\iota(K^\perp)$ satisfies~$\Bl_Z(X,\iota(y))=\Bl_{\partial Z}(\partial X,y)=0$
for all~$X\in TH_2(Z,\partial Z;\Lambda)$, since~$\partial X$ belongs to~$K=\Ima(\partial)$ and~$y$ to~$K^\perp$.
Hence,~$\iota(y)$ lies in the kernel of~$\Ad_{\Bl_{Z}}$, which by Blanchfield's theorem is the maximal pseudonull submodule of~$TH_1(Z;\Lambda)$. Since we know that the inclusion~$K\subset K^\perp$ holds, this implies
\[
K^\perp/K=K^\perp/\ker(\iota)\cong\iota(K^\perp)\subset zH_1(Z;\Lambda)\,.
\]
Assume that~$x$ belongs to~$K^\perp$. By the equation displayed above together with Corollary~\ref{cor:max-pn},
the class~$[x]\in K^\perp/K$ can be written as~$[x]=\sum_i[x_i]$ where for each~$i$, there exists coprime
elements~$\alpha_i,\beta_i\in\Lambda$ such that~$\alpha_i[x_i]=\beta_i[x_i]=0\in  K^\perp/K$; in other words,
the elements~$\alpha_i x_i$ and~$\beta_i x_i$ belong to~$K$. Hence, for all~$i$ and all~$y\in K^\perp$, we have~$\alpha_i\Bl_{\partial Z}(x_i,y)=\Bl_{\partial Z}(\alpha_ix_i,y)=0$
and similarly~$\beta_i\Bl_{\partial Z}(x_i,y)=0$. Since~$\alpha_i$ and~$\beta_i$ are coprime, Corollary~\ref{cor:max-pn}
and Example~\ref{ex:NoPN} imply~$\Bl_{\partial Z}(x_i,y)=0$ for all~$i$ and all~$y\in K^\perp$.
This yields~$\Bl_{\partial Z}(x,y)=0$ for all~$y\in K^\perp$, meaning that~$x$ belongs to~$K^{\perp\perp}$.
This shows that~$P=K^\perp=K^{\perp\perp}=P^\perp$, thus proving the claim.

Let~$\that\colon TH_1(\partial Z;\Lambda)\to \that H_1(\partial Z;\Lambda)$ denote the canonical projection.
We now prove that~$\that(P)$ is a metabolizer for~$\hat \Bl_{\partial Z}$.
By definition, note that~$\hat \Bl_{\partial Z}(\that(x),\that(y)) = \Bl_{\partial Z}(x,y)$
for all~$x,y\in TH_1(\partial Z;\Lambda)$.
For any submodule~$N\subset TH_1(\partial Z;\Lambda)$, this equation readily implies 
\begin{equation}
\label{eq:hat-perp}
\that(N)^\perp=\that(N^\perp).
\end{equation}
The claim establishes that the submodule~$P=K^\perp\subset TH_1(\partial Z;\Lambda)$ satisfies~$P^\perp=P$. By~\eqref{eq:hat-perp} applied to~$N=P$, its image~$\that(P)\subset \that H_1(\partial Z;\Lambda)$
satisfies
\[
\that(P)^\perp=\that(P^\perp)=\that(P)\,.
\]
In other words, the submodule~$\that(P)\subset \that H_1(\partial Z;\Lambda)$ is a metabolizer for the nondegenerate pairing~$\hat \Bl_{\partial Z}$. 
This concludes the proof.
\end{proof}

\begin{remark}
\label{rem:WittNonSing}
We conclude this section with some remarks on Theorem~\ref{theorem:Invariance3ManifoldVersion}.
\begin{itemize}
\item Since we noted that Blanchfield forms are nondegenerate after taking the quotient by the maximal pseudonull submodule, the conclusion of Theorem~\ref{theorem:Invariance3ManifoldVersion} can be rephrased as saying that~$\hat \Bl_Y^\varphi$ is trivial in the Witt group~$L_\textit{nd}^4(\Lambda,\Lambda \setminus \{0\})$ of nondegenerate linking forms.
\item The most commonly used Witt group of linking forms~$(T,b)$ requires that the torsion module~$T$ has projective dimension~$1$,  that the form~$b$ is even and nonsingular, and that lagrangians also have projective dimension~$1$; see Appendix~\ref{sub:Ltheory}. 
\item There are (exceptional) situations where~$\hat \Bl_Y^\varphi$ is singular, see Proposition~\ref{prop:NonSingTorsionY} and Example~\ref{ex:SingBl}.
\item Even when~$\hat \Bl_Y^\varphi$ is nonsingular and $\hat{t}H_1(Y;\Lambda)$ has projective dimension $1$,  at this level of generality, we have not been able to prove that the metabolizer~$P$ from Theorem~\ref{theorem:Invariance3ManifoldVersion} also has projective dimension~$1$.
As a consequence, the conclusion of Theorem~\ref{theorem:Invariance3ManifoldVersion} generally does not take place in the Witt group that is most amenable to calculations.
\end{itemize}
\end{remark}

\section{Invariants of link concordance}
\label{sec:LinkConcordance}

The aim of this section is to prove items (2) and (3) of Theorem~\ref{thm:ConcordanceInvarianceGeneralIntro} and items (1) and (2) of Theorem~\ref{thm:BlClassProperties}. These results concern two link invariants whose definitions were only sketched in the introduction. We now give the precise constructions.

\medbreak

\begin{construction}
\label{cons:MLL'}
Let~$L$ and~$L'$ be~$\mu$-component links.
The zero framing induces homeomorphisms~$g\colon \partial X_L \to \bigsqcup_{i=1}^\mu T^2$ and~$g'\colon \partial X_{L'} \to \bigsqcup_{i=1}^\mu T^2$.
Define
$$M_{L,L'}:=X_L\cup_{g' \circ g^{-1}}  -X_{L'}.$$
Abelianisation gives rise to epimorphisms~$\varphi \colon \pi_1(X_L) \to \Z^\mu$ and~$\varphi \colon \pi_1(X_{L'}) \to \Z^\mu$ and these extend non-uniquely to a~$\varphi \colon \pi_1(M_{L,L'}) \to \Z^\mu$; the indeterminacy is closely related to that of triple linking numbers~\cite{DavisNagelOrsonPowell}.
Thus~$M_{L,L'}$ is a~$3$-manifold over~$\Z^\mu$ but not canonically so for~$\mu>1$.
We call a homomorphism~$\varphi \colon H_1(M_{L,L'}) \to \Z^\mu$ \emph{meridional} if it extends the maps~$H_1(X_L) \to \Z^\mu$ and~$H_1(X_{L'}) \to \Z^\mu$ that send the~$i$-th meridian to the~$i$-th canonical basis element.
\end{construction}

If~$L$ and~$L'$ have the same linking numbers and total Milnor invariant, then work of Davis-Orson-Nagel-Powell ensures that there is a meridional homomorphism~$\varphi \colon H_1(M_{L,L'}) \to \Z^\mu$ such that~$(M_{L,L'},\varphi)$ bounds a~$4$-manifold over~$\Z^\mu$~\cite[Theorem 1.1]{DavisNagelOrsonPowell}.

\begin{definition}
\label{def:WittInvariant}
Let~$L$ and~$L'$ be~$\mu$-component links, and let~$\varphi \colon H_1(M_{L,L'}) \to \Z^\mu$ be a meridional homomorphism.
\begin{itemize}
\item If~$L$ and~$L'$ have the same linking numbers and total Milnor invariant,  and if~$\varphi$ is a meridional homomorphism that extends over a~$\Z^\mu$-nullbordism for~$M_{L,L'}$, then the \emph{homology surgery invariant} of~$(L,L',\varphi)$ is defined as
$$
\lambda(L,L',\varphi):=[\lambda(M_{L,L'},\varphi)] \in L^4(Q).
$$
\item The \emph{Blanchfield class} of~$(L,L',\varphi)$ is defined as
$$
\Bl(L,L',\varphi):=[\hat \Bl_{M_{L,L'}}^\varphi] \in L_\textit{nd}^4(\Lambda,\Lambda \setminus \{0\}).
$$
\end{itemize}
\end{definition}

The next proposition shows that when~$\mu \leq 2$,  these invariants do not depend on the choice of~$\varphi$, thus proving the third item of Theorem~\ref{thm:ConcordanceInvarianceGeneralIntro} and the second item of Theorem~\ref{thm:BlClassProperties}.

\begin{proposition}
\label{prop:mu=2Indep}
Fix $\mu \leq 2$ and let $L$ and $L'$ be $\mu$-component links.
For any two meridional homomorphisms~$\varphi,\varphi' \colon H_1(M_{L,L'}) \to \Z^\mu$, there exists an orientation-preserving homeomorphism $h \colon M_{L,L'} \to M_{L,L'}$ satisfying $\varphi'=\varphi\circ h_*$.
In particular,  the Blanchfield class $\Bl(L,L',\varphi)$ and the homology surgery class $\lambda(L,L',\varphi)$ do not depend on the choice of $\varphi$.
\end{proposition}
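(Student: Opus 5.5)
The plan is to classify meridional homomorphisms with a Mayer--Vietoris computation, and then to realise every difference between two of them by a homeomorphism supported in a collar of the gluing tori.

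\emph{Parametrising meridional homomorphisms.} Write $M:=M_{L,L'}$ and $T:=\partial X_L=\bigsqcup_{i=1}^\mu T_i$ for the gluing tori. The Mayer--Vietoris sequence for $M=X_L\cup -X_{L'}$ reads
\[
H_1(T)\to H_1(X_L)\oplus H_1(X_{L'})\to H_1(M)\to H_0(T)\cong\Z^\mu\to \Z\oplus\Z .
\]
Hence the cokernel of $H_1(X_L)\oplus H_1(X_{L'})\to H_1(M)$ is free of rank $\mu-1$. For $i=2,\dots,\mu$, it is generated by loops $\gamma_i$ that run from $T_1$ to $T_i$ inside $X_L$ and return inside $X_{L'}$. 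Two meridional homomorphisms $\varphi,\varphi'$ agree on the image of $H_1(X_L)\oplus H_1(X_{L'})$. Since the cokernel is free, $\varphi'$ is therefore determined by $\varphi$ together with the vectors $\varphi'(\gamma_i)-\varphi(\gamma_i)\in\Z^\mu$, and any choice of these vectors occurs.

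For $\mu=1$ there are no $\gamma_i$, so $\varphi=\varphi'$ and we take $h=\id$. For $\mu=2$ there is a single loop $\gamma:=\gamma_2$, and we must realise an arbitrary difference $v:=\varphi'(\gamma)-\varphi(\gamma)\in\Z^2$.

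\emph{Realising $v$ by collar twists.} Pick a collar $T_j\times[0,1]\subset M$ of the gluing torus $T_j$ and a class $w\in H_1(T_j)\cong\Z^2$. The homeomorphism $\tau_{j,w}$ acts on this collar by $(x,s)\mapsto(x+s\,w,s)$, using the affine structure of $T_j=\R^2/\Z^2$, and is the identity elsewhere. It is orientation preserving. It restricts to the identity outside the collar, so it fixes every class carried by $X_L$ or $X_{L'}$ away from the collar. Since $\gamma$ crosses $T_j$ exactly once, $(\tau_{j,w})_*\gamma=\gamma\pm w$. Hence $\varphi\circ(\tau_{j,w})_*$ is again meridional and differs from $\varphi$ on $\gamma$ by $\pm\varphi(w)$.

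It remains to identify the values $\varphi(w)$ that can occur. On $T_1$, the meridian maps to $e_1$ and the $0$-framed longitude maps to $\lk(L_1,L_2)\,e_2$. On $T_2$, the meridian maps to $e_2$ and the longitude maps to $\lk\,e_1$. Twisting along the meridians of $T_1$ and of $T_2$, with suitable multiplicities and signs, therefore changes $\varphi(\gamma)$ by an arbitrary element of $\Z e_1+\Z e_2=\Z^2$. A composite $h$ of such twists then satisfies $\varphi\circ h_*=\varphi'$.

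The main point requiring care is the bookkeeping: the signs with which $\gamma$ crosses $T_1$ and $T_2$, and checking that the twists genuinely fix the homology classes coming from $X_L$ and $X_{L'}$. The latter holds because each twist is supported in a collar and is the identity on both boundary tori of that collar, so classes pushed off the collar are unchanged. This is exactly the point that fails for $\mu\ge3$: there the loops $\gamma_i$ may cross several tori, and the twists cannot move the $\gamma_i$ independently.

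\emph{The ``in particular''.} For the Blanchfield class, Remark~\ref{rem:BlChoicePhi} shows that $h$ lifts to an isometry between $\hat\Bl_{M}^{\varphi'}$ and $\hat\Bl_M^{\varphi}$. For the homology surgery class, suppose $(W,\psi)$ is a $\Z^\mu$-filling of $(M,\varphi)$. Re-identifying $\partial W$ with $M$ via $h$ turns the same $4$-manifold $W$ into a $\Z^\mu$-filling of $(M,\varphi\circ h_*)=(M,\varphi')$. Its forms $\lambda_W^{Q,\textit{ns}}$ and $\lambda_W^{\Q,\textit{ns}}$ are literally unchanged. In particular $\varphi'$ extends over a filling whenever $\varphi$ does, and Definition~\ref{def:WittInvariant3Manifold} gives $\lambda(L,L',\varphi)=\lambda(L,L',\varphi')$.
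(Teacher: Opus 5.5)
Your proof is correct and follows essentially the same route as the paper. A Mayer--Vietoris argument reduces the ambiguity in a meridional homomorphism to its value on a single big loop. That value is then adjusted by Dehn twists along the meridians in collars of the two gluing tori. The ``in particular'' follows from Remark~\ref{rem:BlChoicePhi} and from re-identifying the boundary of a filling via $h$.

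Your handling of the Mayer--Vietoris sequence, which works with the cokernel of $H_1(X_L)\oplus H_1(X_{L'})\to H_1(M_{L,L'})$, is slightly more careful than the paper's. The paper writes this map as an injection, which does not hold in general.
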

\begin{proof}
If~$\mu=1$, then~$H_1(M_{L,L'})$ admits a unique meridional homomorphism and the statement is readily seen to hold.
We can therefore assume that~$\mu=2$.
The Mayer-Vietoris sequence for~$M_{L,L'}=X_L \cup X_{L'}$ gives rise to a split short exact sequence
$$
0 \to H_1(X_L) \oplus H_1(X_{L'}) \to H_1(M_{L,L'}) \to \Z  \to 0\,.
$$
A \emph{big loop} in $M_{L,L'}$ refers to a representative of the image of a generator of $\Z$ under an arbitrary splitting $s \colon \Z \to H_1(M_{L,L'})$.
By definition of the connecting homomorphism,  a big loop consists of an union of two arcs (one in $X_L$, the other in $X_{L'}$) whose two endpoints lie on the two tori that make up the boundary of the link exteriors.
There is no canonical choice of big loop,  but given any such choice,
a meridional homomorphism~$ H_1(M_{L,L'})\to \Z^2$ is well defined up to fixing its value on the chosen big loop.

Write $L=K_1 \cup K_2$  and consider a collar neighborhood of $\partial X_{K_1} \subset X_L$.
The zero framing leads to an identification of this neighborhood with $S^1 \times S^1 \times [0,1]$, where (say) the first $S^1$-factor corresponds to the Seifert longitude and the second to the meridian.
Perform a Dehn twist along the meridian in this collar resulting in a homeomorphism of~$S^1 \times S^1 \times [0,1]$ (i.e.  perform a $2$-dimensional Dehn twist $\tau$ along the annulus $S^1 \times [0,1]$ and extend it to $\id_{S^1} \times \tau$ on the aforementioned collar).
This homeomorphism is the identity on both boundary components of the collar,  so it can be extended by the identity on $X_L$ and $X_{L'}$  to a homeomorphism of $M_{L,L'}$.

Precomposing $\varphi$ by this homeomorphism changes the first component of its value on the big loop by $t_1^{\pm 1}$.
Dehn twisting along a meridian of $K_2$ leads to a similar change in the second coordinate.
The required homeomorphism $h$ can therefore be taken to be the composition of appropriate powers of these Dehn twists and their inverses.

We conclude by explaining how this implies that the invariants~$\lambda(L,L',\varphi)$ and~$\Bl(L,L',\varphi)$ do not depend on the choice of~$\varphi$.
For~$\lambda(L,L',\varphi)$,  this follows either from the definition of the invariant (use~$h$ to glue a collar to a filling of~$(M_{L,L'},\varphi)$ and observe that this does not affect the invariant) or from the cobordism invariance of Proposition~\ref{prop:WittCobordismInvariant} (if~$(M_{L,L'},\varphi)$ and~$(M_{L,L'},\varphi')$ are homeomorphic, then they are cobordant).
For~$\Bl(L,L',\varphi)$,  this follows from Remark~\ref{rem:BlChoicePhi}.
\end{proof}

The next result establishes the concordance invariance of these invariants, thus proving the second item of Theorem~\ref{thm:ConcordanceInvarianceGeneralIntro} and the first item of Theorem~\ref{thm:BlClassProperties}.

\begin{theorem}
\label{thm:ConcordanceInvarianceGeneral}
If~$L$ and~$L'$ are concordant~$\mu$-component links, then there exists a meridional homomorphism~$\varphi\colon H_1(M_{L,L'}) \to \Z^\mu$ such that 
$$\lambda(L,L',\varphi)=0 \text{ and } \Bl(L,L',\varphi)=0.$$
When $\mu \leq 2$, this holds for any choice of $\varphi$.
\end{theorem}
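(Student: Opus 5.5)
Let $C=\bigsqcup_{i=1}^\mu A_i\subset S^3\times[0,1]$ be a concordance from $L$ to $L'$, and let $Z:=S^3\times[0,1]\setminus\nu(C)$ be its exterior. The plan is to use $Z$ as a filling of $M_{L,L'}$ and then invoke the cobordism results of Sections~\ref{sec:HomologySurgeryInvariant} and~\ref{sec:Blanchfield}.

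\emph{The filling.} The boundary of $Z$ is $X_L\cup(\bigsqcup_i A_i\times S^1)\cup -X_{L'}$. Each $A_i\times S^1\cong S^1\times[0,1]\times S^1$ carries a product structure, and the zero framings of $L$ and $L'$ match because linking numbers (hence zero framings) are concordance invariants. So $\partial Z\cong M_{L,L'}$, identifying the tori through the collars. Alexander duality gives $H_1(Z)\cong\Z^\mu$, generated by meridians, and $H_2(Z)\cong H_2(\bigsqcup_i T^2\times\{\ast\})$ modulo details; in any case the inclusions $X_L\hookrightarrow Z\hookleftarrow X_{L'}$ are $\Z$-homology equivalences. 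Let $\psi\colon H_1(Z)\xrightarrow{\cong}\Z^\mu$ send meridians to basis vectors, and define $\varphi:=\psi\circ\iota_*$. This $\varphi$ is meridional, and $(Z,\psi)$ is a $\Z^\mu$-filling of $(M_{L,L'},\varphi)$. In particular $\lambda(L,L',\varphi)$ is defined.

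\emph{Vanishing of $\lambda$.} Since $H_*(Z,X_L;\Z)=0$, the map $H_2(M_{L,L'};\Q)\to H_2(Z;\Q)$ is surjective. Concretely, $H_2(Z;\Q)\cong\Q^\mu$ is spanned by the boundary tori. Hence $\lambda_Z^{\Q,\textit{ns}}$ is the zero form on the zero space. For $Q$-coefficients, I would use the standard fact that a $\Z$-homology equivalence between finite complexes over $\Z^\mu$ (a poly-torsion-free-abelian group) is a $Q$-homology equivalence; this is Strebel/Cochran--Orr--Teichner, or follows from a chain-level rank argument since $\Lambda\to\Z$ with augmentation detects acyclicity. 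Thus $H_*(Z,X_L;Q)=0$, so $H_2(M_{L,L'};Q)\to H_2(Z;Q)$ is onto and $\lambda_Z^{Q,\textit{ns}}$ is also zero. Proposition~\ref{prop:WittCobordismInvariant} then gives $\lambda(L,L',\varphi)=0$.

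\emph{Vanishing of $\Bl$.} By Theorem~\ref{theorem:Invariance3ManifoldVersion}, I must check exactness of $TH_2(Z,\partial Z;\Lambda)\to TH_1(\partial Z;\Lambda)\to TH_1(Z;\Lambda)$. This is the main obstacle, because restricting the long exact sequence to torsion parts need not preserve exactness. Let $x\in TH_1(\partial Z;\Lambda)$ with $\iota(x)=0$. Then $x=\partial y$ for some $y\in H_2(Z,\partial Z;\Lambda)$, and I must show $y$ can be chosen torsion. Rationally over $Q$, the sequence $H_2(\partial Z;Q)\to H_2(Z;Q)\to H_2(Z,\partial Z;Q)\to H_1(\partial Z;Q)\to H_1(Z;Q)$ is available. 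By duality and the $Q$-acyclicity of $(Z,X_L)$ and $(Z,X_{L'})$, I expect $H_2(Z,\partial Z;Q)\to H_1(\partial Z;Q)$ to be injective. Equivalently, $H_2(Z;Q)\to H_2(Z,\partial Z;Q)$ vanishes: the intersection form over $Q$ is zero, and by the previous paragraph, $H_2(Z;Q)$ is hit from the boundary. Given injectivity, $y\otimes 1\mapsto x\otimes1=0$ in $H_1(\partial Z;Q)$, so $y\otimes 1=0$, meaning $y$ is torsion. Then Theorem~\ref{theorem:Invariance3ManifoldVersion} yields that $\hat\Bl^\varphi_{M_{L,L'}}$ is metabolic, so $\Bl(L,L',\varphi)=0$.

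Finally, for $\mu\le2$ the statement holds for every meridional $\varphi$ by Proposition~\ref{prop:mu=2Indep}.
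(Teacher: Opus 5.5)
Your proof is correct and follows the paper's overall route. The concordance exterior $Z$ serves as a $\Z^\mu$-filling, and the Cochran--Orr--Teichner lifting gives $H_*(Z,X_L;Q)=0$, so both $\lambda_Z^{Q,\textit{ns}}$ and $\lambda_Z^{\Q,\textit{ns}}$ live on the zero module. One then concludes with Proposition~\ref{prop:WittCobordismInvariant}, Theorem~\ref{theorem:Invariance3ManifoldVersion} and Proposition~\ref{prop:mu=2Indep}.

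The one place you diverge is the exactness of $TH_2(Z,\partial Z;\Lambda)\to TH_1(\partial Z;\Lambda)\to TH_1(Z;\Lambda)$. The paper deduces from $H_2(Z,X_L;Q)=0$ that $H_2(Z,\partial Z;Q)=0$, so that any preimage $y$ is automatically torsion. You use only that $H_2(Z,\partial Z;Q)\to H_1(\partial Z;Q)$ is injective, which you get from the surjectivity of $H_2(\partial Z;Q)\to H_2(Z;Q)$.

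Your version is the more robust one. By duality, $H_2(Z,\partial Z;Q)\cong\overline{\Hom_Q(H_2(X_L;Q),Q)}$. This vanishes when $\mu=1$ or $\Delta_L\neq 0$, but not in general: for the product concordance of the two-component unlink it is $Q$. Your injectivity argument needs no such assumption.

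One minor slip: $H_2(Z;\Q)\cong H_2(X_L;\Q)\cong\Q^{\mu-1}$, not $\Q^\mu$, because the $\mu$ boundary tori satisfy one relation. This does not affect the argument.
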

\begin{proof}
Let~$C \subset S^3\times [0,1]$ be a concordance between~$L$ and~$L'$ and let~$X_C$ denotes its exterior.
Note the homeomorphism~$\partial X_C\cong M_{L,L'}$.
The abelianisation~$H_1(X_C) \to \Z^\mu$ restricts to a meridional homomorphism~$\varphi \colon H_1(M_{L,L'}) \to \Z^\mu$.
Since~$H_i(X_C,X_L)=0$,  the chain homotopy lifting argument from~\cite[Proposition~2.10]{CochranOrrTeichner} (see also~\cite[Lemma 2.16]{ConwayNagelToffoli}) gives~$H_i(X_C,X_L;Q)=0.$
As a consequence, the inclusion induced map~$H_2(X_L;Q)\to H_2(X_C;Q)$ is an isomorphism, and~$H_2(\partial X_C;Q)\to H_2(X_C;Q)$ an epimorphism.
It follows that~$\lambda_{X_C}^Q$ is the zero form, and~$\lambda_{X_C}^{\Q}$ as well for similar reasons.
For~$\lambda(L,L',\varphi)$, the result now follows from Proposition~\ref{prop:WittCobordismInvariant}, so we focus on the Blanchfield form.
Since~$H_2(X_C, X_L;Q)=0$, it follows that~$H_2(X_C,\partial X_C;Q)=0$ and thus~$H_2(X_C,\partial X_C;\Lambda)$ is torsion.
The exact sequence of the pair is then seen to yield the exact sequence~$TH_2(X_C,\partial X_C;\Lambda) \to TH_1(\partial X_C;\Lambda) \to TH_1(X_C;\Lambda)$, and Theorem~\ref{theorem:Invariance3ManifoldVersion} implies that~$\hat\Bl_{M_{L,L'}}^\varphi$ is metabolic.
When~$\mu\le 2$, Proposition~\ref{prop:mu=2Indep} implies that this holds for any~$\varphi$.
\end{proof}

\begin{remark}
\label{rem:SimplifiedLinkInvariant}
When~$\mu \leq 2$,  using the same notation and assumptions as in Definition~\ref{def:WittInvariant},  one can define $\widetilde{\lambda}(L,L'):=\widetilde{\lambda}(M_{L,L'},\varphi)$, where $\widetilde{\lambda}$ denotes the simplified homology surgery obstruction from Definition~\ref{def:WittInvariant3ManifoldSimplified}.
The arguments from Proposition~\ref{prop:mu=2Indep} and Theorem~\ref{thm:ConcordanceInvarianceGeneral} respectively show that~$\widetilde{\lambda}(L,L')$ does not depend on $\varphi$ and that it is a concordance invariance.
\end{remark}

We conclude by noting the following calculation,  which shows that when~$\mu=1$, the concordance invariance 
of~$\Bl(K,K')$ follows from that of~$\Bl(K)$.
Recall that~$\Bl(K)$ denotes the class of~$\Bl^\varphi_{M_K}$ (with~$\varphi$ the abelianisation).

\begin{proposition}
\label{prop:FLNP}
For knots $K$ and $K'$,  there is an inclusion induced isometry~$\Bl_{M_K}\oplus -\Bl_{M_{K'}} \cong \Bl_{M_{K,K'}}$ and
the following equality holds in~$L^4(\Lambda,\Lambda\setminus\{0\})$:
$$\Bl(K,K') =\Bl(K) \oplus -\Bl(K').$$
\end{proposition}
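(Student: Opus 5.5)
We plan to identify $M_{K,K'}$ with a connected sum-like decomposition and compute its Alexander module by Mayer--Vietoris. For $\mu=1$, $M_{K,K'}=X_K\cup_{T^2}-X_{K'}$, glued along the boundary tori via the zero framing, with $\varphi$ the unique meridional map. The key observation is that $M_K=X_K\cup_{T^2}(S^1\times D^2)$, where the meridian of the solid torus is glued to the longitude of $K$. Likewise $M_{K,K'}$ is obtained by gluing $X_K$ and $X_{K'}$ along $T^2$. We will use the Mayer--Vietoris sequence with $\Lambda=\Z[t^{\pm1}]$ coefficients for both decompositions. The torus $T^2$ covers with $H_*(T^2;\Lambda)$ generated by the longitude direction: $H_1(T^2;\Lambda)\cong\Lambda/(t-1)$ and $H_0(T^2;\Lambda)\cong \Lambda/(t-1)$. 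The longitude lifts to the infinite cyclic cover of $X_K$ and is null-homologous there, since it bounds a lift of a Seifert surface. Hence the map $H_1(T^2;\Lambda)\to H_1(X_K;\Lambda)\oplus H_1(X_{K'};\Lambda)$ is zero in the relevant component. Using $H_1(X_K;\Lambda)\cong H_1(M_K;\Lambda)$, which is the standard fact that filling in the longitude kills nothing in the Alexander module, together with $H_0$-level injectivity, we will obtain an inclusion-induced isomorphism $H_1(X_K;\Lambda)\oplus H_1(X_{K'};\Lambda)\xrightarrow{\cong}H_1(M_{K,K'};\Lambda)$. The same argument applied to $X_K\subset M_K$ gives $H_1(X_K;\Lambda)\cong H_1(M_K;\Lambda)$. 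All these modules are torsion of projective dimension one with no pseudonull part, so $\that H_1=H_1$.

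Next we show that the composite isomorphism $H_1(M_K;\Lambda)\oplus H_1(M_{K'};\Lambda)\cong H_1(M_{K,K'};\Lambda)$ is an isometry from $\Bl_{M_K}\oplus-\Bl_{M_{K'}}$. We will use the geometric description of the Blanchfield form: for $x,y$ with $p(t)x=\partial c$, we have $\Bl(x,y)=\frac1p\sum_k (c\cdot t^k y)t^{-k}$, where $\cdot$ is an equivariant intersection. Classes from $X_K$ are represented by curves in $X_K$, and $p(t)x$ bounds a 2-chain already in the cover of $X_K$, because the isomorphism $H_1(X_K;\Lambda)\cong H_1(M_K;\Lambda)$ implies $H_1(X_K;\Lambda)$ is torsion and the relation holds there. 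The intersection numbers are then computed inside $X_K$. This gives the same value in $M_K$ and in $M_{K,K'}$, up to the sign coming from the orientation reversal on $X_{K'}$. Cross terms vanish because the chains are disjoint. This is the main technical step: we must check that the bounding chain can be chosen in $\widetilde{X}_K$, and that the Bockstein/Poincaré-duality definition of Construction~\ref{cons:Blanchfield} agrees with this chain-level formula, which we take as standard (cf.~\cite{Powell}). Alternatively, we can argue via naturality of Poincaré duality for the codimension-zero inclusion $X_K\subset M_{K,K'}$ and relative Blanchfield pairings of $(X_K,\partial X_K)$.

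Finally, $\Bl_{M_K}$ is nonsingular on a module of projective dimension one, so both sides define classes in $L^4(\Lambda,\Lambda\setminus\{0\})$. The isometry then yields $\Bl(K,K')=\Bl(K)\oplus-\Bl(K')$ in that Witt group.
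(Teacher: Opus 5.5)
Your proposal is correct, and your Mayer--Vietoris step is the same as the paper's. The map $H_1(T;\Lambda)\to H_1(X_K;\Lambda)\oplus H_1(X_{K'};\Lambda)$ vanishes because the longitude bounds a lifted Seifert surface. Injectivity of $H_0(T;\Lambda)\to H_0(X_K;\Lambda)\oplus H_0(X_{K'};\Lambda)$ forces the connecting map to be zero, which is the correct reading of the paper's ``$\partial$ is injective''.

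Where you genuinely differ is the isometry step. The paper handles it in two pieces. It separately identifies $\Bl(K)$ with a pairing $\Bl^\varphi_{X_K}$ on $H_1(X_K;\Lambda)$. It then cites \cite[Theorem~1.1]{FriedlLeidyNagelPowell}, a general decomposition theorem for Blanchfield pairings along surfaces, which applies because $H_1(T;Q)=0$.

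You instead argue by hand with the chain-level description of $\Bl$:
\begin{itemize}
\item Since $H_1(X_K;\Lambda)$ is already torsion, the 2-chain $c$ with $\partial c=p(t)x$ can be chosen inside $\widetilde X_K$.
\item All equivariant intersections are then local to $\widetilde X_K$, so they agree in $\widetilde M_K$ and in $\widetilde M_{K,K'}$, up to the sign from reversing the orientation of $X_{K'}$.
\item Cross terms vanish by disjointness.
\end{itemize}

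Your route is more self-contained. It compares $M_K$ and $M_{K,K'}$ directly through $\widetilde X_K$, without needing a Blanchfield pairing on the manifold with boundary $X_K$. Its cost is relying on the standard but nontrivial agreement between the chain-level formula and the Bockstein/Poincar\'e-duality definition in Construction~\ref{cons:Blanchfield}. The paper's citation packages exactly this kind of locality argument in greater generality.

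One small omission: the Witt group $L^4(\Lambda,\Lambda\setminus\{0\})$ used in the paper consists of \emph{even} nonsingular linking forms on modules of projective dimension one. So besides nonsingularity and projective dimension, you should note evenness. It holds because $H_1(X_K;\Lambda)$ is annihilated by a symmetrically normalized $\Delta_K$ with $\overline{\Delta}_K=\Delta_K$; see Remark~\ref{rem:BlanchfieldEven}.
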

\begin{proof}
A short Mayer-Vietoris argument shows that the inclusion induces an isometry $H_1(X_K;\Lambda)\cong H_1(M_K;\Lambda)$ with respect to the nonsingular hermitian pairings~$\Bl^\varphi_{X_K}$, yielding~$\Bl(K)=[\Bl^\varphi_{X_K}]$. Consider the Mayer-Vietoris decomposition for~$M_{K,K'}=X_K \cup_T -X_{K'}$ with $\Lambda$-coefficients:
$$
\overbrace{H_1(T;\Lambda)}^{\cong\Z} \xrightarrow{i} H_1(X_K;\Lambda)\oplus H_1(X_{K'};\Lambda)
\to H_1(M_{K,K'};\Lambda) \xrightarrow{\partial} \overbrace{H_0(T;\Lambda)}^{\cong \Z}\,.
$$
Since~$H_1(T;\Lambda)$ is generated by a Seifert longitude which bounds in~$H_1(X_K;\Lambda)$,
the map~$i$ is the zero map. 
Furthermore, the map $\partial$ is clearly injective.
It follows that the inclusion induces an isomorphism~$H_1(X_K;\Lambda)\oplus H_1(X_{K'};\Lambda)
\cong H_1(M_{K,K'};\Lambda)$.
That this isomorphism is an isometry follows for example from~\cite[Theorem~1.1]{FriedlLeidyNagelPowell} because $H_1(T;Q)=0$.
The fact that the claimed equality holds in the Witt group~$L^4(\Lambda,\Lambda\setminus\{0\})$ follows from
its definition: the~$\Lambda$-module~$H_1(X_K;\Lambda)$ is torsion, of projective dimension~$1$ since the Alexander module of a knot admits a square presentation matrix, and the nonsingular hermitian form~$\Bl^\varphi_{X_K}$ is even since~$H_1(X_K;\Lambda)$ is annihilated by~$\Delta_K\in\Lambda$ with~$\overline{\Delta}_K=\Delta_K$, see Remark~\ref{rem:BlanchfieldEven}.
\end{proof}

We conclude this section with a discussion of the possible extension of this result to links of~$\mu>1$ components.

\begin{remark}
\label{rem:Bl-add}
In general,  the Blanchfield form of~$X_L$ fails to be nondegenerate because the inclusion induced map~$H_1(\partial X_L;\Lambda) \to H_1(X_L;\Lambda)$ fails to be injective; this is why many articles in the area work over localised coefficients; see e.g.~\cite{KimWhitney,BorodzikFriedlPowell,CFT18,ConwayBlanchfield}.
However, given a~$2$-component link~$L$ with nonzero linking number~$\ell$, one has~$H_1(\partial X_L;\Lambda)=0$ (see e.g.~\cite[Equation~(10)]{LevineModule2}) and the Blanchfield form is seen to be nondegenerate over~$\Lambda$~\cite[Introduction,~B]{LevineModule2}.
In general, the map~$\partial$ in the Mayer-Vietoris argument above fails to be injective but this time, the intersection consists of the disjoint union~$T$ of two tori with an isomorphism~$H_0(T;\Lambda) \cong \Lambda/(t_1-1,t_2^\ell-1) \oplus \Lambda/(t_1^\ell-1,t_2-1)$ (see e.g.~\cite[Equation (9)]{LevineModule2}).
Thus for example, for the Hopf link~$\mathcal{H}$, one has~$H_1(X_\mathcal{H};\Lambda)=0$ but~$H_1(M_{\mathcal{H},\mathcal{H}};\Lambda) \cong \Z$.
It is therefore conceivable that the equality $\Bl(L,L') =\Bl(L) \oplus -\Bl(L')$ holds in some cases but the argument will not be as straightforward as in the case of knots.
\end{remark}

\section{Calculation of the homology surgery invariant}
\label{sec:Fillings}

The goal of this section is to prove the fourth and fifth items of Theorem~\ref{thm:ConcordanceInvarianceGeneralIntro}.
More precisely, we start in Section~\ref{subsec:plumbedmanifolds} with a quick review of plumbed~$3$-manifolds,  leading to the construction of
a~$\Z^\mu$-manifold~$M_L$ in Section~\ref{subsec:M_L}.
The most technical and lengthy part is Section~\ref{sub:WF}, where we build a~$\Z^\mu$-filling~$W_F$ of~$M_L$ assuming~$\mu\le 2$. This in turn allows us to construct a~$\Z^\mu$-filling~$W_{F,F'}$ of~$M_{L,L'}$ in Section~\ref{sub:WFF'}, and to prove the fourth and fifth items of Theorem~\ref{thm:ConcordanceInvarianceGeneralIntro} assuming intersection form calculations that will be carried out in Section~\ref{sec:IntersectionForm}.

\subsection{Plumbed manifolds}
\label{subsec:plumbedmanifolds}

In this section, we recall the definition of plumbed~$3$-manifolds, following~\cite[Section~4.5]{ConwayNagelToffoli} and~\cite[Section~2.5]{CMP}.
We also recall the definition of a meridional homomorphism on such a plumbed manifold, and analyse their existence and nonuniqueness.
These will be used in Section~\ref{subsec:M_L} to define the manifold~$M_L$, but also in Section~\ref{sub:WF} to describe the boundary of the exterior of a pushed in C-complex, and in Section~\ref{sub:WFF'} to build the manifold~$W_{F,F'}$.

\medskip

Let~$\Gamma=(V,E)$ be a finite unoriented graph without loops (multiple edges are allowed). Following classical references such as~\cite{Serre}, we denote
the set of {\em oriented\/} edges by~$E$, and the source and target maps by~$s,t\colon E\to V$. The graph~$\Gamma$ is unoriented in the sense that~$E$ is endowed with an involution~$e\mapsto \overline{e}$ such that~$\overline{e}\neq e$ and~$s(\overline{e})=t(e)$ for all~$e\in E$. Such a graph is a \emph{plumbing graph} if it is endowed with the following additional data: 
\begin{itemize}
\item each vertex~$i \in V$ is decorated with an oriented and connected surface~$F_i$;
\item each oriented edge~$e \in E$ comes with a sign~$\sgn(e)=\pm 1$ such that~$\sgn(\overline{e})=\sgn(e)$.
\end{itemize}

Given such a plumbing graph~$\Gamma$, the associated \emph{plumbed manifold} is the~$3$-manifold~$P(\Gamma)$
constructed as follows.
For each oriented edge~$e \in E$, let~$D_e$ be an open disk in~$F_{s(e)}$ such that the discs~$\{D_e\}_{e\in E}$ are disjoint. For each~$i\in V$, set
\[
F_i^\circ =  F_i\backslash \bigsqcup_{s(e)=i} D_e\,.
\]
Note that since~$F_i$ is oriented, so are~$F_i^\circ$ and~$D_e$, inducing opposite orientations on their shared boundary. By~$\partial D_e$, we mean the boundary of~$D_e$ endowed with the orientation induced by the orientation
of~$D_e$; hence,~$-\partial D_e$ is the same space endowed with the orientation induced by that of~$F_i^\circ$. Let~$S^1$ denote an oriented circle. The plumbed manifold~$P(\Gamma)$ is defined by
\[
P(\Gamma) = \left(\bigsqcup_{i\in V} F_i^\circ \times S^1\right) \slash \sim\,,
\]
where each pair of edges~$e,\overline{e}\in E$ yields the identification of~$F_{s(e)}^\circ\times S^1$ and~$F_{s(\overline{e})}^\circ\times S^1$ along one of their boundary tori; this identification is defined by
the maps
\begin{equation}
\label{eq:ident}
\begin{aligned}
(-\partial D_e)\times S^1 & \longrightarrow (-\partial D_{\overline{e}})\times S^1\\
( x, y )& \longmapsto (y^{-\textrm{sgn}(e)}, x^{-\textrm{sgn}(e)}).
\end{aligned}
\end{equation}
Since these maps are orientation reversing, the orientation on~$\bigsqcup_i F_i^\circ \times S^1$ extends to
an orientation on~$P(\Gamma)$.
Note that the boundary of~$P(\Gamma)$ consists of a (possibly empty) disjoint union of tori.

\begin{example}
\label{ex:P}
Consider the graph~$\Gamma$ given by two vertices, each decorated by a disc, and a single edge~$e$ linking them.
The associated plumbed manifold~$P(\Gamma)$ is given by two thickened tori glued along one of their toric boundary components via a map switching the coordinates, so it is homeomorphic to the exterior of the Hopf link, with the orientation of~$P(\Gamma)$ depending on~$\sgn(e)$.
\end{example}

\begin{construction}
\label{cons:PL}
Let~$L = K_1 \cup \ldots \cup K_\mu$ be a~$\mu$-component oriented link in the~3-sphere~$S^3$.
Let~$\Gamma_L$ denote the plumbing graph defined as follows: it has one vertex~$i$ for each component~$K_i$ of~$L$, each decorated by an oriented disc~$D_i$; for each pair~$i\neq j$, there are~$|\lk(K_i,K_j)|$ edges between the vertices~$i$ and~$j$, 
each with sign~$\sgn(\lk(K_i,K_j))$. We set
\[
P_L:=P(\Gamma_L)\,,
\]
an oriented~$3$-manifold with boundary given by~$\mu$ disjoint tori. Note that~$P_L$ only depends on the
linking matrix of~$L$.
\end{construction}

We now discuss coefficient systems on plumbed manifolds, assuming~$V=\{1,\dots,\mu\}$. 
Following~\cite{ConwayNagelToffoli}, we say that a homomorphism~$H_1(P(\Gamma)) \to \Z^\mu$ is {\em meridional\/} if, for any point~$p_i\in F^\circ_i$, it maps the class of~$\{p_i\}\times S^1$ to the element~$t_i$ of~$\Z^\mu$.
As we shall see, meridional homomorphisms always exist, but are in general not unique, due to the possible
existence of ``big loops'' in~$P(\Gamma)$.

We wish to understand this nonuniqueness in a precise way for the plumbed manifold~$P_L$ defined above.
To do so, let us consider an embedding~$\alpha\colon\Gamma_L\hookrightarrow P_L$ 
mapping~$i\in V$ to~$\alpha(i)\in D_i\times S^1$ and each unoriented edge~$e$ between~$i$ and~$j$ to a simple path~$\alpha(e)\subset (D_i^\circ \times S^1) \cup (D_j^\circ \times S^1)$ between~$\alpha(i)$ and~$\alpha(j)$ intersecting~$(\partial D_i^\circ \times S^1)\cup(\partial D_j^\circ \times S^1)$ only in the torus~$T_e:=\partial D_e \times S^1$ (identified with~$\partial D_{\overline{e}} \times S^1$ in~$P_L$).
Such embeddings clearly exist (but are not unique).
The point of this construction lies in the following result: a meridional homomorphism on~$H_1(P_L)$ is uniquely defined
up to its values on the ``big loops'' from~$H_1(\Gamma_L)$. Here is the precise statement.

\begin{lemma}
\label{lemma:MeridionalPL}
For any link~$L$, there exists a meridional homomorphism~$\varphi_P\colon H_1(P_L)\to \Z^\mu$.
Moreover, an embedding~$\alpha\colon\Gamma_L\hookrightarrow P_L$ as above induces an injection~$\alpha_\ast\colon H_1(\Gamma_L)\to H_1(P_L)$ such that:
\begin{enumerate}[(i)]
\item The subgroup~$\alpha_\ast(H_1(\Gamma_L))$ is  a free summand of~$H_1(P_L)$.
\item A meridional homomorphism is uniquely defined on~$H_1(P_L)/\operatorname{Im}(\alpha_\ast)$
and can take arbitrary values on~$\alpha_\ast(H_1(\Gamma_L))$, whose element we call {\em big loops\/}.
\end{enumerate}
\end{lemma}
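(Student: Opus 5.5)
The plan is to compute $H_1(P_L)$ directly from the decomposition $P_L=\bigcup_i D_i^\circ\times S^1$ glued along the tori $T_e$. I will use a Mayer--Vietoris sequence or, more efficiently, a graph-of-spaces description. Each piece $D_i^\circ\times S^1$ is a planar surface times a circle. Its first homology is generated by the fibre class $m_i=[\{p_i\}\times S^1]$ and the boundary circles $\partial D_e\times\{pt\}$ for $s(e)=i$. The only relation is that their sum is the class of $-\partial D_i$ in the planar surface, so it vanishes. Since the gluing along edges is itself a Seifert-fibred situation, I will take the Mayer--Vietoris sequence
\[
\bigoplus_{e} H_1(T_e)\to \bigoplus_i H_1(D_i^\circ\times S^1)\to H_1(P_L)\to \bigoplus_e H_0(T_e)\to \bigoplus_i H_0(D_i^\circ\times S^1),
\]
where the sums over $e$ run over unoriented edges. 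The last map is the simplicial boundary map of $\Gamma_L$. Its kernel is therefore $H_1(\Gamma_L)$, a free abelian group, and this gives a surjection $H_1(P_L)\to H_1(\Gamma_L)$. Because the target is free, the sequence splits.

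Next I will check that $\alpha_\ast$ provides such a splitting. For a cycle $c=\sum n_e e$ in $\Gamma_L$, the connecting map sends $\alpha_\ast(c)$ to $c$, since each path $\alpha(e)$ crosses $T_e$ exactly once. Hence $\alpha_\ast$ composed with the connecting map is the identity of $H_1(\Gamma_L)$. This shows at once that $\alpha_\ast$ is injective and that its image is a direct summand complementary to the image $I$ of $\bigoplus_i H_1(D_i^\circ\times S^1)$. That image is free because $H_1(\Gamma_L)$ is free, which proves (i).

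For (ii) and existence, I will show that the quotient $H_1(P_L)/\operatorname{Im}(\alpha_\ast)\cong I$ is generated by the fibre classes $m_i$ together with the boundary circles $\partial D_e\times\{pt\}$. Under the gluing map~\eqref{eq:ident}, the curve $\partial D_e\times\{pt\}$ is identified with $\pm$ the fibre $m_{t(e)}$ of the neighbouring piece. So a meridional map is forced on $I$: it sends $m_i\mapsto t_i$ and $[\partial D_e]\mapsto \mp t_{t(e)}$, with the sign fixed by $\operatorname{sgn}(e)$.

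The main obstacle is to verify that these prescribed values respect the relations in $I$, namely the images of $H_1(T_e)$ and the planar-surface relation $\sum_{s(e)=i}[\partial D_e]=0$. The edge relations are consistent by construction of the assignment. The planar relation requires $\sum_{s(e)=i}\operatorname{sgn}(e)\, t_{t(e)}=0$, which fails in general. I therefore have to recheck which curve is trivial. In $D_i^\circ$ the outer boundary $\partial D_i$ is an actual boundary torus of $P_L$, not a disc, so there is no such relation. More precisely, $D_i$ is a disc with holes removed, $H_1(D_i^\circ)$ is free on the hole boundaries, and no relation arises. With this in hand, $I$ is freely generated modulo the edge identifications, and the prescribed values define a homomorphism on $I$. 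Extending it arbitrarily on the free summand $\alpha_\ast(H_1(\Gamma_L))$ gives existence and the stated freedom.
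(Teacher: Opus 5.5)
Your argument is correct and is essentially the paper's proof. Both use the Mayer--Vietoris sequence for $P_L=\bigcup_i D_i^\circ\times S^1$, identify $\ker(d)\cong H_1(\Gamma_L)$, observe that $\partial_*\circ\alpha_*$ is the canonical isomorphism (so $\alpha_*$ splits the short exact sequence), and check that the prescribed fibre values are coherent with the gluing~\eqref{eq:ident}. You should delete the false relation $\sum_{s(e)=i}[\partial D_e]=0$ from your first paragraph, which you yourself retract since $H_1(D_i^\circ\times S^1)$ is free on $m_i$ and the hole boundaries; the ``main obstacle'' then disappears, and uniqueness is immediate because $[\partial D_e]=\pm m_{t(e)}$ in $H_1(P_L)$, so the complement of $\operatorname{Im}(\alpha_*)$ is generated by the fibre classes alone.
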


\begin{proof}
Recall the plumbing graph~$\Gamma_L=(V,E)$ defined above. The Mayer-Vietoris sequence associated to the decomposition~$P_L=\bigcup_i D_i^\circ \times S^1$ (see e.g.~\cite[Lemma~3.3]{BFP} and~\cite[Lemma~4.7]{ConwayNagelToffoli}) yields the exact sequence 
\[
\bigoplus_{e\in E} H_1(T_e) \stackrel{\iota_t-\iota_s}{\longrightarrow} \bigoplus_{i\in V} H_1(D_i^\circ\times S^1) \longrightarrow H_1(P_L) \stackrel{\partial_*}{\longrightarrow} \bigoplus_{e\in E} H_0(T_e) \stackrel{d}{\longrightarrow} \bigoplus_{i\in V} H_0(D_i^\circ \times S^1),
\]
where~$T_e$ is the two dimensional torus appearing in the identification~\eqref{eq:ident} and~$\iota_t$ (resp.~$\iota_s$) is the homomorphism induced by the inclusion of~$T_e$ in~$D^\circ_{t(e)}\times S^1$ (resp.~$D^\circ_{s(e)}\times S^1$). This boils down to the split short exact sequence
\begin{equation}
\label{eq:MVes}
0 \longrightarrow \bigoplus_{i\in V} H_1(D_i^\circ\times S^1)/\operatorname{Im}(\iota_t-\iota_s)\longrightarrow H_1(P_L) \stackrel{\partial_*}{\longrightarrow} \ker(d)\longrightarrow 0\,.
\end{equation}
Note that a meridional homomorphism~$\varphi_P\colon H_1(P_L)\to \Z^\mu$ is uniquely defined
on~$H_1(D_i^\circ\times S^1)$, via~$\varphi_P([\{p_i\}\times S^1])=t_i$ for any~$p_i\in D^\circ_i$ and~$\varphi_P(\partial D_e)=\sgn(e)\, t_j$ for any~$e\in E$ with~$s(e)=i$ and~$t(e)=j$. 
By construction, this definition is coherent with the identifications coming from~\eqref{eq:ident}.
As a consequence, a meridional homomorphism~$\varphi_P$ 
is uniquely defined on the quotient appearing as the first group in~\eqref{eq:MVes}, but it can take arbitrary values
on the direct summand of~$H_1(P_L)$ corresponding to the image of~$\ker(d)$ by a section of this short
exact sequence.

To analyse the group~$\ker(d)$, note that any embedding~$\alpha\colon\Gamma_L\hookrightarrow P_L$ defined as above induces
a homomorphism~$\alpha_*\colon H_1(\Gamma_L)\to H_1(P_L)$ which fits in the commutative diagram
\[
\begin{tikzcd}
H_1(\Gamma_L)\arrow{r}{\iota} \arrow{d}{\alpha_*} & C_1(\Gamma_L) \arrow{r}{\partial} \arrow{d}{\cong}& C_0(\Gamma_L)\arrow{d}{\cong}\\
H_1(P_L)\arrow{r}{\partial_*}&\bigoplus_{e\in E} H_0(T_e) \arrow{r}{d} & \bigoplus_{i\in V} H_0(D^\circ_i \times S^1)\,,
\end{tikzcd}
\]
where~$\iota$ stands for the inclusion of~$H_1(\Gamma_L)=\ker(\partial)$ in~$C_1(\Gamma_L)$,
and the two unnamed vertical maps are the canonical isomorphisms. 
The commutativity of the left square uses the assumption on~$\alpha$.
These maps induce an isomorphism~$\varphi\colon H_1(\Gamma_L)=\ker(\partial)\stackrel{\cong}{\to}\ker(d)$ such that~$\partial_*\circ\alpha_*=\varphi$.
Thus, the map~$\alpha_*\circ\varphi^{-1}$ is a section of the short exact sequence~\eqref{eq:MVes}.
The lemma follows.
\end{proof}

\color{black}

\subsection{The generalised Seifert surgery}
\label{subsec:M_L}
This section recalls the construction of the generalised Seifert surgery,  denoted $M_L$, following~\cite{ToffoliThesis,Toffoli,CMP,CFP}; see also~\cite{NagelPowell} for the case~$\mu=1$.
\medskip

Let~$L=K_1\cup\dots\cup K_\mu$ be a~$\mu$-component oriented link in~$S^3$, and let~$X_L=S^3\setminus\nu(L)$ denote its exterior.
Recall that a {\em meridian\/} of~$K_i$ is an oriented simple closed curve~$m_i\subset \partial\onu(K_i)$ such that~$[m_i]=0\in H_1(\onu(K_i))$ and~$\lk(m_i,K_i)=1$.
Also, a {\em Seifert longitude\/} of~$K_i$ is an oriented simple closed curve~$\ell_i\subset \partial\onu(K_i)$ such that~$[\ell_i]=[K_i]\in H_1(\onu(K_i))$ and~$\lk(\ell_i,K_i)=0$.

Note that~$\partial X_L$ consists of~$\mu$ disjoint tori, and so does~$\partial P_L$. Therefore, it is possible to glue~$P_L$ and~$X_L$ along their boundaries. We do so using the homeomorphism~$\partial D_i\times S^1\cong \partial\onu(K_i)$ obtained by mapping~$\{p_i\}\times S^1$ (for some~$p_i\in\partial D_i$) to a meridian~$m_i$ and~$\partial D_i\times\{\ast\}$ (for some~$\ast\in S^1$) to a Seifert longitude~$\ell_i$.
Since the orientations on~$X_L$ and on~$P_L$ induce the same orientation on the boundary tori, we reverse the orientation of~$P_L$ and set~$M_L:=X_L\cup_\partial -P_L$.

\begin{definition}
\label{def:ML}
The closed~3-manifold~$M_L:=X_L\cup_\partial -P_L$ is the {\em generalised Seifert surgery\/} on~$L$.
\end{definition}

The main point of this construction is the following fact, see e.g.~\cite[Lemma~2.11]{CMP}:
the canonical isomorphism~$\varphi_X\colon H_1(X_L)\to\Z^\mu$ defined by~$\varphi_X(\gamma)=t_1^{\lk(\gamma,K_1)} \cdots t_\mu^{\lk(\gamma,K_\mu)}$ and any meridional
homomorphism~$\varphi_P\colon H_1(P_L)\to\Z^\mu$ extend to a map~$\varphi\colon H_1(M_L)\to\Z^\mu$.
Once again, we shall call such a map a {\em meridional\/} homomorphism, and it is in general not unique
due to the possible existence of (another type of) ``big loops''.

Due to the pervasive presence of this terminology in our work, we state it in a definition.
Here, recall from Construction~\ref{cons:PL} that $M_L=X_L \cup P_L$, with~$P_L=\bigsqcup_i (D_i^\circ \times S^1)/\sim$.

\begin{definition}
\label{def:meridional}
Let~$L=K_1\cup\dots\cup K_\mu$ be a~$\mu$-component link. A {\em meridional homomorphism for~$L$\/} is a homomorphism~$\varphi\colon H_1(M_L)\to\Z^\mu$
such that for all~$1\le i\le\mu$ and all~$p_i\in D^\circ_i$, it satisfies~$\varphi([\{p_i\}\times S^1])=t_i$.
\end{definition}

Note that a meridional homomorphism maps a~1-cycle~$\gamma\subset X_L$ to~$\varphi([\gamma])=t_1^{\lk(\gamma,K_1)} \ldots t_\mu^{\lk(\gamma,K_\mu)}$.

\subsection{The manifold~$W_F$}
\label{sub:WF}
Throughout the rest of this section, we assume~$\mu\le 2$.
As a first step towards constructing a filling of~$M_{L,L'}$ over~$\Z^\mu$, we construct a filling~$W_F$ of the generalised Seifert surgery~$M_L$.
The intersection forms of this~$4$-manifold will play a crucial role in the calculation of the homology surgery invariant~$\lambda(L,L'):=\lambda(M_{L,L'},\varphi).$

\medbreak

Fix~$\mu \leq 2$. 
Let~$L$ be a~$\mu$-component link and let~$F$ be a connected C-complex for~$L$.
We begin by introducing a size~$b_1(F)$ matrix~$H_F$ that will turn out to represent the equivariant intersection form of~$W_F$. This matrix requires a specific type of basis of~$H_1(F)$, hence some terminology.

\begin{terminology}
\label{ter:geo-symp-good}
~
\begin{enumerate}[(i)]
\item Let~$F$ be a Seifert surface for a knot~$K$. 
We say that a family of oriented simple closed curves $\{\alpha_1,\dots,\alpha_g,\beta_1,\dots,\beta_g\}$ forms a {\em good basis\/} for~$H_1(F)$ if is a geometric symplectic basis for~$H_1(F)$, i.e. if
it represents a basis for~$H_1(F)$ and satisfies the following properties.
\begin{itemize}
\item The curves~$\alpha_i$ are pairwise disjoint, and so are the curves~$\beta_i$.
\item The curves~$\alpha_i$ and~$\beta_j$ are disjoint for~$i\neq j$, while~$\alpha_i$ and~$\beta_i$ intersect geometrically once, with positive sign.
\end{itemize}
\item Let~$F = F_1 \cup F_2$ be a~C-complex for a~$2$-component link~$L=K_1\cup K_2$. 
Let us write~$\ell$ for the linking number~$\ell k(K_1,K_2)$ and~$g_i $ for the genus of~$F_i$, and set~$g:=g_1+g_2$. A \emph{good basis\/} for~$H_1(F)$ is a family of oriented simple closed curves
\[
\{\alpha_1,\ldots,\alpha_{g}, \beta_1,\ldots,\beta_{g}, \gamma_1,\ldots,\gamma_l, \delta_1, \ldots, \delta_n\}
\]
representing a basis of~$H_1(F)$, and satisfying the following properties.
\begin{itemize}
  \item The families~$ \{\alpha_1,\ldots,\alpha_{g_1}, \beta_1,\ldots,\beta_{g_1}\} $ and $ \{\alpha_{g_1+1},\ldots,\alpha_{g}, \beta_{g_1+1},\ldots,\beta_{g}\} $ are good bases for~$H_1(F_1) $ and~$H_1(F_2) $, respectively, and are disjoint from the clasps.
  \item The curves $\{\delta_1, \ldots, \delta_n\}$ are pairwise disjoint and disjoint from the~$\alpha_j$ and from the~$\beta_j$; each~$\delta_k$ passes through exactly two clasps, which are of opposite signs, so that each clasp with sign different from~$\sgn(\ell)$ is traversed by exactly one of the cycles~$\delta_k$.
   \item The curves $\gamma_1, \ldots, \gamma_l$ are disjoint from the~$\alpha_j$ and from the~$\beta_j$; each~$\gamma_k$ crosses exactly two clasps, both of sign~$\sgn(\ell)$.
\end{itemize}
Note that we have~$n=\frac{1}{2}(\#\{\text{clasps}\}-\vert\ell\vert)$ and~$l=\frac{1}{2}(\#\{\text{clasps}\}+\vert\ell\vert)-1$.
\item We say that a C-complex~$F$ is {\em nice\/} if~$H_1(F)$ admits a good basis.
\end{enumerate}
We occasionally refer to the $\delta_i$ as the \emph{cancelling curves}, to the $\gamma_i$ as the \emph{linking curves}, and to the~$\alpha_i$ and $\beta_i$ as the \emph{genus curves}.
\end{terminology}

\begin{remark} 
\label{remark:cancellingCurves}
~
\begin{enumerate}[(i)]
\item All C-complexes with~$\mu=1$ are nice, as any Seifert surface admits a good basis.
On the other hand, not all C-complexes with~$\mu=2$ are nice.
An example is illustrated below, where any two curves pairing clasps of opposite signs need to intersect.

\begin{center}
\begin{overpic}[width=4.5cm]{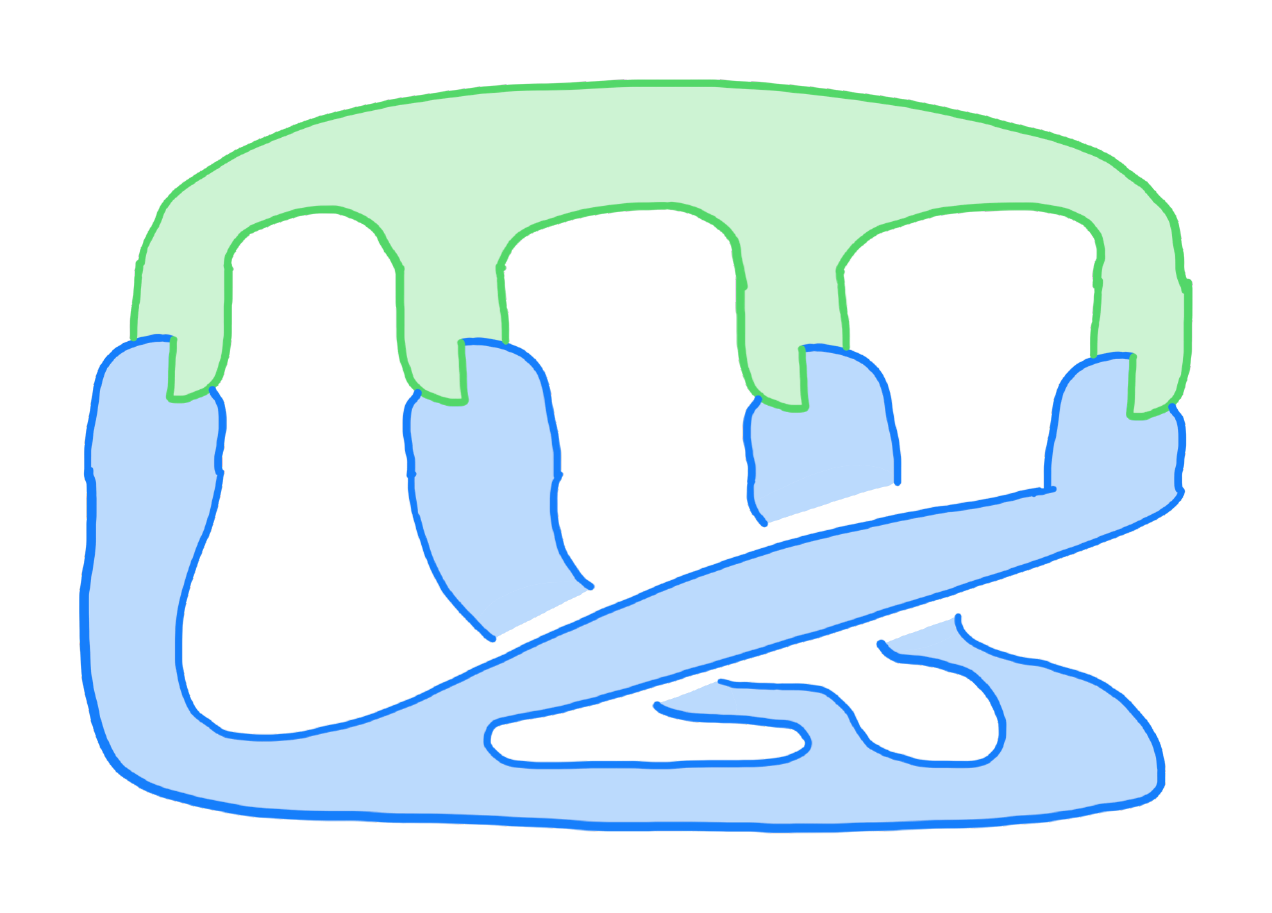}\end{overpic}
\end{center}

\item Any~$2$-component link admits a nice C-complex.
Indeed, if~$\ell\neq 0$, then~\cite[Theorem~2]{Davis} shows that there exists a~C-complex~$F$ for~$L$ with all clasps of same sign. 
One can then complete good bases for~$H_1(F_1)$ and~$H_1(F_2)$ (disjoint from the clasps) into a good basis for~$H_1(F)$ using oriented simple closed curves~$\gamma_1,\dots,\gamma_{\vert\ell\vert-1}$ that are disjoint from the genus curves.
(Note that we have~$n=0$ in this case.)
If~$\ell= 0$, the aforementioned result of~\cite{Davis} implies that there exists a C-complex with exactly two clasps.
 One can complete good bases for~$H_1(F_1)$ and~$H_1(F_2)$ into a good basis for~$H_1(F)$ using an oriented simple closed curve~$\delta_1$ passing through these clasps and following~$\partial F_i$ away from the clasps, hence remaining disjoint from the other curves. (Note that~$n=1$ and~$l=0$ in this case.)
 \item Theorem~\ref{thm:ExistenceOfW} below is still valid in the more general setting where we assume neither~$\beta_i\cap\beta_j=\emptyset$ for~$i\neq j$, nor~$\delta_k\cap\beta_j=\emptyset$, nor~$\gamma_k\cap\alpha_j=\gamma_k\cap\beta_j=\emptyset$ (see~\cite{GaetanPhD}).
However, since every link with~$\mu\le 2$ components admits a nice C-complex in the sense defined above,
we restrict ourselves to this setting.
\end{enumerate}
\end{remark}

\begin{construction}
\label{cons:MatrixH}
We define the matrices that feature in the fourth and fifth items of Theorem~\ref{thm:ConcordanceInvarianceGeneralIntro} as well as in the fifth and sixth items of Theorem~\ref{thm:BlClassProperties} and in Corollary~\ref{cor:IntroSeifert}.
We refer to Terminology~\ref{ter:geo-symp-good} for the definition of a nice C-complex.
\begin{enumerate}[(i)]
\item Let~$F$ be a Seifert surface, and let~$A$ denote the Seifert matrix (corresponding to the Seifert
form~$(x,y)\mapsto\alpha^-(x,y)=\lk(x^-,y)$ in a good basis for~$H_1(F)$. Set
\[
H_{F}:=D'\cdot(tA^{\T}-A)\cdot D\,,
\]
where~$D$ and~$D'$ are the following diagonal matrices:
\[
D'=\mathrm{I}_{g}\oplus(1-t)\mathrm{I}_{g}\,,\quad
D=\frac{1}{(t-1)}\mathrm{I}_{g}\oplus \frac{1}{t}\mathrm{I}_{g}\,.
\]
This matrix is analogous (to the one-variable version) of the one used by Ko~\cite[Section~4]{Ko2} in his study of concordance of boundary links; see also~\cite[Equation (2.3)]{BorodzikFriedl}.
\item Let~$F=F_1\cup F_2$ be a nice C-complex, and let~$A$ and~$B$ denote the generalized Seifert matrices corresponding to the generalized Seifert forms~$\alpha^{--}$ and~$\alpha^{-+}$ respectively, in a good basis for~$H_1(F)$. Set
\[
H_{F} := D' \cdot (t_1 t_2 A^{\T} - t_1 B^{\T} - t_2 B + A) \cdot D\,,
\]
where~$D$ and~$D'$ are the following diagonal matrices:
\small
\begin{align*}
D'&=\mathrm{I}_{g}\oplus(1-t_1)\mathrm{I}_{g_1}\oplus(1-t_2)\mathrm{I}_{g_2}\oplus(t_1-1)(t_2-1)\mathrm{I}_l\oplus\mathrm{I}_{n}\,,\\
D&=\frac{1}{(t_1-1)(t_2-1)} \mathrm{I}_{g}\oplus \frac{1}{t_1(t_2-1)}\mathrm{I}_{g_1}\oplus \frac{1}{t_2(t_1-1)}\mathrm{I}_{g_2}\oplus \frac{1}{t_1t_2}\mathrm{I}_l\oplus\frac{1}{(t_1-1)(t_2-1)}\mathrm{I}_{n}\,.
\end{align*}
\end{enumerate}
The fact that the entries of these matrices take values in $\Z[\Z^\mu]$ follows from Theorem~\ref{thm:ExistenceOfW} below.
\end{construction}

The main theorem of this section is the following.
\begin{theorem}
\label{thm:ExistenceOfW}
Fix $\mu \leq 2$.
For any~$\mu$-component link~$L$ with~$\Delta_L \neq 0$, there exists a~$4$-manifold~$W_F$, constructed
using a nice C-complex~$F$ for~$L$, which satisfies the following properties.
\begin{enumerate}
\item There is an isomorphism~$\Phi\colon \pi_1(W_F) \to \Z^\mu$ such that~$\partial(W_F,\Phi)=(M_L,\varphi)$
with~$\varphi$ a meridional homomorphism for~$L$.
\item The~$\Z$-intersection form of~$W_F$ decomposes as the direct sum of a zero form with a nonsingular form that is metabolic over $\Q$.
\item The~$\Z[\Z^\mu]$-intersection form of~$W_F$ is represented by~$H_F$.
\end{enumerate}
\end{theorem}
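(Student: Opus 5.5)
The plan is to build $W_F$ in two stages: first push the C-complex into $D^4$ and take its exterior, then attach handles (or plumbed pieces) that kill the classes that prevent $\pi_1$ from being $\Z^\mu$.

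\textbf{Stage one: the exterior of the pushed-in C-complex.} Push the interior of each $F_i$ into $D^4$ so that the surfaces meet transversally in one point per clasp. Call the result $F_i'$, and let $V_F := D^4\setminus \nu(F_1'\cup F_2')$. Its boundary splits as
\[
\partial V_F=X_L\cup -P(\Gamma_F),
\]
where $\Gamma_F$ is the plumbing graph with vertices decorated by $F_1,F_2$ and one edge per clasp, with sign equal to the clasp sign. For $\mu=1$ this is the classical pushed-in Seifert surface exterior. By the standard Alexander duality computation, $H_1(V_F)\cong\Z^\mu$ is generated by meridians, and $H_2(V_F)$ is generated by tori over the genus curves together with the classes associated to the clasp graph.

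\textbf{Stage two: modify $V_F$ into a filling of $M_L$.} The boundary $P(\Gamma_F)$ differs from $P_L$ because $\Gamma_F$ has surfaces of positive genus and extra edges. We remove these differences by attaching $2$-handles (equivalently, by gluing a cobordism from $P(\Gamma_F)$ to $P_L$ rel boundary):
\begin{enumerate}[(a)]
\item Surger each $F_i$ along its curves $\alpha_j$, using handles attached along $\alpha_j\times\{pt\}$ pushed off with the surface framing.
\item For each cancelling curve $\delta_k$, cancel the corresponding pair of opposite clasps.
\item Each linking curve $\gamma_k$ reduces a pair of same-sign clasps to a single edge, matching $|\ell|$ edges in $\Gamma_L$.
\end{enumerate}
The resulting $W_F$ has $\partial W_F=M_L$.

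\textbf{Item (1).} A van Kampen argument shows that $\pi_1(W_F)$ is generated by the meridians. The attached handles kill the commutators, and in fact $\pi_1(V_F)$ is already abelian-ish after these attachments, so $\pi_1(W_F)\cong\Z^\mu$ via $\Phi$. The meridional condition holds by construction. Here Lemma~\ref{lemma:MeridionalPL} identifies which meridional $\varphi$ on $M_L$ we obtain: its value on the big loops is determined by the choice of cores.

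\textbf{Item (2).} Over $\Z$, $H_2(W_F)$ has one class for each basis curve, and the intersection form is computed from linking numbers of the attaching curves. The classes coming from the $\beta$'s, the $\gamma$'s and the boundary lie in the radical. The $\alpha$-handles pair with their dual tori, giving hyperbolic-type blocks, which yields a metabolic nonsingular part. One should arrange a basis in which the form is $0\oplus\bsm 0&I\\ I&*\esm$.

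\textbf{Item (3), the main obstacle.} We need the $\Lambda$-intersection form. The plan is to show that $H_2(W_F;\Lambda)$ is free of rank $b_1(F)$, using $\mu\le 2$ (compare Lemma~\ref{lem:H_2IsFree}). We then exhibit explicit lifted surfaces for each basis curve:
\begin{itemize}
\item For each genus curve, a lift of (core $\cup$ Seifert-type surface), weighted by $(t_i-1)$ factors.
\item For each $\gamma$ and $\delta$, tori or spheres assembled from pieces in the plumbing.
\end{itemize}
Equivariant intersections are then computed by the Cooper / Cimasoni–Florens formula, which expresses intersections of lifts of pushed-in surfaces via $t_1t_2A^{\T}-t_1B^{\T}-t_2B+A$. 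The normalisations $D,D'$ arise from the boundary-correction factors $(t_i-1)$ and the meridian-fibre contributions. The hardest part is the bookkeeping showing that these particular rescalings produce genuine integral classes forming a basis, and that the resulting matrix is exactly $H_F$; I would first verify this on the Hopf link and on genus-one knots before handling the general case.
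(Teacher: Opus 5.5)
Your outline (take the exterior $V_F$ of the pushed-in C-complex, then modify the plumbed part of its boundary) is the right starting point, but the Stage two construction is not correct as stated, and your argument for item~(1) skips the real issue.

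\emph{Construction.} A $2$-handle attached along $\alpha_j\times\{pt\}$ performs Dehn surgery on that curve in $F_i^\circ\times S^1$. This does not yield $(F_i \text{ surgered along } \alpha_j)^\circ\times S^1$: for closed $F=T^2$, surgery on $T^3$ along $\alpha\times\{pt\}$ has $H_1\cong\Z^2$, while $S^2\times S^1$ has $H_1\cong\Z$. You need round $2$-handles $D^2\times[-1,1]\times S^1$ glued along $\overline{\nu}(\alpha_j)\times S^1$, i.e.\ a $2$-handle \emph{and} a $3$-handle. For (b) you give no mechanism. The paper glues $X_k=I\times I\times S^1\times S^1$ along the two clasp tori; this deletes both edges at the cost of one extra genus on each $F_i$, which is then removed by round $2$-handles along curves $\delta_k^i$. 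Step (c) is wrong. After cancelling the $n$ opposite pairs, exactly $N-2n=|\ell|$ edges remain, which already matches $\Gamma_L$. The linking curves are cycles of the graph (the big loops of $P_L$) and must be left alone; merging edges would give the wrong boundary.

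\emph{Item (1).} $\pi_1(V_F)\cong\Z^\mu$ already, since $F$ is connected. What must be proved is that every attaching curve lies in $\ker\Phi$, so that $\Phi$ extends and van Kampen keeps $\pi_1\cong\Z^\mu$; note that $X_k$, glued along two disjoint tori, creates a new generator. For the $\alpha_j$, this uses the specific parametrised push-offs, which bound in the $4$-ball $B$ (the exterior of the trace of $F$). For the $\delta_k^i$, the extension over $X_k$ is not unique. One must normalise $\Phi(\delta_k^1)=1$ and then deduce $\Phi(\delta_k^2)=1$ from $[\tilde\delta_k]=[\delta_k^1]+[\delta_k^2]$ with $\tilde\delta_k\subset\partial B$ (Lemma~\ref{lemma:Claim0}).

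\emph{Item (2).} Your radical is misidentified, and the sentence is self-contradictory. The tori $h(\beta\times S^1)$ are exactly the ``dual tori'' of the $\alpha$-spheres: they pair with them via the unimodular intersection form of $F$, so they span the Lagrangian of that block rather than lying in the radical. The radical is spanned by the $|\ell|$ tori around uncancelled clasps. The tori around cancelled clasps pair ($\pm1$) with the spheres $R_{\delta_k}$, giving a second metabolic block that you omit. Also, $\operatorname{rk}H_2(W_F)=b_1(F)+1$ when $\mu=2$, not $b_1(F)$.

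\emph{Item (3).} Here the proposal contains no argument, and two ideas are missing.
\begin{itemize}
\item \textbf{The classes.} The paper uses immersed spheres assembled from discs in $B$ bounding push-offs. Examples are $S_\beta=D_{\beta^-}\cup(\beta\times J)\cup D_{\beta^+}$, and $U_\gamma$, built from the four push-offs $\gamma^\varepsilon$ joined by bands and saddles in $X_L$. Each equivariant intersection is then a linking number in $\partial B$ times a group element recording crossings of $F_i\times J$. The matrices $D,D'$ normalise the resulting Gram matrix; they are not ``rescalings'' of classes. You cannot divide a class by $t_i-1$, and multiplying by it gives a sublattice, not a basis.
\item \textbf{Citing Cooper/Cimasoni--Florens is not enough.} That formula computes $\lambda_{V_F}$, and $H_2(V_F;\Lambda)\to H_2(W_F;\Lambda)$ is only an isomorphism after tensoring with $Q$. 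For $\mu=1$, $\det\lambda_{V_F}\doteq(t-1)^{2g}\Delta_K$ because $\partial V_F\supset F\times S^1$, whereas $\det H_F\doteq\Delta_K$.
\item \textbf{The basis property.} Having $b_1(F)$ classes with Gram matrix $H_F$ in a free module of rank $b_1(F)$ is not enough. The paper uses Lemma~\ref{lemma:FL2}: it suffices that $\det H_F$ agrees up to a unit with the determinant of a matrix of $\lambda_{W_F}$. By Proposition~\ref{prop:BlanchfieldRepresentation}, that determinant is $\operatorname{Ord}H_1(M_L;\Lambda)$, i.e.\ $\Delta_L$ for $\mu=1$ and $((t_1-1)(t_2-1))^{|\ell|-1}\Delta_L$ for $\mu=2$.
\end{itemize}
This last step is exactly where $\Delta_L\neq0$ enters (nondegeneracy and torsion boundary), a hypothesis your proposal never uses.
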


In this section, we construct~$W_F$ and prove the first item of Theorem~\ref{thm:ExistenceOfW}.
The proofs of the second and third items are postponed to Section~\ref{sec:IntersectionForm}.
Note that in the~$\mu=1$ case, the manifold~$W_F$ 
is distinct from Litherland and Ko's construction of a filling of $M_K$~\cite{LitherlandCobordism,Ko2} (see also~\cite{FL26}).
We also emphasise that Ko's work concerns~$\mu$-component \emph{boundary} links.

In a nutshell,~$W_F$ is obtained by attaching handles to the boundary of the exterior of a C-complex~$F$ pushed inside the~$4$-ball.
In order to describe to push offs of curves and discuss coefficient systems, 
we need an explicit parametrisation of this boundary as a plumbed~$3$-manifold,
which unfortunately requires rather tedious notation and technical arguments.
Hence, we divided this section into four parts. In the first, we define the exterior of a pushed-in C-complex and provide an explicit parametrization of part of its boundary as a plumbed manifold. In the second part, we construct the 4-manifold~$W_F$ by attaching suitable pieces to the exterior of the pushed in~C-complex along the aforementioned plumbed manifold. In the third part, we deal with the technical question of the extension of the meridional homomorphism
from~$M_L$ to~$W_F$.
Finally, the fourth part is devoted to the proof of Theorem~\ref{thm:ExistenceOfW} (1).

\subsubsection*{The exterior of a pushed-in C-complex}

We begin by defining the notion of a \emph{pushed-in C-complex\/} and by fixing an explicit parametrization of a neighborhood of such a space. 
These parametrizations will be helpful to calculate (equivariant) intersections in Section~\ref{sec:IntersectionForm}.
We then elaborate on the well-known result (see e.g.see~\cite{ConwayNagelToffoli,Toffoli}) that part of the boundary of this neighborhood is homeomorphic to a particular plumbed manifold.

\begin{terminology}
\label{terminology:PushedIn}
We introduce some terminology related to C-complexes and their neighborhoods in $B^4$.
Let~$F = F_1 \cup F_2$ be a C-complex for a~2-component link~$L = K_1 \cup K_2$ in~$S^3=\partial B^4$.
We use polar coordinates for~$B^4$, identifying it with~$S^3 \times [0,3]/(S^3 \times \{3\})$.
As in~\cite{CFT18}, we write~$X \star I$ instead of $X \times I$ for subspaces~$X\subset S^3$ and~$I\subset [0,3)$
to distinguish the depth coordinate from the others.
The C-complex~$F$ \emph{pushed-in}~$B^4$ is defined as~$\tilde{F} := \tilde{F}_1 \cup \tilde{F}_2\subset B^4$, ~where
\[
\tilde{F}_1 := \left(K_1 \star [0,1]\right) \cup \left(F_1 \star \{1\}\right) \quad\text{and}\quad 
\tilde{F}_2 := \left(K_2 \star [0,2]\right) \cup \left(F_2 \star \{2\}\right)\,.
\]
Note that each clasp in the C-complex~$F$ in~$S^3$ gives rise to a transverse intersection in~$\tilde{F}\subset B^4$.
This is illustrated in Figure~\ref{fig:NeighborhoodCombined}.

Next, given closed tubular neighborhoods~$\onu(\tilde{F}_1)$ and~$\onu(\tilde{F}_2)$ of~$\tilde F_1$ and~$\tilde F_2$ in~$B^4$,  
a neighborhood of $\widetilde{F}$ is obtained by setting~$\onu(\tilde{F}) := \onu(\tilde{F}_1) \cup \onu(\tilde{F}_2)$ and the \emph{exterior} of the pushed-in C-complex is then defined to be~$V_F := B^4 \setminus \nu(\tilde{F})$.
Note that the boundary of~$V_F$ decomposes as
\[
\partial V_F = X_L \cup \left(\partial \onu(\tilde{F}) \setminus \nu(L)\right)\,.
\]
\end{terminology}

It is known that~$\partial \onu(\tilde{F}) \setminus \nu(L)$ is homeomorphic to a plumbed manifold.
In order to describe to push offs of curves and discuss coefficient systems, we need an explicit realization of this homeomorphism. To that end, we now describe an explicit parametrization of~$\onu(\tilde{F})$.

\begin{figure}
\centering
\includegraphics[width=8cm]{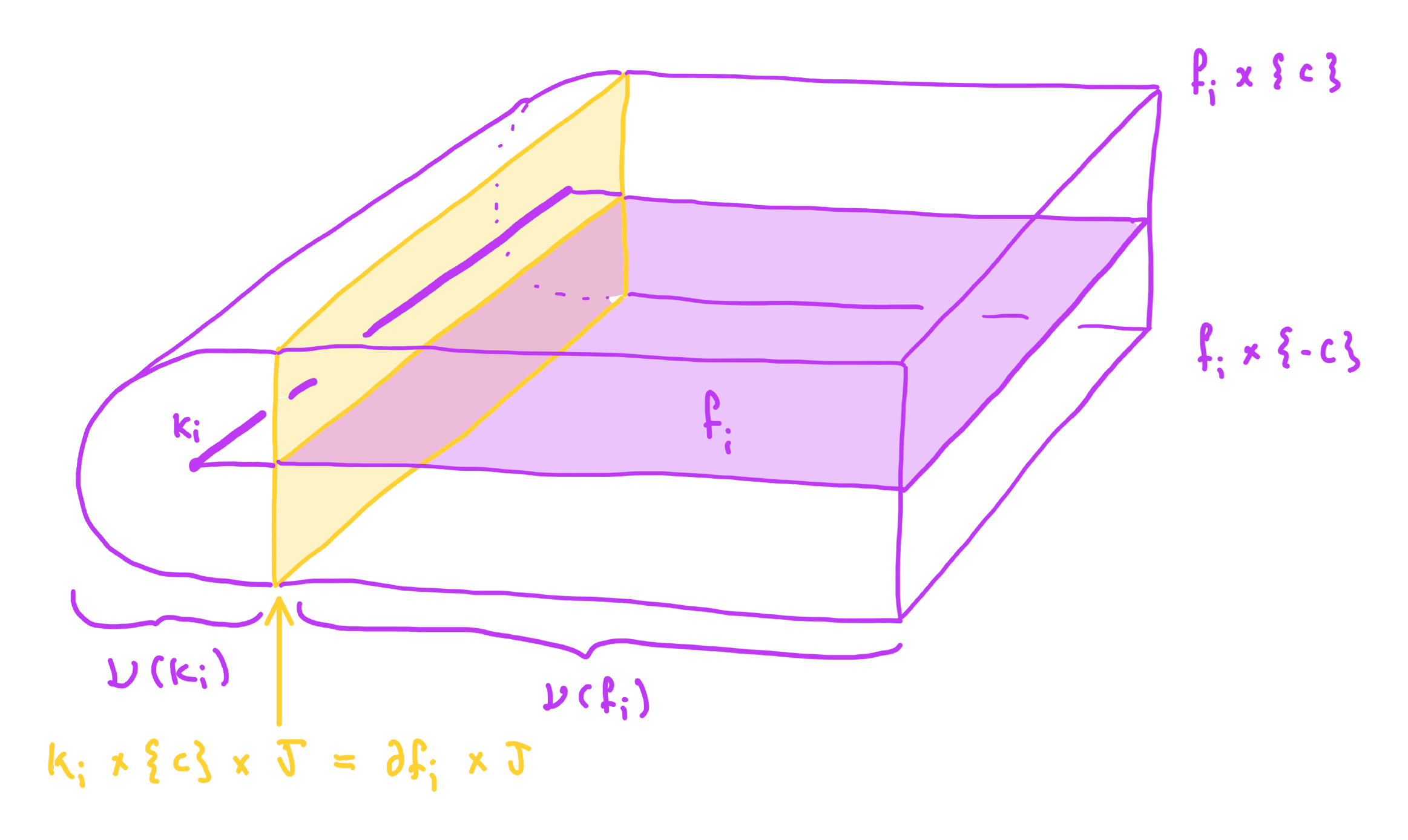}
\caption{Compatibility between $\onu(F_i)$ and $\onu(K_i)$.}
\label{fig:NeighborhoodCompatibility}
\end{figure}

\begin{construction}
\label{cons:parametrizationnu(F)}
Continuing with the notation from Terminology~\ref{terminology:PushedIn}, we describe a parametrization of a neighborhood of a pushed-in C-complex $\widetilde{F}$.
Fix~$0<c<1/4$ and set~$J := [-c,c]$. 
For~$i = 1,2$, fix a parametrization~$\onu(F_i)\cong F_i \times J$ of a closed bicollar neighborhood of~$F_i$ in~$S^3$ identifying~$F_i$ with~$F_i \times \{0\}$, so that the orientation of $J$ from~$-c$ to~$c$ matches the outward normal vector induced by the orientation of~$F_i$ via the ``outward vector first'' convention.
Additionally, fix a parametrization~$\onu(K_i) \cong K_i \times J^2$ of a tubular neighborhood of~$K_i$ in~$S^3$ identifying $K_i$ with~$K_i \times \{(0,0)\}$, and set~$f_i:= F_i\setminus\nu(K_i)$.
We require that the parametrized neighborhoods~$\onu(F_i)$ and~$\onu(K_i)$ are compatible in the sense that
\[
K_i \times \{c\} \times J = \partial f_i \times J\quad\text{and}\quad K_i\times [0,c)\times\{0\}=(F_i\setminus f_i)\times\{0\}\,,
\]
as illustrated in Figure~\ref{fig:NeighborhoodCompatibility}.
Using these data, we define the neighborhood~$\onu(\tilde{F}) := \onu(\tilde{F}_1) \cup \onu(\tilde{F}_2)$ of~$\tilde{F}$ in~$B^4$ via
\[
\onu(\tilde{F}_i) := \left(\nu(K_i) \star [0,i+c]\right) \cup \left(\nu(F_i) \star [i-c,i+c]\right)\,,
\]
see Figure~\ref{fig:NeighborhoodCombined}.
Throughout this section, color conventions in the figures remain consistent: blue represents~$F_1$, green represents~$F_2$, and purple may denote either.
\end{construction}

\begin{figure}
\centering
\begin{overpic}[width=0.7\textwidth]{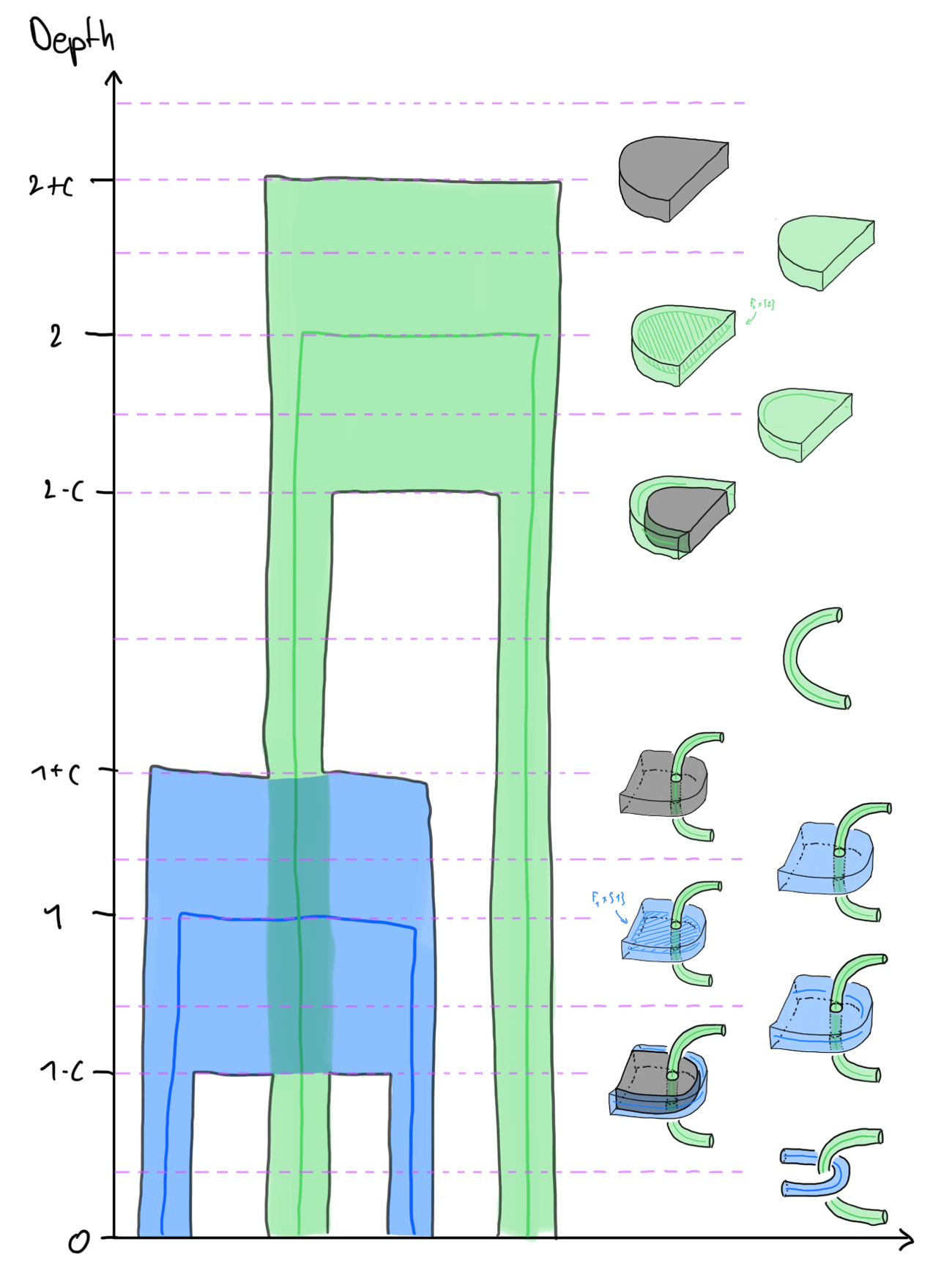}\end{overpic}
\caption{This figure shows~$\onu(\tilde F)$ in reduced dimension (left), and decomposed into three dimensional slices (right). The black and shaded-grey parts correspond to~$\partial\onu(\tilde F_1)\cup\partial\onu(\tilde F_2)$. The colored parts are interior points of~$\onu(\tilde F)$: light filled blue corresponds to~$\onu(\tilde F_1)$ and light filled green to~$\onu (\tilde F_2)$. The parts colored with maximal opacity correspond to~$\tilde F_1$ and~$\tilde F_2$.}
\label{fig:NeighborhoodCombined}
\end{figure}

The following result is well-known, see e.g.~\cite[Example~4.12]{ConwayNagelToffoli} and~\cite[Example~4.15]{Toffoli}.

\begin{lemma}
\label{lemma:HomeoPF}
There is a homeomorphism $h\colon P_F \to \partial \onu(\tilde{F}) \setminus \nu(L)$, where~$P_F$ is the plumbed~$3$-manifold associated to the graph~$\Gamma_F$ with:
\begin{itemize}
  \item two vertices labeled by the surfaces~$F_1$ and~$F_2$;
  \item one edge for each clasp between~$F_1$ and~$F_2$, with sign corresponding to  its contribution to the linking number.
\end{itemize}
\end{lemma}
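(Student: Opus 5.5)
The plan is to decompose $\partial\onu(\tilde F)\setminus\nu(L)$ explicitly using the parametrisation of Construction~\ref{cons:parametrizationnu(F)}. Then match each piece with a piece $F_i^\circ\times S^1$ of $P_F$, and check that the gluing maps are those of~\eqref{eq:ident}.

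First I treat a single surface. Away from the clasps and away from $\nu(L)$, the boundary of $\onu(\tilde F_i)$ restricted to $\nu(F_i)\star[i-c,i+c]$ has two parts. One is $f_i\times\partial(J\times[i-c,i+c])$, where the second factor is identified with a square. The other is the piece lying over the collar $K_i\times J^2\star[0,i+c]$, which only has to be taken up to the depth range below $i-c$. Away from $\nu(L)$, and after rounding corners, this gives a product $f_i\times S^1$. Here the $S^1$ is the boundary of the normal square $J\times[i-c,i+c]$, oriented so that $\{p\}\times S^1$ is a meridian of $\tilde F_i$. Since $f_i\cong F_i$, this accounts for $F_i\times S^1$, except near the clasps.

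Next I analyse a clasp. Near a clasp, the surfaces $F_1\star\{1\}$ and $F_2\star\{2\}$ do not meet. Instead, the collar $K_2\star[0,2]$ passes through $F_1\star\{1\}$ transversally in a single point $x_e$. Locally this is the standard model of two planes meeting transversally in $\R^4$. In that model, $\partial\onu(\tilde F_1\cup\tilde F_2)$ is obtained by removing a disc $D_e$ from each surface and gluing the solid-torus-like pieces $\partial D_e\times D^2$ and $D^2\times\partial D_{\overline e}$. What remains are the tori $\partial D_e\times S^1$ and $S^1\times\partial D_{\overline e}$, and these are identified with the roles of the two factors swapped.

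I then remove the discs $D_e$ from $F_1$, and the corresponding discs from $F_2$ (equivalently, from the collar region, which is isotopic to $F_2$ near $x_e$). This yields $F_i^\circ\times S^1$ glued along $T_e$ by a map of the form $(x,y)\mapsto(y^{\pm1},x^{\pm1})$. The sign is fixed by comparing orientations. The orientation of the intersection point $x_e$ in $B^4$ is the local sign of the clasp's contribution to $\lk(K_1,K_2)$, and this is the standard correspondence between clasp signs and intersection signs of pushed-in surfaces. To check that the exponent is $-\sgn(e)$, I will compute this in the explicit local coordinates $J^2\star[\,\cdot\,]$ for one positive clasp. The negative case then follows by reflection.

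The main obstacle is bookkeeping. I need to verify that the region near the clasp, where $K_2\star[0,2]$ crosses depth $1$ and the tube $\nu(K_2)\star[0,2+c]$ meets $\nu(F_1)\star[1-c,1+c]$, really is the standard transverse model after a small isotopy. I also need the removal of $\nu(L)$ at depth $0$ to cut $\partial\onu(\tilde F)$ exactly along the tori $\partial f_i\times S^1$. The compatibility conditions of Construction~\ref{cons:parametrizationnu(F)} are what make this work: $K_i\times\{c\}\times J=\partial f_i\times J$ guarantees that the collar piece and the surface piece glue into a product. Assembling the local homeomorphisms then gives $h$.
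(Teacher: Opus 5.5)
Your proposal is correct and is essentially the paper's own argument. The paper quotes the lemma as well known and makes $h$ explicit in Construction~\ref{cons:Homeoh} exactly as you plan: it identifies $\partial\onu(\tilde F_i)\setminus(\nu(\tilde F_j)\cup\nu(K_i))$ with $F_i^\circ\times S^1$ via $f_i\star\partial(J\star[i-c,i+c])$ extended over $\partial\onu(K_i)\star[0,i+c]$, takes the removed discs from the double points $(f_1\star\{1\})\cap(K_2\star\{1\})$, and glues the resulting tori by~\eqref{eq:ident}, leaving the sign check you propose to the cited references.
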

\begin{construction}
\label{cons:Homeoh}
Later in this section and in Section~\ref{sec:IntersectionForm},  it will be helpful to make the homeomorphism~$h\colon P_F \to \partial \onu(\tilde{F}) \setminus \nu(L)$ more explicit. 
Using our model of $\overline{\nu}(F)$, this can be done via the decomposition
\[
\partial\onu(\tilde{F}) \setminus \nu(L) = 
\left(\partial \onu(\tilde{F}_1) \setminus \left(\nu(\tilde{F}_2)\cup\nu(K_1)\right)\right)
\cup 
\left(\partial \onu(\tilde{F}_2) \setminus \left(\nu(\tilde{F}_1)\cup\nu(K_2)\right)\right)
\]
and homeomorphisms~$\partial \onu(\tilde{F}_i) \setminus \left(\nu(\tilde{F}_j)\cup\nu(K_i)\right)\to F_i^\circ \times S^1$
for~$\{i,j\}=\{1,2\}$.
Indeed, the boundary of~$\onu(\tilde{F}_1)$ around~$f_1 \star \{1\}$ is parametrized by
\[
\partial\onu(\tilde{f}_1):=(f_1 \times J \star \{1 \pm c\}) \cup (f_1 \times \{\pm c\} \star [1-c,1+c])=f_1\star\partial(J\star[1-c,1+c])\,,
\]
which is homeomorphic to $f_1 \times S^1$.
Since each clasp yields a double point in~$(f_1 \star \{1\}) \cap (K_2 \star \{1\})$, the surface~$f^\circ_1:=f_1\setminus\nu(K_2)$ is homeomorphic to~$F_1^\circ$. Observe the equality
\[
\partial \onu(\tilde{f}_1) \setminus \nu(\tilde{F}_2) = 
(f_1^\circ \times J \star \{1 \pm c\}) \cup 
(f_1^\circ \times \{\pm c\} \star [1-c,1+c])\cong f^\circ_1\times S^1\,.
\]
Extending this to~$\partial \onu(K_1) \star [0,1+c]$ yields the explicit homeomorphism
\[
\partial \onu(\tilde{F}_1) \setminus \left(\nu(\tilde{F}_2)\cup\nu(K_1)\right)\cong f_1^\circ \times S^1\cong F_1^\circ \times S^1\,,
\]
which can be visualized in Figure~\ref{fig:NeighborhoodCombined} focusing on the boundary of the blue region.
Similarly, we have an explicit homeomorphism
\[
\partial \onu(\tilde{F}_2) \setminus \left(\nu(\tilde{F}_1)\cup\nu(K_2)\right)\cong F_2^\circ \times S^1\,,
\]
which can be seen in Figure~\ref{fig:NeighborhoodCombined} by inspecting the boundary of the green region.

Finally, each clasp in~$F\subset S^3$ yields a transverse double point in~$\tilde{F}\subset B^4$,
and the corresponding boundary tori in~$\partial \onu(\tilde{F}_1) \setminus \left(\nu(\tilde{F}_2)\cup\nu(K_1)\right)\cong F_1^\circ \times S^1$ and in~$\partial \onu(\tilde{F}_2) \setminus \left(\nu(\tilde{F}_1)\cup\nu(K_2)\right)\cong F_2^\circ \times S^1$ are glued via the identification~\eqref{eq:ident}.
\end{construction}

\medskip

Recall that in the case~$\mu=1$, a~C-complex~$F=F_1$ is nothing but a Seifert surface for~$L=K_1$.
We can therefore simply define the exterior of the pushed-in~C-complex to be~$V_F=B^4\setminus\nu(\tilde{F})$,
with~$\tilde{F}=\tilde{F}_1$ as above, and we have an explicit homeomorphism~$h\colon P_F=F_1\times S^1\to \partial\onu(\tilde{F})\setminus\nu(L)$.

\subsubsection*{Construction of $W_F$}
Starting from a nice~C-complex~$F$ for~$L=K_1\cup K_2$, we now construct the~$4$-manifold~$W_F$ appearing in Theorem~\ref{thm:ExistenceOfW}.  
The idea of this construction is the following.
By Lemma~\ref{lemma:HomeoPF}, the boundary of~$V_F$ decomposes as~$\partial V_F = X_L \cup_h P_F$, whereas our goal is to obtain a manifold whose boundary is~$M_L=X_L \cup P_L$.
There are two differences between~$P_F$ and~$P_L$: first, the graph~$\Gamma_F$ may contain more edges than~$\Gamma_L$ (when $F$ has clasps of opposite signs); second, the surfaces decorating~$\Gamma_F$ may have positive genus, while those of~$\Gamma_L$ are discs. 
To address these discrepancies, we attach appropriate~$4$-manifolds to~$V_F$, first cancelling the extra edges of~$\Gamma_F$ (resulting in an intermediate $4$-manifold $Z_F$), and then surgering away the genera of the surfaces until they become discs (yielding the desired $4$-manifold~$W_F$).

\medskip

\begin{construction}
\label{cons:ZF}
To make the construction of~$Z_F$ precise, consider a nice~C-complex~$F$ for~$L$ endowed with a good basis of~$H_1(F)$, 
as defined in Terminology~\ref{ter:geo-symp-good}. In particular, it contains a family of ``cancelling curves''~$\{\delta_1, \dots, \delta_n\}$, where~$n=\frac{1}{2}(\#\{\text{clasps}\}-\vert\ell\vert)$.
As outlined above, our goal is to modify $V_F$ so as to do away with the excess edges in the plumbing graph of $P_F \subset \partial V_F$.

For each~$1\le k\le n$, let~$e_k$ and~$e_k'$ be the edges of~$\Gamma_F$ (oriented from~$F_1$ to~$F_2$) corresponding to the two clasps traversed by~$\delta_k$. By assumption, these carry opposite signs (say, positive for~$e_k$ and negative for~$e_k'$), and we wish to ``cancel'' them. 
To do so, we use the following construction from~\cite[Lemma~4.9]{ConwayNagelToffoli}. Set 
$$X_k := I \times I \times S^1 \times S^1$$
and consider the tori~$T_{k} = (-\partial D_{e_k}) \times S^1$ and~$T'_{k}= (-\partial D_{e'_k}) \times S^1$, with bicollar neighborhoods~$I \times T_k$ and~$I\times T'_k$ in~$P_F$. Glue~$X_k$ to~$P_F$ via the homeomorphism
\begin{equation*}
f_k\colon \partial I\times I\times S^1\times S^1\to I\times (T_k\sqcup T_k')
\end{equation*}
given by:
\begin{align*}
\{0\} \times I \times S^1 \times S^1 &\longrightarrow I \times (-\partial D_{e_k}) \times S^1 &
\{1\} \times I \times S^1 \times S^1 &\longrightarrow I \times (-\partial D_{e'_k}) \times S^1 \\
(0,t,x,y) &\longmapsto (t,x,y) &
(1,t,x,y) &\longmapsto (t,x^{-1},y)\,.
\end{align*}
Using the homeomorphism~$h\colon P_F\to\onu(\tilde{F}) \setminus \nu(L)=\overline{\partial V_F \setminus X_L}$ from Construction~\ref{cons:Homeoh}
(but suppressing the~$f_k$ from the notation),
we get a~$4$-manifold
\begin{equation}
\label{equ:def-Z}
Z_F : = V_F \cup_{h} \bigcup_{k=1}^{n} X_k
\end{equation}
which inherits the orientation of~$V_F$.
By construction, there is a homeomorphism
\begin{equation}
\label{equ:def-h'}
h' \colon P_{F'} \to \overline{\partial Z_F\setminus X_L}\,,
\end{equation}
where~$P_{F'}$ denotes the plumbed manifold associated to the plumbing graph obtained from~$\Gamma_F$ by removing the edges~$e_1, e_1', \ldots ,e_n, e_n'$ and by performing~$0$-surgeries on the surfaces~$F_1$ and~$F_2$
along the discs~$D_{e_k}, D_{e_k'} \subset F_1$ and~$D_{\overline{e}_k}, D_{\overline{e}_k'} \subset F_2$,  resulting in surfaces~$F_1'$ and~$F_2'$ (see Figure~\ref{fig:Xe}).
Thus, gluing the handles~$X_k$ cancels the $2n$ extra edges of~$\Gamma_F$ at the expense of increasing the genera of~$F_1$ and~$F_2$ (each by~$n$).
Moreover, an explicit such homeomorphism~$h'$ can be obtained from~$h\colon P_F\to\overline{\partial V_F \setminus X_L}$ as follows. First consider the restriction~$h''$ of~$h$ to the complement in~$P_F$ of~$h^{-1}(\bigcup_k\text{Im}(f_k))$: this space can be understood as~$P_{F''}$, with~$F''$ defined as~$F$ but without identifying
the tori corresponding to the edges~$\{e_k,e'_k\}_k$ (see Figure~\ref{fig:Xe}, left). Then, extend~$h''$ to~$h'\colon P_{F'} \to \overline{\partial Z_F\setminus X_L}$ via a parametrisation~$P_F\setminus P_{F''}\cong \bigsqcup_k I\times\partial I\times S^1\times S^1\subset\bigsqcup_k\partial X_k$ whose inverse coincides with the restriction of~$f_k$ to~$\partial I\times \partial I\times S^1\times S^1$.
\end{construction}

\begin{construction}
\label{cons:WF}
Continuing with the notation from Construction~\ref{cons:ZF},  we conclude the construction of $W_F$.
To do so, we address the reduction of genera by performing two sets of~$1$-surgeries along well chosen curves in~$F_1'$ and~$F_2'$. 
 \begin{itemize}
 \item We begin by surgering away the new unwanted genus created during Construction~\ref{cons:ZF}.
 For this,  note that for each cancelling curve~$\delta_k$, there are simple closed curves
$$\delta_k^1 \subset (F_1')^\circ \quad \text{and} \quad \delta_k^2 \subset (F_2')^\circ$$ such that surgery along all~$\delta_k^i$ transforms~$F_i'$ back to~$F_i$. 
There are many such curves but, for the moment, we make an arbitrary choice; a specific choice will be described in Lemma~\ref{lemma:Claim0} below.
For all~$i=1,2$ and~$1\le k\le n$, choose 
pairwise disjoint neighborhoods~$\nu(\delta_k^i)$ of~$\delta_k^i$ in~$(F_i')^\circ$, and set~$Y^i_k:= D^2 \times [-1,1]$.
Glue the ``round $2$-handle"~$Y^i_k\times S^1$ to~$P_{F'}$ via a homeomorphism
\begin{equation}
\label{eq:glue-Y}
\begin{tikzcd}
\partial Y^i_k\times S^1\supset \partial D^2\times [-1,1]\times S^1\arrow[rr,"h_k^i\times \id_{S^1}"]&&\delta_k^i\times S^1\subset F_i'\times S^1\,,
\end{tikzcd}
\end{equation}
where~$h_k^i\colon \partial D^2\times [-1,1]\xrightarrow{\cong}\overline{\nu}(\delta_k^i)$ is required to map the oriented curve~$\partial D^2\times\{0\}$ to~$\delta_k^i$.
 \item  Next, we surger away the genus of the Seifert surfaces $F_1$  and $F_2$.
Recall from Terminology~\ref{ter:geo-symp-good} that the good basis of~$H_1(F)$ contains pairwise disjoint oriented curves~$\{\alpha_1, \ldots, \alpha_{g_1}\}$ in~$F_1$ and~$\{\alpha_{g_1+1}, \ldots, \alpha_{g}\}$ in~$F_2$ disjoint from the clasps,
defining curves
$$\alpha_m \subset (F_1')^\circ\cup (F_2')^\circ.$$
For all~$1\le m\le g$, choose 
pairwise disjoint neighborhoods~$\nu(\alpha_m)$ of~$\alpha_m$ in~$(F_i')^\circ$, disjoint from~$\bigcup_{k,i}\overline{\nu}(\delta_k^i)$.
Set~$Y_m:= D^2 \times [-1,1]$ and glue to the round $2$-handle~$Y_m\times S^1$ to~$P_{F'}$ via
\[
\begin{tikzcd}
\partial Y_m\times S^1\supset \partial D^2\times [-1,1]\times S^1\arrow[rr,"h_m\times \id_{S^1}"]&&\overline{\nu}(\alpha_m)\times S^1\subset F_i'\times S^1\,,
\end{tikzcd}
\]
with~$h_m\colon \partial D^2\times [-1,1]\to\overline{\nu}(\alpha_m)$ a homeomorphism mapping~$\partial D^2\times\{0\}$ to~$\alpha_m$.
\end{itemize}
Combining these two steps and using the homeomorphism~$h'\colon P_{F'} \to \overline{\partial Z_F\setminus X_L}$ induced by~$h$, we get a~$4$-manifold which inherits the orientation of~$Z_F$:
\[
W_F:=Z_F \cup_{h'} 
 \bigcup_{k=1}^{n} \left( \left(Y_k^1 \times S^1\right) \cup \left(Y_k^2 \times S^1\right)\right)
\cup \bigcup_{m=1}^{g} \left(Y_m \times S^1 \right) \,,
\]
This construction is illustrated in Figure~\ref{fig:WF}.
\end{construction}

\begin{figure}
\centering
\begin{overpic}[width=12cm]{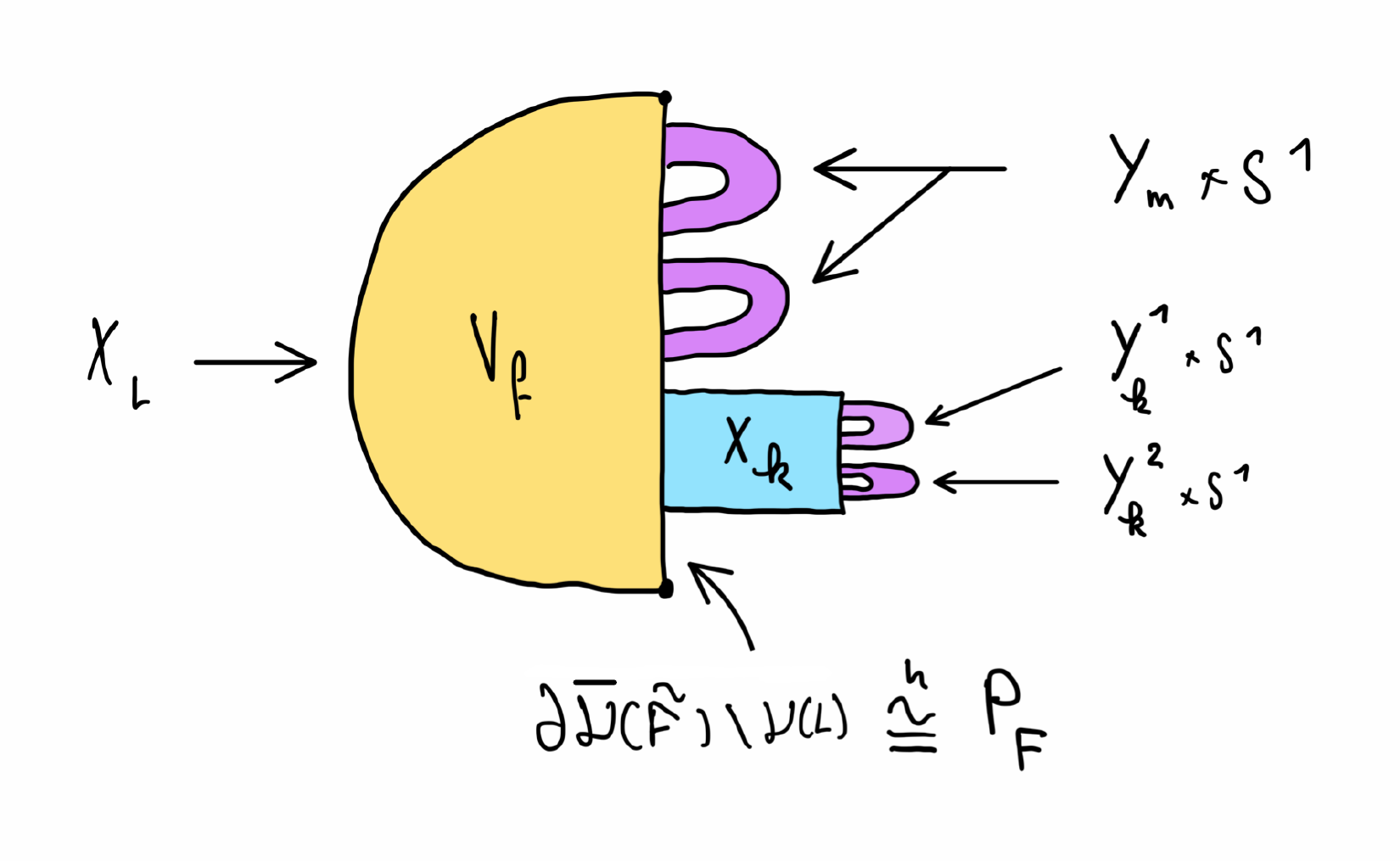}\end{overpic}
\caption{The manifold~$W_F$ is obtained by first attaching the round $2$-handles~$X_k$ to~$V_F$ to cancel the extra edges of $\Gamma_F$, and then attaching the handles~$Y_k^i \times S^1$ and~$Y_m \times S^1$ to cancel the genera.}
\label{fig:WF}
\end{figure}

\begin{remark}
We check that~$\partial W_F$ is homeomorphic to~$M_L$.
By construction, it decomposes as~$\partial W_F = X_L \cup_h P_L$,
where the manifolds~$X_L$ and~$P_L$ are glued along their boundaries via the restriction of the homeomorphism~$h$
to~$\partial P_L=\partial P_F=(\partial F_1\times S^1)\cup(\partial F_2\times S^1)$.
The restriction of~$h$ to~$\partial F_i \times S^1=\partial D_i\times S^1$ maps~$\partial D_i\times\{*\}$
(with~$*\in S^1$) to the Seifert longitude of~$K_i$ in~$\partial X_L$, and~$\{p_i\} \times S^1$ (with~$p_i\in\partial D_i$) to a meridian of $K_i$. Since this is the gluing of~$X_L$ and~$P_L$ used to define~$M_L$, we get an orientation preserving homeomorphism~$\partial W_F\cong M_L$.
\end{remark}

\medskip

\begin{remark}
\label{rem:KoComparison}
In the case~$\mu=1$,  Construction~\ref{cons:WF} boils down to~$W_F=V_F\cup\bigcup_{m=1}^{g_1}\left(Y_m\times S^1\right)$, whose boundary~$\partial W_F\cong X_L\cup P_L=M_L$ yields the~$0$-surgery along the knot~$L$.
This should be contrasted with Litherland and Ko's construction of a filling of $M_L$ which, for a Seifert surface $F$, takes the form~$V_F \cup (\mathcal{H}_g \times S^1)$ with~$\mathcal{H}_g$ a solid handlebody~\cite{LitherlandCobordism,Ko2}.
We also note that our construction depends on the choice of a good basis whereas theirs does not.
\end{remark}

\subsubsection*{Extension of the meridional homomorphism}
Recall that there is an isomorphism 
\[
\Phi_V\colon H_1(V_F) \stackrel{\cong}{\longrightarrow} \Z^\mu
\]
sending the meridian of~$\tilde F_i$ to~$t_i$ (see e.g.~\cite[Proposition 3.1]{CFT18}).
Furthermore, composing the homomorphism~$H_1(P_F)\to H_1(V_F)$ induced by~$P_F \stackrel{h}{\to}\partial \onu(\tilde{F}) \setminus \nu(L)\subset V_F$ with~$\Phi_V$ yields a homomorphism
\begin{equation}
\label{equ:phiF}
\varphi_F \colon H_1(P_F) \to \Z^\mu\,,
\end{equation}
which is meridional.
Lemmas~\ref{lemma:Claim0} and~\ref{lemma:Claim1} below ensure that we can perform the surgeries described above
on cycles that are in the kernel of~$\varphi_F$, thus allowing for the homomorphism~$\Phi_V$ to extend to~$H_1(W_F)$.

As a first step, consider the~$4$-manifold
\begin{equation}
\label{equ:U}
U:=V_F \cup \bigcup_{k=1}^n \Big(X_k \cup \left(Y_k^1 \times S^1\right)\cup \left(Y_k^2 \times S^1\right)\Big)\,,
\end{equation}
which depends on the choice of the attaching curves~$\delta_k^i$. In the case~$\mu=1$, we have~$U=V_F$, and
the conclusion of the following lemma holds trivially; we therefore assume~$\mu=2$.

\begin{lemma}
\label{lemma:Claim0}
For each cancelling curve~$\delta_k \subset F$, there exist simple closed curves~$\delta_k^1$ in~$(F_1')^\circ$ and~$\delta_k^2$ in~$(F_2')^\circ$ with the following properties:
\begin{itemize}
\item For each~$i$, surgery along all~$\{\delta_k^i\}_k$ transforms~$F_i'$ into~$F_i$.
\item The isomorphism~$\Phi_V\colon H_1(V_F)\to\Z^2$ extends to~$\Phi_U\colon H_1(U)\to\Z^2$, with~$U$
obtained as in~\eqref{equ:U} using the attaching curves~$\delta_k^i$.
\end{itemize}
\end{lemma}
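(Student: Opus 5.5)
The plan is to identify precisely what attaching each piece does to $H_1$, and then choose the curves $\delta_k^i$ to lie in the kernel of the relevant homomorphism. The pieces are $X_k$ and the round $2$-handles $Y_k^i\times S^1$.

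\emph{Effect of the pieces on $H_1$.} The manifold $X_k=I\times I\times S^1\times S^1$ is glued along $\partial I\times I\times S^1\times S^1$, which is two copies of $I\times T^2$, one around $T_k$ and one around $T'_k$. A Mayer--Vietoris argument shows that attaching $X_k$ has two effects on $H_1$:
\begin{itemize}
\item[(a)] it adds one new generator, a loop $\epsilon_k$ running through $X_k$ in the $I$-direction from $T_k$ to $T'_k$ and closing up in $V_F$;
\item[(b)] it imposes the relations $(x,y)\sim(x^{-1},y)$, identifying the classes of $T_k$ with those of $T'_k$ through $f_k$.
\end{itemize}
For the relations (b) to be compatible with $\Phi_V$, I will check that $\Phi_V$ agrees on the glued tori. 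The class $y$ is the meridian of $\tilde F_2$ (or of $\tilde F_1$ on the other side of the torus), so it maps to $t_2$ on both tori. The class $x=-\partial D_{e}$ maps to $\sgn(e)t_1$, as recorded in the proof of Lemma~\ref{lemma:MeridionalPL}. Since $e_k$ and $e'_k$ have opposite signs, the inversion $x\mapsto x^{-1}$ exactly matches $\Phi_V$ on the two tori. Hence $\Phi_V$ extends over $V_F\cup\bigcup_k X_k$, and the only freedom is the value assigned to the new big loop $\epsilon_k$, which we may take arbitrarily.

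\emph{Effect of the round $2$-handles.} Attaching $Y^i_k\times S^1$ along $\delta_k^i\times S^1$ adds no new $1$-cycles, since $Y_k^i\times S^1\simeq S^1$ and that circle is the fibre, which is already present. It kills only the class $[\delta_k^i\times\{*\}]$. So the extension to $U$ exists if and only if $\Phi(\delta_k^i)=0$ for every $i,k$, and the real task is to choose the curves $\delta_k^i$ with this property.

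\emph{Choosing $\delta_k^1$.} After $0$-surgery on $F_1$ along the two discs $D_{e_k}$ and $D_{e'_k}$, the surface $F_1'$ acquires a tube. Let $\eta$ be the meridian of this tube, i.e.\ the boundary circle $-\partial D_{e_k}$. Any curve $c$ in $F_1'$ crossing the tube once, with $\eta$, forms a symplectic pair, and surgering along a curve of the form $c+\text{(multiples of }\eta)$ undoes the tube. I would take $\delta_k^1$ to be the arc of $\delta_k$ lying in $F_1$ between the two clasps, closed up through $X_k$ along the $I$-direction. Its class is then the sum of $\epsilon_k$ and a path in $F_1^\circ\times\{*\}$. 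Choosing the value of $\Phi$ on $\epsilon_k$ to cancel the $\Phi$-value of that path gives $\Phi(\delta_k^1)=0$; the $\Phi$-value of such a path is computed by linking numbers, up to multiples of $t_2$ coming from pushing across $\tilde F_2$. I may also twist by powers of $\eta$: this changes $\Phi$ by $\pm t_1$ and still undoes the tube after surgery.

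\emph{Choosing $\delta_k^2$.} Analogously, $\delta_k^2$ is the arc of $\delta_k$ in $F_2$, closed up through $X_k$. It traverses the same big loop $\epsilon_k$, since the torus $T_k$ is shared by both sides of the plumbing. Its $\Phi$-value is therefore $\Phi(\epsilon_k)$ plus a path contribution, and this time I can adjust by twisting around the $F_2$-tube meridian, which changes $\Phi$ by $\pm t_2$.

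\emph{Main obstacle.} The difficulty is making both conditions hold simultaneously with the single free parameter $\Phi(\epsilon_k)\in\Z^2$, together with the twisting freedoms $t_1$ (for $\delta_k^1$) and $t_2$ (for $\delta_k^2$). After fixing $\Phi(\epsilon_k)$ to kill $\Phi(\delta_k^1)$, the required adjustment for $\delta_k^2$ must lie in the span of $t_2$. I would verify this using the fact that $\delta_k=\delta_k^1\cdot\delta_k^2$ is a closed curve in $F$, pushed into $V_F$. Its image under $\Phi_V$ has the form $t_1^{a}t_2^{b}$, with $a=\lk(\delta_k,K_1)$ and $b=\lk(\delta_k,K_2)$ read from the pushed-in positions. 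The $t_1$-component of this discrepancy can be absorbed by twisting $\delta_k^1$ (contributing $\pm t_1$), and the $t_2$-component by twisting $\delta_k^2$ (contributing $\pm t_2$). Together with $\Phi(\epsilon_k)$, this gives enough freedom to make $\Phi(\delta_k^1)=\Phi(\delta_k^2)=0$.

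The twisted curves still satisfy the first bullet of the lemma, because each is dual to its tube meridian. Distinct indices $k$ involve disjoint tubes, so the choices can be made independently.
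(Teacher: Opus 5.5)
Your argument is correct in substance, but it uses a different mechanism from the paper's proof.

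\textbf{The paper's route.} The paper never uses twisting. It takes the specific curve $\tilde\delta$ obtained by pushing $\delta^{++}$ into $B^4$. It splits $h^{-1}(\tilde\delta)$ at the tori $T_k,T_k'$ and closes the two halves by antiparallel arcs $\gamma^1,\gamma^2$ in $\partial X_k$. A rectangle in $X_k$ then gives $[\tilde\delta]=[h'(\delta^1\times\{y\})]+[h'(\delta^2\times\{y\})]$ in $H_1(Z_F)$. Since $\tilde\delta$ lies in the $4$-ball $B$ (the exterior of the trace of the pushed-in C-complex), $\Phi_V(\tilde\delta)=1$. So the discrepancy you worry about in your last paragraph is automatically trivial for this choice. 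The free value on the big loop is spent making $\Phi_Z(\delta^1)=1$, and $\Phi_Z(\delta^2)=1$ follows.

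\textbf{Your route.} You allow an arbitrary discrepancy and absorb it by Dehn-twisting each $\delta_k^i$ about its tube meridian. Together with the free value $\Phi(\epsilon_k)\in\Z^2$, this gives four integer unknowns against four integer equations. The system is unimodular, hence solvable. Twisting is legitimate: the twist is a homeomorphism of $F_i'$ supported near the tube, so it fixes the $\alpha_m$ and the surgered surface is still $F_i$.

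\textbf{Comparison.} Your approach buys robustness: you never need to exhibit a nullhomologous representative in $V_F$. The cost is canonicity. When the twists are nonzero you lose the identity $[\tilde\delta]=[\delta^1]+[\delta^2]$. The paper reuses that identity in Section~\ref{sec:IntersectionForm} to build the spheres $R_\delta=D_\delta\cup D_k$ with $\partial D_k=\tilde\delta$. In fact, starting from the arcs of $h^{-1}(\tilde\delta)$, your system forces the twists to be zero and recovers the paper's curves.

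\textbf{Things to fix in the write-up.}

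First, the meridian of the tube in $F_1'$ is $\partial D_{e_k}$. Since $e_k$ ends at the vertex $F_2$, a meridional homomorphism sends it to $t_2^{\pm1}$, not $t_1^{\pm1}$ (see the proof of Lemma~\ref{lemma:MeridionalPL}). Symmetrically, the tube meridian in $F_2'$ goes to $t_1^{\pm1}$. The same swap occurs in your compatibility check for the tori. This is harmless, because the two twists still span $\Z^2$.

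Second, your last paragraph has the order wrong. If you fix $\Phi(\epsilon_k)$ first and then twist $\delta_k^1$, you undo it. The correct order is:
\begin{enumerate}
\item Choose the twists so that $\Phi(\delta_k^1)\,\Phi(\delta_k^2)=1$. This product does not depend on $\Phi(\epsilon_k)$, because $\delta_k^1$ and $\delta_k^2$ cross $X_k$ in opposite directions; that is exactly the paper's rectangle.
\item Only then choose $\Phi(\epsilon_k)$ to make $\Phi(\delta_k^1)=1$.
\end{enumerate}

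Third, $\lk(\delta_k,K_i)$ is not well defined for a curve on $F$ passing through clasps. You never need its value, so simply drop it.
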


\begin{proof}
We begin with the construction of the loops~$\delta_k^i \subset (F_i')^\circ$ for $i=1,2$.
Before diving into the details, we outline the idea.
Recall from Constructions~\ref{cons:ZF} and~\ref{cons:WF} that during the construction of $Z_F = V_F \cup_h \bigcup_{k=1}^n X_k$,  hollow tubes were added to the $F_i$ resulting in the new surfaces~$F_i'$.
In essence, the loops~$\delta_k^i$ are obtained by joining an arc running along the tube with an arc with in~$F_i^\circ$; see Figure~\ref{fig:Xe}.
In order to extend~$\Phi_V \colon H_1(V_F) \to \Z^2$ over~$H_1(U)$, we need show that it vanishes on (a specific choice of) these loops and, for this reason,  it is necessary to describe them in detail.

First, we may assume that the oriented simple closed cancelling curve~$\delta_k=:\delta\subset F$ behaves well near the clasps, as illustrated on the left of Figure~\ref{fig:TildeDelta}.
This makes it possible to define a pushoff~$\delta^{++}\subset\partial\onu(F)$ which lies in the surface
\[
(F_1\times \{\pm c\}\setminus \nu (F_2)) \cup (F_2 \times \{\pm c\} \setminus \nu(F_1))\,.
\]
As illustrated on the right hand side of Figure~\ref{fig:TildeDelta}, pushing the C-complex~$F$ into~$B^4$,  the cycle~$\delta^{++}$ is mapped to the cycle
\begin{equation}
\label{eq:wtdelta}
\tilde \delta\subset\partial \onu(\tilde F)\setminus \nu(L)
\end{equation}
obtained as the concatenation of the following four oriented paths:
\[
(\delta^{++} \cap  (F_1 \times \{c\})) \star\{1 + c\}\,,\quad \partial(\delta^{++} \cap (F_1 \times \{c\}))  \star [1+c, 2+c] \,,\quad (\delta^{++} \cap (\overline{F_2\setminus(F_1 \times J)} \times \{c\})) \star\{2 + c\}\,.
\]
Note that the middle formula defines two paths in Figure~\ref{fig:TildeDelta}.
\begin{figure}
\centering
\begin{overpic}[width=15cm]{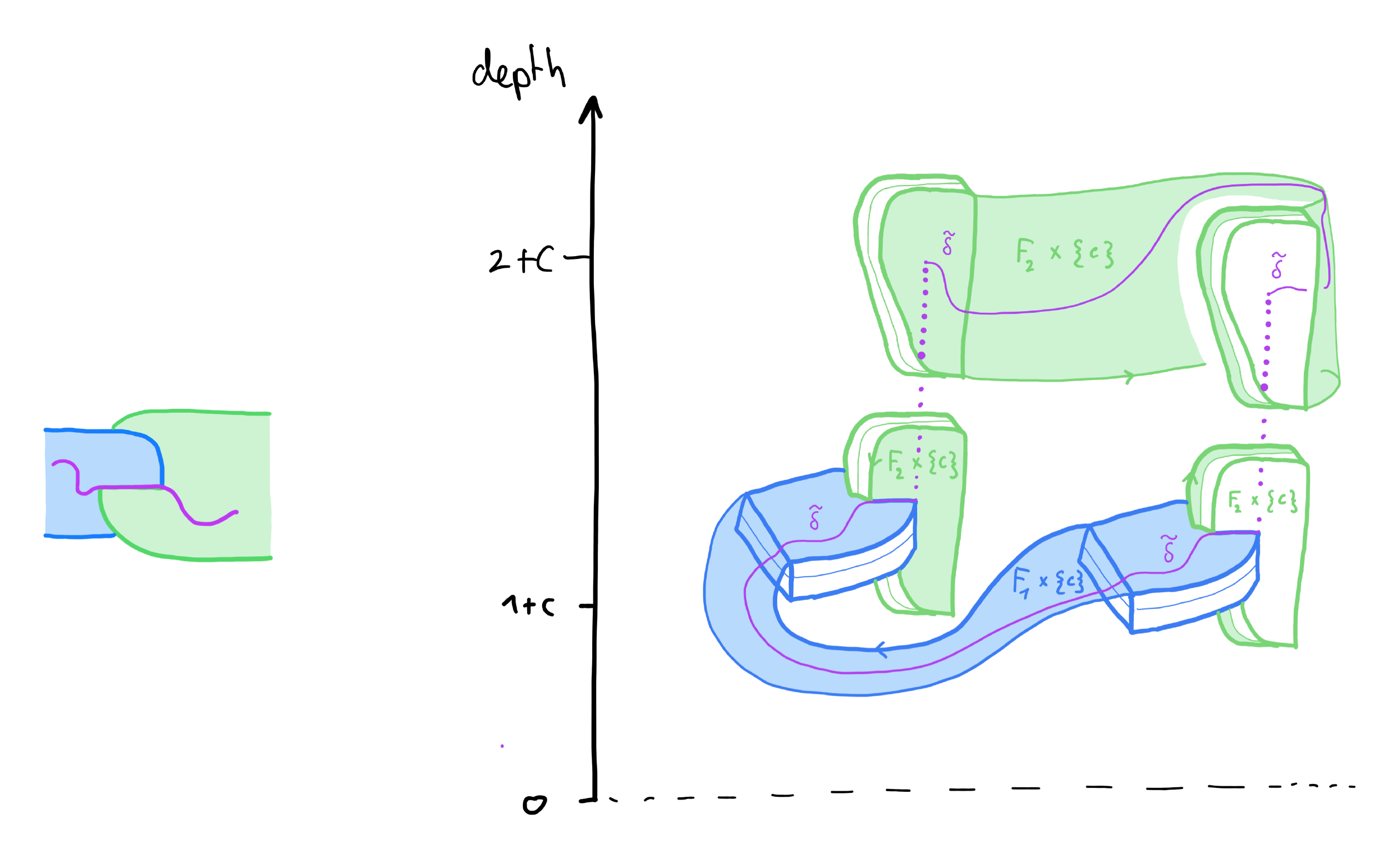}\end{overpic}
\caption{Left: a curve behaving well near a clasp. Right: a schematic picture of $\tilde \delta$.}
\label{fig:TildeDelta}
\end{figure}

Recall the homeomorphism~$h\colon P_F \to \partial \onu(\tilde{F}) \setminus \nu(L)$ from Lemma~\ref{lemma:HomeoPF},  and consider the cycle~$h^{-1}(\tilde \delta) \subset P_F$.
For~$i=1,2$, this cycle intersects~$F_i^\circ\times S^1$ along the fixed~$S^1$-coordinate~$y_i\in S^1$ corresponding to~$(c,i+c)\in\partial(J\star[i-c,i+c])$.
Up to reparametrization of~$S^1$, it may be assumed  that~$y_1=y_2=:y$.
By definition of a cancelling curve,
this cycle intersects~$T_k=\partial D_{e_k}\times S^1$ (resp.~$T'_{k}= \partial D_{e'_k} \times S^1$) once.
We write~$(x_i,y)$  (resp.~$(x_i',y)$) for the intersection of~$h^{-1}(\tilde\delta)$ with the boundary of the collar neighborhood of~$T_k$ (resp.~$T'_k$) in~$F_i^\circ\times S^1$, see Figure~\ref{fig:Xe}.
Up to reparametrization of~$\partial  D_{\overline{e}_k}\cong S^1$ and~$\partial  D_{\overline{e}'_k}\cong S^1$, it may be assumed  that~$x_1=x_2=:x$ and~$x'_1=x'_2=:x'$.

For~$i=1,2$, consider the simple oriented closed curve~$\delta^i$ in~$(F_i')^\circ$ defined by
\begin{equation}
\label{eq:deltai}
\delta^i\times\{y\}=\left(h^{-1}(\tilde \delta)\cap(F_i^\circ \times S^1)\right)\cup f_k(\gamma^i)\,,
\end{equation}
where~$\gamma^1$ is a simple oriented path in~$I\times\{0\}\times S^1\times\{y\}\subset \partial X_k$ joining~$(0,0,x,y)$ and $(1,0,(x')^{-1},y)$, the path~$-\gamma^2$ is the parallel copy of~$\gamma^1$ in~$I\times\{1\}\times S^1\times\{y\}\subset \partial X_k$, and the orientations are chosen to be compatible with the one of~$h^{-1}(\tilde \delta)$. This is illustrated in Figure~\ref{fig:Xe}.

\begin{figure}[tb]
\centering
\begin{overpic}[width=\textwidth]{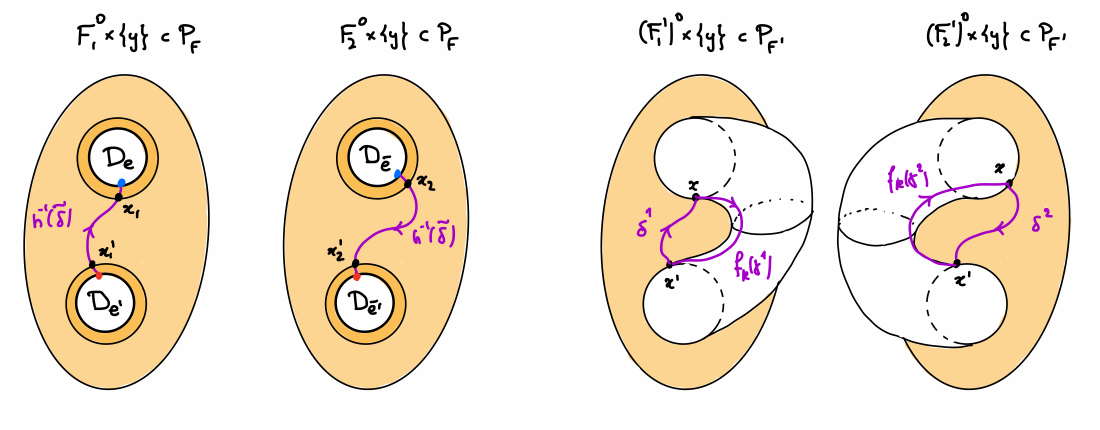}\end{overpic}
\caption{Left: the cycle~$h^{-1}(\tilde \delta)\subset P_F$ decomposed into two arcs. Right: the resulting cycles~$\delta^1$ and~$\delta^2$. }
\label{fig:Xe}
\end{figure}

This concludes the construction of the curves $\{ \delta_k^i\}$ for $i=1,2$.
By definition, the surface obtained from~$F_i'$ by surgery along all curves~$\delta^i_k$ is canonically homeomorphic to $F_i$,
proving the first item of the proposition. 

We now turn to the second item.
Recall the~$4$-manifold~$Z_F$ defined in~\eqref{equ:def-Z} and the homeomorphism~$h' \colon P_{F'} \to \overline{\partial Z_F\setminus X_L}$ from~\eqref{equ:def-h'}.
Since the paths~$\gamma^1,\gamma^2$ are antiparallel, the cycle
\[
\gamma^1+\gamma^2-\left(\{0\}\times I\times\{x\}\times\{y\}\right)+\left(\{0\}\times I\times\{(x')^{-1}\}\times\{y\}\right)
\]
bounds a rectangle in~$X_k$, as illustrated in Figure~\ref{fig:rectangle}.
Finally, observe that the two paths in the middle of the formula defining~$\tilde\delta$ are given by~$\{x\}\star[1+c,2+c]$
and~$\{(x')^{-1}\}\star[1+c,2+c]$; these are homotopic rel boundary in~$\partial V_F$ to the paths~$f_k(\{0\}\times I\times\{x\}\times\{y\})$ and~$f_k(\{0\}\times I\times\{(x')^{-1}\}\times\{y\})$, respectively.
This leads to the equality
\begin{equation}
\label{equ:deltak}
[\tilde \delta]=[h'(\delta^1\times\{y\})]+[h'(\delta^2\times\{y\})]\in H_1(Z_F)\,,
\end{equation}
as illustrated in Figure~\ref{fig:rectangle}.
We claim the isomorphism~$\Phi_V \colon H_1(V_F) \to \Z^2$ 
extends to a homomorphism~$\Phi_Z \colon H_1(Z_F) \to~\Z^2$
mapping~$[h'(\delta^i \times \{y\})]$ to~$1\in \Z^2$ for~$i=1,2$ and all~$y\in S^1$.
As explained in the proof of~\cite[Lemma~4.9]{ConwayNagelToffoli}, the unique homomorphism~$\Phi_X \colon H_1(X_{k}) \to \Z^2$ mapping~$[\{\ast\} \times \{t\} \times S^1 \times \{y\}]$ to~$t_1$ and~$[\{\ast\} \times \{t\} \times \{x\} \times S^1]$ to~$t_2$ is compatible with the meridional homomorphism~$\varphi_F\colon H_1(P_F) \to\Z^2$ defined in~\eqref{equ:phiF}, so the isomorphism~$\Phi_V\colon H_1(V_F) \to \Z^2$ extends to 
$$H_1(Z_F) \to \Z^2$$
This extension is not unique: the Mayer--Vietoris exact sequence for~$Z_F=V_F\cup\bigcup_k X_k$ shows it can take arbitrary values on some homology classes, which can be chosen to be those of~$h'(\delta^1 \times \{y\})$. Hence, we can choose the extension~$\Phi_Z \colon H_1(Z_F) \to \Z^2$ to satisfy
$$\Phi_Z([h'(\delta^1\times\{y\})]) = 1.$$
It remains to verify that~$\Phi_Z([h'(\delta^2\times\{y\})])=1$.
Consider the space~$(F_1\star[0,1])\cup(F_2\star[0,2])\subset B^4$, which should be thought of
as the ``trace'' of the pushed-in C-complex. 
Observe that its exterior
in $B^4$, is homeomorphic to a topological ball in~$V_F$ (see~\cite[Section~3.1]{CFT18}) that we denote
\[
B:=B^4\setminus((F_1 \times J \star[0,1+c])\cup(F_2 \times J \star [0,2+c]))\,.
\]
Since (the boundary of) this $4$-ball contains~$\tilde \delta$, we have~$[\tilde \delta]=0\in H_1(V_F)$
and~$\Phi_V([\tilde \delta]) = 1\in\Z^2$. 
Since~$\Phi_Z$ extends~$\Phi_V$, the equality~\eqref{equ:deltak} implies
\[
1 = \Phi_V([\tilde \delta]) = \Phi_Z([\tilde \delta]) = \Phi_Z([h'(\delta^1\times\{y\})]) \, \Phi_Z([h'(\delta^2\times\{y\})]) = \Phi_Z([h'(\delta^2\times\{y\})])\,,
\]
This concludes the proof of the claim.

\begin{figure}[tb]
\centering
\begin{overpic}[width=10cm]{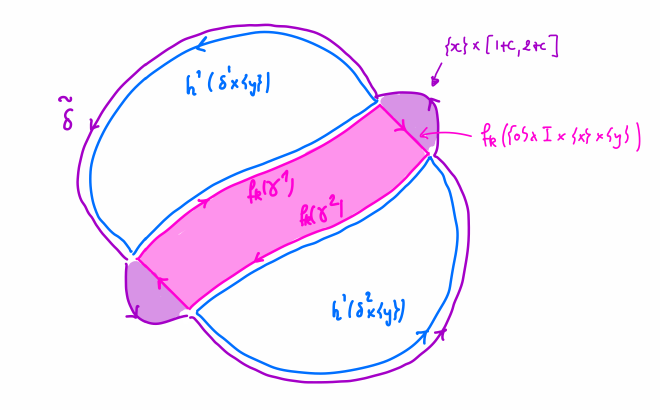}\end{overpic}
\caption{Schematic justification of $[\tilde \delta]=[h'(\delta^1\times\{y\})]+[h'(\delta^2\times\{y\})]$.}
\label{fig:rectangle}
\end{figure}

Since the extension~$\Phi_Z$ of~$\Phi_V$ constructed in the claim takes trivial values on the attaching curves~$h'(\delta^i\times\{y\})$, a Mayer-Vietoris argument shows that it extends further to~$\Phi_U\colon H_1(U)\to~\Z^2$.
\end{proof}

We are now ready to extend the coefficient system over $W_F$ which, recall, is defined as
\[
W_F=U\cup\bigcup_{m=1}^{g} \left(Y_m \times S^1\right)\,,
\]
where the round $2$-handle~$Y_m\times S^1=D^2\times [-1,1]\times S^1$ is attached via
\[
\begin{tikzcd}
\partial Y_m\times S^1\supset \partial D^2\times [-1,1]\times S^1\arrow[rr,"h_m\times \id_{S^1}"]&&\nu(\alpha_m)\times S^1\subset F_i'\times S^1
\end{tikzcd}
\]
with~$h_m$ a homeomorphism mapping~$\partial D^2\times\{0\}$ to the genus curves~$\alpha_m$.

\begin{lemma}
\label{lemma:Claim1}
The homomorphism~$\Phi_U\colon H_1(U)\to\Z^\mu$ extends to~$\Phi\colon H_1(W_F)\to\Z^\mu$.
\end{lemma}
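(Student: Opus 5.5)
The plan is a Mayer--Vietoris argument. Attaching a round $2$-handle $Y_m\times S^1=D^2\times[-1,1]\times S^1$ along $\overline{\nu}(\alpha_m)\times S^1$ adds no new $1$-cycles. It kills the classes of the attaching circles $\alpha_m\times\{y\}$, with $y\in S^1$; the $S^1$-factor is already present in $U$. More precisely, Mayer--Vietoris for $W_F=U\cup\bigcup_m (Y_m\times S^1)$, glued along $\bigsqcup_m \overline{\nu}(\alpha_m)\times S^1\simeq \bigsqcup_m T^2$, shows that $H_1(W_F)$ is the quotient of $H_1(U)\oplus\bigoplus_m H_1(Y_m\times S^1)$ by the image of $\bigoplus_m H_1(\alpha_m\times S^1)$. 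Here $H_1(Y_m\times S^1)\cong\Z$ is generated by $\{*\}\times S^1$.

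Define $\Phi$ on the summand $H_1(Y_m\times S^1)$ by sending $\{*\}\times S^1$ to $t_i$, where $\alpha_m\subset F_i'$. The extension then exists exactly when $\Phi_U$ is compatible on the two generators of each gluing torus.
\begin{enumerate}
\item The fibre $\{p\}\times S^1$ with $p\in\alpha_m$ is the $S^1$-fibre of $F_i'\times S^1$. Under $h'$ (hence $h$) it is a meridian of $\tilde F_i$, so $\Phi_U$ sends it to $t_i$, matching the value just chosen.
\item The attaching curve $h'(\alpha_m\times\{y\})$ dies in $Y_m\times S^1$, so we must show $\Phi_U([h'(\alpha_m\times\{y\})])=1$.
\end{enumerate}

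For the second point, note that $\alpha_m$ is a genus curve disjoint from the clasps and from the neighbourhoods used in Constructions~\ref{cons:ZF} and~\ref{cons:WF}. So $h'(\alpha_m\times\{y\})$ lies in the image of $P_F$ under $h$ and is already a cycle in $V_F$; it suffices to show $\Phi_V([h(\alpha_m\times\{y\})])=1$. Up to the choice of $y$, this cycle is a pushoff $\alpha_m^{\pm}\star\{i\pm c\}$ of $\alpha_m\subset F_i\star\{i\}$ lying on $\partial\onu(\tilde F_i)$. Since $\Phi_V$ is determined by linking numbers with the components, equivalently by intersection with $\tilde F_1,\tilde F_2$ via meridians, we can compute $\Phi_V([\alpha_m^{\pm}\star\{i\pm c\}])$ as follows. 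Push the curve radially out to $S^3\star\{0\}$, avoiding $\tilde F$, or use the $4$-ball $B$ from the proof of Lemma~\ref{lemma:Claim0}, which contains the appropriate pushoffs. Either way the value is $t_1^{\lk(\alpha_m^\pm,K_1)}t_2^{\lk(\alpha_m^\pm,K_2)}$ in $S^3\setminus L$.

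Both linking numbers vanish: $\alpha_m^\pm$ bounds the pushed copy of $F_i$ and so is disjoint from it. Concretely, $\alpha_m^\pm\subset F_i\times\{\pm c\}$ is disjoint from $F_i$ and from $F_j$ (since $\alpha_m$ avoids the clasps), so its intersection numbers with the surfaces bounding $K_1$ and $K_2$ are zero.

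The main obstacle is justifying cleanly that the pushoff $h(\alpha_m\times\{y\})$ is null-homologous in $V_F$, or at least has trivial image. The approach I would try first is the one in Lemma~\ref{lemma:Claim0}: choose $y$ corresponding to $(\pm c, i+c)$ so that the curve lies on $\partial B$, which is a $4$-ball. Then $[h(\alpha_m\times\{y\})]=0\in H_1(V_F)$, and the extension follows from the Mayer--Vietoris description above. For $\mu=1$ the same argument applies with $U=V_F$.
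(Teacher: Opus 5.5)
Your proposal is correct and follows essentially the same route as the paper: a Mayer--Vietoris argument that reduces the extension to showing that the attaching curves $h'(\alpha_m\times\{y\})$ already lie in $V_F$ and are null-homologous there, because they can be moved onto the boundary of the $4$-ball $B$. The paper slides $\alpha\times\{s\}\star\{i\pm c\}$ to $\alpha\times\{0\}\star\{i+c\}$ via a cylinder, whereas you either choose $y$ at a corner or push radially into $X_L$ and use the vanishing linking numbers; your explicit check that the $S^1$-fibre goes to $t_i$ on both sides is a detail the paper leaves implicit in its final appeal to Mayer--Vietoris.
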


\begin{proof}
We claim that given any oriented closed curve~$\alpha$ in~$F_i$ that is disjoint from the clasps, and any~$(s,t)\in\partial(J\star [i-c,i+c])$, the oriented closed curve~$\alpha \times \{s\} \star \{t\} \subset \partial \overline{\nu} (\tilde F)$ lies
in the kernel of~$\Phi_V\colon H_1(V_F)\to\Z^2$.
Let us first assume that~$t=i\pm c$ and~$s\ge 0$.
Since the cylinder~$\alpha\times[0,s]\star\{i\pm c\}$ is contained in~$\partial \overline{\nu}(\tilde F)\subset V_F$, the cycle~$\alpha \times \{s\} \star \{i\pm c\}$ is homologous in~$V_F$ to the cycle~$\alpha \times \{0\} \star \{i\pm c\}$, 
which is included in the exterior of the trace of the pushed in C-complex (recall the end of the proof of Lemma~\ref{lemma:Claim0}). 
Since this is a topological ball, we have
\[
\Phi_V([\alpha \times \{s\} \star \{i\pm c\}]) = \Phi_V([\alpha \times \{0\} \star \{i\pm c\}]) = 1\,.
\]
The other cases can be treated in a similar way.
This concludes the proof of the claim.

Since~$\Phi_U$ extends~$\Phi_V$, these classes also lie in the kernel of~$\Phi_U$. In particular, this holds true for
the attaching curves~$\alpha_m\times \{y\}\subset F'_i\times S^1$. The statement now follows from Mayer-Vietoris.
\end{proof}

\begin{remark}
When working with classical invariants of a surface~$F \subset X$ in a simply-connected~$4$-manifold with~$\pi_1(X\setminus \nu(F)) \cong \Z$,  it usually becomes necessary to choose a framing~$\overline{\nu}(F) \cong F\times D^2$ with the property that push offs of curves on $F$ are nullhomologous in $X$.
In the case of pushed-in Seifert surfaces,  explicitly parametrising~$\overline{\nu}(F)$ as in Construction~\ref{cons:parametrizationnu(F)} serves this purpose.
This is the underlying reason for which Lemma~\ref{lemma:Claim1} holds.
\end{remark}

\subsubsection*{Proof of the first item of Theorem~\ref{thm:ExistenceOfW}}
Let~$W_F$ be the~$4$-manifold described in Construction~\ref{cons:WF}, and let~$\Phi\colon\pi_1(W_F)\to\Z^\mu$ be the homomorphism
defined in Lemma~\ref{lemma:Claim1} (precomposed with the Hurewicz map). We now check that~$\Phi$ is an isomorphism which restricts to a meridional
homomorphism on~$M_L$.

The proof that~$\Phi$ is an isomorphism takes the most effort but the idea can be described succintly.
First,  since the $C$-complex~$F$ is connected,~$\pi_1(V_F)$ is freely generated by the meridians of the~$F_i$.
Next, when forming~$W_F$ as in Constructions~\ref{cons:ZF} and~\ref{cons:WF},  adding the~$X_k$ creates loops~$\delta_k^1,\delta_k^2$ that are then killed by adding the round $2$-handles~$Y_k \times S^1$; the~$S^1$-factors are identified with the aforementioned meridians.
Finally, adding the round $2$-handles~$Y_m \times S^1$ kills push offs of the genus curves~$\alpha_i$ (which, it turns out are already nullhomologous in~$V_F$) while again preserving the meridians.
Making this precise relies on the Seifert-Van Kampen theorem together with the following lemma, whose proof we leave to the reader.

\begin{lemma}
\label{lemma:Pushout}
Consider the following commutative diagrams of groups, where~$(D, \varphi_1, \varphi_2)$ is a pushout of~$C\stackrel{j}{\leftarrow}A\stackrel{i}{\rightarrow}B$, and~$\varphi\colon D\to G$ is given by the corresponding universal property:
\[
\begin{tikzcd}
A \arrow[r, "i"] \arrow[d, "j"'] & B \arrow[d, "\varphi_2"] \arrow[ddr, bend left=20, "g"] & \\
C \arrow[r, "\varphi_1"'] \arrow[drr, bend right=20, "f"'] & D \arrow[dr, "\varphi"] & \\
& & G\,.
\end{tikzcd}
\]
If~$i$ is surjective and~$f$ is an isomorphism, then~$\varphi \colon  D \to G$ is an isomorphism.\qed
\end{lemma}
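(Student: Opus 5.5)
The plan is to prove injectivity and surjectivity of $\varphi$ separately, using only the universal property of the pushout $D$ together with the commutativity relations $\varphi\circ\varphi_1=f$ and $\varphi\circ\varphi_2=g$.

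\emph{Surjectivity.} This is immediate: since $f=\varphi\circ\varphi_1$ is an isomorphism, in particular surjective, the map $\varphi$ is surjective.

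\emph{Injectivity.} The key observation is that $\varphi_1\colon C\to D$ is itself surjective. We proceed as follows.
\begin{enumerate}
\item The group $D$ is generated by $\varphi_1(C)\cup\varphi_2(B)$; this is standard for pushouts, either from the amalgamated-product presentation or from the universal property applied to the subgroup generated by these images.
\item Since $i$ is surjective, $\varphi_2(B)=\varphi_2(i(A))=\varphi_1(j(A))\subset\varphi_1(C)$.
\item Hence $D=\varphi_1(C)$, so $\varphi_1$ is surjective.
\item Now take $d\in\ker\varphi$ and write $d=\varphi_1(c)$. Then $f(c)=\varphi(\varphi_1(c))=1$, and injectivity of $f$ gives $c=1$, hence $d=1$.
\end{enumerate}

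A cleaner alternative avoids generation statements altogether. We construct an inverse $\psi\colon G\to D$ by setting $\psi:=\varphi_1\circ f^{-1}$ and check that $\psi\circ\varphi=\id_D$ via the uniqueness part of the universal property.
\begin{enumerate}
\item On $C$ we have $\psi\varphi\varphi_1=\varphi_1 f^{-1} f=\varphi_1$.
\item On $B$, given $b=i(a)$, we have $\psi\varphi\varphi_2(b)=\varphi_1 f^{-1} g(i(a))=\varphi_1 f^{-1} f(j(a))=\varphi_1 j(a)=\varphi_2 i(a)=\varphi_2(b)$.
\item Uniqueness in the universal property therefore gives $\psi\circ\varphi=\id_D$.
\item Finally, $\varphi\circ\psi=\varphi\varphi_1 f^{-1}=\id_G$.
\end{enumerate}

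The only mildly delicate point is the identity $g\circ i=f\circ j$, used in step 2 above. It follows from commutativity of the diagram: $g\, i=\varphi\varphi_2 i=\varphi\varphi_1 j=f\, j$.
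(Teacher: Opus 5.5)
Both of your arguments are correct. The paper gives no proof to compare against: the lemma is stated with \qed and ``whose proof we leave to the reader''. Either of your arguments therefore fills that gap.

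The second argument is the one worth recording. It uses only the uniqueness clause of the universal property, together with the cancellation $\psi\varphi\varphi_2\circ i=\varphi_2\circ i\Rightarrow\psi\varphi\varphi_2=\varphi_2$. That cancellation holds in any category as soon as $i$ is an epimorphism, so surjectivity of elements is never really needed. It also shows in passing that $\varphi_1=\varphi^{-1}\circ f$ is itself an isomorphism.

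Your first argument is the group-theoretic version of the same fact: a pushout of an epimorphism is an epimorphism, so $\varphi_1$ is onto. The generation statement it relies on is correctly justified by your remark about applying the universal property to the subgroup generated by $\varphi_1(C)\cup\varphi_2(B)$.
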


We now proceed with the proof that~$\Phi\colon\pi_1(W_F)\to\Z^\mu$ is an isomorphism. 
For any non-negative integer~$0\le N\le n$, consider
the~$4$-manifold
\[
U_N:=V_F \cup \bigcup_{k=1}^N \Big(X_k \cup \left(Y_k^1 \times S^1\right)\cup \left(Y_k^2 \times S^1\right)\Big)\,.
\]
In order to make sense of this union,
recall that by~\eqref{eq:glue-Y}, the round $2$-handle~$Y^i_k\times S^1=D^2\times [-1,1]\times S^1$ is glued to~$X_k=I\times I\times S^1\times S^1$ via the restriction of~$h_k^i\times\id_{S^1}$ to the semi-circle in~$\partial D^2$ corresponding to~$\delta_k^i\cap X_k$, see Figure~\ref{fig:Xe}.

Note that~$U_0=V_F$ and~$U_n=U$. Note also that~$n=0$ in the case~$\mu=1$, so we assume~$\mu=2$.
Let us check that the restriction~$\Phi_N\colon \pi_1(U_N)\to\Z^2$ of~$\Phi$
is an isomorphism, inductively on~$N\ge 0$.
The base case follows from~\cite[Proposition~3.1]{CFT18}, so let us assume that~$\Phi_{N-1}$ is an isomorphism.
The induction step relies on the Seifert-Van Kampen theorem for the union
\[
U_N=U_{N-1}\cup \big( X_{N} \cup (Y_{N}^1 \times S^1) \cup (Y_{N}^2 \times S^1) \big)\,,
\]
which can be used since the intersection is connected:
\[
U_{N-1} \; \cap \; \big ( X_{N} \cup (Y_{N}^1 \times S^1) \cup (Y_N^2 \times S^1) \big) = (T_N \times I) \cup (T'_N \times I) \cup (h^{-1}(\tilde \delta_N) \times I \times S^1)=:A.
\]
By construction of~$\Phi_U$ (recall Lemma~\ref{lemma:Claim0}), the inclusion induced homomorphisms fit in the following commutative diagram where the square is a pushout of~$(\pi_1(A),i_\#,j_\#)$ by the Seifert-Van Kampen theorem:
\begin{equation}
\label{eq:Induction}
\begin{tikzcd}
\pi_1(A) \arrow{d}[swap]{j_\#} \arrow{r}{i_\#} & \pi_1\left(X_N \cup (Y_N^1 \times S^1) \cup (Y_{N}^2 \times S^1)\right) \arrow{d} \arrow[bend left =25]{ddr}{} & \\
\pi_1(U_{N-1}) \arrow{r} \arrow[bend right=10, swap]{drr}{\Phi_{N-1}}[swap]{\cong} & \pi_1\left(U_N \right) \arrow{dr}{\Phi_N} & \\
& & \Z^2\,,
\end{tikzcd}
\end{equation}
We now return to the diagram in~\eqref{eq:Induction}.
The map~$i_\#$ being clearly surjective and~$\Phi_{N-1}$ being an isomorphism by induction,
Lemma~\ref{lemma:Pushout} implies that~$\Phi_N$ is an isomorphism, completing the induction step.

Now, given any integer~$0\le M\le g:=\sum_{i=1}^\mu g_i$, consider the~$4$-manifold
\[
W_M:=U\cup\bigcup_{m=1}^M\left(Y_m\times S^1\right)\,.
\]
One readily checks that the restriction~$\Phi_M$ of~$\Phi$ to~$\pi_1(W_M)$
is an isomorphism, inductively on~$M\ge~0$: the base case was proved above (as~$W_0=U$), and the induction step follows from Seifert-Van Kampen together with Lemma~\ref{lemma:Pushout} applied to the pushout diagram
\begin{equation}
\label{eq:OtherPushout}
\begin{tikzcd}
\pi_1(\nu(\alpha_M) \times S^1) \arrow{d} \arrow{r} & \pi_1(Y_M \times S^1) \arrow{d} \arrow[bend left =15]{ddr} & \\
\pi_1(W_{M-1}) \arrow{r} \arrow[bend right=10, swap]{drr}{\Phi_{M-1}}[swap]{\cong} & \pi_1(W_M) \arrow{dr} {\Phi_M}& \\
& & \Z^\mu\,.
\end{tikzcd}
\end{equation}
Since~$W_{g}=W_F$ and~$\Phi_{g}=\Phi$, this concludes the proof that~$\Phi\colon\pi_1(W_F)\to\Z^\mu$
is an isomorphism.

It remains to check that the restriction of~$\Phi$ to~$\partial W_F=W_L$ yields a meridional homomorphism~$\varphi\colon H_1(M_L)\to\Z^\mu$, i.e.
that the restriction of~$\varphi$ to~$P_L$
maps the class of the loop~$\{p\}\times S^1$ with~$p\in D_i$ to~$t_i$. Via the commutative diagram
\begin{center}
\begin{tikzcd}
H_1(P_L) \arrow{r} \arrow{d}& H_1(W_F) \arrow{d}{\Phi} \\
H_1(M_L) \arrow{r}[swap]{\varphi} &  \Z^\mu\,,
\end{tikzcd}
\end{center}
we are left with the proof that the composition~$H_1(P_L)\to H_1(W_F) \stackrel{\Phi}{\to} \Z^{\mu}$ satisfies this property.
For this, recall from the construction of~$W_F$ that~$D_i$ is obtained from~$F_i$ performing~$0$-surgeries
followed by~$1$-surgeries along curves in the resulting surface~$F_i'$. Via this construction, the loop~$\{p\} \times S^1$ with~$p\in D_i$ is obtained from~$\{p'\} \times S^1$ with the corresponding~$p'\in F_i$. Since~$\Phi$ extends~$\Phi_V$ and~$h(\{p'\} \times S^1)$ is a meridian of~$\tilde F_i$ in~$V_F$, we get
\[
\Phi([\{p\} \times S^1]) = \Phi_V([h(\{p'\} \times S^1)]) = t_i\,,
\]
concluding the proof of Theorem~\ref{thm:ExistenceOfW}~(1).

\medskip

We end this section by recording an additional result for later use. Recall that~$Q$ denotes the field of fractions
of the ring~$\Lambda=\Z[\Z^\mu]$.

\begin{proposition}
\label{lem:V-WoverQ}
The inclusion $V_F \to W_F$ induces isomorphisms~$H_*(V_F; Q) \to H_*(W_F; Q)$.
\end{proposition}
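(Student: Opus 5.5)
We prove that $H_*(W_F,V_F;Q)=0$; the long exact sequence of the pair $(W_F,V_F)$ with $Q$-coefficients then gives the claim. By excision, $H_*(W_F,V_F;Q)$ splits as a direct sum over the pieces glued to $V_F$ in Constructions~\ref{cons:ZF} and~\ref{cons:WF}. To avoid handling the pieces' intersections with one another, we attach them one at a time, as in the proof of Theorem~\ref{thm:ExistenceOfW}(1). At each stage, excision reduces the relative group to $H_*(P,P\cap \text{(previous manifold)};Q)$. Here $P$ is one of the pieces $X_k$, $Y^i_k\times S^1$ or $Y_m\times S^1$, and the coefficient system is the restriction of $\Phi$.

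For each piece the relevant pair is a product with a circle on whose factor the coefficient system is nontrivial. For a round $2$-handle $Y\times S^1=D^2\times[-1,1]\times S^1$, the attaching region is $\partial D^2\times[-1,1]\times S^1$. The $S^1$-factor is $\{p\}\times S^1$ in $F_i'\times S^1$, which maps to a meridian and hence to $t_i$ under $\Phi$, by the final paragraph of the proof of Theorem~\ref{thm:ExistenceOfW}(1). The pair $(Y\times S^1,\partial D^2\times[-1,1]\times S^1)$ is the product of $(D^2\times[-1,1],\partial D^2\times[-1,1])$ with $S^1$. The Künneth formula for twisted coefficients (or a direct cellular computation) gives
$$H_*(Y\times S^1,\partial;Q)\cong H_*(D^2,\partial D^2)\otimes_\Z H_*(S^1;Q).$$
Since $t_i-1$ is invertible in $Q$, we have $H_*(S^1;Q)=0$, so this relative group vanishes. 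The pieces $X_k=I\times I\times S^1\times S^1$, attached along $\partial I\times I\times S^1\times S^1$, are treated the same way. The last $S^1$-factor $\{*\}\times\{t\}\times\{x\}\times S^1$ is sent to $t_2$ by $\Phi_X$ (see the proof of Lemma~\ref{lemma:Claim0}), so again $H_*(S^1;Q)=0$ kills the relative homology.

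The main obstacle is the round handles $Y^i_k\times S^1$, which in the inductive construction of $U_N$ are glued partly to $X_k$ and partly to $V_F$. The cleanest bookkeeping is the order of Construction~\ref{cons:WF}: first attach all the $X_k$ to obtain $Z_F$, then all the round $2$-handles to $\partial Z_F$ along $\overline\nu(\delta^i_k)\times S^1$ and $\overline\nu(\alpha_m)\times S^1$. Then every attaching region is exactly of the product form above, and the circle factor is still sent to a meridian $t_i$, since it is $\{p\}\times S^1$ with $p\in F_i'$.

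Combining the stages, $H_*(Z_F,V_F;Q)=0$ and $H_*(W_F,Z_F;Q)=0$. Composing the resulting isomorphisms $H_*(V_F;Q)\to H_*(Z_F;Q)\to H_*(W_F;Q)$ proves the proposition.
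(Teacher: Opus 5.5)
Your proposal is correct and is essentially the paper's argument. The paper uses Lemma~\ref{lemma:TwistedHomologyxS1} to see that each attached piece and each gluing region is a product with a circle on which the coefficient system is nontrivial, hence $Q$-acyclic, and then concludes by an inductive Mayer--Vietoris argument. Your excision and relative K\"unneth formulation is the same computation routed through the long exact sequence of the pair. Attaching all the $X_k$ first and only then the round $2$-handles to $\partial Z_F$ is a sound way to make every attaching region a product.
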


The proof relies on the following well known lemma.

\begin{lemma}
\label{lemma:TwistedHomologyxS1} 
Let~$X$ be a connected CW-complex and let~$\varphi \colon H_1(X\times S^1) \to \Z^\mu$ be a homomorphism.
If the composition~$H_1(S^1)\to H_1(X\times S^1)\stackrel{\varphi}{\to}\Z^\mu$ maps a generator of~$H_1(S^1)$ to a non-trivial element of~$\Z^\mu$, then the~$\Lambda$-module~$H_\ast(X\times S^1; \Lambda)$ is torsion.
\end{lemma}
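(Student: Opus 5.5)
The plan is to compute $H_*(X\times S^1;\Lambda)$ through the covering space associated to $\varphi$ and a Künneth-type argument, reducing everything to the homology of a circle with nontrivial twisted coefficients. Write $g:=\varphi(\text{generator of }H_1(S^1))\in\Z^\mu$, which is nontrivial by assumption. In multiplicative notation $g=t^a:=t_1^{a_1}\cdots t_\mu^{a_\mu}$ with $a\neq 0$. The key observation is that the element $t^a-1\in\Lambda$ is nonzero and will annihilate all of $H_*(X\times S^1;\Lambda)$.

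First, I will make the annihilation explicit at the chain level. Give $X\times S^1$ the product CW structure, with $S^1$ having one $0$-cell and one $1$-cell. The deck transformation group of the $\Z^\mu$-cover acts on the cellular chain complex $C_*(X\times S^1;\Lambda)$. The action of $t^a$ on this complex is induced by the loop $\{x_0\}\times S^1$, which is central in $\pi_1(X\times S^1)=\pi_1(X)\times\Z$. Rotating the $S^1$ factor once gives a self-homotopy $H\colon X\times S^1\times I\to X\times S^1$ from the identity to the identity, and its track along a point is exactly the loop $\{x\}\times S^1$. Lifting $H$ to the cover therefore gives a homotopy from the identity to the deck transformation $t^a$. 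This makes multiplication by $t^a$ chain homotopic to the identity on $C_*(X\times S^1;\Lambda)$. Since the lift is equivariant (it is the lift of a homotopy of the base and commutes with deck transformations), the chain homotopy is $\Lambda$-linear. Hence $t^a-1$ acts as zero on $H_*(X\times S^1;\Lambda)$.

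If this lifting argument proves awkward, I have a fallback: the cellular Künneth formula. Under it, $C_*(X\times S^1;\Lambda)$ becomes the mapping cone of $(t^a-1)\cdot\colon C_*(X;\Lambda)\to C_*(X;\Lambda)$, where $C_*(X;\Lambda)$ uses the restricted coefficient system $\varphi|_{H_1(X)}$. Indeed, the $1$-cell $e^1$ of $S^1$ has boundary $(t^a-1)e^0$ in the twisted complex. The long exact sequence of the mapping cone then gives
$$\cdots\to H_k(X;\Lambda)\xrightarrow{t^a-1}H_k(X;\Lambda)\to H_k(X\times S^1;\Lambda)\to H_{k-1}(X;\Lambda)\xrightarrow{t^a-1}H_{k-1}(X;\Lambda)\to\cdots.$$
From this sequence, $H_k(X\times S^1;\Lambda)$ is an extension of $\ker(t^a-1)$ by $\coker(t^a-1)$. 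The cokernel is killed by $t^a-1$ directly. For the kernel, observe that tensoring with the field $Q$ is exact and turns $t^a-1$ into an isomorphism, because $t^a-1$ is a unit in $Q$. The mapping cone over $Q$ is therefore acyclic, so $H_*(X\times S^1;Q)=H_*(X\times S^1;\Lambda)\otimes_\Lambda Q=0$, which is equivalent to torsion. That flatness step is the cleanest route, and I would actually present the proof this way.

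The main obstacle is the only genuinely delicate point: identifying the twisted cellular chain complex of the product with the mapping cone. Doing so requires choosing lifts of cells compatibly. I would handle it by noting that the coefficient system factors through $\pi_1(X)\times\Z$, and that the cover of $X\times S^1$ is $\widetilde X_\varphi\times\R$ quotiented appropriately. If $X$ is not finite, finiteness is irrelevant, since flatness of $Q$ over $\Lambda$ is all that is used.
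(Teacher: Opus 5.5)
Your proposal is correct and contains two valid arguments.

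The argument you say you would present is essentially the paper's proof. You identify $C_*(X\times S^1;\Lambda)$ with the mapping cone of $t^a-1$ on $C_*(X;\Lambda)$, then use flatness of a localisation in which $t^a-1$ becomes a unit. The paper differs only in two respects. It inverts just $z-1=t^a-1$, working over $\Lambda'=\Lambda[(z-1)^{-1}]$ rather than $Q$, which gives the slightly sharper conclusion that every element is killed by a power of $z-1$. It also cites Nicolaescu and Conway--Friedl--Toffoli for the acyclicity of $C_*(X\times S^1;\Lambda')$ instead of writing out the Künneth identification.

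Your first argument is genuinely different. You lift the rotation of the $S^1$-factor to an equivariant homotopy from the identity to the deck transformation $t^a$. This shows directly that $t^a-1$ itself annihilates $H_*(X\times S^1;\Lambda)$, with no product decomposition of the chain complex and no localisation. It is the most elementary route and gives the strongest annihilation statement. To get a $\Lambda$-linear chain homotopy, work with singular chains, where the prism operator is natural, or invoke equivariant cellular approximation.

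Finally, the detour through $Q$ in your fallback is unnecessary. $\ker(t^a-1)$ is annihilated by $t^a-1$ by definition, so your long exact sequence already shows that $(t^a-1)^2$ kills each $H_k(X\times S^1;\Lambda)$.
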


\begin{proof}[Proof of Lemma~\ref{lemma:TwistedHomologyxS1}]
Write~$z$ for the non-trivial element of~$\Z^\mu$ mentioned in the statement, and let~$\Lambda'$ denote
the localised ring~$\Z[\Z^\mu][(z-1)^{-1}]$.
Straightforward computations show that the chain complex~$C_*(X\times S^1;\Lambda')$ is acyclic,
see e.g.~\cite[Example~2.7]{Nicolaescu} and~\cite[Lemma~2.2]{CFT18}.
Since~$\Lambda'$ is flat, this yields~$0=H_*(X\times S^1;\Lambda')\cong H_*(X\times S^1;\Lambda)\otimes_\Lambda\Lambda'$, so~$H_\ast(X\times S^1; \Lambda)$ is~$(z-1)$-torsion.
\end{proof}

\begin{proof}[Proof of Proposition~\ref{lem:V-WoverQ}]
Recall that the manifold~$W_F$ is obtained from~$V_F$ via
\[
W_F=V_F \cup \bigcup_{k=1}^n \left(X_k \cup (Y_k^1 \times S^1)\cup (Y_k^2\times S^1)\right)\cup \bigcup_{m=1}^{g} \left(Y_m \times S^1\right)
\]
with~$X_k=I\times I\times S^1\times S^1$, and the homomorphism~$\Phi\colon H_1(W_F)\to\Z^\mu$ maps
each of the~$S^1$-factors appearing above to some~$t_i$.
By Lemma~\ref{lemma:TwistedHomologyxS1} and flatness of~$Q$ over~$\Lambda$,
each of these~$n+g$ spaces is~$Q$-acyclic.
Moreover, each of these spaces is glued along a connected subspace satisfying the hypothesis of
Lemma~\ref{lemma:TwistedHomologyxS1}, so these intersections are also~$Q$-acyclic.
The result now follows from an inductive use of the Mayer-Vietoris exact sequence for homology with~$Q$-coefficients.
\end{proof}

\subsection{The manifold~$W_{F,F'}$}
\label{sub:WFF'}

The goal of this section is to construct an explicit filling of~$M_{L,L'}$ over~$\Z^\mu$.
This is the manifold we will use to calculate~$\lambda(L,L'):=\lambda(M_{L,L'},\varphi)$,  in order to prove the fourth and the fifth items of Theorem~\ref{thm:ConcordanceInvarianceGeneralIntro}.

\medbreak

In order to construct this filling, we begin with the following lemma.

\begin{lemma}
\label{lem:HomeoPL}
Fix $\mu \leq 2$ and let~$L$ be a~$\mu$-component link.
For any meridional homomorphisms~$\varphi,\varphi'\colon H_1(P_L) \to \Z^\mu$, there exists a homeomorphism~$ f \colon P_L \to P_L$ such that~$\varphi' \circ  f_*=\varphi$.
\end{lemma}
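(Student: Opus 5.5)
The plan is to copy the Dehn twist argument from the proof of Proposition~\ref{prop:mu=2Indep}, this time inside $P_L$ itself. The difference between two meridional homomorphisms is measured on the big loops via Lemma~\ref{lemma:MeridionalPL}. When $\mu=1$ the graph $\Gamma_L$ has a single vertex and no edges, so $P_L=D^2\times S^1$. In that case $H_1(P_L)$ is generated by $\{p\}\times S^1$, the meridional homomorphism is unique, and we take $f=\id$. From now on let $\mu=2$ with $\ell=\lk(K_1,K_2)$. Then $\Gamma_L$ has two vertices joined by $|\ell|$ edges $e_1,\dots,e_{|\ell|}$, so $H_1(\Gamma_L)\cong\Z^{|\ell|-1}$ (this is zero if $|\ell|\le 1$, in which case again $f=\id$).

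First fix an embedding $\alpha\colon\Gamma_L\hookrightarrow P_L$. Choose as basis of $H_1(\Gamma_L)$ the cycles $c_r=e_r-e_{r+1}$ for $1\le r\le|\ell|-1$. By Lemma~\ref{lemma:MeridionalPL}(ii), any meridional $\varphi$ and $\varphi'$ agree on $H_1(P_L)/\im(\alpha_*)$. Hence $\varphi'=\varphi+\psi$, where $\psi$ factors through $H_1(P_L)\to H_1(\Gamma_L)\to\Z^2$, using the splitting in~\eqref{eq:MVes}. It therefore suffices to build, for each $r$ and each $i\in\{1,2\}$, a homeomorphism $\tau_{r,i}$ of $P_L$ with two properties. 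First, $\tau_{r,i}$ acts trivially on the first summand of~\eqref{eq:MVes}, the one generated by the fibres $\{p\}\times S^1$ and the curves $\partial D_e$. Second, it sends $\alpha_*(c_s)$ to $\alpha_*(c_s)+\delta_{rs}\,m_i$, where $m_i=[\{p_i\}\times S^1]$ is the $i$-th fibre class. Composing powers of these maps then realises any $\psi$.

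For the construction, take the torus $T_{e_r}=\partial D_{e_r}\times S^1$ with a bicollar $T_{e_r}\times[0,1]$ inside $D_1^\circ\times S^1$. Under the gluing~\eqref{eq:ident} the fibre direction of $D_1^\circ\times S^1$ corresponds, up to sign, to $\partial D_{\bar e_r}$, and the fibre of $D_2^\circ\times S^1$ corresponds to $\partial D_{e_r}$. Both fibres $m_1$ and $m_2$ are therefore available as simple closed curves on $T_{e_r}$. Let $\tau_{r,i}$ be the Dehn twist supported in this collar, along the annulus given by the curve representing $m_i$ times $[0,1]$. Concretely this is $\id\times\tau$ in the coordinates used in the proof of Proposition~\ref{prop:mu=2Indep}, extended by the identity elsewhere. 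The map is the identity off the collar, so every closed curve in $P_L$ avoiding $T_{e_r}$ is fixed, and curves lying on $T_{e_r}$ are fixed up to homology. The loop $\alpha(e_r)$ crosses the collar once, so $\tau_{r,i}$ adds $\pm m_i$ to every big loop that traverses $e_r$. With the chosen basis, only $c_{r-1}$ and $c_r$ traverse $e_r$, and they do so with opposite signs. To obtain the clean effect on a single $c_r$, I would instead twist along the collars of $T_{e_{r+1}},\dots,T_{e_{|\ell|}}$ simultaneously, or equivalently pass to the basis $e_1-e_s$. The result is upper-triangular unipotent changes, which generate all translations of $\Hom(H_1(\Gamma_L),\Z^2)$.

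The main obstacle is the bookkeeping in this last step. One has to check that the twisting curve on $T_{e}$ really has image $\pm t_i$ under a meridional map, with signs given by $\sgn(e)$ and~\eqref{eq:ident}. One also has to check that the twist fixes the first summand of~\eqref{eq:MVes}, which holds because a twist along $\gamma$ changes the homology class of a curve $x$ only by a multiple of the intersection number of $x$ with $\gamma$ on the torus. Finally, the collection of twists must span all possible changes on the free summand $\alpha_*(H_1(\Gamma_L))$, and this is a lattice-generation statement. Orientation preservation of $f$ is automatic, since Dehn twists are isotopic to homeomorphisms fixing the boundary.
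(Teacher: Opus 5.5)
Your argument is correct and is essentially the paper's: reduce via Lemma~\ref{lemma:MeridionalPL} to the values on big loops, then compose Dehn-twist homeomorphisms supported near the plumbing tori $T_{e_r}$, each of which shifts the value on a single big loop by $t_1^{\pm1}$ or $t_2^{\pm1}$. The differences are cosmetic. The paper twists $D_1^\circ$ around $\partial D_{e_r}$ and $D_2^\circ$ around $\partial D_{\overline{e}_r}$ (times $\id_{S^1}$), whereas you twist on one side of $T_{e_r}$ in both directions. The paper also uses the basis $[\alpha(e_0)-\alpha(e_r)]$ from the outset, which avoids the telescoping fix you need for the basis $e_r-e_{r+1}$.
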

\begin{proof}
First note that if~$\mu=1$, then~$P_L=D^2\times S^1$ admits a unique meridional homomorphism and the statement holds trivially. This is also the case when~$\mu=2$ and the linking number vanishes. Without loss of generality, we may therefore assume that~$L=K_1\cup K_2$ is a two-component link with~$\ell:=\vert\lk(K_1,K_2)\vert>0$.
We start by explaining the idea of the proof before diving into the details.
By Lemma~\ref{lemma:MeridionalPL}, two meridional homomorphisms~$\varphi, \varphi'\colon H_1(P_L)\to\Z^2$
agree on meridians, but might differ on big loops coming from the graph~$\Gamma_L$.
Note that~$\Gamma_L$ consists of two vertices and~$\ell$ edges between them, yielding~$\ell-1$ independent such big loops.
The idea is to define~$ f$ as
the appropriate composition of (Dehn twists $\times\id_{S^1}$) about curves around the punctures in~$D^\circ_1\sqcup D_2^\circ$ to obtain~$\varphi'\circ  f_*=\varphi$.
There is a sufficiently large number of such curves (namely~$2\ell$) compared to the number of~$\Z$-coordinates of~$\varphi$ on the big loops (namely~$2(\ell-1)$)
for this problem to have a solution, a favorable situation not met in general for~$\mu>2$ components.

We now work out the details.
Let~$\varphi, \varphi' \colon H_1(P_L) \to \Z^2$ be meridional homomorphisms.
Recall that~$\Gamma_L$ consists of two vertices decorated by discs~$D_1,D_2$ and~$\ell$ edges.
Let us denote by~$e_0,\dots,e_{\ell-1}$ these edges ordered arbitrarily and oriented from~$D_1$ to~$D_2$.
For~$0\le r\le \ell-1$, let~$\tau^{r,1}\colon  D^\circ_{1}\to D^\circ_{1}$ (resp.~$\tau^{r,2}\colon  D^\circ_{2}\to D^\circ_{2})$ denote the positive Dehn twist about a simple closed curve parallel to~$\partial D_{e_r}$ (resp.~$\partial D_{\overline{e}_r}$). For~$i=1,2$ and~$0\le r\le \ell-1$, write~$ f^{r,i}$ for the orientation-preserving self-homeomorphism of~$P_L$ obtained by extending~$\tau^{r,i}\times\id_{S^1}$ by the identity outside~$D^\circ_{i}\times S^1$.
These homeomorphisms~$ f^{r,i}$ will be the building blocks of~$ f$.
Fix an embedding~$\alpha\colon\Gamma_L\hookrightarrow P_L$ as defined above Lemma~\ref{lemma:MeridionalPL}.
By this lemma,
it induces an injective homomorphism~$\alpha_*\colon H_1(\Gamma_L)\to H_1(P_L)$.
Therefore, the classes~$\gamma_1,\dots,\gamma_{\ell-1}$ in~$H_1(P_L)$ given by~$\gamma_r=[\alpha(e_0)-\alpha(e_{r})]$ for~$1\le r\le\ell-1$ form a basis of~$\alpha_*(H_1(\Gamma_L))\subset H_1(P_L)$.
By construction, we have
\[
\varphi( f^{r,1}_*(\gamma_{s}))=
\begin{cases}
t^{\sgn(e_r)}_{2}\varphi(\gamma_{s}) & \text{if~$r=s$,}\\
\varphi(\gamma_{s}) & \, \text{else,}
\end{cases}
\quad\text{and}\quad
\varphi( f^{r,2}_*(\gamma_{s}))=
\begin{cases}
t^{-\sgn(e_r)}_{1}\varphi(\gamma_{s}) & \text{if~$r=s$,}\\
\varphi(\gamma_{s}) & \, \text{else,}
\end{cases}
\]
for all~$1\le r,s\le \ell-1$.
The case~$i=1$ is illustrated in Figure~\ref{fig:twist}.
This readily implies that the appropriate composition~$ f$ of the orientation-preserving homeomorphisms~$ f^{r,i}\colon P_L\to P_L$ and their inverses satisfies~$\varphi( f_*(\gamma_s))=\varphi'(\gamma_s)$ for all~$1\le s\le \ell-1$.
By Lemma~\ref{lemma:MeridionalPL}, this implies that~$\varphi\circ  f_*$ and~$\varphi'$ coincide on~$H_1(P_L)$.
\end{proof}

\begin{figure}[h]
\centering
\begin{overpic}[width=10cm]{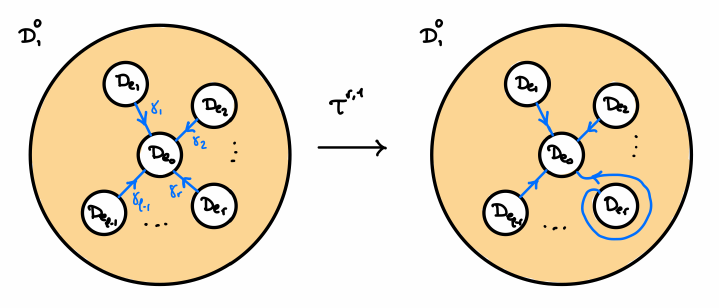}\end{overpic}
\caption{The Dehn twist~$\tau^{r,1}$ on~$D^\circ_1$ making up the homeomorphism~$f^{r,1}$ of~$P_L$.}
\label{fig:twist}
\end{figure}

\begin{construction}
\label{cons:WFF}
Fix $\mu \leq 2$, let~$F$ and~$F'$ be C-complexes for~$\mu$-component links~$L$ and~$L'$ with equal linking numbers,   and consider the~$4$-manifolds~$W_F$ and~$W_{F'}$ from Theorem~\ref{thm:ExistenceOfW}.
The isomorphisms~$\pi_1(W_F) \cong \Z^\mu \cong \pi_1(W_{F'})$ restrict to meridional homomorphisms on the subsets of~$\partial W_F,\partial W_{F'}$ that are homeomorphic to~$P_L$, i.e. on~$\partial W_F \setminus X_L$ and~$\partial W_{F'} \setminus X_{L'}$.
Use Lemma~\ref{lem:HomeoPL} to choose a homeomorphism~$f \colon \partial W_F \setminus X_L \to \partial W_{F'} \setminus X_{L'}$ that intertwines these coefficients systems.
Consider the~$4$-manifold with boundary given by
$$W_{F,F'}:=W_F \cup_h -W_{F'}.$$
Observe that by construction~$\partial W_{F,F'} \cong M_{L,L'}$ as manifolds over~$\Z^\mu$.
\end{construction}

The next proposition describes the~$Q$-intersection form of~$W_{F,F'}$ and, as a consequence,  leads to a calculation of the homology surgery invariant of~$M_{L,L'}$,  proving the fourth and fifth items of Theorem~\ref{thm:ConcordanceInvarianceGeneralIntro}.

\begin{theorem}
\label{thm:Calculation}
Fix~$\mu\le 2$. Let~$L$ and~$L'$ be~$\mu$-component links with $\Delta_L,\Delta_{L'} \neq 0$, let~$F$ and~$F'$ be nice connected~C-complexes for~$L$ and~$L'$ respectively, and let~$H_F$ and~$H_{F'}$ be a choice of matrices as in Construction~\ref{cons:MatrixH}.
If~$L$ and~$L'$ have equal linking numbers, then 
\begin{align*}
&\lambda(L,L')=[H_F]-[H_{F'}] \in L^4(Q) \\
&\widetilde{\lambda}(L,L')=[H_F]-[H_{F'}] \in L^4(Q)/\im(L^4(\Lambda) \to L^4(Q)).
\end{align*}
\end{theorem}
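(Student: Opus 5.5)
The plan is to compute $\lambda(M_{L,L'},\varphi)$ using the explicit filling $W_{F,F'}=W_F\cup_f -W_{F'}$ from Construction~\ref{cons:WFF}. By construction, $\partial W_{F,F'}\cong M_{L,L'}$ as manifolds over $\Z^\mu$. By Proposition~\ref{prop:mu=2Indep}, the invariant does not depend on the meridional homomorphism, so we may use the one induced by this filling. Then Definition~\ref{def:WittInvariant3Manifold} gives
\[
\lambda(L,L')=[\lambda_{W_{F,F'}}^{Q,\textit{ns}}]-[\lambda_{W_{F,F'}}^{\Q,\textit{ns}}].
\]
The task therefore reduces to computing both Witt classes via Novikov--Wall additivity along the gluing region $P_L\subset\partial W_F$, which is identified with $P_{L'}\subset \partial W_{F'}$ by $f$. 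Since $Q$ and $\Q$ are fields, the Witt-class version of additivity applies (as in the proof of Proposition~\ref{prop:WellDef}). The output should be
\[
[\lambda^{Q}_{W_{F,F'}}]=[\lambda^Q_{W_F}]-[\lambda^Q_{W_{F'}}]
\quad\text{and}\quad
[\lambda^{\Q}_{W_{F,F'}}]=[\lambda^\Q_{W_F}]-[\lambda^\Q_{W_{F'}}],
\]
provided the Wall correction term vanishes.

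For the correction term, I would use that $P_L$ is built from $D_i^\circ\times S^1$ pieces in which each $S^1$-factor maps to some $t_i\neq 1$. By Lemma~\ref{lemma:TwistedHomologyxS1} and a Mayer--Vietoris argument, $P_L$ and $\partial P_L$ (the union of tori) are $Q$-acyclic, so the Maslov-type correction over $Q$ vanishes. Over $\Q$, the correction term is not obviously zero. I would avoid computing it by noting that it depends only on the Lagrangians in $H_1(\partial P_L;\Q)$ determined by $X_L$, $X_{L'}$ and $P_L$, and on $\Q$ it contributes only a signature-type class. It therefore suffices to show that $[\lambda^{\Q}_{W_{F,F'}}]=0$ in $L^4(\Q)$ after mapping to $L^4(Q)$. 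One route is Theorem~\ref{thm:ExistenceOfW}(2): $\lambda^\Q_{W_F}$ and $\lambda^\Q_{W_{F'}}$ are zero plus $\Q$-metabolic. For the glued manifold, I would argue that its rational nonsingular form has vanishing Witt class: a Mayer--Vietoris computation over $\Q$ should show that the Wall term is represented by a form on $H_1$ of the tori that is metabolic, because both $X_L$ and $P_L$ kill the longitudes.

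Next, identify $[\lambda^Q_{W_F}]$ with $[H_F]$. Theorem~\ref{thm:ExistenceOfW}(3) says the $\Lambda$-intersection form of $W_F$ is represented by $H_F$. Tensoring with $Q$ is flat, and Proposition~\ref{lem:V-WoverQ} gives $H_2(W_F;Q)\cong H_2(V_F;Q)$, which has rank $b_1(F)$. So the $Q$-form is $H_F$ over $Q$. The hypothesis $\Delta_L\neq 0$ should give $\det H_F\neq 0$, since $\det H_F$ agrees with $\Delta_L$ up to units and normalising factors. Hence $H_F$ is nonsingular over $Q$, $\im(H_2(\partial W_F;Q)\to H_2(W_F;Q))=0$, and $\lambda^{Q,\textit{ns}}_{W_F}=H_F$. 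Combining these steps gives $\lambda(L,L')=[H_F]-[H_{F'}]$.

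For $\widetilde\lambda$, note that $\Phi\colon\pi_1(W_F)\cong\Z^\mu$ by Theorem~\ref{thm:ExistenceOfW}(1). A Seifert--van Kampen argument along $P_L$, using that $\pi_1(P_L)\to\Z^\mu$ is surjective, then gives $\pi_1(W_{F,F'})\cong\Z^\mu$. So $W_{F,F'}$ is an admissible filling for Definition~\ref{def:WittInvariant3ManifoldSimplified}, and the $Q$-part of the computation above applies verbatim, with no $\Q$ correction to handle.

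I expect the main obstacle to be the rational Novikov--Wall correction and the vanishing of $[\lambda^{\Q,\textit{ns}}_{W_{F,F'}}]$. I would first try to show directly that the inclusion $H_2(M_{L,L'};\Q)\to H_2(W_{F,F'};\Q)$ has image a Lagrangian complement up to a metabolic piece. The $\Q$-metabolic statement of Theorem~\ref{thm:ExistenceOfW}(2) would supply the metabolisers on each side.
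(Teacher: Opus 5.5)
Your outline matches the paper's proof: the filling $W_{F,F'}$, additivity over $Q$ from the $Q$-acyclicity of $P_L$, Theorem~\ref{thm:ExistenceOfW}(2) to dispose of the rational term, and Theorem~\ref{thm:ExistenceOfW}(3) to identify $\lambda^Q_{W_F}$ with $H_F$.

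\textbf{Main gap: the rational Wall correction.} This is the step you single out, and your argument for it does not work as written. You say that $X_L$ and $P_L$ both ``kill the longitudes''. That is false once a linking number is nonzero. In $H_1(X_L)$ the Seifert longitude satisfies $\ell_i=\sum_{j\neq i}\lk(K_i,K_j)\,m_j$. In $H_1(P_L)$ one likewise has $[\partial D_i\times\{*\}]=\sum_{j\neq i}\lk(K_i,K_j)\,[\{p_j\}\times S^1]$. Your sketch also never uses the equal-linking-number hypothesis at this step, which is exactly where the paper uses it.

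The missing observation is short. Wall's correction for $W_F\cup_{P_L}-W_{F'}$ is the Maslov form of three Lagrangians: $\ker\bigl(H_1(\partial P_L;\Q)\to H_1(X_L;\Q)\bigr)$, $\ker\bigl(H_1(\partial P_L;\Q)\to H_1(P_L;\Q)\bigr)$ and $\ker\bigl(H_1(\partial P_L;\Q)\to H_1(X_{L'};\Q)\bigr)$. This form vanishes identically as soon as two of the three coincide. The first and third are both spanned by the classes $\ell_i-\sum_{j\neq i}\lk(K_i,K_j)\,m_j$, so they are equal because $L$ and $L'$ have the same linking numbers. Hence $[\lambda^{\Q,\textit{ns}}_{W_{F,F'}}]=[\lambda^{\Q,\textit{ns}}_{W_F}]-[\lambda^{\Q,\textit{ns}}_{W_{F'}}]=0$.

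The relation in $P_L$ displayed above shows that the middle Lagrangian is this same one. So comparing $X_L$ with $P_L$ also works, but not because anything kills the longitudes.

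Two further remarks on this step. Over $\Q$ the correction is a class in $L^4(\Q)$, not merely a signature, so you genuinely need the form itself to vanish or be metabolic. Your fallback, showing that the image of $H_2(M_{L,L'};\Q)$ is ``a Lagrangian complement'', cannot work: that image is the radical, which is quotiented out before Witt classes are taken.

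\textbf{Smaller point: the filling for $\widetilde\lambda$.} Your van Kampen argument needs $P_L$ to be connected. This fails when $\mu=2$ and $\lk(K_1,K_2)=0$. Then $P_L=(D_1\times S^1)\sqcup(D_2\times S^1)$, and gluing along it is an amalgam followed by an HNN extension. So $H_1(W_{F,F'})\cong\Z^3$, and $W_{F,F'}$ is not an admissible filling in Definition~\ref{def:WittInvariant3ManifoldSimplified}. The paper's proof also uses $W_{F,F'}$ for $\widetilde\lambda$ without comment.

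A patch is available. Extend the coefficient system so that it vanishes on the HNN loop; this is allowed by Proposition~\ref{prop:mu=2Indep}. Then surger that loop in the interior. The trace of the surgery exhibits $W_{F,F'}\cup_{M_{L,L'}}-W'$ as a boundary over $\Z^\mu$. By half-lives-half-dies and Novikov additivity over $Q$, the Witt class of the $Q$-form is therefore unchanged, and the new filling $W'$ has $\pi_1\cong\Z^2$.
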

\begin{proof}
Recall from Proposition~\ref{prop:mu=2Indep} and Remark~\ref{rem:SimplifiedLinkInvariant} that~$\lambda(L,L'):=\lambda(M_{L,L'},\varphi)$ and~$\widetilde{\lambda}(L,L'):=\widetilde{\lambda}(M_{L,L'},\varphi)$
do not depend on the choice of a meridional homomorphism~$\varphi \colon H_1(M_{L,L'}) \to \Z^\mu$.
We will calculate these invariants using 
the~$4$-manifold~$W_{F,F'}$ from Construction~\ref{cons:WFF}, which can be defined since the linking numbers coincide.
Since~$M_{L,L'}$ bounds~$W_{F,F'}$ over~$\Z^\mu$, we obtain a meridional homomorphism~$\varphi$ that extends over~$W_{F,F'}$.
Since~$H_i(P_L;Q)=0$ (as can be seen by repeated applications of Lemma~\ref{lemma:TwistedHomologyxS1}),
a Mayer-Vietoris sequence shows that the inclusion induces an isometry~$\lambda_{W_{F,F'}}^Q \cong \lambda_{W_F}^Q \oplus -\lambda_{W_{F'}}^Q$.
Since~$\Delta_L,\Delta_{L'} \neq 0$, a Mayer-Vietoris argument shows that~$H_1(M_L;Q)=0=H_1(M_{L'};Q)$ and so these forms are nonsingular.
As a consequence, we deduce that~$\lambda_{W_{F,F'}}^Q=\lambda_{W_{F,F'}}^{Q,\textit{ns}}$ can be used to calculate
\begin{align*}
\lambda(L,L')=\lambda(M_{L,L'},\varphi)
&=[\lambda_{W_{F,F'}}^{Q,\textit{ns}}]-[\lambda_{W_{F,F'}}^{\Q,\textit{ns}}]=[\lambda_{W_F}^Q]-[\lambda_{W_{F'}}^Q]-[\lambda_{W_{F,F'}}^{\Q,\textit{ns}}] \\
\widetilde{\lambda}(L,L')=
\widetilde{\lambda}(M_{L,L'},\varphi)
&=[\lambda_{W_{F,F'}}^{Q,\textit{ns}}]
=[\lambda_{W_F}^Q]-[\lambda_{W_{F'}}^Q]\,.
\end{align*}
We assert that~$[\lambda_{W_{F,F'}}^{\Q,\textit{ns}}]=0$.
The second item of Theorem~\ref{thm:ExistenceOfW} ensures that~$[\lambda_{W_F}^{\Q,\textit{ns}}]=0 \in L^4(\Q)$ and similarly for~$W_{F'}$.
The assertion will therefore follow from an application of the Novikov-Wall theorem
(see~\cite{Wall}) to the decomposition~$W_{F,F'}=W_F\cup_{P_L} W_{F'}$.
The lagrangians that appear in the definition of the Maslov index are the kernel of the inclusion induced maps
\[
H_1(\overline{\nu}(L);\Q) \to H_1(X_L;\Q)\,,\quad H_1(\overline{\nu}(L);\Q) \to H_1(X_{L'};\Q)\,,\quad H_1(\overline{\nu}(L);\Q) \to H_1(P_L;\Q)\,.
\]
Since the linking numbers of~$L$ and~$L'$ agree, the first two Lagrangians are equal.
Hence, the symmetric form defining the Maslov Witt class is identically zero and the assertion follows.

The third item of Theorem~\ref{thm:ExistenceOfW} shows that~$\lambda_{W_F}^Q$ and~$\lambda_{W_{F'}}^Q$ are represented by~$H_F$ and~$H_{F'}$ respectively.
This concludes the proof of the theorem.
\end{proof}

\color{black}

\section{Relating the invariants}
\label{sec:Relating}

The goal of this section is to relate our two invariants when~$\mu\le 2$.
To do so, we begin by recalling the relation between the equivariant intersection form of a~$4$-manifold and the Blanchfield form of its boundary.
Despite this type of result being well known, we are not aware of a reference that quite matches our setting. 
As a consequence, we provide some details.
Once this intermediate proposition has been proved,  the main theorem follows promptly from considerations in~$L$-theory.

\medbreak

Throughout this section, we will write~$W$ for an oriented~$4$-manifold with connected boundary~$\partial W$, endowed with an isomorphism~$\Phi\colon\pi_1(W)\to\Z^\mu$ such that
the inclusion induced homomorphism~$i_\#\colon\pi_1(\partial W)\to\pi_1(W)$ is surjective.

We begin with three technical lemmas, the first of which concerns~$\pi_2(W) \cong H_2(W;\Lambda)$.

\begin{lemma}
\label{lem:H_2IsFree}
For~$W$ as above, the~$\Lambda$-module~$H_2(W;\Lambda)$ is free if and only if $\mu \leq 3$.
\end{lemma}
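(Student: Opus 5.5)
The plan is to compute $H_2(W;\Lambda)$ through duality and the universal coefficient spectral sequence (UCSS), using that $\pi_1(W)\cong\Z^\mu$ and that $\pi_1(\partial W)\to\pi_1(W)$ is surjective. Surjectivity gives $H_1(W,\partial W;\Lambda)=0$ and $H_0(W,\partial W;\Lambda)=0$. Poincaré duality then gives
\[
H^3(W;\Lambda)\cong H_1(W,\partial W;\Lambda)=0, \qquad H^4(W;\Lambda)\cong H_0(W,\partial W;\Lambda)=0 .
\]
Since $\widetilde W$ is simply connected, we also have $H_1(W;\Lambda)=0$ and $H_0(W;\Lambda)=\Z$, the latter being the trivial module $\Lambda/(t_1-1,\dots,t_\mu-1)$. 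Since $W$ has nonempty boundary, it is homotopy equivalent to a $3$-complex, so $H_k(W;\Lambda)=0$ for $k\ge 4$. Also $H_3(W;\Lambda)\cong H^1(W,\partial W;\Lambda)$, which I would show vanishes or is at least controlled via the UCSS for the pair.

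The key step is the UCSS $E_2^{p,q}=\Ext^p_\Lambda(H_q(W;\Lambda),\Lambda)\Rightarrow H^{p+q}(W;\Lambda)$. The only nonzero low rows are $q=0$, with entries $\Ext^p(\Z,\Lambda)$, which equals $\Z$ for $p=\mu$ and vanishes otherwise (Koszul resolution), and $q=2$, with entries $\Ext^p(H_2,\Lambda)$. I would apply the same UCSS to the relative cohomology $H^*(W,\partial W;\Lambda)\cong H_{4-*}(W;\Lambda)$, or equivalently dualise in the other direction, to read off $\Ext^p(H_2(W;\Lambda),\Lambda)$ in terms of $\Ext^{p+3}(\Z,\Lambda)$ through the differentials $d_3\colon E^{p,2}\to E^{p+3,0}$. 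The expected outcome is that $\Ext^{p}(H_2,\Lambda)\cong\Ext^{p+3}(\Z,\Lambda)$ for $p\ge1$, so $\Ext^{\mu-3}(H_2,\Lambda)\cong\Z$ when $\mu\ge4$ and all higher Ext groups vanish when $\mu\le3$. Cleaner still is the chain level: $C_*(W;\Lambda)$ is a free complex of length $3$ (after a handle decomposition without $4$-handles), with homology $\Z$ in degree $0$, nothing in degree $1$, $H_2$ in degree $2$, and $H_3$. Splicing the Koszul resolution of $\Z$, which has length $\mu$, shows that $H_2$ is a syzygy-type module. Concretely, $H_2$ is stably the third syzygy of $\Z$ (modulo $H_3$), so $\operatorname{pd}H_2=\max(\mu-3,0)$.

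Next I would conclude from this computation. If $\mu\le 3$, the module $H_2$ is stably free, hence projective, and by Quillen–Suslin it is free over the Laurent polynomial ring. If $\mu\ge4$, then $\Ext^{\mu-3}(H_2,\Lambda)\cong\Ext^{\mu}(\Z,\Lambda)=\Z\neq0$, so $H_2$ is not projective and in particular not free.

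I expect the main obstacle to be handling $H_3(W;\Lambda)$ and verifying exactly which differentials are isomorphisms. I would first show $H_3(W;\Lambda)=0$: it injects into $C_3$ as a cycle module, and it is dual to $H^1(W,\partial W;\Lambda)$, which the UCSS identifies with $\Hom(H_1(W,\partial W;\Lambda),\Lambda)$ plus an $\Ext^1(H_0(W,\partial W;\Lambda),\Lambda)$ term, both zero. With $H_3=0$, the complex $0\to C_3\to C_2\to C_1\to C_0\to\Z\to0$ has $H_2=\ker(C_2\to C_1)/\im C_3$ with $C_3$ injecting. From this one gets $0\to C_3\to Z_2\to H_2\to0$ and $0\to Z_2\to C_2\to C_1\to C_0\to\Z\to 0$, and Schanuel's lemma applied to the Koszul resolution yields the syzygy claim.
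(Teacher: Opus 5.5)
\textbf{Verdict.} The overall strategy is sound, and the $\mu\ge 4$ direction is fine, but the $\mu\le 3$ direction has a gap at the passage from $Z_2$ to $H_2$.

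\textbf{What works.} Your plan is to exhibit $H_2(W;\Lambda)\cong\pi_2(W)$ as stably the third syzygy of $\Z$, then use $\operatorname{pd}_\Lambda\Z=\mu$ and Quillen--Suslin. This is the mechanism behind the paper's proof, which combines $\operatorname{pd}_\Lambda\Z=\mu$ with results of Conway--Kasprowski. Your expected UCSS outcome $\Ext^p_\Lambda(H_2,\Lambda)\cong\Ext^{p+3}_\Lambda(\Z,\Lambda)$ for $p\ge 1$ is correct. It follows from the absolute UCSS, whose only nonzero rows are $q=0,2$, together with $H^n(W;\Lambda)=0$ for $n\ge 3$. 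So the argument for $\mu\ge 4$ stands.

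\textbf{The gap.} Knowing $H_3(W;\Lambda)=0$ only tells you that $d_3\colon C_3\to C_2$ is injective. From $0\to C_3\to Z_2\to H_2\to 0$ with $Z_2$ stably free you can only conclude $\operatorname{pd}H_2\le 1$; think of $\Lambda\xrightarrow{t_1-1}\Lambda$. So neither ``$H_2$ is stably the third syzygy (modulo $H_3$)'' nor ``$H_2$ is stably free for $\mu\le 3$'' follows from the two exact sequences you wrote down. Schanuel's lemma only controls $Z_2$.

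\textbf{The fix.} What you need is that $d_3$ is \emph{split} injective. This is exactly the vanishing $H^3(W;\Lambda)=0$, which you computed at the start but never used. Since $H^3(W;\Lambda)=\coker\bigl(d_3^*\colon\Hom_\Lambda(C_2,\Lambda)\to\Hom_\Lambda(C_3,\Lambda)\bigr)$ vanishes, each coordinate functional on the finitely generated free module $C_3$ extends over $C_2$. This gives $r\colon C_2\to C_3$ with $r\circ d_3=\id$. Then $Z_2=d_3(C_3)\oplus(Z_2\cap\ker r)$, so $H_2\oplus C_3\cong Z_2$. Both directions now follow at once from $\operatorname{pd}_\Lambda\Z=\mu$ and \cite[Lemma~4.1.6]{Weibel}, applied to the resolution $C_2\to C_1\to C_0\to\Z$; you need neither the Koszul comparison nor the spectral sequence. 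Alternatively, your UCSS gives $\Ext^1_\Lambda(H_2,\Lambda)\cong\Ext^4_\Lambda(\Z,\Lambda)=0$ for $\mu\le 3$, which also splits $0\to C_3\to Z_2\to H_2\to 0$ because $C_3$ is finitely generated free.

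\textbf{Minor point.} The paper works in the topological category, where a handle decomposition need not exist. The length-three chain model should therefore be obtained algebraically, by Wall's argument using $H^k(W;-)=0$ for $k\ge 4$. The top module is then only stably free, which changes nothing above.
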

\begin{proof}
The projective dimension of~$\Z\cong\Lambda/(t_1-1,\ldots,t_\mu-1)$ as a module over
the group ring~$\Lambda=\Z[\Z^\mu]$ is equal to~$\mu$. In other words, the cohomological dimension of the group~$\Z^\mu$
is equal to~$\mu$. 
By Lemma~3.1 and Proposition~3.4 of~\cite{CK},  the~$\Lambda$-module~$\pi_2(W)$ is projective (hence free, see e.g.~\cite[Chapter V,Corollary 4.12]{Lam06}) if and only if $\mu \leq 3$.
\end{proof}

The next lemma follows promptly from the universal coefficient spectral sequence (UCSS), but we record it in order to avoid repeating the argument later on.
\begin{lemma}
\label{lem:H2Vanishes}
If $Y$ is a closed $3$-manifold and~$\varphi \colon \pi_1(Y) \to \Z^\mu$ is an epimorphism such that the Alexander module~$H_1(Y;\Lambda)$ is torsion, then Poincar\'e duality and $\varphi$ induce isomorphisms
$$ H_2(Y;\Lambda) \cong H^1(Y;\Lambda) \cong  H^1(\Z^\mu,\Lambda).$$
Furthermore, this group vanishes unless $\mu=1$, in which case it is isomorphic to $\Z$.
\end{lemma}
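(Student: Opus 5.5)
The plan is to prove the two isomorphisms in turn and then compute $H^1(\Z^\mu;\Lambda)$ directly.

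\textbf{Poincaré duality.} Since $Y$ is a closed oriented $3$-manifold and $\Lambda$ is a local coefficient system via $\varphi$, twisted Poincaré duality gives $H_2(Y;\Lambda)\cong H^1(Y;\Lambda)$. The involution on $\Lambda$ only affects the module structure up to conjugation, which is harmless for the statement.

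\textbf{Comparison with $H^1(\Z^\mu;\Lambda)$.} We compute $H^1(Y;\Lambda)$ via the universal coefficient spectral sequence
\[
E_2^{p,q}=\Ext^p_\Lambda(H_q(Y;\Lambda),\Lambda)\Rightarrow H^{p+q}(Y;\Lambda).
\]
In total degree $1$ the contributing terms are $E_2^{1,0}=\Ext^1_\Lambda(H_0(Y;\Lambda),\Lambda)$ and $E_2^{0,1}=\Hom_\Lambda(H_1(Y;\Lambda),\Lambda)$. Since $\varphi$ is surjective, $H_0(Y;\Lambda)\cong\Z$ with trivial action. Since $H_1(Y;\Lambda)$ is torsion and $\Lambda$ is a domain, $E_2^{0,1}=0$. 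The only possibly nonzero differential out of $E_2^{1,0}$ starts from $E_2^{-1,1}=0$, and the only one into it lands from negative $p$. Hence
\[
H^1(Y;\Lambda)\cong\Ext^1_\Lambda(\Z,\Lambda)=H^1(\Z^\mu;\Lambda).
\]

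To see that this identification is induced by $\varphi$, consider the classifying map $Y\to B\Z^\mu=T^\mu$ and its UCSS. For $T^\mu$, the universal cover is contractible, so that spectral sequence collapses to $H^*(\Z^\mu;\Lambda)=\Ext^*_\Lambda(\Z,\Lambda)$. Naturality of the UCSS on the edge term $\Ext^1(H_0)$, where $H_0$ is $\Z$ for both spaces, shows that the induced map on $H^1$ is the isomorphism above. This naturality check is the only step requiring care; it is a standard edge-map argument.

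\textbf{Computing $\Ext$.} Resolve $\Z$ by the Koszul complex on $t_1-1,\dots,t_\mu-1$, which is a regular sequence in $\Lambda$. Then $\Ext^p_\Lambda(\Z,\Lambda)$ vanishes for $p\neq\mu$ and is $\Lambda/(t_1-1,\dots,t_\mu-1)\cong\Z$ for $p=\mu$, by the self-duality of the Koszul complex. Thus $H^1(\Z^\mu;\Lambda)=0$ unless $\mu=1$, in which case it is $\Lambda/(t-1)\cong\Z$. Alternatively, one can use $H^p(\Z^\mu;\Z[\Z^\mu])\cong H_c^p(\mathbb{R}^\mu)$.
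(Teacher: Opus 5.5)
Your proof is correct and follows the paper's argument: the UCSS gives $0\to \Ext^1_\Lambda(\Z,\Lambda)\to H^1(Y;\Lambda)\to \Hom_\Lambda(H_1(Y;\Lambda),\Lambda)$, the last term vanishes because $H_1(Y;\Lambda)$ is torsion, and you also spell out the naturality via the classifying map $Y\to\mathbb{T}^\mu$. There is one minor slip in wording: the differentials you need to rule out \emph{into} $E_2^{1,0}$ come from $E_2^{-1,1}$, while those out of it land in negative $q$. The only real difference is how you compute $\Ext^*_\Lambda(\Z,\Lambda)$: you use the Koszul resolution, whereas the paper uses $H^1(\mathbb{T}^\mu;\Lambda)\cong H_{\mu-1}(\mathbb{T}^\mu;\Lambda)\cong H_{\mu-1}(\mathbb{R}^\mu;\Z)$, which is essentially the alternative you mention at the end.
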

\begin{proof}
The UCSS yields the exact sequence 
$$ 
0 \to H^1(\Z^\mu;\Lambda) \to H^1(Y;\Lambda) \xrightarrow{\ev} \Hom(H_1(Y;\Lambda),\Lambda) \to H^2(\Z^\mu;\Lambda)\,.
$$
Since $H_1(Y;\Lambda)$ is torsion,  $\Hom(H_1(Y;\Lambda),\Lambda)$ vanishes and the first statement follows.
Finally, the isomorphisms~$H^1(\Z^\mu;\Lambda) \cong H^1(\mathbb{T}^\mu;\Lambda) \cong H_{\mu-1}(\mathbb{T}^\mu;\Lambda) \cong H_{\mu-1}(\mathbb{R}^\mu;\Z)$ yield the second statement.
\end{proof}

Our next technical lemma concerns the exact sequence of the pair~$(W,\partial W)$.
Recall that for a~$\Lambda$-module~$M$, we write~$\that M=TM/zM$ and~$\shat M=M/zM$, with~$zM$ the maximal pseudonull submodule of~$M$ (see Appendix~\ref{sub:PseudoNull}).

\begin{lemma}
\label{lemma:PresentationExactSequence}
Assume that $\mu \leq 2$.
If~$H_1(\partial W;\Lambda)$ is torsion, then the exact sequence of the pair~$(W,\partial W)$ induces a resolution
\begin{equation}
\label{eq:PresentationExactSequence}
H_2(W; \Lambda) \stackrel{\hat \jmath_*}{\to} \shat H_2(W,\partial W; \Lambda) \stackrel{\hat \partial}{\to} \that H_1(\partial W; \Lambda) \to 0
\end{equation}
of the~$\Lambda$-module~$\that H_1(\partial W; \Lambda)$, where the first two modules are free of the same rank.
When $\mu=1$,  it is additionally the case that $\that H_1(\partial W; \Lambda)=H_1(\partial W; \Lambda)$.
\end{lemma}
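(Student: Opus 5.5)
We start from the long exact sequence of the pair with $\Lambda$-coefficients,
\[
H_2(\partial W;\Lambda)\to H_2(W;\Lambda)\xrightarrow{j_*} H_2(W,\partial W;\Lambda)\xrightarrow{\partial} H_1(\partial W;\Lambda)\to H_1(W;\Lambda),
\]
and simplify each term. Since $\Phi$ is an isomorphism, the $\Z^\mu$-cover of $W$ is its universal cover, so $H_1(W;\Lambda)=0$ and $\partial$ is surjective. By Lemma~\ref{lem:H_2IsFree}, $H_2(W;\Lambda)\cong\pi_2(W)$ is free as $\mu\le 2$. By Poincar\'e duality, $H_2(W,\partial W;\Lambda)\cong H^2(W;\Lambda)$, and the UCSS with $\pi_1(W)\cong\Z^\mu$ gives
\[
H^2(\Z^\mu;\Lambda)\to H^2(W;\Lambda)\xrightarrow{\ev}\Hom_\Lambda(H_2(W;\Lambda),\Lambda)\to H^3(\Z^\mu;\Lambda).
\]
For $\mu\le 2$ we have $H^3(\Z^\mu;\Lambda)=0$. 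Moreover $H^2(\Z^\mu;\Lambda)$ is $0$ for $\mu=1$ and $\cong\Z$ (pseudonull over $\Z[\Z^2]$) for $\mu=2$; there is also an $E_2^{1,1}$-term built from $\Ext^1(H_1(W;\Lambda),\Lambda)=0$. So $H_2(W,\partial W;\Lambda)$ is an extension of a free module $\Hom(H_2(W;\Lambda),\Lambda)$ by a pseudonull module. I expect this to identify $zH_2(W,\partial W;\Lambda)$ with the image of $H^2(\Z^\mu;\Lambda)$, so that $\shat H_2(W,\partial W;\Lambda)\cong\Hom(H_2(W;\Lambda),\Lambda)$ is free of the same rank as $H_2(W;\Lambda)$. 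The point is that a free module has no nonzero pseudonull submodules (Example~\ref{ex:NoPN}), so the maximal pseudonull submodule is exactly that image.

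Next, $H_1(\partial W;\Lambda)$ is torsion by hypothesis, so $\partial$ lands in the torsion part. Since $\partial$ maps pseudonull submodules to pseudonull submodules (Lemma~\ref{lemma:PnMapInduction}), it descends to a surjection
\[
\hat\partial\colon \shat H_2(W,\partial W;\Lambda)\to H_1(\partial W;\Lambda)/zH_1(\partial W;\Lambda)=\that H_1(\partial W;\Lambda).
\]
Let $\hat\jmath_*$ be $j_*$ followed by the projection. Clearly $\im\hat\jmath_*\subset\ker\hat\partial$. For the reverse inclusion, take $x$ with $\partial x\in zH_1(\partial W;\Lambda)$. Then $\ker\hat\partial/\im\hat\jmath_*$ is a quotient of a submodule of $zH_1(\partial W;\Lambda)$, hence pseudonull, and it sits inside $\coker\hat\jmath_*$.

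The main obstacle is showing this pseudonull piece vanishes. My first attempt is a rank argument. Since $H_1(\partial W;Q)=0$, duality also gives $H_2(\partial W;Q)=0$, so $j_*\otimes Q$ is an isomorphism. Hence $\hat\jmath_*$ is a square matrix with nonzero determinant between free modules of equal rank, and its cokernel has projective dimension $\le 1$. Then $\ker\hat\partial/\im\hat\jmath_*$ is a pseudonull submodule of a module of projective dimension $1$, and such modules have no nonzero pseudonull submodules (Appendix~\ref{sub:PseudoNull}), so it is zero. This yields exactness of~\eqref{eq:PresentationExactSequence}.

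For $\mu=1$, $\Lambda$ is a PID-like ring of dimension $1$ in which pseudonull means finite. In that case $H_1(\partial W;\Lambda)$ is a quotient of the free module $H_2(W,\partial W;\Lambda)$, namely the cokernel of a nondegenerate square matrix, so it has no nonzero finite submodule. Hence $zH_1(\partial W;\Lambda)=0$ and $\that H_1(\partial W;\Lambda)=H_1(\partial W;\Lambda)$.
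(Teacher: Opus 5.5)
Your proof is correct. The first half is essentially the paper's: freeness of $H_2(W;\Lambda)$, the UCSS identification $zH^2(W;\Lambda)=\ker(\ev)\cong\Ext^2_\Lambda(\Z,\Lambda)$, and hence $\shat H_2(W,\partial W;\Lambda)$ free of the same rank.

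You diverge from the paper at exactness in the middle. The paper verifies the hypothesis of Lemma~\ref{lemma:ExactSequencePseudonull} directly. It runs the UCSS for $\partial W$ too, identifies $zH^2(\partial W;\Lambda)$ with the image of $\Ext^2_\Lambda(H_0(\partial W;\Lambda),\Lambda)\cong\Z$ (using $z\Ext^1_\Lambda(H_1(\partial W;\Lambda),\Lambda)=0$ via $\Ext^1\cong\Hom(-,Q/\Lambda)$), and uses naturality to show that $\partial$ restricts to an isomorphism $zH_2(W,\partial W;\Lambda)\to zH_1(\partial W;\Lambda)$.

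You instead note that the defect $\ker\hat\partial/\im\hat\jmath_*\cong zH_1(\partial W;\Lambda)/\partial(zH_2(W,\partial W;\Lambda))$ sits inside $\coker\hat\jmath_*$. That cokernel is presented by a square matrix with nonzero determinant, because $j_*\otimes Q$ is an isomorphism, so it has trivial maximal pseudonull submodule.

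Trade-offs: your route is shorter, uses nothing about $\partial W$ beyond $H_*(\partial W;Q)=0$, and treats $\mu=1,2$ uniformly. The paper's route gives more: it shows $zH_1(\partial W;\Lambda)\cong\Z$ for $\mu=2$ (compare Proposition~\ref{prop:NonSingTorsionY}), whereas yours only shows it is a quotient of $\Z$.

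A few small repairs:
\begin{itemize}
\item The fact that the cokernel of an injective square matrix $A$ has $z=0$ is not in the appendix. It is a two-line UFD argument: if $\alpha v=Ax$ and $\beta v=Ay$ with $\alpha,\beta$ coprime, then $A(\beta x-\alpha y)=0$ forces $\beta x=\alpha y$, so $\alpha$ divides $x$ and $v\in\im A$.
\item You do not need $zH_1(\partial W;\Lambda)$ to be pseudonull, which the paper explicitly does not establish. It suffices that each generator of $zH_1(\partial W;\Lambda)$ has a coprime pair in its annihilator, so its image in $\coker\hat\jmath_*$ vanishes.
\item A free module has $z=0$ simply because $zM\subset TM$; Example~\ref{ex:NoPN} is not the relevant citation.
\item $\Z[t^{\pm1}]$ has Krull dimension $2$, not $1$. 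Your $\mu=1$ conclusion is unaffected, since it rests on the same square-matrix fact.
\end{itemize}
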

\begin{proof}
In the case~$\mu=1$, it is well known that the exact sequence of~$(W,\partial W)$ yields a square presentation $H_2(W; \Lambda) \to H_2(W,\partial W; \Lambda) \to H_1(\partial W; \Lambda) \to 0$ of~$H_1(\partial W; \Lambda)$, see e.g.~\cite[Lemma~3.2]{ConwayPowell}. 
In order to see that~$zH_1(\partial W;\Lambda)=0$,  first note that if $\Delta \doteq 1$, then $H_1(\partial W;\Lambda)=0$ and the conclusion is clear, whereas otherwise, since~$H_1(\partial W;\Lambda) \neq 0$ is torsion (so that~$\Delta\neq 0$),
the annihilator of any non-zero element of~$H_1(\partial W;\Lambda)$ is contained in the proper principal ideal of~$\Lambda$ generated by~$\Delta$,
the determinant of the corresponding presentation matrix.

We therefore focus on the case where $\mu=2$.
Since the twisted coefficient system is defined via the isomorphism~$\Phi\colon\pi_1(W)\to\Z^2$,
the corresponding cover is the universal cover~$\widetilde{W}$, and we have~$H_1(W;\Lambda)\cong H_1(\widetilde{W})=0$ since~$\widetilde{W}$ is simply-connected. Therefore, the exact sequence of the pair~$(W,\partial W)$
contains
\begin{equation}
\label{eq:ExactSequenceW}
H_2(W;\Lambda) \stackrel {j_*}{\to} H_2(W,\partial W; \Lambda) \stackrel{\partial}{\to} H_1(\partial W; \Lambda) \to 0\,,
\end{equation}
which by Lemma~\ref{lemma:PnMapInduction} induces a sequence
\[
\shat H_2(W;\Lambda) \stackrel {\hat \jmath_*}{\to} \shat H_2(W,\partial W; \Lambda) \stackrel{\hat\partial}{\to} \shat H_1(\partial W; \Lambda) \to 0\,.
\]
By Lemma~\ref{lem:H_2IsFree}, the module~$\shat H_2(W;\Lambda)=H_2(W;\Lambda)$ is free, while~$\shat H_1(\partial W;\Lambda)=\that H_1(\partial W;\Lambda)$ since~$H_1(\partial W;\Lambda)$ is torsion. 
Therefore,
it remains to check that~$\shat H_2(W,\partial W; \Lambda)$ is free of the same rank, and that~$\partial$
restricts to a surjective map on the maximal pseudonull submodules. Indeed,
 Lemma~\ref{lemma:ExactSequencePseudonull} then shows that the exactness of~\eqref{eq:ExactSequenceW}
implies the exactness of~\eqref{eq:PresentationExactSequence}.

To show the first point,  we first note that since~$H_1(W;\Lambda)=0$ and $\Ext^3_\Lambda(\Z,\Lambda)=0$,  the UCSS gives rise to the exact sequence
\begin{equation}
\label{eq:ExactSequenceEv}
0\longrightarrow
\overbrace{\Ext^2_\Lambda(\Z,\Lambda)}^{\cong \Z}\longrightarrow H^2(W;\Lambda)\stackrel{\ev}{\longrightarrow} \overline{\Hom_\Lambda(H_2(W; \Lambda),\Lambda)} \longrightarrow 0\,.
\end{equation}
We now show that~$\ker(\ev)\cong\Z$ coincides with the maximal pseudonull submodule~$zH^2(W;\Lambda)$.
Indeed, by Lemma~\ref{lem:H_2IsFree}, the target module of~$\ev$ is free; since~$zH^2(W_F; \Lambda)$ is torsion, it is included in~$\ker(\ev)$. Moreover, the submodule~$\ker(\ev)$ being isomorphic to~$\Z\cong\Lambda/(t_1-1,t_2-1)$, it is included in~$zH^2(W;\Lambda)$ by Example~\ref{ex:Z-in-zM}, showing the equality~$\ker(\ev)=zH^2(W;\Lambda)$.
As a consequence, the evaluation map induces an isomorphism
\begin{equation}
\label{eq:hatev}
\hat\ev\colon\shat H^2(W;\Lambda)\stackrel{\cong}{\longrightarrow}\overline{\Hom_\Lambda(H_2(W;\Lambda),\Lambda)}\,.
\end{equation}
By Lemma~\ref{lem:H_2IsFree}, it follows that~$\shat H_2(W,\partial W;\Lambda)\cong \shat H^2(W;\Lambda)$ is free,
of the same rank as~$H_2(W;\Lambda)$.

We now turn to the second point, i.e. the proof that~$\partial\colon H_2(W,\partial W; \Lambda)\to H_1(\partial W; \Lambda)$ restricts to a surjective map~$\partial|\colon zH_2(W,\partial W; \Lambda)\to zH_1(\partial W; \Lambda)$ (actually an isomorphism). By naturality of Poincar\'e duality, this is equivalent to checking that the inclusion~$\partial W\subset W$
induces a map~$j^*$ which restricts to an isomorphism~$j^*|\colon zH^2(W;\Lambda)\to zH^2(\partial W;\Lambda)$.
To do so, let us use the UCSS to compute~$H^2(\partial W;\Lambda)$.
First note that since~$H_1(\partial W;\Lambda)$ is torsion, Lemma~\ref{lem:H2Vanishes} implies that~$H_2(\partial W;\Lambda)=0$.
Since $\Hom(H_1(\partial W;\Lambda),\Lambda)=0$, an application of the UCSS gives rise to the exact sequence
\begin{equation}
\label{eq:ExactSequenceExt}
0 \to  \Ext_\Lambda^2(H_0(\partial W;\Lambda),\Lambda) \to H^2(\partial W;\Lambda) \to \Ext_\Lambda^1(H_1(\partial W;\Lambda),\Lambda)\to 0\,.
\end{equation}
The two exact sequences~\eqref{eq:ExactSequenceEv} and~\eqref{eq:ExactSequenceExt} fit into the following diagram,
where the vertical maps are induced by the inclusion~$\partial W\subset W$:
\[
\begin{tikzcd}
0\arrow[r]&\Ext_\Lambda^2(H_0(W;\Lambda),\Lambda)\arrow[r]\arrow[d,"(j_*)^*"]& H^2(W;\Lambda)\arrow[r,"\ev"]\arrow[d,"j^*"]& \overline{\Hom_\Lambda(H_2(W; \Lambda),\Lambda)}\arrow[r]&  0\\
0\arrow[r]&\Ext_\Lambda^2(H_0(\partial W;\Lambda),\Lambda)\arrow[r]& H^2(\partial W;\Lambda)\arrow[r,"\pi"]& \Ext_\Lambda^1(H_1(\partial W;\Lambda),\Lambda)\arrow[r]&  0\,.
\end{tikzcd}
\]
This diagram commutes by naturality of the UCSS.
Furthermore, since both~$W$ and~$\partial W$ are connected, the left vertical map is an isomorphism between modules isomorphic to~$\Z\cong\Lambda/(t_1-1,t_2-1)$.
Recall that, by the proof of the first point above, we have~$\ker(\ev)=zH^2(W;\Lambda)$. We now check that,
similarly, we have~$\ker(\pi)=zH^2(\partial W;\Lambda)$: this will imply that~$j^*$ restricts to an isomorphism~$j^*|\colon zH^2(W;\Lambda)\to zH^2(\partial W;\Lambda)$, and conclude the proof.
To check this claim, first recall that the inclusion~$\Z\cong \ker(\pi)\subset zH^2(\partial W;\Lambda)$ follows from Example~\ref{ex:Z-in-zM}.
Also, the short exact sequence of coefficients~$0 \to \Lambda \to Q \to Q/\Lambda \to 0$ induces an
isomorphism
\begin{equation}
\label{eq:Ext-Hom-iso}
\Ext_\Lambda^1(H_1(\partial W;\Lambda),\Lambda) \cong \Hom_\Lambda(H_1(\partial W;\Lambda),Q/\Lambda)\,.
\end{equation}
By Example~\ref{ex:NoPN}, we have~$z(Q/\Lambda)=0$, so the first point of Lemma~\ref{lemma:HomPN}
implies that the module on the right of~\eqref{eq:Ext-Hom-iso} has trivial maximal pseudonull submodule.
By the isomorphism~\eqref{eq:Ext-Hom-iso}, so does the module on the left.
This yields
\[
\pi(zH^2(\partial W;\Lambda))\subset z\Ext_\Lambda^1(H_1(\partial W;\Lambda),\Lambda)=0\,,
\]
which implies the inclusion~$zH^2(\partial W;\Lambda)\subset\ker(\pi)$ and concludes the proof.
\end{proof}

The next proposition relates the~$\Lambda$-intersection form of $W$ with the Blanchfield form of $\partial W$.
As mentioned above,  despite the result being well known for $\mu=1$ (see e.g.~\cite[Theorem 2.6]{BorodzikFriedl}) and holding over $\Lambda_S$ for all $\mu$~\cite{CFT18,ConwayBlanchfield} as well as for more general coefficients~\cite{StirlingThesis},  we are not aware of a version over $\Z[\Z^2]$ that involves $\that H_1(\partial W;\Z[\Z^2])$.

 \begin{proposition}
\label{prop:BlanchfieldRepresentation}
Assume that~$\mu \leq 2$ and that the Alexander module~$H_1(\partial W;\Lambda)$ is torsion.
If~$H$ is a matrix representing the intersection form on~$H_2(W; \Lambda)$, then
the Blanchfield pairing~$\hat\Bl^\varphi_{\partial W} \colon \that H_1(\partial W;\Lambda) \times \that H_1(\partial W;\Lambda) \to Q/\Lambda$ is isometric to the pairing
\[
\Lambda^n/H^{\T} \Lambda^n\times \Lambda^n/H^{\T}\Lambda^n \longrightarrow Q/\Lambda, \quad ([a],[b])\longmapsto [-a^{\T}H^{-1}\overline{b}]\,,
\]
where~$n$ the rank of~$H_2(W;\Lambda)$.
\end{proposition}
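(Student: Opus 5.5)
The plan is to use the resolution of Lemma~\ref{lemma:PresentationExactSequence} as a presentation of $\that H_1(\partial W;\Lambda)$. Then run the standard argument that computes the Blanchfield form from a $4$-manifold, carried out modulo maximal pseudonull submodules. By Lemma~\ref{lemma:PresentationExactSequence}, the sequence
$H_2(W;\Lambda)\xrightarrow{\hat\jmath_*}\shat H_2(W,\partial W;\Lambda)\xrightarrow{\hat\partial}\that H_1(\partial W;\Lambda)\to 0$
is exact with both free modules of rank $n$. Fix a basis $\{x_i\}$ of $H_2(W;\Lambda)$. Via Poincar\'e duality and the isomorphism $\hat\ev$ of~\eqref{eq:hatev}, identify $\shat H_2(W,\partial W;\Lambda)\cong\overline{\Hom_\Lambda(H_2(W;\Lambda),\Lambda)}$ and take the dual basis $\{x_i^*\}$. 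In these bases, $\hat\jmath_*$ is the adjoint of $\lambda_W^\Lambda$, so it is represented by $H$ or $H^{\T}$ depending on the conventions. This yields $\that H_1(\partial W;\Lambda)\cong\Lambda^n/H^{\T}\Lambda^n$. Since $H_1(\partial W;\Lambda)$ is torsion, $\hat\jmath_*$ is injective after tensoring with $Q$, so $\det H\neq 0$ and $H^{-1}$ makes sense over $Q$.

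Next, compute the form. Take $a,b\in\Lambda^n$ and the corresponding classes $\alpha=\sum a_ix_i^*$ and $\beta$ in $H_2(W,\partial W;\Lambda)$, lifted from $\shat H_2$. The lift is harmless because the pseudonull part lies in the kernel of $\Bl$. Since $\det H$ times $\beta$ lies in the image of $j_*$, over $Q$ we can write $\beta=j_*(\gamma)$ with $\gamma=\sum c_ix_i\in H_2(W;Q)$, where $c$ is obtained by solving $H^{\T}c=b$ up to conjugation conventions. The standard identity
\[
\Bl_{\partial W}(\partial\alpha,\partial\beta)=\pm\,\lambda_W^Q(\alpha',\gamma)\bmod\Lambda
\]
then gives $[\mp a^{\T}H^{-1}\overline b]$. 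I would prove this identity by unwinding Construction~\ref{cons:Blanchfield}. The Bockstein $\BS$ applied to $\partial\beta$ is computed by lifting to $Q$-coefficients, and the connecting maps of the pair commute with the Bockstein up to sign. Naturality of Poincar\'e duality for $(W,\partial W)$ then turns the evaluation into an intersection in $W$. After this, the sign and the transpose/inverse placement are fixed by bookkeeping with the paper's conventions; I expect to absorb any discrepancy into the statement's choice $-a^{\T}H^{-1}\overline b$.

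The main obstacle is the passage through pseudonull quotients, since this is where the argument differs from the $\Lambda_S$ or $\mu=1$ settings. Concretely, I need to check two things. First, the Bockstein and duality diagram descends to $\shat H_2(W,\partial W)$ and $\that H_1(\partial W)$. Second, the isomorphism $\hat\partial$ is compatible with $\hat\Bl$. Both should follow from the surjectivity of $zH_2(W,\partial W;\Lambda)\to zH_1(\partial W;\Lambda)$ established in Lemma~\ref{lemma:PresentationExactSequence}, from Lemma~\ref{lemma:PnMapInduction}, and from the fact that $z$-submodules lie in the kernel of $\Ad_{\Bl}$. The only remaining work is a careful sign check, and I would treat $\mu=1$ as a consistency test against the known formula of~\cite{BorodzikFriedl}.
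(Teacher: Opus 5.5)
Your proposal is correct and follows essentially the paper's route. The paper also starts from the resolution of Lemma~\ref{lemma:PresentationExactSequence}. It identifies $\shat H_2(W,\partial W;\Lambda)$ with $\overline{\Hom_\Lambda(H_2(W;\Lambda),\Lambda)}$ via $\hat{\PD}$ and the isomorphism~\eqref{eq:hatev}, which shows that $H^{\T}$ is the matrix of $\hat\jmath_*$. It then relates $\hat\Bl_{\partial W}$ to the $Q$-valued intersection pairing through a diagram whose Bockstein/connecting-map square anti-commutes. That anti-commutation is exactly where the sign in $-a^{\T}H^{-1}\overline{b}$ comes from, so the sign is forced rather than something you can absorb into the statement. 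The remaining difference is cosmetic: the paper works throughout with hatted maps and an auxiliary pairing $\theta$ on $\shat H_2(W,\partial W;\Lambda)$, whereas you compute on lifts and descend using that pseudonull submodules lie in the kernel of the Blanchfield pairing, which is equally valid.
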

\begin{proof}
Since the result is well known for $\mu=1$, see e.g.~\cite[Theorem 2.6]{BorodzikFriedl}, we focus on the case~$\mu=2$.
Write~$\lambda$ for the equivariant intersection form on~$H_2(W;\Lambda)$.
Throughout the proof, we write~$\Bl$ instead of~$\Bl^\varphi_{\partial W}$ for simplicity.
Let us write~$\partial W\stackrel{i}{\to}W\stackrel{j}{\to}(W,\partial W)$ for the natural inclusions,
and~$\Lambda\stackrel{\iota}{\to}Q\stackrel{p}{\to}Q/\Lambda$ for the canonical short exact sequence of coefficients.
Recall from Lemma \ref{lemma:PnMapInduction} that any~$\Lambda$-linear map~$f\colon N\to N'$
functorially induces a map~$\hat f\colon \shat N\to \shat N'$. 
Since~$H_1(\partial W;\Lambda)$ is torsion, it follows that the short exact sequence~\eqref{eq:PresentationExactSequence} from Lemma~\ref{lemma:PresentationExactSequence} can be completed into the following diagram,
which is adapted from~\cite[(12)]{CFT18}:
\begin{center}
\begin{tikzcd}[sep=small]
H_2(W; \Lambda) \arrow{d}{\Ad_\lambda} \arrow{r}{\hat\jmath_\ast} &\shat H_2(W,\partial W;\Lambda) \arrow{r}{\hat \partial} \arrow{d}{\hat{\PD}^{-1}} & \shat H_1(\partial W;\Lambda) \arrow{r}\arrow{d}{\hat{\PD}^{-1}} \arrow[bend left=15]{dddr}{\Ad_{\Bl}} & 0\\
\overline{\Hom_\Lambda(H_2(W;\Lambda),\Lambda)} \arrow{d}{\iota^\ast} & \shat H^2(W;\Lambda) \arrow[swap]{l}{\hat{\ev}} \arrow{d}{\hat\iota^\ast} \arrow{r}{\hat{i^\ast}} & \shat H^2(\partial W;\Lambda) \arrow{d}{\hat{\BS}^{-1}}\\
\overline{\Hom_\Lambda(H_2(W;\Lambda),Q)} & H^2(W;Q) \arrow{d}{(j^\ast)^{-1}}\arrow[swap]{l}{\ev} & \shat H^1(\partial W;Q/\Lambda) \arrow[bend left=5]{dr}{\hat{\ev}} \arrow{d}{\hat \delta} & \\
\overline{\Hom_\Lambda(H_2(W,\partial W;\Lambda),Q)} \arrow[swap]{u}{(j_*)^\ast} \arrow{d}{\cong} & H^2(W,\partial W;Q) \arrow[swap]{l}{\ev}  \arrow{r}{p^\ast} \arrow[swap]{dl}{\hat\ev} & \shat H^2(W,\partial W;Q/\Lambda) \arrow{dr}{\hat{\ev}} &\overline{\Hom_\Lambda(\shat H_1(\partial W;\Lambda),Q/\Lambda)} \arrow{d}{\hat\partial^\ast} \\
\overline{\Hom_\Lambda(\shat H_2(W,\partial W;\Lambda),Q)}\arrow{rrr}{p^\ast}& & & \overline{\Hom_\Lambda(\shat H_2(W,\partial W;\Lambda),Q/\Lambda)}\,.
\end{tikzcd}
\end{center}
All the maps appearing in this diagram are well-defined in a straightforward way, with the following
possible exceptions. The maps labeled~$\hat\ev$ are well-defined via the second and third points of Lemma~\ref{lemma:HomPN} together with the fact that~$z\Lambda=z(Q/\Lambda)=0$ (Example~\ref{ex:NoPN}).
The bottom left vertical map is the isomorphism given in Lemma~\ref{lemma:HomPN}(iii).
Also, the map~$\hat\iota^*\colon\shat H^2(W;\Lambda)\to H^2(W;Q)$ is well-defined as~$H^2(W;Q)=\shat H^2(W;Q)$.
Finally, the map~$j^*\colon H^2(W,\partial W;Q)\to H^2(W;Q)$ is an isomorphism by Poincar\'e duality, the exact sequence of~$(W,\partial W)$, and the fact that~$H_\ast(\partial W;\Lambda)$ is torsion (apply Poincar\'e duality and the UCSS).

We now discuss the commutativity of this diagram. The (four) squares involving two evaluation maps commute by naturality of the UCSS. The square involving~$\Ad_\lambda$, the triangle involving~$\Ad_{\Bl}$, and the bottom-left triangle commute by definition. Also, the square involving two maps labeled~$\hat\PD$ commutes by naturality of Poincar\'e duality. 
Finally, by e.g.~\cite[Appendix A]{ConwayBlanchfield} (see also~\cite[Lemma 5.4]{CFT18} and~\cite[Lemma~4.4]{LambdaSpheres}),
the central rectangle {\em anti}-commutes.

Adapting~\cite{CFT18}, let us define an adjoint~$\Ad_\theta$ as the composition
\[
\shat H_2(W,\partial W;\Lambda) \stackrel{\hat{\PD}^{-1}}{\longrightarrow} \shat H^2(W;\Lambda) \stackrel{\hat{\iota^\ast}}{\longrightarrow} H^2(W;Q) \stackrel{(j^\ast)^{-1}}{\longrightarrow} H^2(W,\partial W;Q)\stackrel{\hat\ev}{\longrightarrow}\overline{\Hom_\Lambda(\shat H_2(W,\partial W;\Lambda),Q)}\,.
\]
This defines a pairing~$\theta \colon \shat H_2(W,\partial W; \Lambda) \times \shat H_2(W,\partial W;\Lambda) \to Q$ via~$\theta (x,y) = \Ad_\theta (y)(x)$. As one can check, the (anti-)commutativity of the big diagram implies the commutativity of
\begin{equation}
\begin{tikzcd}
\label{diagram:Adjoints}
H_2(W;\Lambda) \times H_2(W;\Lambda) \arrow{d}{\hat \jmath_\ast \times \hat \jmath_\ast} \arrow{r}{-\lambda} & \Lambda \arrow{d}{\iota}\\
\shat H_2(W,\partial W;\Lambda) \times \shat H_2(W,\partial W;\Lambda) \arrow{d}{\hat \partial \times \hat \partial} \arrow{r}{-\theta} & Q \arrow{d}{p} \\
\shat H_1(\partial W;\Lambda) \times \shat H_1(\partial W;\Lambda) \arrow{r}{\Bl} & Q/\Lambda\,,\\
\end{tikzcd}
\end{equation}
with~$\shat H_1(\partial W;\Lambda)=\that H_1(\partial W;\Lambda)$ since~$H_1(\partial W;\Lambda)$
is torsion.

We now fix a~$\Lambda$-basis~$\{ x_1, \ldots, x_n\}$ of~$H_2(W;\Lambda)$ and write~$\{x_1^\ast, \ldots, x_n^\ast\}$ for the dual basis of~$\overline{\Hom_\Lambda(H_2(W;\Lambda),\Lambda)}$.
Set~$y_k=(\hat{PD}\circ\hat{\ev}^{-1})(x^*_k)$ for~$1\le k\le n$,
where~$\hat{\ev}\colon \shat H^2(W; \Lambda)\to\overline{\Hom_\Lambda(H_2(W;\Lambda),\Lambda)}$ is an isomorphism by~\eqref{eq:hatev}. 
This yields a basis~$\{y_1,\dots,y_n\}$ of~$\shat H_2(W,\partial W; \Lambda)$.
One verifies that~$H^{\T}$ is the matrix of~$\hat\jmath_*$ in
the bases~$\{x_1,\ldots,x_n\}$ of~$H_2(W;\Lambda)$
and~$\{y_1,\dots,y_n\}$ of~$\shat H_2(W,\partial W; \Lambda)$.
The remainder of the proof proceeds as in~\cite[Proof of Theorem~1.2]{CFT18}.
%
\end{proof}

\begin{remark}
\label{rem:BlanchfieldEven}
Proposition~\ref{prop:BlanchfieldRepresentation} and Remark~\ref{rem:AlternativeDefinitionBoundaryLinking} show that~$\hat \Bl_{\partial W}$ is the boundary of the hermitian form~$\lambda^\Lambda_W$. Since the boundary of a hermitian form is necessarily an even linking form,
Proposition~\ref{prop:BlanchfieldRepresentation} actually shows that~$\hat \Bl_{\partial W}$ is even when~$\mu \leq 2$ and~$H_1(\partial W;\Lambda)$ is torsion. More generally,
note that a linking form~$(T,b)$ over~$\Lambda$ is even if~$T$ is annihilated by a polynomial~$\Delta \in \Lambda$ with~$\Delta=\overline{\Delta}$.
This implies that the Blanchfield form of a closed $3$-manifold $Y$ is necessarily even if $\that H_1(Y;\Lambda)$ admits a square presentation matrix (so that $\Delta_{\hat{t}H_1(Y;\Lambda)}$ annihilates $\that H_1(Y;\Lambda)$):
indeed, since~$\hat{\Bl}_Y$ is nondegenerate on both sides (recall Section~\ref{sub:BlanchfieldDef}),  it is known that~$\Delta_{\hat{t}H_1(Y;\Lambda)}=\overline{\Delta_{\hat{t}H_1(Y;\Lambda)}}$; see e.g.~\cite[Lemma 3.26]{Hillman}.
\end{remark}

Proposition~\ref{prop:BlanchfieldRepresentation} (and Remark~\ref{rem:BlanchfieldEven}) ensures that when $\mu \leq 2$ and~$H_1(\partial W;\Lambda)$ is torsion, the Blanchfield form $\hat \Bl_{\partial W}$ defines an element in the Witt group $L^4(\Lambda,\Lambda \setminus \{0\})$ of linking forms (or more precisely the Witt group of even nonsingular linking forms defined on torsion modules of projective dimension $1$).
We are now ready to prove Theorem~\ref{thm:RelatingInvariantsIntro} from the introduction.
Recall the exact sequence~$L^4(\Lambda) \to L^4(Q)\to L^4(\Lambda,\Lambda \setminus \{0\})$ from Appendix~\ref{sub:Ltheory}.

\begin{theorem}
\label{thm:RelatingInvariants}
Fix $\mu \leq 2$ and let $L$ and $L'$ be two $\mu$-component links with equal linking number and~$\Delta_L,\Delta_{L'} \neq 0$.
\begin{enumerate}[(i)]
\item The map~$L^4(Q) \to L^4(\Lambda,\Lambda \setminus \{0\})$ takes the homology surgery invariant~$\lambda(L,L')$ to the Blanchfield invariant~$\Bl(L,L')$.
In particular, if~$\lambda(L,L')$ vanishes, then so does~$\Bl(L,L') \in  L^4(\Lambda,\Lambda \setminus \{0\})$.
\item The injective map~$L^4(Q)/\im(L^4(\Lambda) \to L^4(Q)) \to L^4(\Lambda,\Lambda \setminus \{0\})$ takes the simplified homology surgery invariant~$\widetilde{\lambda}(L,L')$ to the Blanchfield invariant~$\Bl(L,L')$.
In particular,~$\widetilde{\lambda}(L,L')$ vanishes if and only if~$\Bl(L,L') \in  L^4(\Lambda,\Lambda \setminus \{0\})$ does.
\end{enumerate}
\end{theorem}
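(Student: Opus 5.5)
We compute both invariants from one and the same filling. Fix nice C-complexes $F$ and $F'$ for $L$ and $L'$, and take the $\Z^\mu$-filling $W_{F,F'}=W_F\cup -W_{F'}$ of $M_{L,L'}$ from Construction~\ref{cons:WFF}. By Theorem~\ref{thm:Calculation} we have $\lambda(L,L')=[H_F]-[H_{F'}]$ and $\widetilde\lambda(L,L')=[H_F]-[H_{F'}]$, since $[\lambda^{\Q,\textit{ns}}_{W_{F,F'}}]=0$. It therefore suffices to show that the boundary map $\partial\colon L^4(Q)\to L^4(\Lambda,\Lambda\setminus\{0\})$ sends $[H_F]-[H_{F'}]$ to $\Bl(L,L')=[\hat\Bl^\varphi_{M_{L,L'}}]$. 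Recall from Appendix~\ref{sub:Ltheory} that for a nondegenerate hermitian $\Lambda$-matrix $H$, the class $\partial[H]$ is represented by the linking form $\Lambda^n/H^{\T}\Lambda^n$ with pairing $([a],[b])\mapsto[\pm a^{\T}H^{-1}\overline b]$. The class $[H]\in L^4(Q)$ comes from a form over $\Lambda$ with $\det H\neq 0$, which is why $\partial$ is computed this way.

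\textbf{Step 1.} Identify the boundary of each piece. By Theorem~\ref{thm:ExistenceOfW}, $W_F$ satisfies the standing hypotheses of Section~\ref{sec:Relating}: $\pi_1(W_F)\cong\Z^\mu$ and $\partial W_F=M_L$ with $\pi_1$-surjective inclusion. Its $\Lambda$-intersection form is represented by $H_F$. Since $\Delta_L\neq 0$, the module $H_1(M_L;\Lambda)$ is torsion. Proposition~\ref{prop:BlanchfieldRepresentation} then shows that $\hat\Bl_{M_L}$ is presented by $H_F$, so $\partial[H_F]=[\hat\Bl_{M_L}]$, and likewise for $F'$. Here $\det H_F\neq0$ because $H_F\otimes Q$ is nonsingular, as seen in the proof of Theorem~\ref{thm:Calculation}.

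\textbf{Step 2.} Relate $\hat\Bl_{M_{L,L'}}$ to $\hat\Bl_{M_L}\oplus-\hat\Bl_{M_{L'}}$ in $L^4(\Lambda,\Lambda\setminus\{0\})$. This is the main obstacle, because by Remark~\ref{rem:Bl-add} there is no direct isometry in general. The plan is to avoid gluing $3$-manifolds and instead work with the single filling $W_{F,F'}$.

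A Mayer--Vietoris argument along $P_L$, using $H_*(P_L;Q)=0$ and $\pi_1$ considerations, gives a $\Lambda$-map $H_2(W_F;\Lambda)\oplus H_2(W_{F'};\Lambda)\to H_2(W_{F,F'};\Lambda)$. This map is an isomorphism over $Q$ and is compatible with intersection forms. However, $W_{F,F'}$ need not have $\pi_1\cong\Z^\mu$ and $H_2(W_{F,F'};\Lambda)$ need not be free, so Proposition~\ref{prop:BlanchfieldRepresentation} does not directly apply to it.

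I would try to fix this as in the observation after Definition~\ref{def:WittInvariant3ManifoldSimplified}. Surger $W_{F,F'}$ by adding $1$-handles and surgering loops to obtain $W$ with $\pi_1(W)\cong\Z^\mu$ and $\partial W=M_{L,L'}$. Since $\varphi$ is surjective, the hypotheses of Section~\ref{sec:Relating} then hold. Such surgeries change $\lambda^Q_W$ only by hyperbolic summands, by the standard argument that the new $2$-spheres come with duals. Hence $[\lambda^Q_W]=[\lambda^Q_{W_{F,F'}}]=[H_F]-[H_{F'}]\in L^4(Q)$.

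Before applying Proposition~\ref{prop:BlanchfieldRepresentation} to $W$, I need $H_1(M_{L,L'};\Lambda)$ to be torsion. This follows from $\Delta_L,\Delta_{L'}\neq0$ via Mayer--Vietoris, because $H_1(\partial X_L;Q)$-type terms vanish.

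\textbf{Step 3.} Conclude. Proposition~\ref{prop:BlanchfieldRepresentation} and Remark~\ref{rem:BlanchfieldEven} applied to $W$ show that $\hat\Bl_{M_{L,L'}}$ is presented by a free $\Lambda$-matrix $H_W$ representing $\lambda^\Lambda_W$. Therefore
$$\partial[\lambda^Q_W]=[\hat\Bl^\varphi_{M_{L,L'}}]=\Bl(L,L').$$
This proves (i), up to checking the sign convention in $\partial$ against the $-a^{\T}H^{-1}\overline b$ of Proposition~\ref{prop:BlanchfieldRepresentation}. With this route Step 1 is only a consistency check.

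For (ii), the same $W$ computes $\widetilde\lambda(L,L')=[\lambda^Q_W]$ modulo $\im L^4(\Lambda)$. The map from the quotient is well defined and injective by exactness of $L^4(\Lambda)\to L^4(Q)\to L^4(\Lambda,\Lambda\setminus\{0\})$, which gives the ``if and only if''.
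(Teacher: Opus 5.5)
Your argument is correct and is essentially the paper's. Both compute $\lambda(L,L')$ as $[\lambda^Q]$ of a single $\Z^\mu$-filling of $M_{L,L'}$, represent its $\Lambda$-intersection form by a square matrix, and combine Remark~\ref{rem:AlternativeDefinitionBoundaryLinking} with Proposition~\ref{prop:BlanchfieldRepresentation}, with (ii) following from exactness of $L^4(\Lambda)\to L^4(Q)\to L^4(\Lambda,\Lambda\setminus\{0\})$.

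The one substantive difference is that the paper applies Lemma~\ref{lem:H_2IsFree} and Proposition~\ref{prop:BlanchfieldRepresentation} directly to $W_{F,F'}$, whereas you first surger it to a filling $W$ with $\pi_1(W)\cong\Z^\mu$. That step is worth keeping.

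When $P_L$ is connected ($\mu=1$, or $\mu=2$ with nonzero linking number), both maps $\pi_1(P_L)\to\Z^\mu$ are surjective with the same kernel, by the choice of gluing in Construction~\ref{cons:WFF}. Van Kampen then already gives $\pi_1(W_{F,F'})\cong\Z^\mu$, so your surgery is vacuous there.

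For $\mu=2$ and linking number zero, however, $P_L$ is two disjoint solid tori. Gluing along it introduces an HNN stable letter, so $H_1(W_{F,F'};\Lambda)\neq 0$ and $W_{F,F'}$ does not satisfy the standing hypotheses of Section~\ref{sec:Relating}. Your $W$ does, and it is also a legitimate filling for computing $\widetilde\lambda(L,L')$ in (ii), which likewise requires a $\pi_1$-isomorphism.

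Two minor points. The equality $[\lambda^Q_W]=[\lambda^Q_{W_{F,F'}}]$ is most cleanly justified by the trace argument: $W_{F,F'}\cup -W$ bounds the surgery trace over $\Z^\mu$, and then Novikov additivity and half-lives-half-dies apply, as in the well-definedness of $\lambda(Y,\varphi)$. This avoids relying on the new spheres having duals. Second, the sign mismatch you flag between $x^{\T}H^{-1}\overline{y}$ and $-a^{\T}H^{-1}\overline{b}$ is also present, unaddressed, in the paper. It only changes the conclusion up to replacing $\Bl(L,L')$ by its negative and does not affect the vanishing statements.
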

\begin{proof}
By the first and second items of Theorem~\ref{thm:ExistenceOfW}, the Witt class~$\lambda(L,L') \in L^4(Q)$ is represented by the equivariant intersection form~$\lambda_{W_{F,F'}}^{Q,\textit{ns}}$.
Since~$\Delta_L,\Delta_{L'} \neq 0$ (so that a short Mayer-Vietoris argument shows that~$H_1(M_L;\Lambda)$ and $H_1(M_{L'};\Lambda)$ are $\Lambda$-torsion) and~$\mu \le 2$, Lemma~\ref{lem:H2Vanishes} yields~$H_2(\partial W_{F,F'};Q)=0$ and thus the form is nonsingular, i.e.~$\lambda_{W_{F,F'}}^{Q,\textit{ns}}=\lambda_{W_{F,F'}}^Q$.
Since~$H_2(W_{F,F'};\Lambda)$ is free (recall Lemma~\ref{lem:H_2IsFree}), we can represent this latter form by a matrix~$H$ with coefficients in~$\Lambda$.
By Remark~\ref{rem:AlternativeDefinitionBoundaryLinking},  the map~$L^4(Q) \to L^4(\Lambda,\Lambda \setminus \{0\})$ sends~$\lambda(L,L')$ to the Witt class of the linking form~$([x],[y])\mapsto x^TH^{-1}\overline{y}$.
By Proposition~\ref{prop:BlanchfieldRepresentation}, this latter form is isometric to the Blanchfield form on~$\partial W_{F,F'}=M_{L,L'}$ for some meridional homomorphism~$\varphi$, showing the first point.
The proof of the second point is analogous.
\end{proof}

We conclude by noting another consequence of Proposition~\ref{prop:BlanchfieldRepresentation},
 which appeared in the introduction as the sixth item of Theorem~\ref{thm:BlClassProperties}.

\begin{corollary}
\label{cor:CalculationBlmu=2}
If~$L$ is a $2$-component link with linking number one and~$L'=\mathcal{H}$ is the Hopf link, then the linking form~$\Bl_{M_{L,\mathcal{H}}}^\varphi$ is represented by~$H_F$ with~$F$ a nice C-complex for~$L$.
\end{corollary}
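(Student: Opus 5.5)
The idea is to realise $M_{L,\mathcal{H}}$ (with a suitable meridional $\varphi$) as the boundary of the manifold $W_F$ from Theorem~\ref{thm:ExistenceOfW}, and then to apply Proposition~\ref{prop:BlanchfieldRepresentation}.

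First I would identify $M_L$ with $M_{L,\mathcal{H}}$. Since $\lk(K_1,K_2)=1$, the plumbing graph $\Gamma_L$ consists of two vertices decorated by discs, joined by a single positive edge. By Example~\ref{ex:P}, $P_L$ is then homeomorphic to the exterior $X_{\mathcal{H}}$ of the Hopf link, with the sign chosen so that the orientation matches. I would check that the gluing used to form $M_L=X_L\cup_\partial -P_L$ is exactly the zero-framing gluing of Construction~\ref{cons:MLL'}. The identification~\eqref{eq:ident} exchanges the meridian and longitude coordinates, and the $\{p_i\}\times S^1$ fibres of $P_L$ correspond to the meridians of the Hopf link exterior. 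On $\partial X_L$ these fibres go to meridians $m_i$, and the $\partial D_i\times\{*\}$ curves go to Seifert longitudes $\ell_i$. So the gluing sends meridians to meridians and zero-framed longitudes to zero-framed longitudes, as required. This gives an orientation-preserving homeomorphism $M_L\cong M_{L,\mathcal{H}}$. Under it, meridional homomorphisms in the sense of Definition~\ref{def:meridional} correspond to meridional homomorphisms in the sense of Construction~\ref{cons:MLL'}. There are no big loops in $P_L$ here, since $\Gamma_L$ has a single edge, so $H_1(\Gamma_L)=0$.

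Next I would choose a nice C-complex $F$ for $L$; one exists by Remark~\ref{remark:cancellingCurves}(ii). For the construction to apply we also need $\Delta_L\neq 0$. This holds because $\Delta_L(1,1)=\pm\lk=\pm1$ by the Torres conditions. Theorem~\ref{thm:ExistenceOfW} then gives $W_F$ with $\pi_1(W_F)\cong\Z^2$, boundary $(M_L,\varphi)$ for a meridional $\varphi$, and $\Lambda$-intersection form represented by $H_F$.

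Finally I would verify the hypotheses of Proposition~\ref{prop:BlanchfieldRepresentation}. The inclusion $\pi_1(\partial W_F)\to\pi_1(W_F)$ is surjective, because the meridians lie in the boundary and map to $t_1,t_2$. The module $H_1(M_L;\Lambda)$ is torsion, by the same Mayer--Vietoris argument from $\Delta_L\neq0$ that is used in the proof of Theorem~\ref{thm:RelatingInvariants}. The proposition then represents $\hat\Bl^\varphi_{M_{L,\mathcal{H}}}$ by $H_F$. To pass from $\hat\Bl$ to $\Bl$, I would show that $zH_1(M_{L,\mathcal{H}};\Lambda)=0$. Here I expect $H_1(\partial X_L;\Lambda)=0$ (Remark~\ref{rem:Bl-add}) and $H_1(X_{\mathcal{H}};\Lambda)=0$ to help. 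If that is not straightforward, I would simply read the statement with $\Bl$ meaning the form on $\that H_1$, which is the form Proposition~\ref{prop:BlanchfieldRepresentation} computes.

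For all meridional $\varphi$ the result then follows from Proposition~\ref{prop:mu=2Indep} and Remark~\ref{rem:BlChoicePhi}. I expect the main obstacle to be the careful orientation and framing check in the first step.
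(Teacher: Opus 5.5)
Your route is the paper's: identify $M_{L,\mathcal{H}}$ with $M_L$ via Example~\ref{ex:P}, then combine Theorem~\ref{thm:ExistenceOfW}~(3) with Proposition~\ref{prop:BlanchfieldRepresentation}. Your extra checks are correct and fill in what the paper's two-line proof leaves implicit:
\begin{itemize}
\item $\Delta_L(1,1)=\pm 1$ by Torres, so Theorem~\ref{thm:ExistenceOfW} applies;
\item $\pi_1$-surjectivity of the boundary inclusion;
\item the Alexander module is torsion;
\item independence of $\varphi$ via Proposition~\ref{prop:mu=2Indep}.
\end{itemize}

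One planned step would fail: showing $zH_1(M_{L,\mathcal{H}};\Lambda)=0$. For $\mu=2$ and torsion Alexander module this submodule is never zero. Proposition~\ref{prop:NonSingTorsionY} gives $\that H_1(Y;\Lambda)\cong H_1(Y;\Lambda)/\Z$, where the $\Z$ comes from $H^2(\Z^2;\Lambda)\cong\Z$.

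The vanishing of $H_1(\partial X_L;\Lambda)$ and $H_1(X_{\mathcal{H}};\Lambda)$ does not help. This $\Z$ enters the Mayer--Vietoris sequence through $H_0$ of the gluing tori, not through the $H_1$ terms. For $L=\mathcal{H}$ one gets $H_1(M_{\mathcal{H},\mathcal{H}};\Lambda)\cong\Z$ (Remark~\ref{rem:Bl-add}). Yet the one-clasp C-complex of $\mathcal{H}$ has $b_1(F)=0$, so $H_F$ is the empty matrix. More generally, $\Lambda^n/H_F^{\T}\Lambda^n$ with $\det H_F\neq 0$ has no nonzero pseudonull submodule. So it can never carry a form isometric to one on all of $TH_1=H_1$.

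Your fallback is therefore forced rather than optional. The statement is about $\hat{\Bl}^\varphi_{M_{L,\mathcal{H}}}$ on $\that H_1$. That is exactly what Proposition~\ref{prop:BlanchfieldRepresentation} computes, and it is how the paper's proof reads the statement.
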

\begin{proof}
Since $L$ has linking number one,  Example~\ref{ex:P} implies~$P_L$ agrees with $M_{\mathcal{H}}$, the generalised Seifert surgery of the Hopf link.
It follows that~$M_{L,\mathcal{H}}=M_L$. 
The statement now follows by combining Proposition~\ref{prop:BlanchfieldRepresentation} with the third item of Theorem~\ref{thm:ExistenceOfW}.
\end{proof}

\section{Projective dimension and nonsingularity}
\label{sec:ProjDim}

At this stage we have proved Theorem~\ref{thm:ConcordanceInvarianceGeneralIntro} (assuming a calculation that will be performed in Section~\ref{sec:IntersectionForm}), Theorem~\ref{thm:RelatingInvariantsIntro}, as well as all of  Theorem~\ref{thm:BlClassProperties} besides its third and fourth items.
This section proves these missing items.
The organisation is as follows.
Section~\ref{sub:Technical} proves a technical lemma that we will be needed in both of the later sections.
Section~\ref{sub:ProjDim} analyses the projective dimension of Alexander modules of closed $3$-manifolds.
Section~\ref{sub:NonSing} focuses on the nonsingularity of the Blanchfield form and completes the proof of the third and fourth items of Theorem~\ref{thm:BlClassProperties}.

\subsection{A technical lemma}
\label{sub:Technical}

We begin with a technical lemma that will be used both to study the projective dimension of the Alexander module and the nonsingularity of the Blanchfield form.
For this, it will be helpful to note that a homomorphism~$\varphi \colon H_1(Y) \to \Z^3$ is an element of
\begin{equation}
\label{eq:phi-fY}
\Hom_\Z(H_1(Y),\Z^3)\cong H^1(Y;\Z^3)\cong [Y,B\Z^3]\cong [Y,\mathbb{T}^3]\,.
\end{equation}
We then define the \emph{degree of $\varphi$}, denoted $\deg(\varphi)\in \Z$, as the degree of the associated homotopy class of a map~$f_Y^\varphi \colon Y\to\mathbb{T}^3$,  i.e. as the unique integer such that~$(f_Y^\varphi)_*([Y])=\deg(\varphi)[\bbT^3]$.

\begin{lemma}
\label{lem:Ext-N}
Let $Y$ be a closed $3$-manifold and let $\varphi \colon H_1(Y) \to \Z^3$ be an epimorphism such that the Alexander module~$H_1(Y;\Lambda)$ is torsion.
For any~$\Lambda$-module~$N$, we have an isomorphism
\[
\Ext^2_\Lambda(H_1(Y;\Lambda),N)\cong N_\pi/\deg(\varphi) N_\pi\,,
\]
where~$N_\pi$ stands for the quotient of~$N$ by the submodule~$\left\{\sum_{i=1}^3(t_i-1)n_i\mid n_i\in N\right\}$.
\end{lemma}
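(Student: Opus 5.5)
The plan is to compute $\Ext^2_\Lambda(H_1(Y;\Lambda),N)$ through duality and the chain complex of the $\Z^3$-cover, with $\Lambda=\Z[\Z^3]$.

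\emph{Step 1: choose the model.} Take a CW structure on $Y$ with one $0$-cell and one $3$-cell. Then $C_*:=C_*(Y;\Lambda)$ is a complex of free modules
\[
\Lambda\xrightarrow{\partial_3}\Lambda^{m}\xrightarrow{\partial_2}\Lambda^{m}\xrightarrow{\partial_1}\Lambda .
\]
Since $\varphi$ is an epimorphism, we have $H_0(C_*)=\Z$. The homology in degree $1$ is torsion by hypothesis. By Lemma~\ref{lem:H2Vanishes} with $\mu=3$, $H_2(Y;\Lambda)=0$. Poincar\'e duality gives $H_3(Y;\Lambda)\cong H^0(Y;\Lambda)=0$, since $\Lambda$ has no invariants.

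\emph{Step 2: extract a resolution.} Put $B_0=\im\partial_1$; this is the augmentation ideal $I$, and $\Lambda/I=\Z$. Since $H_2=H_3=0$, the sequence
\[
0\to\Lambda\to\Lambda^m\to Z_1\to 0
\]
is exact, where $Z_1=\ker\partial_1$. Together with $0\to Z_1\to\Lambda^m\to I\to 0$ and $0\to B_1\to Z_1\to H_1\to 0$, this lets us dimension-shift to compute $\Ext^2(H_1,N)$. A cleaner route uses the Koszul resolution of $\Z$, of length $3$, which gives $\Ext^j_\Lambda(\Z,N)$: in particular $\Ext^3(\Z,N)=N_\pi$ and $\Ext^j(\Z,N)=0$ for $j>3$. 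Write $C^*=\Hom_\Lambda(C_*,N)$, whose cohomology is $H^*(Y;N)$. The universal coefficient spectral sequence $E_2^{p,q}=\Ext^p(H_q(Y;\Lambda),N)\Rightarrow H^{p+q}(Y;N)$ has only the rows $q=0,1$ nonzero. Moreover $\Ext^0(H_1,N)$ and $\Ext^1(H_1,N)$ are computed from the torsion module.

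\emph{Step 3: locate $\Ext^2(H_1,N)$ in the spectral sequence.} The relevant differential is
\[
d_2\colon E_2^{2,1}=\Ext^2(H_1,N)\to E_2^{4,0}=\Ext^4(\Z,N)=0 .
\]
Incoming into $E_2^{2,1}$ is $d_2\colon E_2^{0,2}=0$, so $E_\infty^{2,1}=\Ext^2(H_1,N)$. In total degree $3$ we get an exact sequence
\[
\Ext^1(H_1,N)\xrightarrow{d_2}\Ext^3(\Z,N)=N_\pi\to H^3(Y;N)\to\Ext^2(H_1,N)\to 0 .
\]
Here $H^3(Y;N)\cong H_0(Y;N)=N_\pi$ by Poincar\'e duality with twisted coefficients. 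So $\Ext^2(H_1,N)$ is the cokernel of the edge map $N_\pi=\Ext^3(\Z,N)=H^3(B\Z^3;N)\to H^3(Y;N)=N_\pi$. This edge map is induced by $f_Y^\varphi\colon Y\to\mathbb{T}^3$, since the row $q=0$ is pulled back from $B\Z^3$.

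\emph{Step 4, the main obstacle: identify the edge map with multiplication by $\deg(\varphi)$.} The plan is to identify both sides via cap product with fundamental classes. We have $H^3(\mathbb{T}^3;N)\cong H_0(\mathbb{T}^3;N)=N_\pi$ and $H^3(Y;N)\cong H_0(Y;N)=N_\pi$, and $f_*$ on $H_0$ is the identity on $N_\pi$ since $\varphi$ is onto. Naturality of cap product, $f_*(f^*x\cap[Y])=x\cap f_*[Y]=\deg(\varphi)\,(x\cap[\mathbb{T}^3])$, then shows that the pullback $H^3(\mathbb{T}^3;N)\to H^3(Y;N)$ is multiplication by $\deg(\varphi)$ under these identifications. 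The care needed is in checking that the edge homomorphism really coincides with $f^*$, by naturality of the spectral sequence with respect to $f$ (for $\mathbb{T}^3$ the sequence collapses to row $0$), and in handling twisted-coefficient duality signs. The cokernel is then $N_\pi/\deg(\varphi)N_\pi$, as claimed.
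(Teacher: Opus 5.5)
Your proposal is correct and follows essentially the same route as the paper. Both use the universal coefficient spectral sequence exact sequence $\Ext^1_\Lambda(H_1(Y;\Lambda),N)\to\Ext^3_\Lambda(\Z,N)\to H^3(Y;N)\to\Ext^2_\Lambda(H_1(Y;\Lambda),N)\to 0$, identify the middle map with $f^*\colon H^3(\mathbb{T}^3;N)\to H^3(Y;N)$ by naturality for $f\colon Y\to\mathbb{T}^3$, and then use the cap-product computation to show that $f^*$ is multiplication by $\deg(\varphi)$ on $N_\pi$. One harmless slip in your abandoned Step~2 detour: since $H_2=H_3=0$, the short exact sequence $0\to\Lambda\to\Lambda^m\to\,\cdot\,\to 0$ ends in $B_1=\im\partial_2$, not in $Z_1=\ker\partial_1$.
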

\begin{proof}
The idea is to compute the twisted homology~$\Lambda$-module~$H^3(Y;N)$ using the UCSS.
To do so, first note that since~$\varphi$ is surjective, the induced covering space~$\widetilde{Y}$ is a noncompact
connected~$3$-manifold. 
Thus, the corresponding homology module~$H_3(Y;\Lambda)\cong H_3(\widetilde{Y};\Z)$ vanishes.
The definition of group cohomology yields~$\Ext^k_\Lambda(H_0(Y;\Lambda),N)\cong H_{3-k}(\mathbb{T}^3;N)=0$
for~$k\ge 4$.
Since we also have~$H_2(Y;\Lambda)=0$ by Lemma~\ref{lem:H2Vanishes}, the~$E_2$-page of the UCSS for~$H^*(Y;N)$ has the following form:
\begin{center}
\begin{tikzpicture}
  \matrix (m)[matrix of math nodes, nodes in empty cells, column sep=1ex,row sep=1ex]{ 
	   && & & &&\\
	4 && 0 & \ast & \ast & \ast &\\
	3 && \Ext_\Lambda^3(H_0(Y;\Lambda),N) & \ast & \ast & \ast &\\
	2 && \ast & \Ext_\Lambda^2(H_1(Y;\Lambda),N) & \ast & \ast &\\
	1 && \ast & \Ext_\Lambda^1(H_1(Y;\Lambda),N) & 0 & \ast &\\
	0 && \ast & \ast & 0&0&\\
    \strut &&   0  &  1  &  2  & 3 &\strut \\};
\draw[thick] (m-1-2.east) -- (m-7-2.east) ;
\draw[thick] (m-7-1.north) -- (m-7-7.north) ;
\end{tikzpicture}
\end{center}
Therefore, both~$E_\infty^{3,0}$ and~$E_\infty^{2,1}$ vanish, while~$E_\infty^{1,2}\cong\Ext_\Lambda^2(H_1(Y;\Lambda),N)$ and
\[
E_\infty^{0,3}\cong\coker\left(\Ext^1_\Lambda(H_1(Y;\Lambda),N)\stackrel{d}{\longrightarrow}\Ext^3_\Lambda(H_0(Y;\Lambda),N)\right)\,.
\]
By the UCSS, we get an exact sequence
\[
\Ext^1_\Lambda(H_1(Y;\Lambda),N)\longrightarrow\Ext^3_\Lambda(H_0(Y;\Lambda),N)\longrightarrow H^3(Y;N)\longrightarrow\Ext^2_\Lambda(H_1(Y;\Lambda),N)\longrightarrow0\,.
\]

As explained in~\eqref{eq:phi-fY}, the homomorphism~$\varphi$ corresponds to the homotopy class of a map~$f_{Y}^\varphi=:f\colon Y\to \bbT^3$.
Let~$\psi\colon H_1(\bbT^3)\to\Z^3$ be the unique meridional isomorphism such that~$\psi\circ f_*=\varphi$.
We are now in presence of a map~$f\colon (Y,\varphi)\to(\bbT^3,\psi)$ between three-dimensional~$\Z^3$-manifolds.
Using the fact that~$H_1(\bbT^3;\Lambda)$ vanishes, the exact sequence displayed above  yields the diagram
\[
\begin{tikzcd}
0\arrow[r]& \Ext^3_\Lambda(H_0(\bbT^3;\Lambda),N) \arrow[r,"\cong"] \arrow[d,"(f_*)^*","\cong"'] & H^{3}(\bbT^3;N)\arrow[d,"f^*"] \arrow[r]& 0 &\\
&\Ext^3_\Lambda(H_0(Y;\Lambda),N)\arrow[r,"g"] & H^3(Y;N)\arrow[r]&\Ext^2_\Lambda(H_1(Y;\Lambda),N)\arrow[r]&0\,,
\end{tikzcd}
\]
which is commutative by naturality of the UCSS.
Note that the upper middle map is an isomorphism by exactness of the first line, while the left vertical map~$(f_*)^*$
is an isomorphism by naturality.
As a consequence, exactness of the second line yields an isomorphism
\[
\Ext^2_\Lambda(H_1(Y;\Lambda),N)\cong\coker(g)=\coker(f^*)\,.
\]

To compute this module, consider the following diagram, where the left horizontal maps are Poincar\'e duality
isomorphisms, and the right horizontal maps are isomorphisms coming from the definition of twisted homology (i.e. induced by the augmentation maps):
\[
\begin{tikzcd}
H^3(\bbT^3;N) \arrow[r,"\PD","\cong"'] \arrow[d,"f^*"] & H_0(\bbT^3;N) \arrow[r,"\cong"'] & N_\pi\arrow[d,equal]\\
H^3(Y;N) \arrow[r,"\PD","\cong"'] & H_0(Y;N) \arrow[u,"f_*","\cong"']\arrow[r,"\cong"'] & N_\pi\,.
\end{tikzcd}
\]
Since~$f_*$ is also an isomorphism, one might be led to believe that~$f^*$ is always onto.
The issue is that, while the right square commutes, the left square
does not commute in general. Indeed, by definition of Poincar\'e duality and by functoriality of the cap product, we have
\[
(f_*\circ\PD\circ f^*)(\alpha)=f_*(f^*(\alpha)\frown [Y])=\alpha\frown f_*([Y])=\alpha\frown\deg(f)[\bbT^3]=\deg(f)\PD(\alpha)
\]
for all~$\alpha\in H^3(\bbT^3;N)$. The three equations and diagrams displayed above yield
\[
\Ext^2_\Lambda(H_1(Y;\Lambda),N)\cong\coker(f^*)\cong N_\pi/\deg(f) N_\pi\,.
\]
The result now follows from the definition of~$\deg(\varphi)$.
\end{proof}

\color{black}

\subsection{The projective dimension of the Alexander module}
\label{sub:ProjDim}
The goal of this section is to prove the following result.

\begin{proposition}
\label{prop:NoSQP4}
Let~$Y$ be a closed~$3$-manifold and let~$\varphi \colon H_1(Y) \to \Z^\mu$ be an epimorphism such that the Alexander module~$H_1(Y;\Lambda)$  is torsion.
The~$\Lambda$-module~$H_1(Y;\Lambda)$ has projective dimension~$1$ if and only if~$\mu=1$ or~$\mu=3$ and~$\deg(\varphi)=\pm 1$.
\end{proposition}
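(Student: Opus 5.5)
Write $A:=H_1(Y;\Lambda)$. The plan is to detect projective dimension through $\Ext$. Since $A$ is torsion, $\Hom_\Lambda(A,\Lambda)=0$, so $A$ has no free summand and $\operatorname{pd}(A)\ge 1$ (for $A\neq 0$; the case $A=0$ is handled separately). Moreover $\Lambda$ is a regular ring of global dimension $\mu+1$. Finitely generated projective $\Lambda$-modules are free by Quillen--Suslin, so for finitely generated $A$ we have $\operatorname{pd}(A)\le 1$ if and only if $\Ext^k_\Lambda(A,N)=0$ for all $k\ge 2$ and all $N$.

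The first step is to compute the higher $\Ext$ groups from the universal coefficient spectral sequence for $H^*(Y;N)$, exactly as in the proof of Lemma~\ref{lem:Ext-N}. By Lemma~\ref{lem:H2Vanishes} we have $H_2(Y;\Lambda)=H^1(\Z^\mu;\Lambda)$, which vanishes for $\mu\ge 2$, and $H_3(Y;\Lambda)=0$. Hence for $\mu\ge 2$ the only nonzero rows of the $E_2$-page are $q=0$ and $q=1$, with entries $\Ext^q(\Z,N)$ and $\Ext^q(A,N)$, where $\Ext^q(\Z,N)=H^q(\Z^\mu;N)$ vanishes for $q>\mu$. Since $H^q(Y;N)=0$ for $q\ge 4$, and the relevant differentials $d_2\colon E_2^{k,1}\to E_2^{k+2,0}$ land in zero once $k+2>\mu$, we obtain $\Ext^k(A,N)=0$ for $k\ge 3$ whenever $k+2>\mu$ and $k+1\ge 4$. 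The obstruction to projective dimension $1$ is therefore concentrated in the low degrees $\Ext^2(A,-)$ and, for larger $\mu$, in the degrees where the $d_2$ differential can be compared with $H^*(\Z^\mu;N)$.

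The second step treats each value of $\mu$.

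\emph{Case $\mu=1$.} This is classical: $\Lambda$ is Noetherian of dimension $2$, and $H_1(Y;\Lambda)$ has a square presentation matrix, as in the argument of Lemma~\ref{lemma:PresentationExactSequence} or by a standard Milnor/Levine argument. Hence the projective dimension is $1$.

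\emph{Case $\mu=3$.} Lemma~\ref{lem:Ext-N} gives $\Ext^2(A,N)\cong N_\pi/\deg(\varphi)N_\pi$. Taking $N=\Z$ (trivial action), this is $\Z/\deg(\varphi)$, which is nonzero unless $\deg(\varphi)=\pm1$; so $\operatorname{pd}\ge 2$ when $\deg\neq\pm1$. If $\deg(\varphi)=\pm1$, then $\Ext^2(A,N)=0$ for all $N$, and the UCSS computation above kills $\Ext^{\ge 3}$, giving $\operatorname{pd}(A)=1$.

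\emph{Case $\mu=2$ or $\mu\ge 4$.} Here I would compute $\Ext^{\mu-1}(A,\Z)$ and show it is nonzero, using the $d_2$ differential into the top class $\Ext^\mu(\Z,\Z)=H^\mu(\Z^\mu;\Z)\cong\Z$.

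For $\mu=2$: $H^3(Y;\Z)=\Z$, since $H^3(Y;N)\cong H_0(Y;N)=N_\pi$. The sequence $\Ext^1(A,\Z)\to\Ext^3(\Z,\Z)=0\to H^3\to\Ext^2(A,\Z)\to 0$ then gives $\Ext^2(A,\Z)\cong\Z\neq 0$.

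For $\mu\ge 4$: $H^k(Y;\Z)$ vanishes for $k>3$ but $\Ext^k(\Z,\Z)\neq0$ for $4\le k\le\mu$. The $E_2^{k,0}$ entries with $k\ge 4$ must therefore be killed by $d_2$ from $\Ext^{k-2}(A,\Z)$. Taking $k=\mu$ forces $\Ext^{\mu-2}(A,\Z)\neq 0$ with $\mu-2\ge 2$, so $\operatorname{pd}(A)\ge 2$.

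The main obstacle is the bookkeeping in the spectral sequence for $\mu\ge 4$, together with the converse direction for $\mu=3$: the $d_2$ differentials and the $E_2^{0,2}$ row need care (note that $H_2=0$ only holds for $\mu\ge2$), and one must verify that the vanishing of $\Ext^{\ge2}(A,N)$ really follows for all $N$. One should also check the degenerate case $A=0$, where the projective dimension is $0$ and the convention has to be adjusted. I would first verify the $\mu\ge4$ claim on the example $Y=T^3\#(\#S^1\times S^2)$ mapping onto $\Z^4$.
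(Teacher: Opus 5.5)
Your proof is correct. For $\mu=1$ and $\mu=3$ it follows the paper's argument: a square presentation or filling argument for $\mu=1$, and Lemma~\ref{lem:Ext-N} for $\mu=3$. For $\mu=3$ the paper uses the criterion that $\operatorname{pd}(A)\le 1$ if and only if $\Ext^2_\Lambda(A,-)=0$ \cite{Weibel}, so your extra spectral-sequence check that $\Ext^{\ge 3}_\Lambda(A,N)=0$ is harmless but not needed.

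You take a different route in the case $1<\mu\neq 3$. The paper uses coefficients $N=\Lambda$ and one uniform computation: since $\Ext^3_\Lambda(\Z,\Lambda)=H^3(\Z^\mu;\Lambda)=0$ for $\mu\neq 3$, it gets $\Z\cong H^3(Y;\Lambda)\cong\ker\bigl(d_2\colon\Ext^2_\Lambda(A,\Lambda)\to\Ext^4_\Lambda(\Z,\Lambda)\bigr)$.

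You use trivial coefficients $N=\Z$ instead.
\begin{itemize}
\item For $\mu=2$ this is the same degree-$3$ duality argument: $\Ext^2_\Lambda(A,\Z)\cong H^3(Y;\Z)\cong\Z$.
\item For $\mu\ge 4$ the mechanism is genuinely different. The vanishing of $H^k(Y;\Z)$ for $k\ge 4$ forces $d_2\colon\Ext^{k-2}_\Lambda(A,\Z)\to H^k(\Z^\mu;\Z)\cong\Z^{\binom{\mu}{k}}$ to be onto. In other words, $A$ must absorb the cohomological dimension of $\Z^\mu$ beyond $3$.
\end{itemize}
Your route gives more. Taking $k=\mu$ yields $\operatorname{pd}(A)\ge\mu-2$. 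Your first step shows $\Ext^k_\Lambda(A,N)=0$ for all $N$ and all $k\ge\max(3,\mu-1)$, which gives $\operatorname{pd}(A)\le\mu-2$. Together, $\operatorname{pd}(A)=\mu-2$ exactly when $\mu\ge 4$. The paper's route only detects $\Ext^2_\Lambda(A,\Lambda)\neq 0$, but it handles all $\mu\notin\{1,3\}$ with a single computation.

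Two minor slips.
\begin{itemize}
\item Your announced plan is to compute $\Ext^{\mu-1}_\Lambda(A,\Z)$, but you actually compute $\Ext^2_\Lambda(A,\Z)$ for $\mu=2$ and $\Ext^{\mu-2}_\Lambda(A,\Z)$ for $\mu\ge 4$.
\item Your proposed test case $T^3\#(S^1\times S^2)\to\Z^4$ does not satisfy the hypothesis. A Mayer--Vietoris computation over $Q$ along the connect-sum sphere gives $H_2(Y;Q)\cong Q$, so $H_1(Y;Q)\neq 0$ and $H_1(Y;\Lambda)$ is not torsion.
\end{itemize}
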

\begin{proof}
We begin with the if direction.
Assume that~$\mu=1$.
Since $\Omega_3(\Z)=0$,  the pair $(Y,\varphi)$ bounds a~$4$-manifold~$(W,\psi)$.
Surgering $W$ if necessary, we can assume that $\pi_1(W) \cong \Z$.
The exact sequence of the pair~$(W,Y)$ then provides the required free resolution; recall Lemma~\ref{lemma:PresentationExactSequence}.
Let us now assume~$\mu=3$.
By Lemma~\ref{lem:Ext-N},  we have~$\deg(\varphi)=\pm 1$ if and only if~$\Ext^2_\Lambda(H_1(Y;\Lambda),N)=0$ for any~$\Lambda$-module~$N$. 
By~\cite[Lemma~4.1.6]{Weibel}, this is equivalent to the~$\Lambda$-module~$H_1(Y;\Lambda)$ having projective dimension at most~$1$.

For the only if direction, it remains to assume that~$1<\mu\ne 3$,
and prove that~$H_1(Y;\Lambda)$ has projective dimension
at least~$2$.
We will show that the module~$\Ext_\Lambda^{2}(H_1(Y;\Lambda),\Lambda)$ is nontrivial, which implies the statement.
To check this claim, let us focus on the~$E_2$-page of the UCSS applied to the chain complex~$C_*(\widehat{Y})$. 
Since $H_2(Y;\Lambda) = 0$ by Lemma~\ref{lem:H2Vanishes} (recall that~$\mu>1$), the~$2$-column only contains zeros. 
Since the modules~$H_\ast(Y;\Lambda)$ are torsion,  all the modules in the~$0$-row vanish.
Finally, since~$H_0(Y;\Lambda)\cong\Z$ and~$\mu\neq 3$,  we deduce that~$E_2^{0,3}=\Ext^3_\Lambda(\Z,\Lambda) =0$. Hence, the~$E_2$-page has the following form:
\begin{center}
\begin{tikzpicture}
  \matrix (m)[matrix of math nodes, nodes in empty cells, column sep=1ex,row sep=1ex]{ 
	   && & & &&\\
	4 && \Ext^4_\Lambda(\Z,\Lambda) & \ast & \ast & \ast&\\
	3 && 0 & \ast & \ast & \ast &\\
	2 && \ast & \Ext_\Lambda^2(H_1(Y;\Lambda),\Lambda) & \ast &\ast &\\
	1 && \ast & \ast & 0 &\ast&\\
	0 && \ast & \ast & 0&0&\\
    \strut &&   0  &  1  &  2  & 3 &\strut \\};
\draw[thick] (m-1-2.east) -- (m-7-2.east) ;
\draw[thick] (m-7-1.north) -- (m-7-7.north) ;
\end{tikzpicture}
\end{center}
This implies that the modules~$E^{3,0}_\infty$,~$E^{2,1}_\infty$ and~$E^{0,3}_\infty$
all vanish. As a consequence,
computing~$H^3(Y;\Lambda)\cong H_0(Y;\Lambda)\cong\Z$ with the help of this spectral sequence
yields the isomorphisms
\[
\Z\cong H^3(Y;\Lambda)\cong E_\infty^{1,2} \cong \ker(\Ext_\Lambda^2(H_1(Y;\Lambda),\Lambda) \stackrel{d_2}{\longrightarrow} \Ext^4_\Lambda(\Z,\Lambda))\,.
\]
In particular, the module~$\Ext_\Lambda^2(H_1(Y;\Lambda),\Lambda)$ is nontrivial, concluding the proof.
\end{proof}

\begin{remark}
\label{rem:proj-dim-1}
One can check that a finitely presented torsion~$\Lambda$-module admits a square presentation matrix if and only if its projective dimension is at most~$1$.
In particular Proposition~\ref{prop:NoSQP4} should be compared to the fact that the Alexander module of a link with nonzero Alexander polynomial admits a square presentation matrix if and only if $\mu <3$~\cite{CrowellStrauss}.
\end{remark}

\subsection{Nonsingularity}
\label{sub:NonSing}

The goal of this section is to prove the following result,  and to complete the proof of the third and fourth items of Theorem~\ref{thm:BlClassProperties}.

\begin{proposition}
\label{prop:NonSingTorsionY}
Let $Y$ be a closed $3$-manifold and let $\varphi \colon H_1(Y) \to \Z^\mu$ be an epimorphism.
If the Alexander module~$H_1(Y;\Lambda)$ is torsion, then 
 $$\hat{t} H_1(Y;\Lambda)
\cong
\begin{cases}
 H_1(Y;\Lambda) &\quad \text{if } \mu \neq 2, \\
H_1(Y;\Lambda)/\Z &\quad \text{if } \mu=2.
\end{cases}
$$
Additionally,  the following assertions are equivalent:
\begin{itemize}
\item the Blanchfield pairing~$\hat{\Bl}^{\varphi}_Y$ is nonsingular,
\item either $\mu \neq 3$ or $\mu=3$ and $\deg(\varphi) \neq 0$.
\end{itemize}
\end{proposition}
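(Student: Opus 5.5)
The plan is to read the adjoint of the Blanchfield pairing off the universal coefficient spectral sequence for $H^2(Y;\Lambda)$, after Poincar\'e duality. Since $H_1(Y;\Lambda)$ is torsion, $TH_1(Y;\Lambda)=H_1(Y;\Lambda)\cong H^2(Y;\Lambda)$, and the short exact sequence $0\to\Lambda\to Q\to Q/\Lambda\to 0$ gives an isomorphism
\[
\Hom_\Lambda(H_1(Y;\Lambda),Q/\Lambda)\cong\Ext^1_\Lambda(H_1(Y;\Lambda),\Lambda),
\]
as in~\eqref{eq:Ext-Hom-iso}. Under these identifications, $\Ad_{\Bl}$ should become the edge map $H^2(Y;\Lambda)\to E_\infty^{1,1}\subset E_2^{1,1}=\Ext^1_\Lambda(H_1(Y;\Lambda),\Lambda)$, up to sign. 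This compatibility between the Bockstein/evaluation description of Construction~\ref{cons:Blanchfield} and the UCSS edge morphism is the step I expect to be the main technical obstacle. I would establish it by naturality of the UCSS with respect to the coefficient sequence, comparing with the $Q/\Lambda$-coefficient spectral sequence, where $\Hom_\Lambda(-,Q/\Lambda)$ appears in the $0$-column.

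Next I would compute the UCSS in total degree $2$, using $\Ext^k_\Lambda(\Z,\Lambda)\cong H^k(\Z^\mu;\Lambda)$, which is $\Z$ for $k=\mu$ and $0$ otherwise. By Lemma~\ref{lem:H2Vanishes} (or trivially when $\mu=1$), $E_2^{0,2}=\Hom_\Lambda(H_2(Y;\Lambda),\Lambda)=0$. Moreover $E_2^{0,1}=\Hom_\Lambda(H_1(Y;\Lambda),\Lambda)=0$ by torsionness. The only differential that can hit or leave the relevant terms is therefore $d_2\colon E_2^{1,1}\to E_2^{3,0}=\Ext^3_\Lambda(\Z,\Lambda)$. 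This yields the short exact sequence
\[
0\to\Ext^2_\Lambda(\Z,\Lambda)\to H^2(Y;\Lambda)\xrightarrow{\Ad_{\Bl}}\ker\bigl(d_2\colon\Ext^1_\Lambda(H_1(Y;\Lambda),\Lambda)\to\Ext^3_\Lambda(\Z,\Lambda)\bigr)\to 0 .
\]
By Example~\ref{ex:NoPN} and Lemma~\ref{lemma:HomPN}(i), the module $\Ext^1_\Lambda(H_1(Y;\Lambda),\Lambda)\cong\Hom_\Lambda(H_1(Y;\Lambda),Q/\Lambda)$ has trivial maximal pseudonull submodule. When $\mu=2$, the kernel $\Ext^2_\Lambda(\Z,\Lambda)\cong\Z$ is pseudonull by Example~\ref{ex:Z-in-zM}; when $\mu\neq 2$ it vanishes. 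Hence $zH_1(Y;\Lambda)=\ker(\Ad_{\Bl})$, which is $\Z$ for $\mu=2$ and $0$ otherwise, giving the first claim.

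For nonsingularity, $\hat\Bl^\varphi_Y$ is nonsingular exactly when $\Ad_{\Bl}$ surjects onto $\Hom_\Lambda(\that H_1(Y;\Lambda),Q/\Lambda)$. This target equals $\Hom_\Lambda(H_1(Y;\Lambda),Q/\Lambda)$, since maps from a pseudonull module to $Q/\Lambda$ vanish, so nonsingularity holds if and only if $d_2=0$. For $\mu\neq 3$ the target $E_2^{3,0}$ of $d_2$ is zero, so the form is nonsingular.

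For $\mu=3$, I would reuse the exact sequence from the proof of Lemma~\ref{lem:Ext-N} with $N=\Lambda$, so that $N_\pi=\Z$:
\[
\Ext^1_\Lambda(H_1(Y;\Lambda),\Lambda)\xrightarrow{d_2}\underbrace{\Ext^3_\Lambda(\Z,\Lambda)}_{\cong\Z}\xrightarrow{g}\underbrace{H^3(Y;\Lambda)}_{\cong\Z}\to\Z/\deg(\varphi)\to 0 .
\]
If $\deg(\varphi)\neq 0$, the cokernel of $g$ is finite, so $g\colon\Z\to\Z$ is nonzero and hence injective; therefore $d_2=0$ and the form is nonsingular. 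If $\deg(\varphi)=0$, then $g$ is zero, so $d_2$ is onto $\Z$ and hence nonzero. In that case $\Ad_{\Bl}$ is not surjective and the form is singular, which gives the claimed equivalence.
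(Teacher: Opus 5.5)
Your proof is correct, but it works on the other side of the Bockstein from the paper's proof.

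\textbf{The paper's route.} The paper runs the UCSS for $H^*(Y;Q/\Lambda)$. There, $\ev\colon H^1(Y;Q/\Lambda)\to\overline{\Hom_\Lambda(H_1(Y;\Lambda),Q/\Lambda)}$ \emph{is} the edge homomorphism. Since $\BS$ is an isomorphism, $\Ad_{\Bl}=\ev\circ\BS^{-1}\circ\PD^{-1}$ can be analysed without any compatibility lemma. The remaining terms are converted only abstractly:
\[
\Ext^{k-1}_\Lambda(\Z,Q/\Lambda)\cong H^k(\Z^\mu;\Lambda),\qquad H^2(Y;Q/\Lambda)\cong H^3(Y;\Lambda)\cong\Z,\qquad \Ext^1_\Lambda(H_1,Q/\Lambda)\cong\Ext^2_\Lambda(H_1,\Lambda)\cong\Z/\deg(\varphi).
\]
So for $\mu=3$, exactness of $\Z\to\Z\to\Z/\deg(\varphi)\to 0$ alone decides whether $d_2=0$. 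The equality $zH_1(Y;\Lambda)=\ker(\ev)$ is taken from Blanchfield's theorem.

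\textbf{Your route and its extra step.} You use the $\Lambda$-coefficient spectral sequence for $H^2(Y;\Lambda)$. This forces the step you flag: identifying $\Ad_{\Bl}$ with the edge map to $E_\infty^{1,1}$. That identification is true, but it is not plain naturality in the coefficient module, because the Bockstein is a connecting map. One way to prove it:
\begin{itemize}
\item Resolve $\Lambda$ injectively as $0\to\Lambda\to Q\to J^0\to J^1\to\cdots$, with $J^*$ an injective resolution of $Q/\Lambda$. This is legitimate because $Q$ is an injective $\Lambda$-module.
\item The double complex computing $H^*(Y;Q/\Lambda)$ is then, after a shift by one in the $\Ext$-degree, a filtered subcomplex of the one computing $H^*(Y;\Lambda)$.
\item The induced map of spectral sequences is $\BS$ on the abutments and the connecting isomorphism $\Hom_\Lambda(H_1,Q/\Lambda)\to\Ext^1_\Lambda(H_1,\Lambda)$ on $E_2$.
\end{itemize}
Once you make this comparison, you are essentially running the paper's argument, which starts in the $Q/\Lambda$ spectral sequence and never needs the identification.

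\textbf{What your route buys.} First, the differential $d_2\colon\Ext^1_\Lambda(H_1(Y;\Lambda),\Lambda)\to\Ext^3_\Lambda(\Z,\Lambda)$ is literally the first map of the exact sequence in the proof of Lemma~\ref{lem:Ext-N} with $N=\Lambda$. There the next map $g$ was already identified with $f^*$, so the $\mu=3$ case is immediate. Second, you obtain $zH_1(Y;\Lambda)=\ker(\Ad_{\Bl})$ directly: the target has trivial maximal pseudonull submodule, and the kernel is $0$ or $\Ext^2_\Lambda(\Z,\Lambda)\cong\Z$. So nondegeneracy comes out of your computation rather than from Blanchfield's theorem.
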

\begin{proof}
This follows from an analysis of the evaluation~$\ev \colon H^1(Y; Q/\Lambda)\to \overline{\Hom_\Lambda(H_1(Y; \Lambda),Q/\Lambda)}$ as we now explain.
Since~$H_1(Y;\Lambda)$ is torsion, the Bockstein map $\BS \colon H^1(Y;Q/\Lambda) \to H^2(Y;\Lambda)$ from the definition of the Blanchfield form is readily seen to be an isomorphism.
It follows that 
$$\hat{t} H_1(Y;\Lambda)=TH_1(Y;\Lambda)/zH_1(Y;\Lambda)=H_1(Y;\Lambda)/zH_1(Y;\Lambda)=H_1(Y;\Lambda)/\ker(\ev)\,,$$
where the equality~$zH_1(Y;\Lambda)=\ker(\ev)$ is a consequence of Blanchfield's theorem (recall Section~\ref{sec:Blanchfield}).
We deduce that $\hat{t} H_1(Y;\Lambda)=H_1(Y;\Lambda)$ if and only if $\ev$ is injective and, since $\hat{\Bl}^{\varphi}_Y$ is nondegenerate,  that $\hat{\Bl}^{\varphi}_Y$ is nonsingular if and only if $\ev$ is surjective.

Thus the proposition will follow once we show that $\ev$ is injective for $\mu \neq 2$ and analyze its cokernel.
The UCSS leads to the exact sequence
\begin{equation}
\label{eq:UCSSPatch}
\begin{split}
0\longrightarrow \Ext^1_\Lambda(\Z, Q/\Lambda) \longrightarrow &H^1(Y; Q/\Lambda) 	\stackrel{\ev}{\longrightarrow} \overline{\Hom_\Lambda(H_1(Y;\Lambda),Q/\Lambda)} \\
	&\stackrel{d_2}{\longrightarrow} \Ext^2_\Lambda(\Z,Q/\Lambda)\longrightarrow H^2(Y;Q/\Lambda) \longrightarrow \Ext_\Lambda^1(H_1(Y;\Lambda),Q/\Lambda) \longrightarrow 0.
\end{split}
\end{equation}
It therefore remains to understand $\Ext^k_\Lambda(\Z, Q/\Lambda)$ for $k=2,3$ as well as the image of~$d_2$.

We begin with the former.
Using the Bockstein isomorphism associated to the short exact sequence $0 \to \Lambda \to Q \to Q/\Lambda \to 0$ and the definition of group cohomology, we obtain
\begin{equation*}
\Ext^{k-1}_\Lambda(\Z, Q/\Lambda)
\cong \Ext^{k}_{\Z[\Z^\mu]}(\Z,\Lambda) \cong
H^k(\Z^\mu;\Lambda).
\end{equation*}
We deduce that $\ker(\ev)=0$ for $\mu \neq 2$ and $\ker(\ev) \cong \Z$ for $\mu=2$.
Similarly,  $\coker(\ev)=0$ if~$\mu \neq 3$.

It only remains to consider the case where $\mu=3$ and determine $\coker(\ev)$.
Since~$H_*(Y;\Lambda)$ is torsion,  the Bockstein
homomorphism and Poincar\'e duality yield
$
H^2(Y;Q/\Lambda)\cong H^3(Y;\Lambda)\cong H_0(Y;\Lambda)\cong\Z.
$
Thus for $\mu=3$, the exact sequence from~\eqref{eq:UCSSPatch} takes the form
\[
0\to H^1(Y;Q/\Lambda) \stackrel{\ev}{\longrightarrow} \overline{\Hom_\Lambda(H_1(Y;\Lambda);Q/\Lambda)} \stackrel{d_2}{\longrightarrow} \Z \to \Z \to \Ext_\Lambda^2(H_1(Y;\Lambda),\Lambda) \to 0\,,
\]
Applying Lemma~\ref{lem:Ext-N} with~$N=\Lambda$ shows that~$\Ext^2_\Lambda(H_1(Y;\Lambda),\Lambda)\cong \Z/\deg(\varphi) \Z$.

We conclude.
We have seen that $\hat{\Bl}_Y^\varphi$ 
\color{black}
being nonsingular is equivalent to~$d_2$ being the zero map,  to the map~$\Z\to\Z$ being multiplication by some~$n\neq 0$, and to~$\Ext_\Lambda^2(H_1(M_L;\Lambda),\Lambda)\cong\Z/n\Z$ with~$n\neq 0$.
As mentioned above,  this is equivalent to~$\deg(\varphi)\neq 0$.
The proposition follows.
\end{proof}

As mentioned in Remark~\ref{rem:triple-deg}, the integer~$\deg(\varphi)$ sometimes admits an explicit description.

\begin{example}\label{ex:triple-deg}
Let~$L$ and~$L'$ be~$3$-component links with vanishing pairwise linking numbers, and let~$M_{L,L'}=X_L\cup - X_{L'}$ be the corresponding closed~$3$-manifold (recall Construction~\ref{cons:MLL'}). We claim that for any meridional
homomorphism~$\varphi\colon H_1(M_{L,L'})\to\Z^3$, the following equality holds:
\begin{equation}
\label{eq:triple-deg}
\deg(\varphi)=\overline{\mu}_{L'}(123)-\overline{\mu}_{L}(123)\,.
\end{equation}
Indeed, observe that any such~$\varphi$ restricts to the meridional isomorphism~$\psi\colon H_1(X_L)\to\Z^3$ and to the opposite of the meridional isomorphism~$\psi'\colon H_1(X_{L'})\to\Z^3$. By~\eqref{eq:phi-fY}, the corresponding map~$f^\varphi_{M_{L,L'}}\colon M_{L,L'}\to\mathbb{T}^3$ can be obtained by gluing~$f^\psi_{X_L}\colon X_L\to\mathbb{T}^3$ and~$-f^{\psi'}_{X_{L'}}\colon X_{L'}\to\mathbb{T}^3$ along~$\partial X_L\cong\partial X_{L'}$. Since the pairwise linking numbers of~$L$ vanish, the inclusion of~$X_L$ into the~$0$-surgery~$M_L=X_L\cup -P_L$ is seen to induce an isomorphism~$H_1(X_L)\cong H_1(M_L)$. Using~\eqref{eq:phi-fY} once again, the map~$f^\psi_{X_L}\colon X_L\to\mathbb{T}^3$ extends uniquely up to homotopy to~$f^\psi_{M_L}\colon M_L\to\mathbb{T}^3$, and similarly for~$L'$. 
Assuming that these extensions coincide on~$P_L=P_{L'}$,
we now have the equality
\begin{align*}
f^\varphi_{M_{L,L'}}(M_{L,L'})&=f^\psi_{X_{L}}(X_L)-f^{\psi'}_{X_{L'}}(X_{L'})
=(f^\psi_{M_{L}}(X_L)-f^\psi_{M_L}(P_L))-(f^{\psi'}_{M_{L'}}(X_{L'})-f^{\psi'}_{M_L'}(P_{L'}))\\
&=f^\psi_{M_{L}}(M_L)-f^{\psi'}_{M_{L'}}(M_{L'})
\end{align*}
in~$C_3(\mathbb{T}^3)$, which implies~$\deg(\varphi)=\deg(\psi)-\deg(\psi')$.
By~\cite[Section~4]{PP22} (with a sign correction, see~\cite{M-M}),  we have~$\deg(\psi)=-\overline{\mu}_L(123)$ and~$\deg(\psi')=-\overline{\mu}_{L'}(123)$, concluding the proof of~\eqref{eq:triple-deg}.
\end{example}

This result of~\cite{PP22} also allows us to give an explicit example of a singular Blanchfield pairing.

\begin{example}
\label{ex:SingBl}
Consider a~$3$-component link~$L$ with vanishing pairwise linking numbers and triple linking number,
but non-vanishing Alexander polynomial (e.g. an element of the infinite family described in~\cite[Figure~1]{CFP}). Let~$Y=M_L$ denote the~$0$-surgery on~$L$, and let~$\varphi$ be the unique meridional homomorphism on~$L$.
By~\cite[Section~4]{PP22}, we have~$\deg(\varphi)=-\overline{\mu}_L(123)$ which vanishes by hypothesis.
By Proposition~\ref{prop:NonSingTorsionY}, the pairing~$\hat\Bl_Y^\varphi$ is singular.
\end{example}

\begin{proof}[Proof of Theorem~\ref{thm:BlClassProperties} (3)-(4)]
Let~$L,L'$ be~$\mu$-component links with~$\Delta_L,\Delta_{L'}\neq 0$, an assumption which implies that~$H_1(M_{L,L'};\Lambda)$ is torsion.
For~$\mu\le 2$, Proposition~\ref{prop:BlanchfieldRepresentation} can hence be applied and yields a square presentation matrix for~$\Bl(L,L')$, and therefore for~$\that H_1(M_{L,L'};\Lambda)$.
If~$\mu>3$ and if~$\mu=3$ and~$\deg(\varphi)\neq \pm 1$, Proposition~\ref{prop:NoSQP4} and Remark~\ref{rem:proj-dim-1} imply that~$H_1(M_{L,L'};\Lambda)$ does not admit a square presentation matrix.
By Proposition~\ref{prop:NonSingTorsionY}, this module coincides with~$\that H_1(M_{L,L'};\Lambda)$ for~$\mu\neq 2$, thus completing the proof of the third item.
As for the fourth item, it follows readily from Proposition~\ref{prop:NonSingTorsionY} when~$\Delta_L,\Delta_{L'}\neq 0$, a condition automatically satisfied for~$\mu=1$. 
\end{proof}

\color{black}

\section{Intersection form calculations}
\label{sec:IntersectionForm}

The aim of this section is to prove the second and third items of Theorem~\ref{thm:ExistenceOfW}.
More precisely, we start in Section~\ref{sec:ExplicitBasis} by determining an explicit basis of~$H_2(W_F)$ and computing the integral intersection form with respect to this basis, leading to a proof of Theorem~\ref{thm:ExistenceOfW}~(2).
We then turn to the~$\Z[\Z^\mu]$-intesection form, starting in Section~\ref{sub:ConstructionSpheres} by constructing a family of immersed~$2$-spheres in~$W_F$ yielding a collection of elements in~$\pi_2(W_F) \cong H_2(W_F; \Lambda)$. 
Section~\ref{sub:CalculInt} is devoted to the computation of the equivariant intersection form on this family. 
In Section~\ref{sub:EquivIntForm}, we use an algebraic observation (that we learnt from conversations with Feller and Lewark~\cite{FL26}; see also~\cite[Lemma 2.4]{JuhaszPowell}) to show that this family forms a basis, thereby completing the proof of Theorem~\ref{thm:ExistenceOfW}~(3).

We would like to emphasise
that part of our set-up and calculations were inspired by arguments that we learnt from Feller and Lewark~\cite{FL26} for~$\mu=1$ (but taking place in a different $4$-manifold).

\subsection{The integral intersection form}
\label{sec:ExplicitBasis}

Given a nice C-complex~$F$ for a~$\mu$-component link~$L$ with~$\mu\le 2$, we have constructed a~$4$-manifold~$W_F$ such that~$\partial W_F=M_L$ (recall Section~\ref{sub:WF}).
The aim of the present section is to compute the integral intersection form~$Q_{W_F}$ on~$H_2(W_F)$.

\medskip

We start by exhibiting a basis of~$H_2(W_F)$, assuming the notation of Section~\ref{sub:WF}.
In particular, we use the notation
\[
\{\alpha_1,\ldots,\alpha_{g}, \beta_1,\ldots,\beta_{g}, \gamma_1,\ldots,\gamma_l, \delta_1, \ldots, \delta_n\}
\]
for a good basis of~$H_1(F)$ (recall Terminology~\ref{ter:geo-symp-good}, and the fact that~$l=n=0$ if~$\mu=1$), and we write
\[
W_F = V_F \; \cup \; \bigcup_{k=1}^n \left(X_k\cup (Y_k^1\times S^1) \cup (Y_k^2 \times S^1) \right) \; \cup \;  \bigcup_{m = 1}^{g} \left(Y_m \times S^1 \right)
\]
for the construction of~$W_F$ (recall the second part of Section~\ref{sub:WF}).

\medskip

We now describe immersed spheres and tori in~$W_F$ that will later be shown to generate~$H_2(W_F)$.
\begin{itemize}
\item Each oriented simple closed curve~$\beta\in\{\beta_1,\dots,\beta_g\}$ with~$\beta\subset F_i$ determines an oriented torus~$\beta \times S^1 \subset P_F$.
(Indeed the discs $D_i$ that enter the definition of $P_F$ can be assumed to avoid the genus curves).
 Via the homeomorphism~$h\colon P_F \to\partial \onu(\tilde{F}) \setminus \nu(L)$ from Construction~\ref{cons:Homeoh} it is mapped to the oriented embedded torus
\[
T_{\beta} := h(\beta \times S^1) \subset \partial \onu(\tilde{F}) \setminus \nu(L)\,.
\]
\item Each oriented simple closed curve~$\alpha\in\{\alpha_1,\dots,\alpha_g\}$ with~$\alpha\subset F_i$
determines a curve $\alpha \times \{0\} \star \{i+c\}\subset\partial \onu(\tilde F)$ contained in the exterior~$B$ of the trace of the
pushed-in C-complex. Since~$B$ is a topological ball, this curve bounds a properly immersed oriented disc~$D_{\alpha}\subset B\subset V_F$. Moreover, writing~$\alpha=\alpha_m$, the space~$Y_m \times S^1=D^2\times [-1,1]\times S^1$ is attached to~$V_F$ with~$\partial D^2\times\{0\}\times \{*\}$ glued along this same curve, which therefore bounds a disc~$D_m\subset Y_m \times S^1 \subset W_F$.
We then consider the immersed sphere
\[
T_{\alpha} := D_{\alpha} \cup D_m
\]
that we orient by extending the orientation of~$D_{\alpha}$.

\item
Each clasp~$c$ in the C-complex~$F$ yields an edge~$e$ in the graph~$\Gamma_F$, and an oriented torus~$T_e=-\partial D_e \times S^1 \subset P_F$. 
We then consider the oriented embedded torus
\[
T_c:=h(T_e)\subset \partial\onu(\tilde{F}) \setminus \nu(L)
\]
obtained via the homeomorphism~$h\colon P_F \to\partial \onu(\tilde{F}) \setminus \nu(L)$ from Construction~\ref{cons:Homeoh}.
\item
Recall from the proof of Lemma~\ref{lemma:Claim0} (e.g.~\eqref{eq:wtdelta}) that each cancelling curve~$\delta\in\{\delta_1,\dots,\delta_n\}$ determines an oriented
loop~$\tilde \delta\subset\partial B$.
In more detail, the curve~$\tilde \delta$ is obtained from the push off $\delta^{++}$ by pushing the C-complex into $B^4$.
By construction, the curve~$\tilde \delta$
\color{black}
 bounds an oriented immersed disc~$D_{\delta}\subset B\subset V_F$.
Moreover, writing~$\delta=\delta_k$, the manifold~$W_F$ is obtained by attaching~$X_{k}$ along~$\tilde\delta$
and further attaching~$(Y^1_{k} \times S^1)\cup(Y^2_{k} \times S^1)$ to~$(\delta^1 \times S^1)\cup(\delta^2 \times S^1)$, where~$\delta^i$ are simple closed curves obtained by splitting~$\tilde \delta$ into two curves; recall~\eqref{eq:deltai} and Figure~\ref{fig:rectangle}.
Since~$\delta^i \times \{y\}$ bounds a disc in~$(Y^i_{k}\times S^1)\subset W_F$,
this yields an embedded disc~$D_k\subset W_F$ with boundary~$\partial D_k=\tilde \delta$.
We then consider the immersed sphere
\[
R_{\delta} := D_{\delta} \cup D_{k}\,,
\]
and orient it by extending the orientation on~$D_{\delta}$ such that~$\partial D_{\delta}=\tilde \delta$.
\end{itemize}

This family of immersed surfaces in~$W_F$ represents a family of elements in~$H_2(W_F)$, which turns out to be slightly
too large for a basis.
To describe the correct subset, consider the tori~$T_1,\dots,T_{\vert\ell\vert}$ corresponding to the clasps that are not traversed by
any of the cancelling curves~$\delta_k$ (recall Terminology~\ref{ter:geo-symp-good}, and the fact that~$\ell=\lk(K_1,K_2)$).
There remains~$2n$ clasps, paired by the cancelling curves~$\delta_k$: we choose a representative of each pair, and denote by~$T_{c_1},\dots,T_{c_n}$ the corresponding tori. 

\begin{lemma}
\label{lemma:Basis}
The family
\[
\mathcal {D} : = \{T_{\beta_1}, \ldots, T_{\beta_{g}}, T_{\alpha_1}, \ldots, T_{\alpha_{g}}, T_1, \ldots, T_{\vert\ell\vert}, T_{c_1}, \ldots, T_{c_n}, R_{\delta_1}, \ldots, R_{\delta_n}\}
\]
represents a basis of~$H_2(W_F)$.
\end{lemma}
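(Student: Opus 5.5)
We compute $H_2(W_F)$ inductively along the handle decomposition $V_F\subset Z_F\subset U\subset W_F$ and track explicit generators at each stage. The rank count is the first sanity check. With integer coefficients $H_*(V_F)\cong H_*(S^1\vee S^1)$ for $\mu=2$ (resp.\ $H_*(S^1)$ for $\mu=1$) by Alexander duality, since $F$ is connected. Hence $H_2(V_F)=0$ and the whole of $H_2(W_F)$ comes from the attached pieces. The family $\mathcal D$ has $2g+\vert\ell\vert+2n$ elements. We will check that this equals $b_2(W_F)$ by an Euler characteristic computation. We have $\chi(V_F)=1-\mu$. Attaching each $X_k$ along two tori and attaching the round handles $Y\times S^1$ along $S^1\times S^1\times I$ contributes $\chi=0$ at each step. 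Hence $\chi(W_F)=1-\mu$. Since $\pi_1(W_F)\cong\Z^\mu$, we get $b_1=\mu$, and $b_3$ follows from duality with $H_1(W_F,M_L)$. The resulting count should give $b_2=2g+\vert\ell\vert+2n$, using $b_1(M_L)$ computed from the plumbing description.

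The main work is a Mayer--Vietoris analysis, done one piece at a time, which exhibits each step's contribution to $H_2$ and shows there is no torsion. Consider first $Z_F=V_F\cup\bigcup_k X_k$, with intersection $\bigsqcup_k(T_k\sqcup T_k')\times I$. The map $H_2(T_k)\oplus H_2(T'_k)\to H_2(V_F)\oplus H_2(X_k)=0\oplus\Z$ has a rank one kernel. The Mayer--Vietoris boundary is then analysed on $H_1$: the tori classes $\partial D_{e}$ and the meridians map to $H_1(V_F)\cong\Z^\mu$ and to $H_1(X_k)\cong\Z^2$. This produces the classes $[T_{c_k}]$ (with $T_{c_k}=\pm T_{c_k'}$ in $H_2(Z_F)$) together with the new $H_1$ classes $\delta^1_k$. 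Next, gluing the $Y^i_k\times S^1$ kills $\delta^i_k$ in $H_1$ but contributes the spheres $R_{\delta_k}$: the curve $\tilde\delta=\delta^1+\delta^2$ bounds in $V_F$ by \eqref{equ:deltak}. Finally, gluing each $Y_m\times S^1$ along $\nu(\alpha_m)\times S^1$ gives an intersection $\simeq T^2$, with $H_1$ generated by $\alpha_m$ and a meridian. The meridian survives, while $\alpha_m$ is already nullhomologous in $V_F$, as shown in the proof of Lemma~\ref{lemma:Claim1}. This yields exactly one new class from the connecting map, namely the sphere $T_{\alpha_m}$. The other new class is the image of $H_2(\alpha_m\times S^1)$, which is parallel to $\beta$-dual data. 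The tori $T_{\beta_m}$ and $T_{1},\dots,T_{\vert\ell\vert}$ are already present in $H_2(\partial V_F)$. Their independence in $W_F$ will be checked by pairing.

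Because such a Mayer--Vietoris bookkeeping argument is error prone, we prefer to conclude via intersection numbers. We compute pairwise intersections of the elements of $\mathcal D$, together with dual classes in $H_2(W_F,M_L)$, to show that $\mathcal D$ is linearly independent and spans a direct summand. For instance, $T_{\alpha_m}\cdot T_{\beta_m}=\pm1$, since $D_{\alpha}$ meets $\beta\times S^1$ once, where $\alpha$ and $\beta$ cross. Similarly, each $R_{\delta_k}$ meets $T_{c_k}$ once, and the remaining intersections are triangular. The tori $T_1,\dots,T_{\vert\ell\vert}$ lie in the boundary and are detected instead by relative classes: cocores of the clasp tori, or discs in $P_L$ over the edges. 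Independence together with the rank count $b_2=\vert\mathcal D\vert$ then gives a rational basis. Freeness and integral generation follow from the Mayer--Vietoris steps above, which each add $\Z$-summands.

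The main obstacle is the integral spanning statement, especially the treatment of the $\vert\ell\vert$ boundary tori, which lie in the radical. Matching the number of clasp tori that survive, namely $\vert\ell\vert$ rather than $\vert\ell\vert-1$, requires careful control of the connecting homomorphism in the step from $V_F$ to $Z_F$. A useful cross-check is the long exact sequence of $(W_F,M_L)$: the $T_i$ should account for the image of $H_2(P_L)\cong\Z^{\vert\ell\vert}$ in $H_2(W_F)$.
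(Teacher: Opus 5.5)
Your base case is false, and everything downstream inherits the error. Although $\pi_1(V_F)$ is free on the meridians, $V_F$ is not homologically a wedge of circles. Mayer--Vietoris for $B^4=V_F\cup\overline{\nu}(\tilde F)$, whose intersection is $h(P_F)$, together with $H_2(\tilde F)=H_3(\tilde F)=0$, gives $H_2(V_F)\cong H_2(P_F)$; equivalently, Alexander duality gives $H_2(V_F)\cong H^1(\tilde F,\partial\tilde F)$. This group is free of rank $2g+N$, where $N=2n+\vert\ell\vert$ is the number of clasps ($N=0$ if $\mu=1$). A basis is given by the tori $h(\alpha_i\times S^1)$, $T_{\beta_i}$ and $h(T_e)$ for \emph{all} clasps $e$. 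Hence $\chi(V_F)=1-\mu+2g+N$.

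With your value $\chi(V_F)=1-\mu$ you would get $\chi(W_F)=1-\mu$. Together with $b_1(W_F)=\mu$ and $b_3(W_F)=\operatorname{rk}H_1(W_F,M_L)=0$, this forces $b_2(W_F)=0$, so your rank count cannot produce $2g+\vert\ell\vert+2n$. The proposal is also internally inconsistent: $T_{\beta_m}$ and $T_1,\dots,T_{\vert\ell\vert}$ lie in $\partial V_F\subset V_F$, so their classes in $H_2(W_F)$ factor through $H_2(V_F)$ and would vanish if that group were zero. Your Mayer--Vietoris steps are computed from the wrong groups too. The map $H_2(T_k)\oplus H_2(T'_k)\to H_2(V_F)\oplus H_2(X_k)$ is in fact injective, and its effect is to identify $[T_{e_k}]$ with $\pm[T_{e'_k}]$. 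The new classes $[\delta^1_k\times S^1]$, $[\delta^2_k\times S^1]$ come from the kernel on $H_1$; a kernel on $H_2$ would feed $H_3(Z_F)$, not $H_2(Z_F)$. Since you invoke exactly these steps for freeness and integral generation, that part of the argument is unsupported.

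The intersection-number fallback does not repair this as written. Discs in $P_L$ lie in $M_L$, so they do not give classes in $H_2(W_F,M_L)$ detecting the boundary tori $T_i$, and the $\vert\ell\vert$ versus $\vert\ell\vert-1$ issue is left open. Once $H_2(V_F)$ is computed correctly, the paper's bookkeeping goes through directly:
\begin{itemize}
\item each $X_k$ merges one pair of edge tori and adds $[\delta^i_k\times S^1]$;
\item the handles $Y^i_k\times S^1$ kill these and create $[R_{\delta_k}]$, using $[\delta^1_k]+[\delta^2_k]=[\tilde\delta_k]=0$;
\item each $Y_m\times S^1$ replaces $h(\alpha_m\times S^1)$ by $T_{\alpha_m}$, since the pushed-in $\alpha_m$ is null-homologous in $V_F$.
\end{itemize}
All $N$ edge tori are independent in $H_2(V_F)$ and only $n$ pairs get merged, so the $\vert\ell\vert$ uncancelled tori survive automatically.

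Alternatively, your pairing route can be completed. First, $H_2(W_F)\cong H^2(W_F,M_L)$ is torsion-free because $H_1(W_F,M_L)=0$. Second, its rank is $2g+2n+\vert\ell\vert$ by the corrected Euler characteristic. Third, pair $\mathcal D$ against two kinds of relative classes: the images of $T_\beta,T_\alpha,T_c,R_\delta$ in $H_2(W_F,M_L)$, and classes capping off $\vert\ell\vert$ meridian-corrected loops in $M_L$. These loops are the $\vert\ell\vert-1$ big loops of $\Gamma_L$, plus one loop running from $X_L$ into $P_L$ across a single edge torus. The resulting matrix is block upper triangular with unimodular diagonal blocks; the lower-right block is computed via $\langle x,y\rangle=\pm\,x\cdot_{M_L}\partial y$ for $x$ supported in $M_L$.
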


\begin{proof}
We first claim that the abelian group~$H_2(V_F)$ is freely generated by the classes of the images under~$h\colon P_F\to\partial \onu(\tilde{F}) \setminus \nu(L)\subset\partial V_F$ of the following tori in~$P_F$:
\[
\{\beta_i \times S^1,\alpha_i\times S^1\}_{i=1}^{g}\;\cup\; \{T_e\}_e\,,
\]
where~$e$ ranges over all the edges of the graph~$\Gamma_F$ (i.e. all the clasps in~$F$).
The long exact sequences of the pairs~$(V_F, B^4)$ and~$(h(P_F), \nu(\tilde F))$ together with excision yield isomorphisms
\[
H_2(V_F) \cong H_3(B^4, V_F) \cong H_3(\overline{\nu}(\tilde F), h(P_F)) \cong H_2(h(P_F))\,.
\]
By naturality, the resulting isomorphism~$H_2(h(P_F)) \cong H_2(V_F)$ is induced by the inclusion,
and we are left with the computation of~$H_2(P_F)$.
If~$\mu=1$, then~$P_F=F_1\times S^1$ and the claim follows from the K\"unneth formula; we therefore assume~$\mu=2$.
The Mayer-Vietoris exact sequence for the decomposition~$P_F = (F_1^\circ \times S^1) \cup (F_2^\circ \times S^1)$ yields
\[
\bigoplus_{e} H_2(T_e) \to H_2(F_1^\circ \times S^1) \oplus H_2(F_2^\circ \times S^1) \to H_2(P_F) \to \bigoplus_{e} H_1(T_e) \to H_1(F_1^\circ \times S^1) \oplus H_1(F_2^\circ \times S^1),
\]
with the last map injective.
The claim now follows from the K\"unneth formula applied to~$F_i^\circ\times S^1$.

As a second step, we study the gluing of~$X_k=I\times I\times S^1\times S^1$ to~$V_F$
along two edges~$e,e'$ of~$\Gamma_F$ (recall the second part of Section~\ref{sub:WF}).
A careful use of the Mayer-Vietoris exact sequence shows that the effect on the second homology is the following:
the two classes~$[T_e]$ and~$[T_{e'}]$ get identified, while two new independent classes~$[\delta_k^1\times S^1]$ and~$[\delta_k^2\times S^1]$ appear.
Similarly, a Mayer-Vietoris argument shows that attaching the thickened solid tori~$Y_k^1\times S^1$ and~$Y_k^2\times S^1$
cancels the classes~$[\delta_k^1\times S^1]$ and~$[\delta_k^2\times S^1]$ (recall that~$Y_k^i=D^2\times [-1,1]$ is glued so that~$\partial D^2=\delta_k^i$), while creating the new class~$[R_{\delta_k}]$.
Summarizing the results obtained so far, the second homology of
\[
 V_F\cup  \bigcup_{k=1}^n \left(X_k \cup (Y_k^1 \times S^1) \cup (Y_k^2 \times S^1)\right)
\]
is freely generated by the classes of the tori
\[
\{T_{\beta_1}, \ldots, T_{\beta_{g}}, h(\alpha_1\times S^1), \ldots, h(\alpha_{g}\times S^1), T_1, \ldots, T_{\vert\ell\vert}, T_{c_1}, \ldots, T_{c_n}, R_{\delta_1}, \ldots, R_{\delta_n}\}\,,
\]
with~$\ell=n=0$ if~$\mu=1$.
Finally, one last Mayer-Vietoris argument shows that the gluing of~$Y_m\times S^1$ has the effect of replacing
the generator~$h(\alpha_m\times S^1)$ with~$T_{\alpha_m}$. This concludes the proof.
\end{proof}

We now compute the intersection form~$Q_{W_F}$ on the elements of the basis~$\mathcal D$.

\begin{lemma}
\label{lemma:Matrix}
The intersection form~$Q_{W_F}$ on~$H_2(W_F)$ with respect to the basis~$\mathcal D$ is given by a block matrix
of size~$g+g+\vert\ell\vert+n+n$ of the form
\[
\begin{pmatrix}
0 & Q_F(\beta,\alpha) & 0 & 0 & \ast \\
Q_F(\alpha, \beta)^{\T} & \ast & 0 & 0 & \ast \\
0 & 0 & 0 & 0 & 0 \\
0 & 0 & 0 & 0 & \widetilde\id \\
\ast & \ast & 0 &\widetilde \id & \ast
\end{pmatrix}\,,
\]
where~$Q_F(\rho, \rho')_{m,m'}$ denotes the algebraic intersection of~$\rho_m$ and~$\rho'_{m'}$ in~$F$
and $\widetilde \id$ is a matrix with~$\pm 1$ on the diagonal and zeros elsewhere.
\end{lemma}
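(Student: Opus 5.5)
The plan is to compute each entry $Q_{W_F}(x,y)$ by choosing, for each pair of basis elements, representatives that meet transversely, and to place the tori far enough apart that most entries visibly vanish. The parametrisation $\onu(\tilde F)$ of Construction~\ref{cons:parametrizationnu(F)} lets us push tori lying in $h(P_F)\subset\partial V_F$ into a collar of $\partial V_F$. We use the fibre coordinate $(s,t)\in\partial(J\star[i-c,i+c])$ together with a depth parameter to make them pairwise disjoint or transverse in a controlled way.

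\textbf{Step 1: the tori $T_\beta$, $T_c$, $T_1,\dots,T_{|\ell|}$.} All of these lie in the $3$-manifold $h(P_F)\subset\partial V_F$, so any two of them can be made disjoint by pushing one of them into the interior along a collar. Hence every entry of the form $Q(T_\beta,T_{\beta'})$, $Q(T_\beta,T_c)$, $Q(T_c,T_{c'})$ or $Q(T_j,\cdot)$ vanishes on torus pairs, and each self-intersection is zero. This gives the zero blocks in positions $(1,1)$, $(1,3)$, $(1,4)$, $(3,3)$, $(3,4)$ and $(4,4)$.

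For the pairs $(T_\beta,T_\alpha)$, we push $T_\beta=h(\beta\times S^1)$ slightly into the collar. The sphere $T_\alpha$ meets the collar only in the annulus $\alpha\times\{0\}\star\{i+c\}\times[0,\epsilon]$ near its boundary curve, and $D_m$ lies outside $V_F$. The annulus meets the pushed torus exactly where $\alpha$ meets $\beta$ on $F_i$, at the single fibre point $(0,i+c)\in S^1$. So $Q(T_\beta,T_\alpha)=\pm Q_F(\beta,\alpha)$, and after fixing orientation conventions the sign is $+$. Symmetry gives the $(2,1)$ block.

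\textbf{Step 2: $T_\alpha$ and $R_\delta$ against the clasp tori.} The disc $D_\alpha$ lies in the ball $B$, and its boundary curve avoids the clasps. We can therefore push the tori $T_j$ and $T_{c_k}$ into a neighbourhood of the double points of $\tilde F$, away from $B$ and away from $Y_m\times S^1$. This gives the zeros in positions $(2,3)$ and $(2,4)$.

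For $R_{\delta_k}$, the relevant piece is the disc $D_k\subset X_k\cup Y_k^1\times S^1\cup Y_k^2\times S^1$. Inside $X_k=I\times I\times S^1\times S^1$, this disc contains the rectangle spanned by $\gamma^1,\gamma^2$ (Figure~\ref{fig:rectangle}), which has the form (arc)$\times I\times\{y\}$. The torus $T_{e_k}$ is identified with $\{0\}\times\{t\}\times S^1\times S^1$, so it meets this rectangle transversely in exactly one point, giving $\pm1$. For $k'\neq k$ the supports are disjoint, which yields $\widetilde\id$. The tori $T_j$ correspond to clasps not traversed by any $\delta_k$, so they can be isotoped away from $D_\delta$ and from the handles, giving the zero row and column for $T_j$.

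\textbf{Main obstacle.} The entries marked $\ast$, namely $Q(T_\alpha,T_{\alpha'})$, $Q(R_\delta,\cdot)$ and $Q(T_\beta,R_\delta)$, need not be computed. The delicate step is the claim in Step 2 that $T_{c_k}$ meets $R_{\delta_k}$ exactly once, which relies on the explicit gluing maps $f_k$. The paired clasp $e_k'$ is glued with an orientation reversal $x\mapsto x^{-1}$. This reversal is what makes $T_{e_k}$ and $T_{e_k'}$ homologous, so choosing either representative of the pair gives $\pm1$ consistently. I expect the difficulty to lie in tracking these orientations and in checking that the immersed disc $D_\delta\subset B$ contributes no further intersections with the pushed-off torus. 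I would handle the latter by pushing $T_{c_k}$ into the handle $X_k$ itself, which lies outside $B$.
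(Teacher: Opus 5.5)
Your proposal is correct and is essentially the paper's argument: both compute the matrix block by block with the same representatives. The differences are only in how transversality is arranged. You separate the tori lying in $\partial V_F$ by collar push-offs, where the paper uses that the curves $\rho$ with $T=h(\rho\times S^1)$ are disjoint. For the $\widetilde\id$ entries you push $T_{c_k}$ into $X_k$ so that it meets the rectangle of $D_k$ exactly once, whereas the paper reads off the single intersection of $T_{c_k}$ with the seam $\tilde\delta_k$; your version makes transversality cleaner.

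One small point: symmetry of $Q_{W_F}$ gives $Q_F(\beta,\alpha)^{\T}$ for the $(2,1)$ block. For a good basis this is the negative of the stated $Q_F(\alpha,\beta)^{\T}$. This is a harmless sign slip in the statement and does not affect Theorem~\ref{thm:ExistenceOfW}~(2).
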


\begin{proof}
We first claim that the intersections with the~$T_\beta$ tori are given by
\[
Q_{W_F}(T_{\beta},T_{\beta'}) = Q_{W_F}(T_{\beta},T_l) = Q_{W_F}(T_{\beta},T_{c}) = 0\quad\text{and}\quad
Q_{W_F}(T_{\beta},T_{\alpha}) = Q_F(\beta, \alpha)\,, 
\]
where~$Q_F$ denotes the algebraic intersection of curves on~$F$.
Indeed, recall that the tori~$T_{\beta}$,~$T_l$ and~$T_{c}$ are all of the form~$h(\rho \times S^1)$ for a curve~$\rho$ in~$F$ which is either~$\beta$ or the boundary of a neighborhood of a point in~$K_2 \cap F_1$. Since these curves are pairwise disjoint, the corresponding tori are also pairwise disjoint and therefore have trivial intersection.
Furthermore, since each torus is of the form~$\rho \times S^1$ with~$\rho$ a simple closed curve on an oriented surface, the self intersection of any of these tori is also trivial and the first triple equality is established.
For the remaining equality, observe that
\[
T_{\beta}=\beta\times\partial(J\star[i-c,i+c])=\beta\times\left((J\star\{i-c,i+c\})\cup(\partial J\star[i-c,i+c])\right)
\]
is contained in~$\partial \overline{\nu}(\tilde F)$, and that~$T_{\alpha} \cap \partial \overline{\nu}(\tilde F)$ is equal to~$\alpha \times \{0\}\star \{i+c\}$ for $\alpha\subset F_{i}$. 
Thus, we have the equality
\[
T_{\beta} \cap T_{\alpha} = (\beta \cap \alpha) \times \{0\} \star \{i+c\}\,,
\]
so intersection points in~$T_{\beta} \cap T_{\alpha}$ correspond to intersection points in~$\beta \cap \alpha$.
It can be checked that the chosen conventions for the orientations are such that the signs coincide,
but we do not provide the detailed proof of this fact as it does not affect the validity of Theorem~\ref{thm:ExistenceOfW}~(2).
This establishes the first line and column of the block matrix in the statement.

We now claim that the intersection with the~$T_\alpha$ spheres satisfy
\[
Q_{W_F}(T_{\alpha},T_l) = Q_{W_F}(T_{\alpha},T_{c}) = 0\,,
\]
thus establishing the second line and column of the block matrix.
Indeed, since the tori~$T_l$ and~$T_{c}$ are contained in~$\partial \overline{\nu}(\tilde F)$, their intersection with~$T_{\alpha}$ is contained in~$\alpha \times \{0\} \star \{i+c\}$. As~$\alpha$ is disjoint from a neighborhood of the clasps, there is no such intersection, hence the claimed equalities.

We now check that the intersections with the~$T_l$ tori are given by 
\[
Q_{W_F}(T_l,T_{l'}) =Q_{W_F}(T_l,T_{c}) = Q_{W_F}(T_l,R_{\delta}) = 0\,.
\]
Since~$T_l$ and~$T_{c}$ are tori around clasps, they are pairwise disjoint and therefore have trivial intersection.
The diagonal coefficients vanish as well, since~$T_l$ is the product of a boundary component of~$F_i^\circ$ with a circle and it  can be moved to a close parallel copy.
Finally, the remaining intersection~$T_l \cap R_{\delta}$ is also trivial, as~$T_l$ is included in~$\partial \overline{\nu}(\tilde F)$ and~$R_{\delta} \cap \partial \overline{\nu}(\tilde F) = \tilde \delta$ crosses precisely two ``cancelled'' clasps, while~$T_l$ is a torus around a clasp which is not cancelled.

We now conclude the proof with the claim that the intersections with the~$T_c$ tori are given by 
\[
Q_{W_F}(T_{c},T_{c'}) = 0\,,\quad Q_{W_F}(T_{c_k},R_{\delta_{k'}}) = \begin{cases} \pm 1 & \text{if } k = k'\\ 0 & \text{otherwise.}\end{cases}
\]
The first equality follows from the same arguments as above. Furthermore, since~$T_{c_k}$ is contained in~$\partial \overline{\nu}(\tilde F)$, its only intersections with~$R_{\delta_{k'}}$ are the intersections with~$\tilde \delta_{k'}$. By construction, the clasp corresponding to~$T_{c_k}$ is traversed exactly once by~$\delta_k$ and not crossed by~$\delta_{k'}$ for~$k\neq k'$, hence the claimed formula.
\end{proof}

\begin{remark}
\label{rem:Q-L}
It is not difficult to give a more precise description of the matrix for~$Q_{W_F}$: it is actually given by
\[
\begin{pmatrix}
0 & Q_F(\beta,\alpha) & 0 & 0 & Q_F(\beta, \delta) \\
Q_F(\alpha, \beta)^{\T} & L(\alpha, \alpha) & 0 & 0 & L(\alpha, \delta) \\
0 & 0 & 0 & 0 & 0 \\
0 & 0 & 0 & 0 & \widetilde\id \\
Q_F(\beta, \delta)^{\T} & L(\alpha, \delta)^{\T} & 0 &\widetilde \id & L(\delta,\delta)
\end{pmatrix}\,,
\]
where~$L(\rho, \rho')_{m,m'}$ stands for~$\lk(\rho_m,(\rho'_{m'})^{++})$, see~\cite{GaetanPhD}. 
However, this additional level of detail is not needed for our purposes.
\end{remark}

Equipped with Lemma~\ref{lemma:Matrix}, we now prove the second item of Theorem~\ref{thm:ExistenceOfW}.

\begin{proof}[Proof of Theorem~\ref{thm:ExistenceOfW}~(2)]
If~$\mu=1$, Lemma~\ref{lemma:Matrix} shows that the intersection form~$Q_{W_F}$ is given by a block matrix of size~$g+g$ of the form
\[
\begin{pmatrix}
0 & Q_F(\beta,\alpha) \\
Q_F(\alpha, \beta)^{\T} & \ast
\end{pmatrix}\,,
\]
which is nonsingular (since~$Q_F$ is) and metabolic.
If~$\mu=2$, this same Lemma~\ref{lemma:Matrix} shows that the radical of~$Q_{W_F}$ contains the subgroup~$R\subset H_2(W_F)$
freely generated by the classes of the tori~$T_1,\dots,T_{\vert\ell\vert}$.
Moreover, the form induced on the quotient~$H_2(W_F)/R$ is represented by the matrix
\[
\begin{pmatrix}
0 & Q_F(\beta,\alpha) &  0 & \ast \\
Q_F(\alpha, \beta)^{\T} & \ast &  0 & \ast \\
0 & 0 &  0 & \widetilde \id \\
\ast & \ast & \widetilde \id & \ast
\end{pmatrix}\,,
\]
which is nonsingular since~$\widetilde \id$ and~$Q_F$ are (once restricted to the subgroup generated by the genus curves).
This implies that~$R$ coincides with the radical of~$Q_{W_F}$.
Furthermore, this latter matrix is congruent to the direct sum
\[
\begin{pmatrix}
0 & Q_F(\beta,\alpha) \\
Q_F(\alpha, \beta)^{\T} & \ast
\end{pmatrix}
\oplus
\begin{pmatrix}
0 & \id \\
\id & \ast
\end{pmatrix}
\]
of (nonsingular) metabolic matrices, hence is itself metabolic.
\end{proof}

\subsection{Equivariant intersection form: construction of a basis}
\label{sub:ConstructionSpheres}

The aim of this section is to construct a family of immersed spheres in~$\widetilde{W}_F $, which will be shown in Section~\ref{sub:EquivIntForm} to form a basis of~$H_2(W_F;\Lambda)$.

Throughout this section, we assume the notation of Section~\ref{sub:WF} for our construction of the~$4$-manifold~$W_F$ from a nice~C-complex~$F$ for a~$2$-component link~$L$. In particular, we write~$B$
for the topological~$4$-ball given by the exterior of the trace of the pushed-in~C-complex,
and we use the notation
\[
\{\alpha_1,\ldots,\alpha_{g}, \beta_1,\ldots,\beta_{g}, \gamma_1,\ldots,\gamma_l, \delta_1, \ldots, \delta_n\}
\]
for a good basis of~$H_1(F)$.
From such a good basis, we now construct immersed spheres in~$W_F$ that will later be shown to determine a basis of~$H_2(W_F;\Lambda)$.

\begin{construction}
We construct a family 
\[
\{T_{\alpha_1}, \ldots, T_{\alpha_{g}}, S_{\beta_1}, \ldots, S_{\beta_{g}}, U_{\gamma_1}, \ldots, U_{\gamma_l}, R_{\delta_1}, \ldots, R_{\delta_n}\}
\]
of immersed spheres in~$W_F$.
\begin{itemize}
\item
As described in Section~\ref{sec:ExplicitBasis}, each oriented simple closed curve~$\alpha\in\{\alpha_1,\dots,\alpha_g\}$ yields an oriented immersed sphere~$T_{\alpha} = D_{\alpha} \cup D_m\subset W_F$ where $D_\alpha \subset B$ and~$D_m \subset Y_m$ have boundary the pushed-in curve~$\alpha \times \{0\} \star \{i+c\}$.
\item
For each~$\beta\in\{\beta_1,\dots,\beta_g\}$, the pushoffs $\beta^\pm$ are contained in~$S^3\setminus\nu(F)=\partial B \cap (S^3 \star \{0\})$ and therefore bound immersed discs~$D_{\beta^\pm}\subset B$. We define the immersed sphere
\[
S_{\beta} = D_{\beta^-} \cup (\beta \times J) \cup D_{\beta^+}\subset W_F\,,
\]
and orient it by extending the orientation of~$D_{\beta^-}$ such that~$\partial D_{\beta^-}=\beta^{-}$.
\item For each linking curve~$\gamma\in\{\gamma_1,\dots,\gamma_l\}$, consider the four push-offs of~$\gamma$, namely the  simple closed curves~$\{\gamma^\varepsilon\mid \varepsilon\in\{\pm 1\}^2\}$ which we can assume to intersect transversally.
Since they lie in~$S^3\setminus\nu(F)=\partial B \cap (S^3 \star \{0\})$, each push-off~$\gamma^\varepsilon$ bounds an immersed disc~$D_{\gamma^\varepsilon}\subset B$.
We now show that these~$4$ curves also bound a~$4$-punctured sphere in~$X_L\subset V_F$. Capping it off with the
aforementioned four discs in~$B\subset V_F$ will produce the desired sphere~$U_{\gamma}$ inside~$V_F\subset W_F$.
The construction has its roots in~\cite[Section 4.1.1]{CFT18}.

Recall that~$\gamma$ crosses exactly two clasps.
Away from these clasps, the four push-offs are connected by four bands embedded in~$X_L$: a band~$B_1^+$ (resp.~$B_1^-$) connecting~$\gamma^{++}$ and~$\gamma^{-+}$ (resp.~$\gamma^{+-}$ and~$\gamma^{--})$ near~$F_1$, and a band~$B_2^+$ (resp.~$B_2^-$) connecting~$\gamma^{++}$ and~$\gamma^{+-}$ (resp.~$\gamma^{-+}$ and~$\gamma^{--}$) near~$F_2$, see Figure~\ref{fig:saddle}.
Moreover, near each of these two clasps, the four arcs~$\{\gamma^\varepsilon\}_{\varepsilon}$ are connected by a saddle-shaped disc embedded in~$X_L$, also illustrated in Figure~\ref{fig:saddle}. We write~$S_{\gamma}^1,S_{\gamma}^2$ for these two discs corresponding to the two clasps crossed by~$\gamma$.
Putting all these pieces together, we define the sphere
\[
U_{\gamma} := \bigcup_{\varepsilon \in \{\pm 1\}^2} D_{\gamma^\varepsilon} \cup \bigcup_{i=1,2} \left(B_i^+ \cup B_i^-\right) \cup S_{\gamma}^1 \cup S_{\gamma}^2
\]
immersed in~$V_F\subset W_F$, and we orient it by extending the orientation on~$D_{\gamma^{--}}$ such that~$\partial D_{\gamma^{--}}=\gamma^{--}$.

\begin{figure}
\centering
\begin{overpic}[width=10cm]{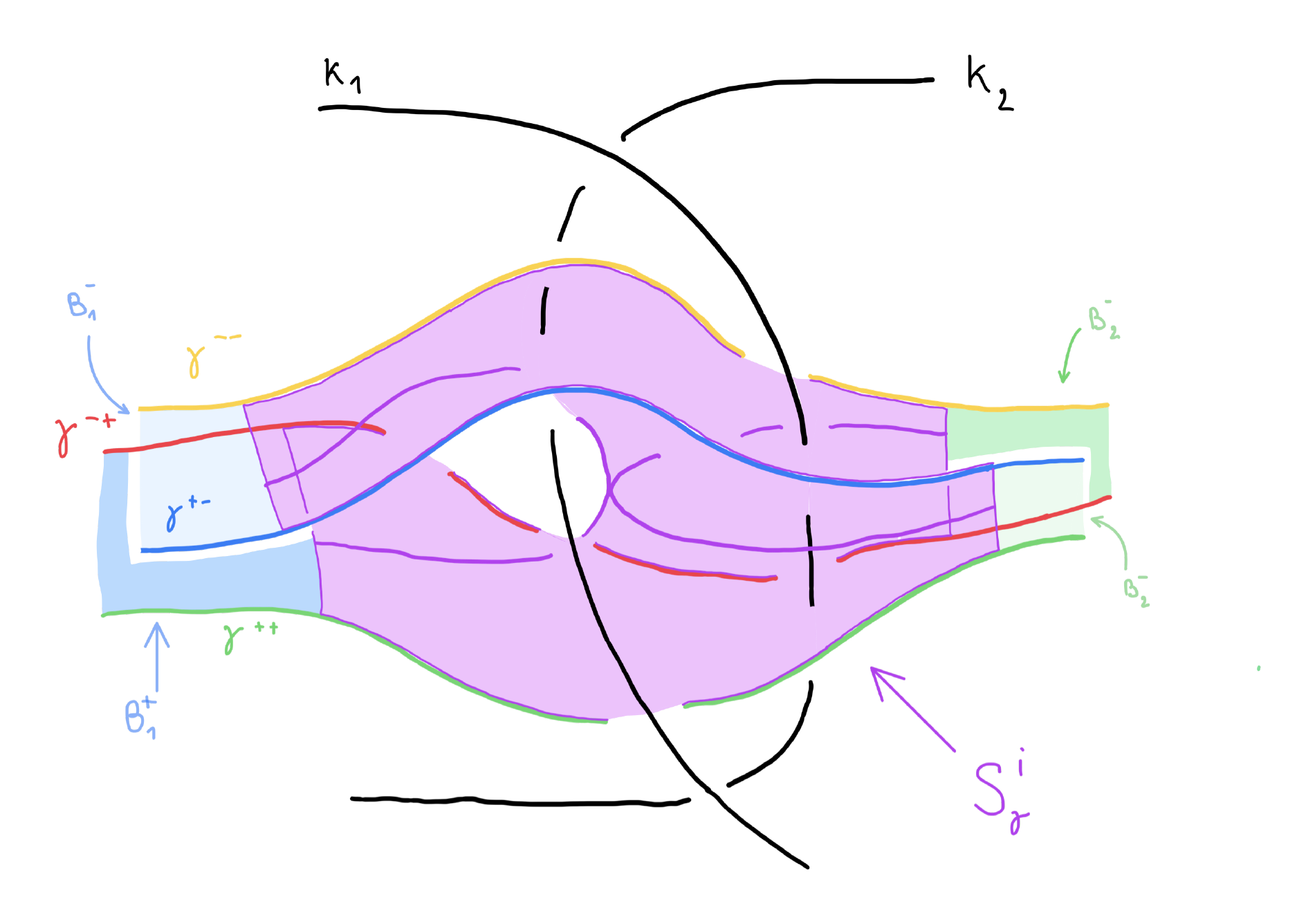}\end{overpic}
\caption{The four bands~$B_i^\pm$ and the saddle~$S_\gamma$ connecting the four pushoffs near a clasp.}
\label{fig:saddle}
\end{figure}

\item As described in Section~\ref{sec:ExplicitBasis}, each oriented cancelling curve~$\delta\in\{\delta_1,\dots,\delta_n\}$ determines an immersed oriented sphere~$R_{\delta}= D_{\delta} \cup D_k\subset W_F$.
Here,  recall that~$D_\delta \subset B$ and~$D_k$ have as common boundary the curve~$\widetilde{\delta} \subset \partial B$ obtained by pushing the push off~$\delta^{++}$ into the~$4$-ball.
The disc~$D_k$ is obtained by attaching to the space illustrated in Figure~\ref{fig:rectangle} one disc in~$Y_k^1\times \{y\}$
along~$\delta^1 \times \{y\}$ and one disc in~$Y_k^2\times\{y\}$ along~$\delta^2 \times \{y\}$.
\end{itemize}
\end{construction}

\medskip

We now wish to pick lifts of these spheres in the universal cover~$\widetilde W_F$.
We do so using {\em based immersions} (see e.g.~\cite[Section 7]{RanickiAlgebraicAndGeometric},~\cite[Section 11.3]{DET} or~\cite[Section 2.2]{KPRT24}), as this will allow us to compute 
the equivariant intersection form via Proposition~\ref{prop:FL1} below.

Recall that if~$(\Sigma,x)$ and~$(W,z)$ are pointed manifolds, a \emph{based immersion} consists of an
immersion~$f \colon \Sigma \to W$ together with a path~$b$ in~$W$ from~$f(x)$ to~$z$, called a \emph{whisker}. 
Let~$\widetilde{W}$ denote the universal cover of~$W$, and let us fix a basepoint~$\tilde z\in\widetilde{W}$ over~$z\in W$.
If the induced homomorphism~$f_\# \colon \pi_1(\Sigma) \to \pi_1(W)$ is trivial, we denote by~$\tilde f \colon \Sigma \to \widetilde W$ the unique lift of~$f$ satisfying~$\tilde f(x) = \tilde b(0)$, where~$\tilde b$ is the unique lift of~$b$ such that~$\tilde b(1) = \tilde z$.  
In particular, if~$\Sigma$ is a closed oriented sphere, then any based immersion~$f\colon \Sigma\to W$ determines a class~$[\tilde{f}(\Sigma)]\in H_2(W; \mathbb{Z}[\pi_1(W)])$.

\begin{convention}
Coming back to our immersed spheres in~$W_F$, we pick our basepoints and whiskers as follows:
a basepoint for~$T_{\alpha}$ in~$D_{\alpha}$, for~$S_{\beta}$ in~$D_{\beta^-}$, for~$U_{\gamma}$ in~$D_{\gamma^{--}}$ and for~$R_{\delta}$ in~$D_{\delta}$. We also fix an arbitrary lift~$\tilde{z}\in\widetilde{W}_F$ of a basepoint~$z\in B\subset W_F$, and we choose all whiskers to lie inside~$B$.
These data uniquely determine lifts of the spheres constructed above, yielding a family 
\[
\mathcal C = \{\tilde T_{\alpha_1}, \ldots, \tilde T_{\alpha_{g}}, \tilde S_{\beta_1}, \ldots, \tilde S_{\beta_{g}}, \tilde U_{\gamma_1}, \ldots,\tilde U_{\gamma_l},\tilde R_{\delta_1}, \ldots,\tilde R_{\delta_n}\}
\]
of classes in~$H_2(W_F;\Z[\pi_1(W_F)])=H_2(W_F;\Lambda)$.
\end{convention}

\subsection{Computation of the equivariant intersection form}
\label{sub:CalculInt}

In this section, we compute the pairwise equivariant intersections of the family~$\mathcal C$.
These computations will be used in Section~\ref{sub:EquivIntForm} to show that~$\mathcal C$ represents a basis of~$H_2(W_F;\Lambda)$, and to prove Theorem~\ref{thm:ExistenceOfW}~(3).
\medskip

To do so, we rely on the following classical result; see e.g.~\cite[Proposition 7.22]{RanickiAlgebraicAndGeometric} or the discussions in~\cite[Section 11.3]{DET} or~\cite[Remark 2.19]{KPRT24}.

\begin{proposition}
\label{prop:FL1}
Let~$(W,z)$ be a pointed four-manifold, and let~$\lambda_W$ denote the equivariant intersection form on~$H_2(W;\Z[\pi_1(W)])$. For~$i = 1,2$, let~$f_i \colon \Sigma_i \to W$ be a based immersion of an oriented, pointed sphere
with whisker~$b_i$, such that~$f_1(\Sigma_1)$ and~$f_2(\Sigma_2)$ intersect transversely. For each intersection point~$p$, choose paths~$\xi_i \subset f_i(\Sigma_i)$ from~$f_i(x_i)$ to~$p$ such that~$f_i^{-1}(\xi_i)$ is a path in~$\Sigma_i$ from~$x_i$ to~$f_i^{-1}(p)$. The concatenated path
\[
\xi_p := b_1^{-1} \cdot \xi_1 \cdot \xi_2^{-1} \cdot b_2
\]
is a loop based at~$z$ and thus determines a class~$[\xi_p]\in\pi_1(W,z)$. We have the formula
\[
\lambda_W([\tilde f_1(\Sigma_1)], [\tilde f_2(\Sigma_2)]) = \sum_{p \in f_1(\Sigma_1) \cap f_2(\Sigma_2)} [\xi_p] \cdot Q_p(f_1(\Sigma_1), f_2(\Sigma_2)) \in \mathbb{Z}[\pi_1(W,z)]\,,
\]
where~$Q_p$ denotes the sign of the intersection point~$p$.\qed
\end{proposition}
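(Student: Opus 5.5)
The plan is to derive the formula from the definition of the equivariant intersection form as the sum of $\Z$-valued intersection numbers of lifts:
\[
\lambda_W(x,y)=\sum_{g\in\pi_1(W,z)} Q_{\widetilde W}(x,g\cdot y)\,g^{-1}
\]
(or with $g$ in place of $g^{-1}$, depending on the convention used in Appendix~\ref{sub:Ltheory}). Here $Q_{\widetilde W}$ is the ordinary algebraic intersection of compactly supported $2$-cycles in the universal cover. With this sum the form is sesquilinear with respect to the $\pi_1$-action, and it agrees with the Poincaré-duality definition via the standard cap-product identification of $C^*_c(\widetilde W)$ with $\Hom_\Lambda(C_*(\widetilde W),\Lambda)$.

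Applied to $x=[\tilde f_1(\Sigma_1)]$ and $y=[\tilde f_2(\Sigma_2)]$, the task is to compute $\tilde f_1(\Sigma_1)\cap g\tilde f_2(\Sigma_2)$ for each $g$. Since the spheres are simply connected and $f_1,f_2$ are transverse, the lifts are transverse. Every intersection point of $\tilde f_1(\Sigma_1)$ with some translate $g\tilde f_2(\Sigma_2)$ projects to an intersection point $p\in f_1(\Sigma_1)\cap f_2(\Sigma_2)$. Conversely, each such $p$ has a unique preimage $\tilde p_1$ on $\tilde f_1(\Sigma_1)$ (the one over $f_1^{-1}(p)$, since the lift is an embedding on the relevant sheet) and lies in exactly one translate $g_p\tilde f_2(\Sigma_2)$. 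The local sign is preserved because the deck transformations and the covering map are orientation-preserving local homeomorphisms. Hence
\[
\lambda_W(x,y)=\sum_p Q_p\cdot g_p^{\pm1}.
\]

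The remaining step, which is where the conventions must be handled carefully, is to identify $g_p$ with $[\xi_p]$. Lift the loop $\xi_p=b_1^{-1}\cdot\xi_1\cdot\xi_2^{-1}\cdot b_2$ starting at $\tilde z$:
\begin{itemize}
\item the lift of $b_1^{-1}$ ends at $\tilde f_1(x_1)$, by the definition of the based lift;
\item the lift of $\xi_1$ stays on $\tilde f_1(\Sigma_1)$, because $f_1^{-1}(\xi_1)$ is a path in $\Sigma_1$, and so ends at $\tilde p_1$;
\item traversing $\xi_2^{-1}$ from $\tilde p_1$ stays on the translate of $\tilde f_2(\Sigma_2)$ through $\tilde p_1$, namely $g_p\tilde f_2(\Sigma_2)$, and reaches $g_p\tilde f_2(x_2)$;
\item following $b_2$ then ends at $g_p\tilde z$.
\end{itemize}
So the lift of $\xi_p$ ends at $g_p\tilde z$, which means $[\xi_p]=g_p$ under the identification of deck transformations with $\pi_1(W,z)$ fixed by $\tilde z$. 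It follows that the answer does not depend on the choices of $\xi_i$, since $\Sigma_i$ is simply connected.

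Finally, I would check that the exponent convention ($g$ versus $g^{-1}$) matches the one making $\lambda_W$ linear in the first variable and anti-linear in the second, as in the paper's conventions. I expect this bookkeeping, rather than the geometry, to be the main obstacle; if it turns out reversed, the formula holds after replacing the roles of the two spheres in $\xi_p$, which is precisely what the stated formula encodes.
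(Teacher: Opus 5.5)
Your argument is correct, and it is the standard proof. The paper gives no argument of its own here and simply cites the formula as classical (\cite{RanickiAlgebraicAndGeometric}, \cite{DET}, \cite{KPRT24}). The proof behind those citations is exactly yours: a bijection between intersection points $p$ and intersection points of $\tilde f_1(\Sigma_1)$ with translates $g\tilde f_2(\Sigma_2)$, combined with the path-lifting computation showing that the lift of $\xi_p$ from $\tilde z$ ends at $g_p\tilde z$. (The uniqueness of $\tilde p_1$ uses that $p$ is not a double point of $f_1$ or $f_2$; this holds by general position and is implicit in the statement.)

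The step you deferred is a two-line check, not an obstacle. The paper's pairings are all of the form $(x,y)\mapsto\Ad(y)(x)$, hence linear in the first variable (compare Remark~\ref{rem:AlternativeDefinitionBoundaryLinking}, where $\overline{H}$ is the matrix of $\Ad_\lambda$). For $\lambda(x,y)=\sum_g Q_{\widetilde W}(x,gy)\,g$ one gets $\lambda(hx,y)=\sum_g Q_{\widetilde W}(x,h^{-1}gy)\,g=h\,\lambda(x,y)$. The version with $g^{-1}$ is instead conjugate-linear in $x$. So the correct lift formula is the one with $g$, and it yields exactly $\sum_p Q_p[\xi_p]$.

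Your closing hedge is slightly off. Under the $g^{-1}$ convention the displayed formula would simply be false as written, since it would produce $[\xi_p]^{-1}=[b_2^{-1}\cdot\xi_2\cdot\xi_1^{-1}\cdot b_1]$. What the statement encodes is the sesquilinearity convention, not a freedom to swap the roles of the spheres in $\xi_p$.
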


We will also make systematic use of the following well-known fact.

\begin{lemma}
\label{lemma:AlgebraicIntersectionNumber}
The algebraic intersection number of two properly immersed discs in a~$4$-ball~$B$ is equal the linking number of their oriented boundaries in~$\partial B$.
\qed
\end{lemma}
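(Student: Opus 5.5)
The plan is to reduce to the case where both discs lie in the interior except near the boundary, and then to compute the intersection number homologically. Let $D_1,D_2\subset B$ be properly immersed discs with oriented boundaries $K_1=\partial D_1$ and $K_2=\partial D_2$ in $\partial B\cong S^3$. After a small perturbation rel boundary we may assume the discs intersect transversely in finitely many interior points and that $K_1\cap K_2=\emptyset$, so that $D_1\cdot D_2$ is defined. Self-intersections of each $D_i$ play no role, since only the intersections between $D_1$ and $D_2$ are counted.

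First I would replace $D_2$ by a chain-level argument. The class $[D_1]\in H_2(B,\partial B)$ is Poincar\'e--Lefschetz dual to a class in $H^2(B)=0$, so the intersection number is not obtained from closed classes directly. Instead, I would push $D_2$ slightly so that near $\partial B$ it is a collar $K_2\times[0,\epsilon]$. Then I would note that $D_1\cdot D_2$ equals the intersection of $D_1$ with the relative cycle $D_2$ in $B\setminus K_1$. By Alexander/Lefschetz duality, $H_2(B,\partial B\setminus\nu K_1)$ pairs with $H_1(B\setminus D_1)\cong H_1(S^3\setminus K_1)\cong\Z$.

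Concretely, the key step is the following. The count $D_1\cdot D_2$ equals the class of $\partial D_2=K_2$ in $H_1(B\setminus D_1)$, where a meridian of $D_1$ is sent to $1$. The reason is that $D_2$ restricted to the complement of small balls around the intersection points is a $2$-chain in $B\setminus \nu D_1$. Its boundary is $K_2$ minus the sum, with signs, of the meridian circles around the intersection points. Hence $[K_2]=\sum_p Q_p\,[\mu]$ in $H_1(B\setminus\nu D_1)$.

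The next step is to identify $H_1(B\setminus\nu D_1)$ with $H_1(S^3\setminus K_1)$ via the inclusion. This identification sends the meridian of $D_1$ to the meridian of $K_1$, and $[K_2]$ to $\lk(K_1,K_2)$. The main obstacle is this isomorphism when $D_1$ is only immersed. For embedded $D_1$ it follows from Alexander duality in $B$ rel boundary. In the immersed case, I would instead run the computation with a Seifert surface. Push a Seifert surface $F$ for $K_1$ slightly into $B$, and let $C=D_1\cup -F$, a closed immersed surface in $B$ after capping. Then $C\cdot D_2=0$ because $H_2(B)=0$; more precisely, $C$ bounds a $3$-chain, which can be intersected with $D_2$, giving $0$ up to boundary contributions. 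This yields $D_1\cdot D_2=F\cdot D_2=F\cdot K_2=\lk(K_1,K_2)$ in $S^3$, with the sign conventions checked on a local model.
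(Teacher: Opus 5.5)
The paper gives no proof of this lemma (it is quoted as a well-known fact), so there is no argument of theirs to compare with. Judged on its own, your final paragraph is the standard proof and it is correct. $D_1\cup -F$ is a $2$-cycle, and its intersection with the relative cycle $D_2$ vanishes because $H_2(B)=0$. The remaining count $F\cdot D_2$ takes place in a collar where $D_2=K_2\times[0,\epsilon]$, so it equals $F\cdot K_2=\lk(K_1,K_2)$. With the outward-normal-first convention, $B$ is oriented as $S^3\times[0,\epsilon]$ (with $t$ the depth) and each $D_i$ as $K_i\times[0,\epsilon]$ near $\partial B$. The sign therefore comes out as $+\lk(K_1,K_2)$, consistent with the model of two complex lines bounding the positive Hopf link.

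Two things to tighten.

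\emph{The capping.} Making it explicit also removes the vague ``up to boundary contributions''. Truncate $D_1$ at depth $s<\epsilon$ and cap it with $-(F\times\{s\})$. The discarded collar $K_1\times[0,s)$ misses $D_2$ because $K_1\cap K_2=\emptyset$. The resulting cycle $C$ lies in $\operatorname{int}(B)$, so it bounds a $3$-chain in $\operatorname{int}(B)$, and that chain cannot meet $\partial D_2\subset\partial B$. Hence $C\cdot D_2=0$ with no correction terms. If you prefer to push $F$ in rel boundary, take the null-homology in $B\setminus K_2\simeq B$ instead.

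\emph{The first three paragraphs.} These can be deleted. The computation $[K_2]=\sum_p Q_p[\mu]$ in $H_1(B\setminus\nu D_1)$ is fine, but it only yields the lemma once $[\mu]$ is known to have infinite order. The isomorphism $H_1(B\setminus D_1)\cong H_1(S^3\setminus K_1)$ that you assert in the second paragraph is exactly what you later, correctly, flag as unproved for immersed $D_1$. It can be supplied. Suppose $n\mu=\partial\Sigma$ for a $2$-chain $\Sigma$ in $B\setminus D_1$. Capping $\Sigma$ with $n$ normal discs gives a closed $2$-cycle that misses $K_1$ and meets $D_1$ algebraically $\pm n$ times. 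This is impossible, since the cycle bounds in $B\setminus K_1\simeq B$. But this fix uses the same input as your Seifert surface argument, which is the cleaner way to present the proof.
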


Since the family~$\mathcal C$ consists of four different types of elements, there are ten distinct types of intersections to evaluate. None of these computations is conceptually difficult, but they can become rather technical and somewhat tedious.  
For clarity, we organise the computations in order of increasing complexity, starting with the most straightforward cases. Each intersection result is first presented as a linking number, and then checked to coincide with the corresponding
coefficient of the matrix~$H_F$ from Construction~\ref{cons:MatrixH}.

\begin{lemma}
\label{lemma:IntTR}
For each~$\alpha\in\{\alpha_1,\dots,\alpha_g\}$ and~$\delta\in\{\delta_1,\dots,\delta_n\}$, we have the equalities
\[
\lambda_{W_F}(\tilde T_{\alpha},\tilde R_{\delta}) = \lk(\alpha, \delta) =  H_F(\alpha, \delta)\,.
\]
\end{lemma}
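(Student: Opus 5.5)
We compute $\lambda_{W_F}(\tilde T_\alpha,\tilde R_\delta)$ with Proposition~\ref{prop:FL1}. The first step is to locate all intersection points of $T_\alpha=D_\alpha\cup D_m$ and $R_\delta=D_\delta\cup D_k$.

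The caps lie in disjoint pieces of $W_F$. Indeed $D_m\subset Y_m\times S^1$, while $D_k$ lies in $X_k\cup(Y_k^1\times S^1)\cup(Y_k^2\times S^1)$. These pieces are disjoint, because the neighbourhoods $\nu(\alpha_m)$ were chosen disjoint from the $\overline\nu(\delta_k^i)$ and from the clasps. For the same reason $D_m$ misses $D_\delta\subset B$, and $D_k$ misses $D_\alpha\subset B$. The boundary curves $\alpha\times\{0\}\star\{i+c\}$ and $\tilde\delta$ are disjoint in $\partial B$, since $\tilde\delta$ is built from $\delta^{++}$ and $\alpha\cap\delta=\emptyset$ in the good basis. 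After a small perturbation rel boundary, all intersections therefore lie in the interior of $B$ and come from $D_\alpha\cap D_\delta$.

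Next we find the group elements. For each $p\in D_\alpha\cap D_\delta$, choose the paths $\xi_1\subset D_\alpha$ and $\xi_2\subset D_\delta$ inside the discs; the whiskers lie in $B$ by the Convention. The loop $\xi_p$ then lies in $B$, and $B$ is a ball. Hence $[\xi_p]=1$ in $\pi_1(W_F)\cong\Z^2$, and
\[\lambda_{W_F}(\tilde T_\alpha,\tilde R_\delta)=D_\alpha\cdot D_\delta\in\Z.\]
By Lemma~\ref{lemma:AlgebraicIntersectionNumber}, this equals the linking number in $\partial B$ of the curves $\alpha\times\{0\}\star\{i+c\}$ and $\tilde\delta$.

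We then identify this linking number with $\lk(\alpha,\delta)$ in $S^3$. The plan is to isotope both curves in $\partial B\cong S^3$ back to their counterparts $\alpha^{+}$ (the pushoff of $\alpha$ off $F_i$) and $\delta^{++}$ in $S^3\star\{0\}$, keeping them disjoint throughout. The isotopy runs radially down the depth coordinate along $\partial B$, which follows the trace of the pushed-in C-complex. So $\lk_{\partial B}=\lk(\alpha^{+},\delta^{++})$. Since $\alpha$ and $\delta$ are disjoint curves on $F$, all pushoffs give the same value, namely $\lk(\alpha,\delta)$. \textbf{This is the step I expect to be the main obstacle.} One must check carefully, using the explicit parametrisation of Construction~\ref{cons:parametrizationnu(F)} and Figure~\ref{fig:TildeDelta}, that the vertical arcs $\partial(\delta^{++}\cap F_1\times\{c\})\star[1+c,2+c]$ of $\tilde\delta$ never cross $\alpha\star\{i+c\}$ during the isotopy. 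The orientation conventions (outward normal first) must also be tracked so that the sign is $+$ rather than $-$.

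Finally, we read off the matrix entry $H_F(\alpha,\delta)$ from Construction~\ref{cons:MatrixH}. For the pair $(\alpha,\delta)$ the diagonal factors are $D'=1$ on the $\alpha$-row and $D=\frac{1}{(t_1-1)(t_2-1)}$ on the $\delta$-column. The entry is therefore
\[\frac{t_1t_2A^{\T}-t_1B^{\T}-t_2B+A}{(t_1-1)(t_2-1)}\]
evaluated at $(\alpha,\delta)$. Because $\alpha\cap\delta=\emptyset$ on $F$, every generalised Seifert form takes the same value $\lk(\alpha,\delta)$ on this pair, in both orders. The numerator becomes $\lk(\alpha,\delta)(t_1t_2-t_1-t_2+1)=\lk(\alpha,\delta)(t_1-1)(t_2-1)$, so $H_F(\alpha,\delta)=\lk(\alpha,\delta)$, as claimed.
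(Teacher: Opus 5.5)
Your proposal is correct and follows the paper's proof step by step:
\begin{enumerate}
\item reduce $T_\alpha\cap R_\delta$ to $D_\alpha\cap D_\delta$;
\item observe that every loop $\xi_p$ lies in the ball $B$;
\item convert to a linking number in $\partial B$ via Lemma~\ref{lemma:AlgebraicIntersectionNumber};
\item push both curves back to depth $0$;
\item evaluate $H_F(\alpha,\delta)$ using that all pushoff linking numbers agree, since $\alpha\cap\delta=\emptyset$.
\end{enumerate}

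The step you flag as the main obstacle is settled in the paper in one line, using cylinders:
\begin{itemize}
\item $\partial D_\alpha$ is moved to $\alpha^-$ through $(\alpha\times[-c,0]\star\{i+c\})\cup(\alpha\times\{-c\}\star[0,i+c])$, which lies on the $-c$ side;
\item $\tilde\delta$ is moved to $\delta^{++}$ through $C_\delta$ from~\eqref{eq:cylinder}, which lies on the $+c$ side.
\end{itemize}
These two cylinders are disjoint because $\alpha\cap\delta=\emptyset$ and $\alpha$ avoids the clasps, so the vertical arcs of $\tilde\delta$ never come near $\alpha$. Your choice of $\alpha^+$ instead of $\alpha^-$ works equally well, for the same reason.
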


\begin{proof}
By construction, we have 
\[
T_{\alpha} \cap R_{\delta} = (D_{\alpha} \cup D_m ) \cap (D_{\delta} \cup D_{k})=D_{\alpha} \cap D_{\delta}\,.
\]
Indeed, the discs~$D_m \subset Y_m$ and~$D_k$ are disjoint since the curves~$\alpha$ and~$\delta$ are disjoint (by definition of a good basis), while~$D_{\alpha} \cap D_k$ (resp.~$D_{\delta}\cap D_m$) is empty as~$D_{\alpha}$ (resp.~$D_{\delta}$) lies in~$B$.
For each intersection point~$p \in D_{\alpha} \cap D_{\delta}$, the path~$\xi_p$ appearing in Proposition~\ref{prop:FL1} is constructed as follows: first, it follows the whisker of~$T_{\alpha}$ from~$z\in W_F$ to the basepoint of~$T_{\alpha}$ in~$D_{\alpha}$, a path which by assumption lies within~$B$; from there, $\xi_p$ follows an arbitrary path within~$D_{\alpha}\subset B$ to the intersection point~$p$, and then an arbitrary path within~$D_{\delta}\subset B$ to the basepoint of~$R_{\delta}$; finally, it returns to~$z$ along the whisker of~$R_{\delta}$, which we assumed to lies within~$B$. In conclusion, the full path~$\xi_p$ is contained in the~$4$-ball~$B$, and therefore represents the trivial element in~$\pi_1(W_F,z)$.
By Proposition~\ref{prop:FL1} and Lemma~\ref{lemma:AlgebraicIntersectionNumber}, we have the equality
\[
\lambda_{W_F}(\tilde T_{\alpha},\tilde R_{\delta}) = \sum_{p \in D_{\alpha} \cap D_{\delta}} Q_p(D_{\alpha},D_{\delta}) = \lk(\partial D_{\alpha}, \partial D_{\delta})\,.
\]
To compute this linking number (of loops that both lie in $\partial B \cong S^3$), observe that for~$\alpha\subset F_i$, the cycles~$\partial D_{\alpha}=\alpha\times\{0\}\star\{i+c\}$ and~$\alpha^{-}=\alpha\times\{-c\}\star \{0\}$ are homologous in~$\partial B$ via the cylinder 
\[
(\alpha \times [-c,0] \star \{i+c\}) \cup (\alpha \times \{-c\} \star [0,i+c])\,.
\]
Moreover, the cycles~$\partial D_{\delta}=\tilde \delta$ and~$\delta^{++}$ are homologous in~$\partial B$ because $\tilde \delta$  was obtained by carefully pushing $\delta^{++}$ into the the $4$-ball.
Explicitly, the homology is given by the cylinder
\begin{equation}
\label{eq:cylinder}
C_{\delta}:=((\delta^{++} \cap (F_1 \times \{c\})) \star [0,1+c]) \;\cup\; ((\delta^{++} \cap (F_2 \times \{c\})) \star [0, 2+c])\,,
\end{equation}
recall the proof of Lemma~\ref{lemma:Claim0} and specifically~\eqref{eq:wtdelta}.
Finally, these two cylinders are disjoint since~$\alpha$ and~$\delta$ are. 
This implies that
\[
\lk(\partial D_{\alpha}, \partial D_{\delta}) = \lk(\alpha^{-}, \delta^{++})=\lk(\alpha,\delta)\,,
\]
using once again the fact that~$\alpha$ and~$\delta$ are disjoint. 
Together with the definition of $H_F$ from Construction~\ref{cons:MatrixH},
this same fact also yields
\[
H_F(\alpha, \delta) = \frac{t_1t_2 \lk(\alpha, \delta^{--}) - t_1 \lk(\alpha, \delta^{-+}) - t_2 \lk(\alpha, \delta^{+-}) + \lk(\alpha, \delta^{++})}{(t_1-1)(t_2-1)} = \lk(\alpha, \delta)\,,
\]
concluding the proof.
\end{proof}

\begin{lemma}
\label{lemma:IntRR}
The equalities
\[
\lambda_{W_F}(\tilde R_{\delta},\tilde R_{\delta'}) = \lk(\delta, \delta'^{++}) =  H_F(\delta, \delta')
\]
hold  for all~$\delta,\delta'\in\{\delta_1,\dots,\delta_n\}$.
\end{lemma}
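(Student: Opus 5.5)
We follow the template of Lemma~\ref{lemma:IntTR}: locate the intersections of the two spheres, show that every double-point loop $\xi_p$ is trivial in $\pi_1(W_F)$, and reduce the count to a linking number in $\partial B$ via Lemma~\ref{lemma:AlgebraicIntersectionNumber}.

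First we locate the intersections. Write $R_\delta=D_\delta\cup D_k$ and $R_{\delta'}=D_{\delta'}\cup D_{k'}$. The discs $D_\delta,D_{\delta'}$ lie in $B$. The discs $D_k,D_{k'}$ lie in $X_k\cup (Y^1_k\times S^1)\cup(Y^2_k\times S^1)$ and its primed analogue, glued along $\partial\onu(\tilde F)$.
\begin{itemize}
\item For $\delta\neq\delta'$, the cancelling curves are disjoint and traverse disjoint pairs of clasps, so $D_k\cap D_{k'}=\emptyset$. Hence $R_\delta\cap R_{\delta'}=D_\delta\cap D_{\delta'}$.
\item For $\delta=\delta'$, we first push $R_{\delta'}$ off itself. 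On the $D_k$ part we displace the $S^1$-coordinate $y$ to a nearby $y'$. This is possible because $D_k$ is built from the rectangle in $X_k$ and discs in $Y^i_k\times\{y\}$, so $D_k$ and its displaced copy are disjoint. The displaced boundary $\tilde\delta'$ corresponds to a parallel push-off of $\delta^{++}$, and the displaced disc $D_{\delta'}$ is taken to be any disc in $B$ bounded by it.
\end{itemize}
In both cases, all intersection points lie in $B$. Since the basepoints and whiskers were chosen inside $B$, each loop $\xi_p$ lies in the ball $B$ and is trivial. Proposition~\ref{prop:FL1} and Lemma~\ref{lemma:AlgebraicIntersectionNumber} then give
\[
\lambda_{W_F}(\tilde R_\delta,\tilde R_{\delta'})=\lk_{\partial B}(\tilde\delta,\tilde\delta').
\]

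Next we identify this linking number with $\lk(\delta,\delta'^{++})$. We use the cylinders $C_\delta$ and $C_{\delta'}$ of~\eqref{eq:cylinder}, which show that $\tilde\delta$ is homologous in $\partial B$ to $\delta^{++}$, and likewise $\tilde\delta'$ to $\delta'^{++}$. The point to check is that the cylinders do not meet the other curve. When $\delta\neq\delta'$, the cylinders are disjoint and $\delta^{++}\cap\delta'^{++}=\emptyset$. When $\delta=\delta'$, the displaced copy is a parallel push-off in the $F_i\times\{c\}$ directions, at depths $\star[0,i+c]$, and it misses the original.

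This gives $\lk(\delta^{++},\delta'^{++})$. Since both curves are pushed in the same direction $(+,+)$, this equals $\lk(\delta,\delta'^{++})$. In the self-pairing case, the framing of the push-off is determined by the explicit parametrisation of $\onu(\tilde F)$. This is exactly why the construction pushes $\delta^{++}$ rather than an arbitrary curve.

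Finally we compare with $H_F$. By Construction~\ref{cons:MatrixH}, the $(\delta,\delta')$ entry has $D'$-factor $1$ and $D$-factor $\frac{1}{(t_1-1)(t_2-1)}$, so
\[
H_F(\delta,\delta')=\frac{t_1t_2\lk(\delta,\delta'^{--})-t_1\lk(\delta,\delta'^{-+})-t_2\lk(\delta,\delta'^{+-})+\lk(\delta,\delta'^{++})}{(t_1-1)(t_2-1)}.
\]
It remains to show that the four push-off linking numbers all coincide, so that the numerator becomes $(t_1-1)(t_2-1)\lk(\delta,\delta'^{++})$. We plan to use the standard formula for the difference $\lk(x,y^{\varepsilon})-\lk(x,y^{\varepsilon'})$ when $\varepsilon$ and $\varepsilon'$ differ in one coordinate. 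This difference is a signed count of intersection points of $x$ and $y$ on $F_i$, plus contributions from clasps traversed by both curves.
\begin{itemize}
\item For $\delta\neq\delta'$, the curves are disjoint and share no clasps, so all four values agree.
\item For $\delta=\delta'$, each of the two clasps of $\delta$ contributes. The expected mechanism is that the two clasps have opposite signs, so their contributions cancel.
\end{itemize}
This clasp bookkeeping in the self-pairing case is the step I expect to be the main obstacle. If the signs do not cancel directly, I would compute the $\delta\delta$ entry locally near a positive and a negative clasp and verify the identity by hand.
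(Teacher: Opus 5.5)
Your proposal is correct and follows essentially the same route as the paper. In both, the intersections lie in $B$, so every $\xi_p$ is trivial; the cylinders $C_\delta$ reduce the count to $\lk(\delta^{++},\delta'^{++})$; the self-pairing case is handled by displacing $y\mapsto y'$ in the construction of $D_k$; and the comparison with $H_F$ rests on $\lk(\delta,\delta'^{\varepsilon})$ being independent of $\varepsilon$. The step you flag as the main obstacle is exactly what the paper settles in one line, by noting that $\delta$ crosses two clasps of opposite signs.
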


\begin{proof}
The case~$\delta\neq\delta'$ can be checked by following the proof of Lemma~\ref{lemma:IntTR} almost verbatim:
since the curves~$\delta$ and~$\delta'$ are disjoint,
we have~$R_{\delta} \cap R_{\delta'} = D_{\delta} \cap D_{\delta'}$ and for each intersection point~$p$,
the closed loop~$\xi_p$ lies inside~$B$ and therefore represents the trivial class in~$\pi_1(W_F,z)$.
This yields
\[
\lambda_{W_F}(\tilde R_{\delta},\tilde R_{\delta'}) = \lk(\partial D_{\delta},\partial D_{\delta'})=\lk(\delta^{++}, \delta'^{++})
\]
since~$\partial D_{\delta}=\tilde\delta$ is homologous to~$\delta^{++}$ via the cylinder~$C_{\delta}$ from~\eqref{eq:cylinder}, with~$C_{\delta}\cap C_{\delta'}=\emptyset$ for~$\delta\neq\delta'$.
Using twice more the fact that~$\delta$ and~$\delta'$ are disjoint together with the definition of $H_F$ from Construction~\ref{cons:MatrixH}, we get
\[
\lk(\delta^{++}, \delta'^{++})=\lk(\delta, \delta')=H_F(\delta, \delta')\,,
\]
settling the case~$\delta\neq\delta'$.

In the case~$\delta=\delta'$, one can isotope the sphere~$R_{\delta}=D_{\delta}\cup D_k$ to a nearby copy~$R'_{\delta}=D'_{\delta}\cup D'_k$ such that~$D_{\delta}$ and~$D'_{\delta}$ are disjoint: this can be done by translating~$\delta^i\times\{y\}$ to a parallel copy~$\delta^i\times\{y'\}$ and constructing~$D_k'$ as~$D_k$ using~$y'$ instead of~$y$.
One can then proceed with the proof as in the case~$\delta\neq\delta'$, noting that since~$\delta$ crosses two clasps of opposite signs, the linking number~$\lk(\delta,\delta^\varepsilon)$ does not depend on~$\varepsilon$.
\end{proof}

\begin{lemma}
\label{lemma:IntTS}
For each~$\alpha\in\{\alpha_1,\dots,\alpha_g\}$ and~$\beta\in\{\beta_1,\dots,\beta_g\}$ with~$\beta\subset F_{i}$, we have
\[
\lambda_{W_F}(\tilde T_{\alpha},\tilde S_{\beta}) = \lk(\alpha,\beta^-) - t^{-1}_{i}\lk(\alpha, \beta^+) = H_F(\alpha, \beta)\,.
\]
\end{lemma}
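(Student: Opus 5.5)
We compute $\lambda_{W_F}(\tilde T_\alpha,\tilde S_\beta)$ with Proposition~\ref{prop:FL1}, as in Lemmas~\ref{lemma:IntTR} and~\ref{lemma:IntRR}. First we locate the intersections. Write $T_\alpha=D_\alpha\cup D_m$ and $S_\beta=D_{\beta^-}\cup(\beta\times J)\cup D_{\beta^+}$. The disc $D_m$ lies in $Y_m\times S^1$, and the only part of $S_\beta$ outside $B$ is the band $\beta\times J\subset S^3\star\{0\}$. Hence $D_m$ misses $S_\beta$. The disc $D_\alpha$ lies in $B$ with boundary the pushed-in curve $\alpha\times\{0\}\star\{i'+c\}$ at positive depth, so it misses the band as well. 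Therefore
\[
T_\alpha\cap S_\beta=(D_\alpha\cap D_{\beta^-})\sqcup(D_\alpha\cap D_{\beta^+}).
\]
Both families of intersection points lie in the ball $B$.

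Next we compute the group elements. The basepoint of $S_\beta$ lies in $D_{\beta^-}$, and all whiskers lie in $B$. For $p\in D_\alpha\cap D_{\beta^-}$ the loop $\xi_p$ therefore stays in $B$ and is trivial. For $p\in D_\alpha\cap D_{\beta^+}$ the path inside $S_\beta$ from the basepoint to $p$ must leave $B$: it runs from $D_{\beta^-}$ across the band $\beta\times J$ into $D_{\beta^+}$. Closing up in $B$, the loop $\xi_p$ is homotopic to an arc in $B$ from $\beta^+$ to $\beta^-$ followed by the short arc $\{x\}\times J$ crossing $F_i$ once. This loop is a meridian of $\tilde F_i$ up to orientation.

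The sign of this meridian is the step where care is needed. It depends on the orientation convention for $J$ (outward normal first, from $-c$ to $c$) and on the order of the two paths in $\xi_p=b_1^{-1}\xi_1\xi_2^{-1}b_2$. The path $\xi_2^{-1}$ traverses the band from $\beta^+$ back to $\beta^-$, that is, in the negative normal direction. This should give $[\xi_p]=t_i^{-1}$, consistent with the stated formula. I would fix this sign by comparing with $\Phi_V$, which sends the meridian of $\tilde F_i$ to $t_i$. We also need the local intersection signs at these points to agree with those in $B$ computed via Lemma~\ref{lemma:AlgebraicIntersectionNumber}. Along the sheet $D_{\beta^+}$ the orientation of $S_\beta$ is transported from $D_{\beta^-}$ through the band, so $D_{\beta^+}$ inherits the orientation opposite to the one with boundary $\beta^+$. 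This accounts for the minus sign in front of $t_i^{-1}$.

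Lemma~\ref{lemma:AlgebraicIntersectionNumber} then gives
\[
\lambda_{W_F}(\tilde T_\alpha,\tilde S_\beta)=\lk(\partial D_\alpha,\beta^-)-t_i^{-1}\lk(\partial D_\alpha,\beta^+).
\]
As in Lemma~\ref{lemma:IntTR}, we replace $\partial D_\alpha$ by $\alpha^-$ using the cylinder $(\alpha\times[-c,0]\star\{i'+c\})\cup(\alpha\times\{-c\}\star[0,i'+c])$. This cylinder is disjoint from $\beta^\pm\subset S^3\star\{0\}$ after a small adjustment, since $\beta^\pm$ lie off $F$. This yields $\lk(\alpha^-,\beta^\mp)$, which we rewrite in the form $\lk(\alpha,\beta^\mp)$ via the standard identity $\lk(x^-,y^\pm)=\lk(x,y^\pm)$ for curves on $F$ away from the clasps.

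Finally, we check the equality with $H_F(\alpha,\beta)$ directly from Construction~\ref{cons:MatrixH}. Both $\alpha$ and $\beta$ avoid the clasps, so for $\beta\subset F_i$ the generalized Seifert forms depend only on the sign in the $i$-th coordinate. The four-term numerator collapses to a multiple of $(t_j-1)$, where $j$ is the other index. The entries of $D$ and $D'$ on the $\alpha$-row and $\beta$-column cancel it, leaving $\lk(\alpha,\beta^-)-t_i^{-1}\lk(\alpha,\beta^+)$. The case $\mu=1$ is the same computation with $H_F=D'(tA^{\T}-A)D$.
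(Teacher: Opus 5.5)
Your overall route is the paper's. You locate $T_\alpha\cap S_\beta$ in $D_\alpha\cap D_{\beta^\pm}$, obtain the group elements $1$ and $t_i^{-1}$, use the orientation that $D_{\beta^+}$ inherits through the band, and apply Lemma~\ref{lemma:AlgebraicIntersectionNumber}. Your check that the four-term numerator of $H_F$ collapses to $(t_j-1)\bigl(t_i\lk(\alpha,\beta^-)-\lk(\alpha,\beta^+)\bigr)$ is also correct.

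The gap is in converting $\lk_{\partial B}(\partial D_\alpha,\beta^-)$ into $\lk(\alpha,\beta^-)$. In Lemma~\ref{lemma:IntTR} this step worked because $\alpha$ and $\delta$ are disjoint. Here $\alpha$ and $\beta$ may lie on the same $F_i$ and intersect; the dual pairs $\alpha_k,\beta_k$ meet once, and this is exactly the case that carries the content of the lemma. Your cylinder $(\alpha\times[-c,0]\star\{i+c\})\cup(\alpha\times\{-c\}\star[0,i+c])$ then ends on $\alpha^-=\alpha\times\{-c\}\star\{0\}$. This curve meets $\beta^-=\beta\times\{-c\}\star\{0\}$, since both lie on $F_i\times\{-c\}$. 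So $\lk(\alpha^-,\beta^-)$ is undefined, and the ``identity'' $\lk(x^-,y^-)=\lk(x,y^-)$ has no meaning when $x\cap y\neq\emptyset$.

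The ``small adjustment'' is not harmless. Pushing $\alpha^-$ off $\beta^-$ to one side or the other gives $\lk(\alpha,\beta^-)$ or $\lk(\alpha,\beta^+)$, and these differ by $\pm Q_F(\alpha,\beta)$. Your text says neither which side to push to nor why the answer is $\lk(\alpha,\beta^-)$. With the wrong choice you would get $(1-t_i^{-1})\lk(\alpha,\beta^+)$, which vanishes at $t_i=1$, whereas the correct entry there is $\pm Q_F(\alpha,\beta)$.

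The repair is short. For the $\beta^-$ term, slide $\partial D_\alpha$ down the other side face, along $(\alpha\times[0,c]\star\{i+c\})\cup(\alpha\times\{c\}\star[0,i+c])\subset\partial B$. This cylinder misses $\beta^-$ and gives $\lk(\alpha^+,\beta^-)=\lk(\alpha,\beta^-)$. Keep your original cylinder for the $\beta^+$ term, where it is fine.

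Alternatively, do as the paper does: keep $\partial D_\alpha$ fixed and push $\beta^\pm$ up along $\beta\times\{\pm c\}\star[0,i+c]$ into the top face $F_i\times J\star\{i+c\}$. There $\alpha\times\{0\}$ and $\beta\times\{\pm c\}$ are separated by the $J$-coordinate and visibly form the pair $(\alpha,\beta^\pm)$. When $\alpha$ and $\beta$ lie on different surfaces they are disjoint, and your argument goes through as written.

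A minor point on $\xi_p$: you first describe it as an arc in $B$ from $\beta^+$ to $\beta^-$ followed by $\{x\}\times J$, which crosses the band positively. The correct direction, which you state afterwards, is from $\beta^+$ to $\beta^-$ through the band, and that is what gives $t_i^{-1}$.
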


\begin{proof}
First note that
\[
T_{\alpha}\cap S_{\beta} = (D_{\alpha} \cup D_m) \cap (D_{\beta^-} \cup (\beta \times J) \cup D_{\beta^+})=(D_{\alpha} \cap D_{\beta^+})\cup(D_{\alpha} \cap D_{\beta^-})\,,
\]
as~$D_m\subset Y_m \times S^1$ is disjoint from~$S_{\beta} \subset V_F$ and~$\beta \times J\subset F_{i} \times J$ is disjoint from~$D_{\alpha} \subset B$.
If a point~$p$ lies in~$D_{\alpha} \cap D_{\beta^-}$, then by our conventions for basepoints and whiskers, the path~$\xi_p$ is entirely contained in~$B$ and hence represents the trivial element in~$\pi_1(W_F,z)$. On the other hand, for~$p \in D_{\alpha} \cap D_{\beta^+}$, the path~$\xi_p$ is constructed as follows: starting at the basepoint~$z$, it follows the whisker of~$T_\alpha$ to~$D_{\alpha}$, proceeds within~$D_{\alpha}$ to~$p$, then traverses the annulus~$\beta \times J$ to reach the basepoint of~$S_{\beta}$ in~$D_{\beta^-}$, and finally returns to~$z$ along the whisker of~$S_{\beta}$.
Hence, this loop always lies within~$B$ except when it crosses the annulus~$\beta \times J$ from~$D_{\beta^+}$ to $D_{\beta^-}$. Since the orientation
of a meridian of~$K_i$ corresponds to the direction from~$F_{i} \times \{-c\} \star \{0\}$ to~$F_{i} \times \{c\} \star \{0\}$ in~$F_{i} \times J \subset S^3 \star \{0\}$, we get~$[\xi_p]=t_i^{-1}$. This is illustrated in Figure~\ref{fig:class-path}.
In summary, we have
\[
\lambda_{W_F}(\tilde T_{\alpha},\tilde S_{\beta}) = \sum_{p\in D_{\alpha} \cap D_{\beta^-}} Q_p(D_{\alpha}, D_{\beta^-}) 
+ t_{i}^{-1}\sum_{p \in D_{\alpha} \cap D_{\beta^+}}  Q_p(D_{\alpha}, D_{\beta^+}).
\]
For~$\alpha\subset F_j$, Lemma~\ref{lemma:AlgebraicIntersectionNumber} together with our convention for the orientation of~$S_\beta$ yield
\[
\sum_{p \in D_{\alpha} \cap D_{\beta^\pm}} Q_p(D_{\alpha}, D_{\beta^\pm}) =\mp \lk(\partial D_{\alpha},\partial D_{\beta^{\pm}})=\mp \lk(\alpha \times \{0\} \star \{j+c\}, \beta^{\pm})\,.
\]
Furthermore, the cylinder~$\beta \times \{\pm c\} \star [0, j+c]$
ensures that the cycles~$\beta^\pm$ and~$\beta \times \{\pm c \} \star \{j+c\}$ are homologous in~$\partial B \setminus (\alpha \times \{0\} \star \{j+c\})$, so
\[
\lk(\alpha \times \{0\} \star \{j+c\}, \beta^{\pm})=\lk(\alpha \times \{0\} \star \{j+c\}, \beta \times \{\pm c\} \star \{j+c\}) = \lk(\alpha, \beta^\pm)\,.
\]
The three equations displayed above imply the first equality in the statement.

The second equality follows from the definition of~$H_F$ if~$\mu=1$,
together with the fact that~$\lk(\alpha,\beta^\varepsilon)$ does not depend on~$\varepsilon_1$ (resp.~$\varepsilon_2$)
for~$\beta\subset F_2$ (resp.~$\beta\subset F_1$) if~$\mu=2$.
\end{proof}

\begin{figure}
\centering
\begin{overpic}[width=12cm]{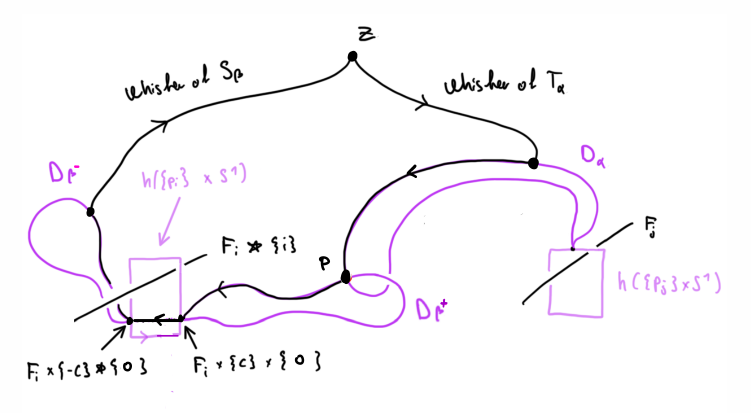}\end{overpic}
\caption{A schematic picture of the path~$\xi_p$ for~$p \in D_{\alpha} \cap D_{\beta^+}$.}
\label{fig:class-path}
\end{figure}

\begin{lemma}
\label{lemma:IntTT}
For each~$\alpha,\alpha'\in\{\alpha_1,\dots,\alpha_g\}$, we have
\[
\lambda_{W_F}(\tilde T_{\alpha},\tilde T_{\alpha'}) = \lk(\alpha, \alpha'^+) = H_F(\alpha,\alpha')\,.
\]
\end{lemma}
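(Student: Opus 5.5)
We follow the template of Lemmas~\ref{lemma:IntTR} and~\ref{lemma:IntRR}, treating the cases $\alpha\neq\alpha'$ and $\alpha=\alpha'$ separately. In both cases we reduce the equivariant intersection to an untwisted count of intersections of discs inside the ball $B$, and then to a linking number.

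\textbf{The case $\alpha\neq\alpha'$.} Write $T_\alpha=D_\alpha\cup D_m$ and $T_{\alpha'}=D_{\alpha'}\cup D_{m'}$. The curves $\alpha$ and $\alpha'$ are disjoint, so their neighbourhoods $\overline{\nu}(\alpha_m)$ and $\overline{\nu}(\alpha_{m'})$ are disjoint and the round handles $Y_m\times S^1$ and $Y_{m'}\times S^1$ do not meet. Moreover $D_\alpha,D_{\alpha'}\subset B$ miss both handles. Hence $T_\alpha\cap T_{\alpha'}=D_\alpha\cap D_{\alpha'}$. For each intersection point $p$, the loop $\xi_p$ of Proposition~\ref{prop:FL1} runs along the whiskers and inside the two discs, so it lies entirely in $B$ and is trivial in $\pi_1(W_F,z)$. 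Proposition~\ref{prop:FL1} and Lemma~\ref{lemma:AlgebraicIntersectionNumber} then give
\[
\lambda_{W_F}(\tilde T_\alpha,\tilde T_{\alpha'})=\lk(\partial D_\alpha,\partial D_{\alpha'}).
\]
To evaluate this linking number, take $\alpha\subset F_i$ and $\alpha'\subset F_j$. Push both boundary curves back to $S^3\star\{0\}$ using cylinders of the form $(\alpha\times[\cdot,0]\star\{i+c\})\cup(\alpha\times\{\pm c\}\star[0,i+c])$, as in the proof of Lemma~\ref{lemma:IntTR}. These cylinders are disjoint, so the linking number equals $\lk(\alpha,\alpha'^{\pm})=\lk(\alpha,\alpha')$. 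This is independent of the sign choices because $\alpha$ and $\alpha'$ are disjoint.

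\textbf{The case $\alpha=\alpha'$.} Here we replace $T_\alpha$ by a parallel copy $T'_\alpha=D'_\alpha\cup D'_m$. Its boundary curve is taken to be $\alpha\times\{s\}\star\{i+c\}$ for small $s>0$, which is a push-off of $\alpha$ in the $J$-direction. The disc $D'_m$ is the corresponding parallel disc $D^2\times\{s'\}\times\{*\}$ in $Y_m\times S^1$, which is disjoint from $D_m$. We also need the two boundary curves to bound parallel discs across the handle, which holds because $h_m$ maps $\partial D^2\times[-1,1]$ onto $\overline{\nu}(\alpha_m)$ inside the surface. The intersection then again reduces to $D_\alpha\cap D'_\alpha\subset B$, with trivial group elements, and so equals $\lk(\alpha\times\{0\}\star\{i+c\},\alpha\times\{s\}\star\{i+c\})$. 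The same cylinder argument identifies this with $\lk(\alpha,\alpha^+)$.

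\textbf{The main obstacle} is precisely this framing bookkeeping in the case $\alpha=\alpha'$. One must confirm that the parallel copy through $Y_m\times S^1$ is compatible with the $J$-direction push-off, so that the self-intersection picks up $\alpha^+$ rather than some other framing. This compatibility is exactly what the explicit parametrisation of $\overline{\nu}(\tilde F)$ was designed to provide.

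\textbf{Comparison with $H_F$.} Finally, we check that the result agrees with the corresponding entry of $H_F$ from Construction~\ref{cons:MatrixH}. On the $\alpha$-block the scaling factors are $D'=1$ and $D=1/(t_1-1)(t_2-1)$. For $\mu=2$, since $\alpha$ and $\alpha'$ avoid the clasps, $\lk(\alpha,\alpha'^{\varepsilon})$ does not depend on $\varepsilon$. The numerator therefore factors as
\[
(t_1t_2-t_1-t_2+1)\,\lk(\alpha,\alpha'^+)=(t_1-1)(t_2-1)\,\lk(\alpha,\alpha'^+),
\]
and the scaling cancels the factor $(t_1-1)(t_2-1)$, giving $H_F(\alpha,\alpha')=\lk(\alpha,\alpha'^+)$. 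For $\mu=1$ the same computation goes through: $tA^{\T}-A$ on this block, multiplied by $1/(t-1)$, gives $\lk(\alpha,\alpha'^+)$ once we use $\lk(\alpha^-,\alpha')=\lk(\alpha,\alpha'^+)$.
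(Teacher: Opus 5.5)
Your route is the paper's. All intersections of $T_\alpha$ with $T_{\alpha'}$ (or with a parallel copy of $T_\alpha$) lie between the discs in $B$, and every loop $\xi_p$ stays in $B$. Proposition~\ref{prop:FL1} and Lemma~\ref{lemma:AlgebraicIntersectionNumber} then reduce everything to $\lk(\partial D_\alpha,\partial D_{\alpha'})$. The comparison with $H_F$ uses that $\lk(\alpha,\alpha'^\varepsilon)$ does not depend on $\varepsilon$.

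For $\alpha\neq\alpha'$ your argument differs slightly from the paper's. You push both boundary curves down to depth $0$, as in Lemma~\ref{lemma:IntTR}. The paper instead reads off curves on the same $F_i$ directly in the slice at depth $i+c$, and for $\alpha\subset F_1$, $\alpha'\subset F_2$ it lowers $\alpha'$ to depth $1+c$. Your version is equally valid, and slightly more transparent: the final linking number is read off in $S^3\setminus\nu(F)$, where $\partial B$ visibly agrees with $S^3$.

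The case $\alpha=\alpha'$ contains a slip, exactly where you locate the main obstacle. On the top face $F_i\times J\star\{i+c\}$ of $\partial\overline{\nu}(\tilde f_i)=f_i\star\partial(J\star[i-c,i+c])$, the $J$-coordinate is the $S^1$-coordinate of $F_i^\circ\times S^1$, not a direction inside $F_i$. Consequences:
\begin{itemize}
\item Your boundary curve $\alpha\times\{s\}\star\{i+c\}$ is the image of $\alpha\times\{y_s\}$, so in $Y_m\times S^1$ it bounds $D^2\times\{0\}\times\{y_s\}$.
\item Your disc $D^2\times\{s'\}\times\{*\}$ instead bounds the in-surface parallel $h_m(\partial D^2\times\{s'\})$ at the original $S^1$-level.
\item So as written, $D'_\alpha$ and $D'_m$ have different boundaries and $T'_\alpha$ does not close up. Your appeal to $h_m$ justifies the wrong direction.
\end{itemize}

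Either choice can be made consistent, and both give the same answer:
\begin{itemize}
\item keep your boundary and take $D'_m=D^2\times\{0\}\times\{y_s\}$; or
\item take the in-surface parallel as $\partial D'_\alpha$. This is what the paper does, using that $\alpha$ lies on an orientable surface.
\end{itemize}
The two push-offs agree in linking number for the following reason. Both have the form $\alpha\times\{q\}$ with $q\neq(0,0)$, inside the neighbourhood $\overline{\nu}(\alpha)\times J\cong\alpha\times[-1,1]\times J$ of $\partial D_\alpha$ in the top face. They are therefore isotopic in $\partial B\setminus\partial D_\alpha$, so both link $\partial D_\alpha$ with linking number $\lk(\alpha,\alpha^+)$. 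For the $J$-push, send one curve down via $-c$ and the other via $+c$ so the cylinders stay disjoint.

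With this correction the argument is complete.
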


\begin{proof}
For~$\alpha\neq \alpha'$, the cycles~$\alpha$ and~$\alpha'$ are disjoint, and one shows~$\lambda_{W_F}(\tilde T_{\alpha},\tilde T_{\alpha'})=\lk(\partial D_{\alpha}, \partial D_{\alpha'})$ as in the proof of Lemma~\ref{lemma:IntTR}.
For~$\alpha=\alpha'$, since~$\alpha$ embeds in an orientable surface,
one can perturb~$T_{\alpha}=D_{\alpha}\cup D_m$ to~$T'_{\alpha}=D'_{\alpha}\cup D'_m$
such that~$\partial D_{\alpha}$ and~$\partial D'_{\alpha}$ are disjoint,
and we get the equality~$\lambda_{W_F}(\tilde T_{\alpha},\tilde T_{\alpha})=\lk(\partial D_{\alpha}, \partial D'_{\alpha})$.

If~$\alpha\neq \alpha'$ but~$\alpha$ and~$\alpha'$ lie on the same surface~$F_i$, then~$\partial D_{\alpha} = \alpha \times \{0\} \star \{i+c\}$ and~$\partial D_{\alpha'} = \alpha' \times \{0\} \star \{i+c\}$ both lie in~$S^3 \star \{i+c\}$, yielding
\[
\lk(\partial D_{\alpha}, \partial D_{\alpha'})= \lk(\alpha, \alpha')= \lk(\alpha, \alpha'^\varepsilon)
\]
for any~$\varepsilon$ since~$\alpha$ and~$\alpha'$ are disjoint. The case~$\alpha=\alpha'$ yields~$\lk(\partial D_{\alpha},\partial D'_{\alpha})= \lk(\alpha, \alpha^\varepsilon)$ for all~$\varepsilon$.
We are left with the case of~$\alpha$ and~$\alpha'$ on different surfaces, say~$\alpha \subset F_1$ and~$\alpha'\subset F_2$. We want to compute
\[
\lk(\partial D_{\alpha}, \partial D_{\alpha'})=\lk(\alpha \times \{0\} \star \{1+c\}, \alpha' \times \{0\} \star \{2+c\})\,.
\]
Since~$\alpha$ and~$\alpha'$ are disjoint, the cylinder
\[
(\alpha' \times [0,c] \star \{2+c\}) \;\cup\; (\alpha' \times \{c\} \star [1+c, 2+c])\;\subset\; \partial B \setminus (\alpha \times \{0\} \star \{1+c\})
\]
ensures that the cycles~$\alpha' \times \{0\} \star \{2+c\}$ and~$\alpha' \times \{c\} \star \{1+c\}$ are homologous in~$\partial B \setminus (\alpha \times \{0\} \star \{1+c\})$. This yields
\[
\lk(\partial D_{\alpha}, \partial D_{\alpha'}) = \lk(\alpha \times \{0\} \star \{1+c\}, \alpha' \times \{c\} \star \{1+c\})=\lk(\alpha, \alpha'^+)\,,
\]
completing the proof of the first equality.

The second equality follows from the definition of~$H_F$ together with the fact that~$\lk(\alpha,\alpha'^\varepsilon)$ does not depend on~$\varepsilon$.
\end{proof}

\begin{lemma}
\label{lemma:IntSS}
For any~$\beta,\beta'\in\{\beta_1,\dots,\beta_g\}$ with~$\beta\subset F_i$ and~$\beta'\subset F_j$, we have
\[
\lambda_{W_F}(\tilde S_{\beta},\tilde S_{\beta'}) = (t_{i}-1) (t_{j}^{-1} \lk(\beta, \beta'^+) - \lk(\beta, \beta'^-)) = H_F(\beta,\beta')\,.
\]
\end{lemma}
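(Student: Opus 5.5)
The plan mirrors the proofs of Lemmas~\ref{lemma:IntTS} and~\ref{lemma:IntRR}. First I would make $S_\beta$ and $S_{\beta'}$ transverse. When $\beta\neq\beta'$ this is automatic. When $\beta=\beta'$, I would replace $S_{\beta'}$ by a parallel copy $S'_{\beta}=D'_{\beta^-}\cup(\beta^\sharp\times J)\cup D'_{\beta^+}$, where $\beta^\sharp$ is a parallel copy of $\beta$ in $F_i$; this is possible since $\beta$ sits in an oriented surface. The annuli $\beta\times J$ and $\beta'\times J$ are then disjoint, because $\beta\cap\beta'=\emptyset$ (good basis, or parallel copies). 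Each annulus also meets the discs of the other sphere nowhere, since those discs lie in $B$. Hence
\[
S_\beta\cap S_{\beta'}=\bigcup_{\epsilon,\epsilon'\in\{\pm\}}D_{\beta^{\epsilon}}\cap D_{\beta'^{\epsilon'}},
\]
and everything reduces to four intersections of pairs of discs in the $4$-ball $B$.

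Next I would compute the group element $[\xi_p]$ attached to each type of point, using Proposition~\ref{prop:FL1}. The basepoints lie in $D_{\beta^-}$ and $D_{\beta'^-}$ and all whiskers lie in $B$. So a loop $\xi_p$ leaves $B$ only when it crosses an annulus to go from a $D_{\beta^+}$ back to the corresponding basepoint disc. As in Lemma~\ref{lemma:IntTS}, crossing $\beta\times J$ from $D_{\beta^+}$ to $D_{\beta^-}$ contributes $t_i^{-1}$ on the $S_\beta$ side. On the $S_{\beta'}$ side the path is traversed in reverse, so it contributes $t_j$. The four types therefore carry the elements $1$ for $(-,-)$, $t_i^{-1}$ for $(+,-)$, $t_j$ for $(-,+)$, and $t_i^{-1}t_j$ for $(+,+)$.

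The signs come from the orientation convention for $S_\beta$: $D_{\beta^-}$ is oriented with boundary $\beta^-$, which forces $\partial D_{\beta^+}=-\beta^+$. Lemma~\ref{lemma:AlgebraicIntersectionNumber} then turns the algebraic counts into linking numbers. This gives
\[
\lambda=\lk(\beta^-,\beta'^-)-t_j\lk(\beta^-,\beta'^+)-t_i^{-1}\lk(\beta^+,\beta'^-)+t_i^{-1}t_j\lk(\beta^+,\beta'^+).
\]
Linking numbers are translation invariant, so $\lk(\beta^\pm,\beta'^\pm)=\lk(\beta,\beta'^{\pm\mp})$ with appropriate sign shifts, e.g.\ $\lk(\beta^+,\beta'^+)=\lk(\beta,\beta')$. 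For $\mu=2$ I would also use that $\beta\subset F_i$ is disjoint from the clasps, so the relevant linking numbers depend only on the $i$-th coordinate of the push-off. After this simplification the expression should collapse to
\[
(t_i-1)\bigl(t_j^{-1}\lk(\beta,\beta'^+)-\lk(\beta,\beta'^-)\bigr),
\]
up to an overall power of $t_i,t_j$ that I must track carefully. That power depends on the direction of the meridian crossing, i.e.\ whether the second sphere contributes $t_j$ or $t_j^{-1}$ through the conjugation $\overline{\cdot}$ in $\lambda$. I expect this sign and power bookkeeping to be the main obstacle, and I would fix it by checking against hermitian symmetry of $\lambda$ in $\beta\leftrightarrow\beta'$. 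The self-intersection case needs separate care, since both sides of the disc-boundary reduction live near the same surface.

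Finally, I would expand $H_F(\beta,\beta')$ from Construction~\ref{cons:MatrixH}. The relevant diagonal entries are $D'=(1-t_i)$ on the row and $D=1/(t_j(t_{\bar j}-1))$ on the column, where $\bar j$ is the other index. I would substitute $\lk(\beta,\beta'^{\varepsilon})$ into $t_1t_2A^{\T}-t_1B^{\T}-t_2B+A$ and use that $\lk(\beta,\beta'^\varepsilon)$ is independent of the coordinate of $\varepsilon$ not indexed by $j$. This lets $(t_{\bar j}-1)$ factor out and cancel, and the result is matched with the expression above. The $\mu=1$ case is the same computation with $A$ and $D,D'$ from part (i) of the construction.
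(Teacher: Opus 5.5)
You have the right overall structure: the reduction to the four disc--disc intersections in $B$, the perturbation when $\beta=\beta'$, the signs $\varepsilon\varepsilon'$ via Lemma~\ref{lemma:AlgebraicIntersectionNumber}, and the expansion of $H_F$ all agree with the paper's proof. However, every group element you attach to the intersection points is inverted, and that bookkeeping is the whole content of this lemma. In Proposition~\ref{prop:FL1} the loop is $\xi_p=b_1^{-1}\cdot\xi_1\cdot\xi_2^{-1}\cdot b_2$. So on the \emph{first} sphere $S_\beta$, the path runs from its basepoint in $D_{\beta^-}$ to $p$. For $p\in D_{\beta^+}$ it therefore crosses $\beta\times J$ from the $-$ side to the $+$ side and contributes $t_i$, exactly as in Lemma~\ref{lemma:IntSR}, where $S_\beta$ is also the first argument. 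On the \emph{second} sphere the path runs from $p$ back to the basepoint, so $p\in D_{\beta'^+}$ contributes $t_j^{-1}$, as in Lemma~\ref{lemma:IntTS}, where $S_\beta$ is the second argument. You borrowed the rule ``from $D_{\beta^+}$ to $D_{\beta^-}$ gives $t_i^{-1}$'' from Lemma~\ref{lemma:IntTS}, but there it applies to the second sphere, not the first. The correct elements are $1$, $t_j^{-1}$, $t_i$, $t_it_j^{-1}$ for $(-,-)$, $(-,+)$, $(+,-)$, $(+,+)$, not $1$, $t_j$, $t_i^{-1}$, $t_i^{-1}t_j$.

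This breaks the final matching step. Since $\beta,\beta'$ are disjoint and avoid the clasps, all four linking numbers equal $L:=\lk(\beta,\beta'^{\pm})$. Your sum then equals $(1-t_i^{-1})(1-t_j)L=t_i^{-1}t_j\,(t_i-1)(t_j^{-1}-1)L=t_i^{-1}t_j\,H_F(\beta,\beta')$, which is $\overline{H_F(\beta,\beta')}$. This agrees with the target when $i=j$. It is off by the unit $t_i^{-1}t_j$ whenever $\beta$ and $\beta'$ lie on different surfaces with $\lk(\beta,\beta')\neq 0$. Checking hermitian symmetry under $\beta\leftrightarrow\beta'$ cannot detect this: inverting every group element replaces $\lambda$ by $\overline{\lambda}$, which is still hermitian. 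The fix is to apply the convention of Proposition~\ref{prop:FL1} literally. The four contributions are then $\lk(\beta,\beta'^-)$, $-t_j^{-1}\lk(\beta,\beta'^+)$, $-t_i\lk(\beta,\beta'^-)$ and $t_it_j^{-1}\lk(\beta,\beta'^+)$. They sum to $(t_i-1)\bigl(t_j^{-1}\lk(\beta,\beta'^+)-\lk(\beta,\beta'^-)\bigr)$, and your expansion of $H_F(\beta,\beta')$ matches this exactly, with no stray power to absorb.
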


\begin{proof}
Let us first assume~$\beta\neq\beta'$. Since~$\beta$ and~$\beta'$ are disjoint, the usual arguments imply
\[
S_{\beta} \cap S_{\beta'} = (D_{\beta^-} \cup (\beta \times J) \cup D_{\beta^+}) \cap (D_{\beta'^-} \cup (\beta' \times J) \cup D_{\beta'^+})=\bigcup_{\varepsilon,\varepsilon\in\{\pm\}}(D_{\beta^\varepsilon}\cap D_{\beta'^{\varepsilon'}})\,,
\]
so Proposition~\ref{prop:FL1} yields
\begin{equation}
\label{eq:SS1}
\lambda_{W_F}(\tilde S_{\beta},\tilde S_{\beta'}) = \sum_{\varepsilon, \varepsilon'\in\{\pm\}} \sum_{p\in D_{\beta^\varepsilon} \cap D_{\beta'^{\varepsilon'}}} [\xi_p]\cdot Q_p(D_{\beta^\varepsilon}, D_{\beta'^{\varepsilon'}})\,.
\end{equation}
For all~$\varepsilon,\varepsilon'\in\{\pm\}$, Lemma~\ref{lemma:AlgebraicIntersectionNumber} implies
\begin{equation}
\label{eq:SS2}
\sum_{p\in D_{\beta^\varepsilon}\cap D_{\beta'^{\varepsilon'}}} Q_p(D_{\beta^\varepsilon}, D_{\beta'^{\varepsilon'}})=\lk(\partial D_{\beta^\varepsilon},\partial D_{\beta'^{\varepsilon'}}) = \varepsilon\varepsilon'\, \lk(\beta^\varepsilon, \beta'^{\varepsilon'})= \varepsilon\varepsilon'\, \lk(\beta, \beta'^{\varepsilon'})\,,
\end{equation}
with the sign~$\varepsilon\varepsilon'$ coming from our orientation conventions.
If~$\beta=\beta'$, we can perturb~$\beta$ to~$\beta'$ disjoint from~$\beta$ and follow the same argument,
so equations~\eqref{eq:SS1} and~\eqref{eq:SS2} still hold.

We now compute the class of~$\xi_p$ for~$p\in D_{\beta^\varepsilon}\cap D_{\beta'^{\varepsilon'}}$, which turns out only to depend on~$\varepsilon,\varepsilon'$. Indeed, if~$\varepsilon=\varepsilon'=-1$, then~$\xi_p$ lies inside~$B$
and we have~$[\xi_p]=1$. By~\eqref{eq:SS1} and~\eqref{eq:SS2}, the corresponding contribution to the equivariant intersection form is~$\lk(\beta,\beta'^-)$.
For~$(\varepsilon,\varepsilon')=(-1,+1)$, the path~$\xi_p$ winds negatively around~$F_{j}$,
yielding a class~$[\xi_p]=t^{-1}_j$ and a total contribution of~$-t_{j}^{-1} \lk(\beta, \beta'^+)$.
Similarly, the contribution of~$(\varepsilon,\varepsilon')=(+1,-1)$ is equal to~$-t_{i} \lk(\beta, \beta'^-)$, while
the contribution of~$(\varepsilon,\varepsilon')=(+1,+1)$ is~$t_{i} t_{j}^{-1} \lk(\beta, \beta'^+)$.
Summing all four contributions shows the first equality in the statement.

The second equality follows from the definition of~$H_F$ together with the fact that~$\lk(\beta,\beta'^{\varepsilon'})$ does not depend on~$\varepsilon'$.
\end{proof}

Our computations of intersections of~$U_\gamma$-spheres will make use of the following lemma.

\begin{lemma}
\label{lemma:Pi1Term}
For a linking curve~$\gamma\in\{\gamma_1,\dots,\gamma_l\}$ and~$p \in D_{\gamma^{\varepsilon}}$, consider a path~$\eta_p$
of the following form: it starts at the basepoint~$z\in B$, follows any path in~$B$ to reach~$p$, proceeds to the basepoint of~$U_\gamma$ along a path contained in~$U_\gamma$, and returns to~$z$ inside~$B$.
Then, its class is given~by
\[
[\eta_p] = t_1^{-\frac{\varepsilon_1+1}{2}} t_2^{-\frac{\varepsilon_2+1}{2}}\in\pi_1(W_F,z)\,.
\]
\end{lemma}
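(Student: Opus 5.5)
The plan is to reduce the computation of $[\eta_p]$ to counting how many times $\eta_p$ crosses the collars $F_1\times J$ and $F_2\times J$ inside $S^3\star\{0\}$. All other parts of $\eta_p$ will be arranged to lie in the ball $B$.

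\textbf{Step 1: discard the parts of $\eta_p$ inside $B$.} Since $B$ is a topological $4$-ball, two paths in $B$ with the same endpoints are homotopic rel endpoints. So the class $[\eta_p]$ depends only on the portion of $\eta_p$ running inside $U_\gamma$ from $p$ to the basepoint in $D_{\gamma^{--}}$, taken modulo pieces lying in $B$. As a consequence, the choice of path to $p$ is irrelevant, and we may first run inside $D_{\gamma^\varepsilon}\subset B$ from $p$ to a point of $\gamma^\varepsilon$. From there we travel along $U_\gamma$ to $\gamma^{--}$, and then inside $D_{\gamma^{--}}\subset B$ to the basepoint.

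\textbf{Step 2: choose the route from $\gamma^\varepsilon$ to $\gamma^{--}$.} The remaining part of $U_\gamma$ lies in $X_L\subset S^3\star\{0\}$ and consists of the bands $B_i^\pm$ and the saddles. We route the path through bands only:
\begin{itemize}
\item if $\varepsilon_1=+$, cross a band $B_1^\pm$, which connects $\gamma^{+\varepsilon_2}$ to $\gamma^{-\varepsilon_2}$ near $F_1$;
\item if $\varepsilon_2=+$, cross a band $B_2^\pm$, which connects $\gamma^{\varepsilon_1+}$ to $\gamma^{\varepsilon_1-}$ near $F_2$.
\end{itemize}
Each crossing of $B_1$ is a short arc in $X_L$ going from the $+$ side of $F_1$ to the $-$ side. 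Close it up with paths in $B$ running from $z$ to its endpoints; these paths may be taken in $S^3\setminus\nu(F)\subset\partial B$.

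\textbf{Step 3: compute the class of each crossing.} Each such band crossing together with its closing paths in $B$ is a loop in $S^3\star\{0\}$. This loop meets $F$ exactly once, passing through $F_1\times J$ from $+c$ to $-c$, and it avoids $F_2$. As in the proof of Lemma~\ref{lemma:IntTS}, the positive meridian direction is from $-c$ to $+c$, so this loop represents $t_1^{-1}$. Equivalently, its linking with $K_1$ is $-1$ and with $K_2$ is $0$, and we evaluate it via the meridional map $\Phi$ using the formula $\varphi(\gamma)=t_1^{\lk(\gamma,K_1)}t_2^{\lk(\gamma,K_2)}$. By the same argument, a crossing of $B_2$ contributes $t_2^{-1}$.

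\textbf{Step 4: assemble.} Multiplying the contributions gives $t_1^{-(\varepsilon_1+1)/2}\,t_2^{-(\varepsilon_2+1)/2}$, since $(\varepsilon_i+1)/2$ equals $1$ exactly when $\varepsilon_i=+$ and $0$ otherwise. The group $\pi_1(W_F)\cong\Z^2$ is abelian, so the order of the crossings does not matter.

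\textbf{Main obstacle.} The delicate point is Step 1, namely checking that the answer is independent of which route through $U_\gamma$ is taken. Given the target formula, a route through a saddle $S_\gamma^i$ instead of the bands must yield the same class. I would verify this by noting that $U_\gamma$ is a sphere, so any two paths in $U_\gamma$ with the same endpoints are homotopic within $U_\gamma$. Hence the class of $\eta_p$ is independent of the path chosen inside $U_\gamma$, and it suffices to carry out the computation along the band route of Step 2. The remaining sign check, that crossing from $+$ to $-$ gives $t_i^{-1}$, follows the orientation conventions fixed in Construction~\ref{cons:parametrizationnu(F)}.
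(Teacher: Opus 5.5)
Your proposal is correct and follows the same approach as the paper: the only non-trivial contribution to $[\eta_p]$ comes from crossing a band $B_i^\pm$ from the $+$ side to the $-$ side of $F_i$ whenever $\varepsilon_i=+1$, and each such crossing gives $t_i^{-1}$. Compared with the paper, you make the routing through bands and the linking-number evaluation of each crossing more explicit. One small precision for your path-independence step: $U_\gamma$ is only an immersed sphere, so two paths with the same endpoints are homotopic only if they lift to the domain sphere (the convention of Proposition~\ref{prop:FL1}); paths that switch sheets at double points of the immersed discs $D_{\gamma^\varepsilon}$ must be excluded.
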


\begin{proof}
The initial and third portions of~$\eta_p$ lie in~$B$, so we only need to understand the contribution of its second part.
This middle portion connects~$p \in D_{\gamma^{\varepsilon}}$ to the basepoint of~$U_\gamma$, which by convention lies in~$D_{\gamma^{--}}$. Therefore, for each~$i$ such that~$\varepsilon_i = +1$, the path~$\eta_p$ crosses a band~$B^{\pm}_i$ from~$F_i\times\{c\}$ to~$F_i\times\{-c\}$. This corresponds to winding once around a meridian of~$K_i$ in the negative direction, yielding a factor~$t_i^{-1}=t_i^{-\frac{\varepsilon_i+1}{2}}$. 
For indices~$i$ with~$\varepsilon_i = -1$, the path~$\eta_p$ does not wind around~$F_i$, contributing a factor~$1 = t_i^{-\frac{\varepsilon_i+1}{2}}$.
\end{proof}

\begin{lemma}
\label{lemma:IntTU}
For each~$\alpha\in\{\alpha_1,\dots,\alpha_g\}$ and~$\gamma\in\{\gamma_1,\dots,\gamma_l\}$, we have
\[
\lambda_{W_F}(\tilde T_{\alpha},\tilde U_{\gamma}) = \sum_{\varepsilon} \varepsilon_1\varepsilon_2\, t_1^{-\frac{\varepsilon_1+1}{2}} t_2^{-\frac{\varepsilon_2+1}{2}} \, \lk(\alpha, \gamma^\varepsilon) = H_F(\alpha, \gamma)\,.
\]
\end{lemma}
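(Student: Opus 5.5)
The plan follows the template of Lemmas~\ref{lemma:IntTS} and~\ref{lemma:IntSS}, with Lemma~\ref{lemma:Pi1Term} supplying the group elements.

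\textbf{Locating the intersections.} Write $\alpha=\alpha_m\subset F_j$ and $T_\alpha=D_\alpha\cup D_m$. The disc $D_m$ lies in $Y_m\times S^1$, which is disjoint from $V_F\supset U_\gamma$. The remaining pieces of $U_\gamma$ are the bands $B_i^\pm$ and the saddles $S^1_\gamma,S^2_\gamma$. These lie in $X_L\subset S^3\star\{0\}$, near $\gamma$ and the two clasps it crosses. On the other hand, $D_\alpha$ is properly immersed in $B$ with boundary $\alpha\times\{0\}\star\{j+c\}$. After a small perturbation of the interior of $D_\alpha$, it misses the part of $\partial B$ at depth $0$. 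Since $\alpha$ is disjoint from $\gamma$ and from the clasps, we conclude
\[
T_\alpha\cap U_\gamma=\bigcup_{\varepsilon\in\{\pm1\}^2}\big(D_\alpha\cap D_{\gamma^\varepsilon}\big).
\]

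\textbf{Group elements and signs.} Take $p\in D_\alpha\cap D_{\gamma^\varepsilon}$. The loop $\xi_p$ from Proposition~\ref{prop:FL1} runs along the whisker of $T_\alpha$ and through $D_\alpha$, both inside $B$. It then travels inside $U_\gamma$ to the basepoint in $D_{\gamma^{--}}$ and returns along the whisker in $B$. This is exactly a path $\eta_p$ as in Lemma~\ref{lemma:Pi1Term}, so
\[
[\xi_p]=t_1^{-\frac{\varepsilon_1+1}{2}}t_2^{-\frac{\varepsilon_2+1}{2}}.
\]
The class depends only on $\varepsilon$, so we may sum the signs $Q_p$ over each $\varepsilon$ separately. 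By Lemma~\ref{lemma:AlgebraicIntersectionNumber}, the sum over $p\in D_\alpha\cap D_{\gamma^\varepsilon}$ equals $\pm\lk(\partial D_\alpha,\partial D_{\gamma^\varepsilon})$. The sign is the orientation that $U_\gamma$ induces on $\gamma^\varepsilon$ relative to the push-off orientation. Starting from $D_{\gamma^{--}}$ with $\partial D_{\gamma^{--}}=\gamma^{--}$, each band $B_i^\pm$ reverses the induced boundary orientation, in the same way the annulus $\beta\times J$ does for $S_\beta$. Hence the sign is $\varepsilon_1\varepsilon_2$. Checking this carefully against Figure~\ref{fig:saddle}, consistently with the saddles, is the step I expect to be the most delicate.

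\textbf{Reducing the linking numbers.} As in Lemma~\ref{lemma:IntTS}, the cylinder $(\alpha\times[-c,0]\star\{j+c\})\cup(\alpha\times\{-c\}\star[0,j+c])$ shows that $\partial D_\alpha$ is homologous to $\alpha^-$ in the complement of $\gamma^\varepsilon$. This uses that $\alpha$ and $\gamma$ are disjoint. It gives $\lk(\partial D_\alpha,\gamma^\varepsilon)=\lk(\alpha,\gamma^\varepsilon)$, and summing over $\varepsilon$ yields the first equality.

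\textbf{Matching $H_F$.} For the second equality, the $(\alpha,\gamma)$ entry of $H_F$ has $D'$-factor $1$ and $D$-factor $(t_1t_2)^{-1}$. So it equals
\[
(t_1t_2)^{-1}\big(t_1t_2\,\alpha^{--}(\alpha,\gamma)-t_1\,\alpha^{-+}(\gamma,\alpha)-t_2\,\alpha^{-+}(\alpha,\gamma)+\alpha^{--}(\gamma,\alpha)\big).
\]
Each generalised Seifert value is rewritten as $\lk(\alpha,\gamma^{\varepsilon})$ for a suitable $\varepsilon$, using the symmetry $\lk(x^\varepsilon,y)=\lk(x,y^{-\varepsilon})$. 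Expanding then gives $\sum_\varepsilon\varepsilon_1\varepsilon_2\,t_1^{-\frac{\varepsilon_1+1}{2}}t_2^{-\frac{\varepsilon_2+1}{2}}\lk(\alpha,\gamma^\varepsilon)$, which is a purely formal check.
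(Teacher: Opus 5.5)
Your argument follows the paper's proof step for step: the same localisation $T_\alpha\cap U_\gamma=\bigcup_\varepsilon D_\alpha\cap D_{\gamma^\varepsilon}$, the same identification of $\xi_p$ with a path $\eta_p$ as in Lemma~\ref{lemma:Pi1Term}, the same sign $\varepsilon_1\varepsilon_2$ from the orientation convention on $U_\gamma$ (your band argument for it is right), and the same cylinder replacing $\partial D_\alpha$ by $\alpha^-$.

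The one error is a transcription slip in your last step. The $(\alpha,\gamma)$ entry of $t_1t_2A^{\T}-t_1B^{\T}-t_2B+A$ is
\[
t_1t_2\,\lk(\gamma^{--},\alpha)-t_1\lk(\gamma^{-+},\alpha)-t_2\lk(\alpha^{-+},\gamma)+\lk(\alpha^{--},\gamma),
\]
but you swapped the two $A$-terms. Expanding your version literally exchanges the coefficients of the $\varepsilon=(+,+)$ and $\varepsilon=(-,-)$ terms relative to the claimed sum, so the check is not ``purely formal'' as you wrote it. There are two ways to repair this:
\begin{enumerate}
\item Use the correct entry; the comparison with the sum is then genuinely formal.
\item Keep your expression, but note that $\lk(\alpha,\gamma^\varepsilon)=\lk(\alpha,\gamma)$ for every $\varepsilon$, because $\alpha$ is disjoint from $\gamma$ and from the clasps. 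Then both expressions equal $(1-t_1^{-1})(1-t_2^{-1})\lk(\alpha,\gamma)$, and you must state this explicitly.
\end{enumerate}
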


\begin{proof}
By construction, we have
\[
T_{\alpha} \cap U_{\gamma} = (D_{\alpha} \cup D_m) \cap \Big( \bigcup_{\varepsilon \in \{\pm1\}^2} D_{\gamma^\varepsilon} \cup \bigcup _{i=1,2} \left(B_i^+ \cup B_i^-\right) \cup S_{\gamma}^1 \cup S_{\gamma}^2 \Big)=D_{\alpha} \cap \Big( \bigcup_{\varepsilon \in \{\pm1\}^2} D_{\gamma^\varepsilon} \Big)\,,
\]
since~$D_m\subset Y_m$ is disjoint from~$U_{\gamma}$ while~$D_{\alpha} \subset B$ meets neither~$B_i^\pm$ nor~$S_{\gamma}^1\cup S_{\gamma}^2$. 
For each intersection point~$p \in D_{\alpha} \cap D_{\gamma^\varepsilon}$, the path~$\xi_p=b_1^{-1}\cdot\xi_1\cdot\xi^{-1}_2\cdot b_2$ from Proposition~\ref{prop:FL1} is such that~$b_1^{-1}\cdot\xi_1\subset B$ and~$b_2\subset B$. By this proposition together with Lemmas~\ref{lemma:Pi1Term} and~\ref{lemma:AlgebraicIntersectionNumber}, the contribution to~$\lambda_{W_F}(\tilde T_{\alpha},\tilde U_{\gamma})$ of each~$\varepsilon \in \{\pm1\}^2$ is equal to 
\[ 
t_1^{-\frac{\varepsilon_1+1}{2}} t_2^{-\frac{\varepsilon_2+1}{2}} \sum_{p \in D_{\alpha} \cap D_{\gamma^\varepsilon}} Q_p(D_{\alpha}, D_{\gamma^\varepsilon})=\varepsilon_1\varepsilon_2\,t_1^{-\frac{\varepsilon_1+1}{2}} t_2^{-\frac{\varepsilon_2+1}{2}}\lk(\partial D_{\alpha},\gamma^\varepsilon)\,,
\]
with the sign coming from our convention for the orientation of~$U_{\gamma}$.
In order to calculate these linking number, recall that  for~$\alpha\subset F_i$, we have~$\partial D_{\alpha}=\alpha \times \{0\} \star \{i+c\}$.

In the case that~$\varepsilon_i =+1$, the cylinders
\[
(\alpha \times [-c,0] \star \{i+c\}) \cup (\alpha \times \{-c\} \star [0,i+c])\subset \partial B\setminus\gamma^\varepsilon
\]
and~$\alpha \times [-c,0] \subset S^3\setminus\gamma^\varepsilon$ show that
\[
\lk(\alpha \times \{0\} \star \{i+c\}, \gamma^\varepsilon) = \lk(\alpha^-, \gamma^\varepsilon)= \lk(\alpha, \gamma^\varepsilon)\,. 
\]
The case~$\varepsilon_i=-1$ is similar and yields~$\lk(\partial D_{\alpha},\gamma^\varepsilon)=\lk(\alpha,\gamma^\varepsilon)$.
In conclusion, we obtain
\[
\lambda_{W_F}(\tilde T_{\alpha},\tilde U_{\gamma}) 
= \sum_{\varepsilon} \varepsilon_1\varepsilon_2\, t_1^{-\frac{\varepsilon_1+1}{2}} t_2^{-\frac{\varepsilon_2+1}{2}} \, \lk(\alpha, \gamma^\varepsilon)
=H_F(\alpha, \gamma)\,.\qedhere
\]
\end{proof}

\begin{lemma}
\label{lemma:IntSU}
For any~$\beta\in\{\beta_1,\dots,\beta_g\}$ and~$\gamma\in\{\gamma_1,\dots,\gamma_l\}$ with~$\beta\subset F_i$,
we have
\[
\lambda_{W_F}(\tilde S_{\beta},\tilde U_{\gamma}) = (1-t_{i}) \sum_{\varepsilon \in \{\pm1\}^2} \varepsilon_1\varepsilon_2 \, t_1^{-\frac{\varepsilon_1 + 1}{2}} t_2^{-\frac{\varepsilon_2 + 1}{2}}  \lk(\beta, \gamma^\varepsilon) = H_F(\beta, \gamma)\,.
\] 
\end{lemma}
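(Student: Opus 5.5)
The plan is to combine the approaches of Lemmas~\ref{lemma:IntTS} and~\ref{lemma:IntTU}. First, I would identify the intersection locus. The piece $\beta\times J\subset F_i\times J$ of $S_\beta$ lies in $S^3\star\{0\}$, while $U_\gamma$ meets $S^3\star\{0\}$ only in the bands $B_i^\pm$ and the saddles $S^1_\gamma,S^2_\gamma$. These sit in $X_L$ near the pushoffs of $\gamma$. Since $\gamma$ is disjoint from $\beta$ in a good basis, I can choose the bands and saddles to avoid $\beta\times J$. Likewise the discs $D_{\beta^\pm}\subset B$ (pushed slightly into the interior) avoid the bands and saddles. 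This gives
\[
S_\beta\cap U_\gamma=\bigcup_{\eta\in\{\pm\}}\ \bigcup_{\varepsilon\in\{\pm1\}^2}\bigl(D_{\beta^\eta}\cap D_{\gamma^\varepsilon}\bigr).
\]

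Next, I would compute the group element attached to each intersection point $p\in D_{\beta^\eta}\cap D_{\gamma^\varepsilon}$. The loop $\xi_p$ of Proposition~\ref{prop:FL1} splits into two parts: a path from $z$ to $p$ inside $S_\beta$, and a path from $p$ back to the basepoint of $U_\gamma$. For the first part:
\begin{itemize}
\item if $\eta=-$, it stays in $B$;
\item if $\eta=+$, it crosses $\beta\times J$ from $D_{\beta^+}$ to $D_{\beta^-}$, which as in Lemma~\ref{lemma:IntTS} contributes $t_i^{\pm1}$ (the sign is fixed below).
\end{itemize}
The second part, traversed in reverse, contributes $\bigl(t_1^{-\frac{\varepsilon_1+1}{2}}t_2^{-\frac{\varepsilon_2+1}{2}}\bigr)^{-1}$ by Lemma~\ref{lemma:Pi1Term}, once the conjugation or involution convention of Proposition~\ref{prop:FL1} is applied. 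The conventions must be matched with those of Lemmas~\ref{lemma:IntTU} and~\ref{lemma:IntSS}, so that the net coefficient is $t_1^{-\frac{\varepsilon_1+1}{2}}t_2^{-\frac{\varepsilon_2+1}{2}}$ for $\eta=-$, and $t_i$ times that for $\eta=+$.

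Then I would compute the signed counts. By Lemma~\ref{lemma:AlgebraicIntersectionNumber}, the count for $D_{\beta^\eta}\cap D_{\gamma^\varepsilon}$ equals $\lk(\partial D_{\beta^\eta},\partial D_{\gamma^\varepsilon})$. With the orientation conventions for $S_\beta$ and $U_\gamma$, this is $-\eta\,\varepsilon_1\varepsilon_2\lk(\beta^\eta,\gamma^\varepsilon)$, and both curves lie in $S^3\star\{0\}$. Because $\beta\subset F_i$ is disjoint from $\gamma$ and from the clasps, $\lk(\beta^\eta,\gamma^\varepsilon)=\lk(\beta,\gamma^\varepsilon)$ for either $\eta$. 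Summing over $\eta$ and $\varepsilon$ gives
\[
(1-t_i)\sum_{\varepsilon}\varepsilon_1\varepsilon_2\,t_1^{-\frac{\varepsilon_1+1}{2}}t_2^{-\frac{\varepsilon_2+1}{2}}\lk(\beta,\gamma^\varepsilon),
\]
which is the first equality.

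For the second equality, I would expand the entry $H_F(\beta,\gamma)$ from Construction~\ref{cons:MatrixH}(ii). Its $D'$ factor is $(1-t_i)$ and its $D$ factor is $1/(t_1t_2)$, applied to $\lk(\beta,\gamma^{--})t_1t_2-t_1\lk(\beta,\gamma^{-+})-t_2\lk(\beta,\gamma^{+-})+\lk(\beta,\gamma^{++})$, up to the transpose conventions for $A$ and $B$. Dividing by $t_1t_2$ gives exactly $\sum_\varepsilon\varepsilon_1\varepsilon_2 t_1^{-\frac{\varepsilon_1+1}{2}}t_2^{-\frac{\varepsilon_2+1}{2}}\lk(\beta,\gamma^{\varepsilon})$, with $\lk(\gamma^{-\varepsilon},\beta)=\lk(\beta,\gamma^{\varepsilon})$ reconciling the argument order. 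The main obstacle is the sign and orientation bookkeeping, namely the direction of the crossing of $\beta\times J$ and the sign $-\eta$. I would fix these by consistency with the already established Lemmas~\ref{lemma:IntTS} and~\ref{lemma:IntSS}, which use the same conventions.
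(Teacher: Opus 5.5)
Your outline follows the paper's proof. The intersection locus is the same: only interior points of $D_{\beta^{\eta}}\cap D_{\gamma^{\varepsilon}}$. The signed counts are the same, namely $-\eta\,\varepsilon_1\varepsilon_2\lk(\beta,\gamma^{\varepsilon})$ from Lemma~\ref{lemma:AlgebraicIntersectionNumber}. Your direct expansion of the $(\beta,\gamma)$ entry of $H_F$ is correct, and it does not need the $\varepsilon$-independence of $\lk(\beta,\gamma^{\varepsilon})$ that the paper invokes. One small correction there: the identity actually used is $\lk(\beta^{-\varepsilon},\gamma)=\lk(\beta,\gamma^{\varepsilon})$, coming from $A_{\beta\gamma}=\lk(\beta^{--},\gamma)$ and $B_{\beta\gamma}=\lk(\beta^{-+},\gamma)$, not $\lk(\gamma^{-\varepsilon},\beta)=\lk(\beta,\gamma^{\varepsilon})$.

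The step that does not hold up as written is the computation of $[\xi_p]$, which is the only equivariant input. Your two stated claims there point the wrong way, and you reach the correct coefficients only by choosing them to match the target formula.

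With $S_\beta$ as the first sphere in Proposition~\ref{prop:FL1}, the loop $\xi_p=b_1^{-1}\cdot\xi_1\cdot\xi_2^{-1}\cdot b_2$ runs as follows:
\begin{itemize}
\item from $z$ through $B$ to the basepoint of $S_\beta$ in $D_{\beta^-}$;
\item inside $S_\beta$ to $p$;
\item inside $U_\gamma$ from $p$ to the basepoint in $D_{\gamma^{--}}$;
\item back to $z$ through $B$.
\end{itemize}

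The $U_\gamma$ portion is traversed in the same direction as in Lemma~\ref{lemma:Pi1Term}, not in reverse. For $\eta=-$, $\xi_p$ is literally a loop of the form considered there, so $[\xi_p]=t_1^{-\frac{\varepsilon_1+1}{2}}t_2^{-\frac{\varepsilon_2+1}{2}}$ with no inversion. The inverse you wrote is what occurs when $U_\gamma$ is the \emph{first} sphere, as in Lemma~\ref{lemma:IntUR}. Proposition~\ref{prop:FL1} contains no conjugation or involution that would turn it back, so your two statements combine to the wrong coefficient.

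Similarly, for $\eta=+$ the path $\xi_1$ runs along $\beta\times J$ from $\beta^-$ to $\beta^+$. This is opposite to Lemma~\ref{lemma:IntTS}, where $S_\beta$ was the second sphere, and it is not ``from $D_{\beta^+}$ to $D_{\beta^-}$'' as you wrote. It goes positively around a meridian of $K_i$ and gives the extra factor $t_i$, exactly as in Lemma~\ref{lemma:IntSR}.

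Fixing these exponents ``by consistency with the established lemmas'' is not a derivation. If you replace it with the direct path description above, your argument is complete.
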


\begin{proof}
Since~$\beta$ and~$\gamma$ are disjoint, the usual arguments lead to
\begin{align*}
S_\beta \cap U_{\gamma} &= (D_{\beta^-} \cup (\beta \times J) \cup D_{\beta^+}) \cap \Big( \bigcup_{\varepsilon \in \{\pm\}^2} D_{\gamma^{\varepsilon}} \cup \bigcup _{i=1,2} \left(B_i^+ \cup B_i^-\right) \cup S_{\gamma}^1 \cup S_{\gamma}^2 \Big)\\
	&=\bigcup_{\varepsilon'\in\{\pm\}}D_{\beta^{\varepsilon'}}\cap\bigcup_{\varepsilon\in \{\pm\}^2} D_{\gamma^{\varepsilon}}\,,
\end{align*}
so Proposition~\ref{prop:FL1} yields
\[
\lambda_{W_F}(\tilde S_{\beta},\tilde U_{\gamma} ) = \sum_{\varepsilon',\varepsilon} \sum_{p\in D_{\beta^{\varepsilon'}} \cap D_{\gamma^\varepsilon}} [\xi_p]\cdot Q_p(D_{\beta^{\varepsilon'}}, D_{\gamma^\varepsilon})\,.
\]
For each~$p \in D_{\beta^-} \cap D_{\gamma^\varepsilon}$, the path~$\xi_p=b_1^{-1}\cdot\xi_1\cdot\xi^{-1}_2\cdot b_2$ from Proposition~\ref{prop:FL1} satisfies~$b_1^{-1}\cdot\xi_1\subset B$ and~$b_2\subset B$. 
By Lemma~\ref{lemma:Pi1Term}, its class is equal to~$[\xi_p] = t_1^{-\frac{\varepsilon_1 + 1}{2}} t_2^{-\frac{\varepsilon_2 + 1}{2}}$. 
For~$p \in D_{\beta^+} \cap D_{\gamma^\varepsilon}$, since the path~$\xi_1$ crosses~$ \beta \times J$ in the positive direction, 
a similar calculation whose details we omit gives~$[\xi_p]=t_i \cdot t_1^{-\frac{\varepsilon_1 + 1}{2}} t_2^{-\frac{\varepsilon_2 + 1}{2}}$.
This leads to
\[
\lambda_{W_F}(\tilde S_{\beta},\tilde U_{\gamma} ) = \sum_{\varepsilon',\varepsilon} \,t_i^{\frac{\varepsilon'+1}{2}} t_1^{-\frac{\varepsilon_1 + 1}{2}} t_2^{-\frac{\varepsilon_2 + 1}{2}}\sum_{p\in D_{\beta^{\varepsilon'}} \cap D_{\gamma^\varepsilon}} Q_p(D_{\beta^{\varepsilon'}}, D_{\gamma^\varepsilon})\,.
\]
Furthermore, Lemma~\ref{lemma:AlgebraicIntersectionNumber} together with our orientation conventions imply
\[
\sum_{p\in D_{\beta^{\varepsilon'}} \cap D_{\gamma^\varepsilon}} Q_p(D_{\beta^{\varepsilon'}}, D_{\gamma^\varepsilon})
=\lk(\partial D_{\beta^{\varepsilon'}},\partial D_{\gamma^\varepsilon})
= -\varepsilon'\varepsilon_1\varepsilon_2\lk(\beta^{\varepsilon'}, \gamma^\varepsilon)
= -\varepsilon'\varepsilon_1\varepsilon_2\lk(\beta, \gamma^\varepsilon)
\]
for all~$\varepsilon'\in\{\pm\}$ and~$\varepsilon\in\{\pm\}^2$,
and we get
\[
\lambda_{W_F}(\tilde S_{\beta},\tilde U_{\gamma}) = (1-t_{i}) \sum_{\varepsilon \in \{\pm1\}^2} \varepsilon_1\varepsilon_2 \, t_1^{-\frac{\varepsilon_1 + 1}{2}} t_2^{-\frac{\varepsilon_2 + 1}{2}}  \lk(\beta, \gamma^\varepsilon)\,.
\]
The final equality follows from the definition of~$H_F$ together with the fact that~$\lk(\beta, \gamma^\varepsilon)$ does not depend on~$\varepsilon$.
\end{proof}

\begin{lemma}
\label{lemma:IntSR}
For each~$\beta\in\{\beta_1,\dots,\beta_g\}$ and~$\delta\in\{\delta_1,\dots,\delta_n\}$ with~$\beta\subset F_{i}$, we have
\[
\lambda_{W_F}(\tilde S_{\beta},\tilde R_{\delta}) = \lk(\beta^-, \delta)- t_{i} \lk(\beta^+, \delta) = H_F(\beta, \delta)\,.
\]
\end{lemma}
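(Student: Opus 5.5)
The plan is to follow the template of Lemmas~\ref{lemma:IntTS} and~\ref{lemma:IntTR}: first determine the set-theoretic intersection $S_\beta\cap R_\delta$, then compute the group element $[\xi_p]$ attached to each intersection point, and finally convert the signed counts into linking numbers via Lemma~\ref{lemma:AlgebraicIntersectionNumber}.

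\textbf{Step 1: the intersection.} We have $S_\beta=D_{\beta^-}\cup(\beta\times J)\cup D_{\beta^+}$ and $R_\delta=D_\delta\cup D_k$. The disc $D_k$ lives in $X_k\cup(Y^1_k\times S^1)\cup(Y^2_k\times S^1)$ together with the rectangle region, and is therefore disjoint from $S_\beta\subset V_F$. The annulus $\beta\times J\subset F_i\times J\star\{0\}$ misses $D_\delta\subset B$. Consequently
\[
S_\beta\cap R_\delta=(D_{\beta^-}\cap D_\delta)\cup(D_{\beta^+}\cap D_\delta),
\]
and both pieces lie inside the ball $B$.

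\textbf{Step 2: the group elements.} Take a point $p\in D_{\beta^-}\cap D_\delta$. Its loop $\xi_p$ stays in $B$, so $[\xi_p]=1$. Now take $p\in D_{\beta^+}\cap D_\delta$. The path inside $S_\beta$ from $p$ back to the basepoint in $D_{\beta^-}$ crosses the annulus $\beta\times J$ from the $+$ side to the $-$ side. In $\xi_p$ this segment is traversed in reverse, namely $\xi_1\cdot\xi_2^{-1}$ with $\xi_1$ running in $R_\delta$ and $\xi_2$ in $S_\beta$ (compare the order in Lemma~\ref{lemma:IntSU}). Hence $\xi_p$ winds positively around $F_i$, and $[\xi_p]=t_i$, in contrast with the $t_i^{-1}$ of Lemma~\ref{lemma:IntTS}, where $S_\beta$ sat in the first slot.

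\textbf{Step 3: linking numbers.} By Lemma~\ref{lemma:AlgebraicIntersectionNumber} and the orientation convention for $S_\beta$ (where $\partial D_{\beta^\pm}=\mp\beta^\pm$ as oriented boundaries), each signed count equals $\mp\lk(\beta^\pm,\partial D_\delta)$. We have $\partial D_\delta=\tilde\delta$, and $\tilde\delta$ is homologous to $\delta^{++}$ via the cylinder $C_\delta$ of~\eqref{eq:cylinder}. Since $\beta$ and $\delta$ are disjoint, $C_\delta$ misses $\beta^\pm\subset S^3\star\{0\}$ away from $C_\delta$. This gives $\lk(\beta^\pm,\tilde\delta)=\lk(\beta^\pm,\delta^{++})=\lk(\beta^\pm,\delta)$, and yields the first equality up to checking signs.

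The main obstacle is this sign bookkeeping. One must check that the $t_i$ term comes with a minus sign and the $t_i^0$ term with a plus sign, which requires reconciling the boundary orientation of $D_{\beta^+}$ with the crossing direction used for $[\xi_p]$, exactly as in Lemma~\ref{lemma:IntSS}. I would fix the signs by comparing with the $(\varepsilon,\varepsilon')=(+,-)$ term of Lemma~\ref{lemma:IntSS}, which has the same structure.

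\textbf{Step 4: comparison with $H_F$.} Finally, expand $H_F(\beta,\delta)$ from Construction~\ref{cons:MatrixH}. The factor $D'$ gives $(1-t_i)$ on the $\beta$ row and $D$ gives $1/((t_1-1)(t_2-1))$ on the $\delta$ column. Because $\beta$ is disjoint from $\delta$ and lies on $F_i$ away from the clasps, $\lk(\beta^{\varepsilon},\delta)$ depends only on $\varepsilon_i$. The four-term combination then factors as $(t_j-1)\bigl(t_i\lk(\beta^+,\delta)-\lk(\beta^-,\delta)\bigr)$, with $j\neq i$, up to the appropriate sign. After cancellation this leaves $\lk(\beta^-,\delta)-t_i\lk(\beta^+,\delta)$ once $\lk(\beta^+,\delta)=\lk(\beta^-,\delta)$ is used where needed. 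This is a routine algebraic check.
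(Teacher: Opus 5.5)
Your argument is correct, and Steps 1, 2 and 4 follow the paper's proof. The difference is in Step 3, the identification $\lk(\beta^\pm,\tilde\delta)=\lk(\beta^\pm,\delta)$, which is where the paper spends most of its effort.

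The paper proceeds in three moves. It slides the arcs of $\tilde\delta$ lying in $F_j\times\{c\}\star\{j+c\}$ to $F_j\times\{0\}\star\{j+c\}$, obtaining a curve $\delta'$. It lifts $\beta^\pm$ to depth $i+c$ along $\beta\times\{\pm c\}\star[0,i+c]$. It then invokes a homeomorphism of triples $(\partial B,\beta\times\{\pm c\}\star\{i+c\},\delta')\cong(S^3,\beta^\pm,\delta)$.

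You instead push $\tilde\delta$ down to $\delta^{++}$ using the cylinder $C_\delta$ of~\eqref{eq:cylinder}, as in Lemmas~\ref{lemma:IntTR} and~\ref{lemma:IntRR}. This is legitimate: $C_\delta$ meets depth $0$ only in $\delta^{++}$, which misses $\beta^\pm$ because $\beta\cap\delta=\emptyset$ and $\beta$ stays away from the clasps. Your route is shorter. Like Lemma~\ref{lemma:IntTR}, it tacitly uses that linking numbers in $\partial B$ and in $S^3$ agree for curves in $S^3\setminus\nu(F)$. The paper's route has the advantage of also working in Lemma~\ref{lemma:IntUR}. There $\gamma$ may cross $\delta$, so some $\gamma^\varepsilon$ meets $\delta^{++}\subset C_\delta$ and the cylinder argument would break down.

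Two small corrections.

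\emph{Step 2.} The slots are reversed. By Proposition~\ref{prop:FL1}, $\xi_1$ runs in the \emph{first} surface, here $S_\beta$. It goes from the basepoint in $D_{\beta^-}$ across $\beta\times J$ to $p\in D_{\beta^+}$, and this is what gives $[\xi_p]=t_i$. In Lemma~\ref{lemma:IntTS}, $S_\beta$ is the \emph{second} argument, which is why $t_i^{-1}$ appears there. Your conclusion is right, but with the labelling as you state it one would recover $t_i^{-1}$.

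\emph{Signs.} The sign bookkeeping you flag as the main obstacle is already settled by your Step 3. The conventions $\partial D_{\beta^-}=\beta^-$ and $\partial D_{\beta^+}=-\beta^+$ directly give $\lk(\beta^-,\tilde\delta)-t_i\lk(\beta^+,\tilde\delta)$. In Step 4, your factorisation $(t_j-1)\bigl(t_i\lk(\beta^+,\delta)-\lk(\beta^-,\delta)\bigr)$, multiplied by $(1-t_i)/((t_1-1)(t_2-1))$, yields $H_F(\beta,\delta)$ exactly. No further identity is needed, although $\lk(\beta^+,\delta)=\lk(\beta^-,\delta)$ does hold here since $\beta\cap\delta=\emptyset$.
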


\begin{proof}
Since~$S_{\beta}\subset V_F$ is disjoint from~$D_{k}$ while~$(\beta \times J) \subset F_{i} \times J$ is disjoint from~$D_{\delta} \subset B$, we have
\[
S_{\beta} \cap R_{\delta} = (D_{\beta^-} \cup (\beta \times J) \cup D_{\beta^+}) \cap (D_{\delta} \cup D_k)=
(D_{\beta^-} \cap D_{\delta}) \; \cup \; (D_{\beta^+} \cap D_{\delta})\,.
\]
For~$p \in D_{\beta^-} \cap D_{\delta}$, the path~$\xi_p$ lies inside~$B$, yielding~$[\xi_p]=1\in\pi_1(W_F,z)$.
For~$p \in D_{\beta^+} \cap D_{\delta}$, the path $\xi_p$ crosses once the annulus~$\beta \times J$ from~$\beta^-$ to~$\beta^+$, yielding~$[\xi_p]=t_i\in \pi_1(W_F,z)$.
Proposition~\ref{prop:FL1} and Lemma~\ref{lemma:AlgebraicIntersectionNumber} then imply the equality
\[
\lambda_{W_F}(\tilde S_{\beta},\tilde R_{\delta}) = \lk(\beta^-,\tilde \delta)-t_{i} \lk(\beta^+,\tilde \delta)\,,
\]
where the sign comes from our convention for the orientation of~$S_{\beta}$. Hence, it remains to check the equality~$\lk(\beta^\pm,\tilde \delta)=\lk(\beta^\pm,\delta)$, where the first linking number is in~$\partial B$ and the second one in~$S^3\star\{0\}$. (Recall that~$\beta^\pm$ belongs to the intersection~$\partial B\cap(S^3\star\{0\})=S^3\setminus\nu(F)$.)

\begin{figure}
\centering
\begin{overpic}[width=15cm]{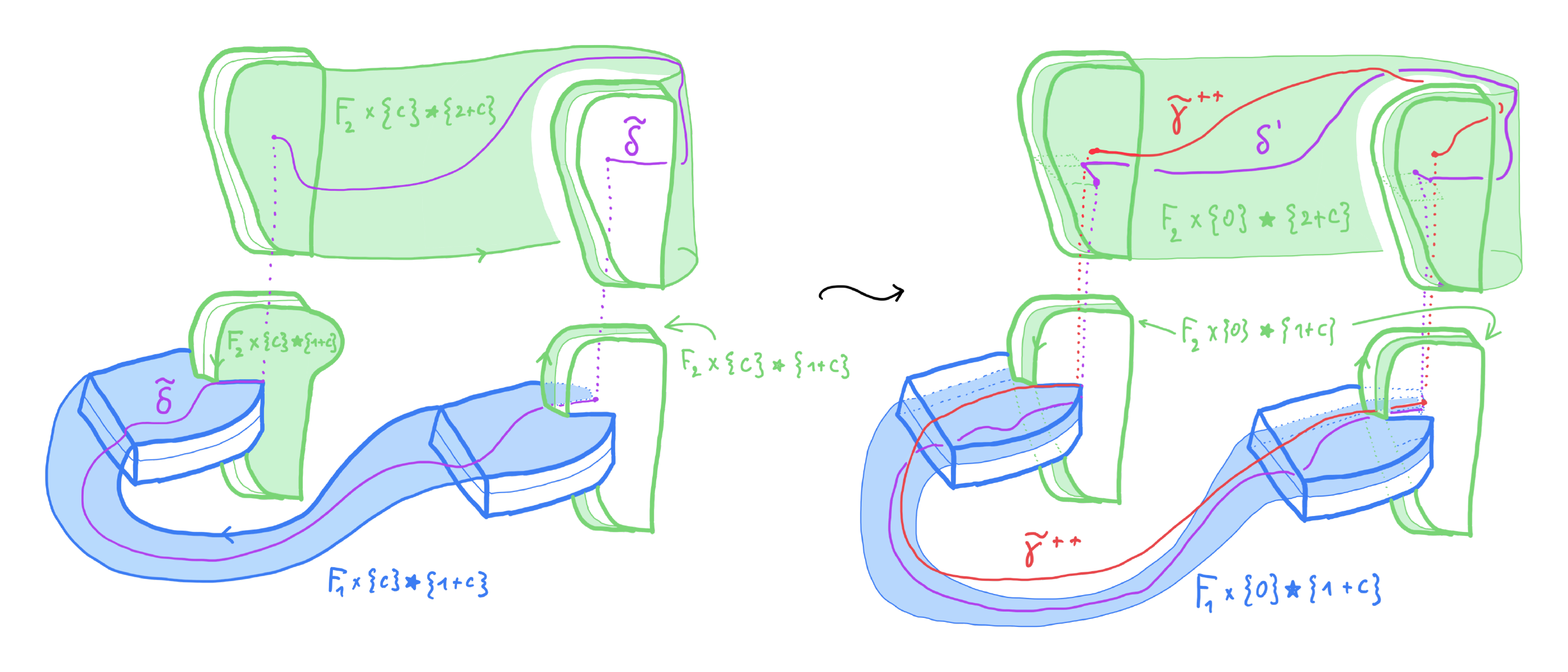}\end{overpic}
\caption{The cycle~$\delta'$ obtained by sliding the arcs contained in~$F_j \times \{c\} \star \{j+c\}$ to~$F_j \times \{0\} \star \{j+c\}$.}
\label{fig:SlidingDeltaUR}
\end{figure}

To check this fact, we slide the arcs~$(\delta^{++} \cap (F_j \times \{c\}))  \star \{j+c\}$ of~$\tilde \delta$ to~$(\delta^{++} \cap (F_j \times \{0\}) ) \star \{j+c\}$ in $\partial B$ for~$j = 1,2$, as illustrated by the purple curve in Figure~\ref{fig:SlidingDeltaUR};
in order to still have a loop, we also move the arcs~$\partial (\delta^{++} \cap (F_1 \times \{c\})) \star [1+c, 2+c]$ to parallel copies in~$(F_1 \times \{0\}) \star [1+c, 2+c]$,
and denote the resulting curve by~$\delta'\subset \partial B$. 
Note that this deformation is performed at depth range~$[1+c,2+c]$, and therefore does not cross the cycle~$\beta^\pm$ which lies at depth~$0$.
We then bring the cycle~$\beta^\pm$ to depth~$i + c$ via the cylinder~$\beta \times \{\pm c\} \star [0,i +c]\subset F_i\times\{\pm c\}\star [0,i+c]\subset\partial B\setminus \delta'$, yielding
\[
\lk(\beta^\pm,\tilde \delta)=\lk(\beta \times \{\pm c\} \star \{i + c\},\delta')\,.
\]
Finally, the triple~$(\partial B,\beta \times \{\pm c\} \star \{i + c\},\delta')$ is homeomorphic to~$(S^3, \beta^\pm,\delta)$, as illustrated in Figure~\ref{fig:HomeoBSR} (left), and the claim follows.

The second equality follows from the definition of~$H_F$ together with the fact that~$\lk(\beta^\varepsilon,\delta)$ only depends on~$\varepsilon_{i}$ for~$\beta\subset F_i$.
\end{proof}

\begin{figure}
\centering
\begin{overpic}[width=0.49\textwidth]{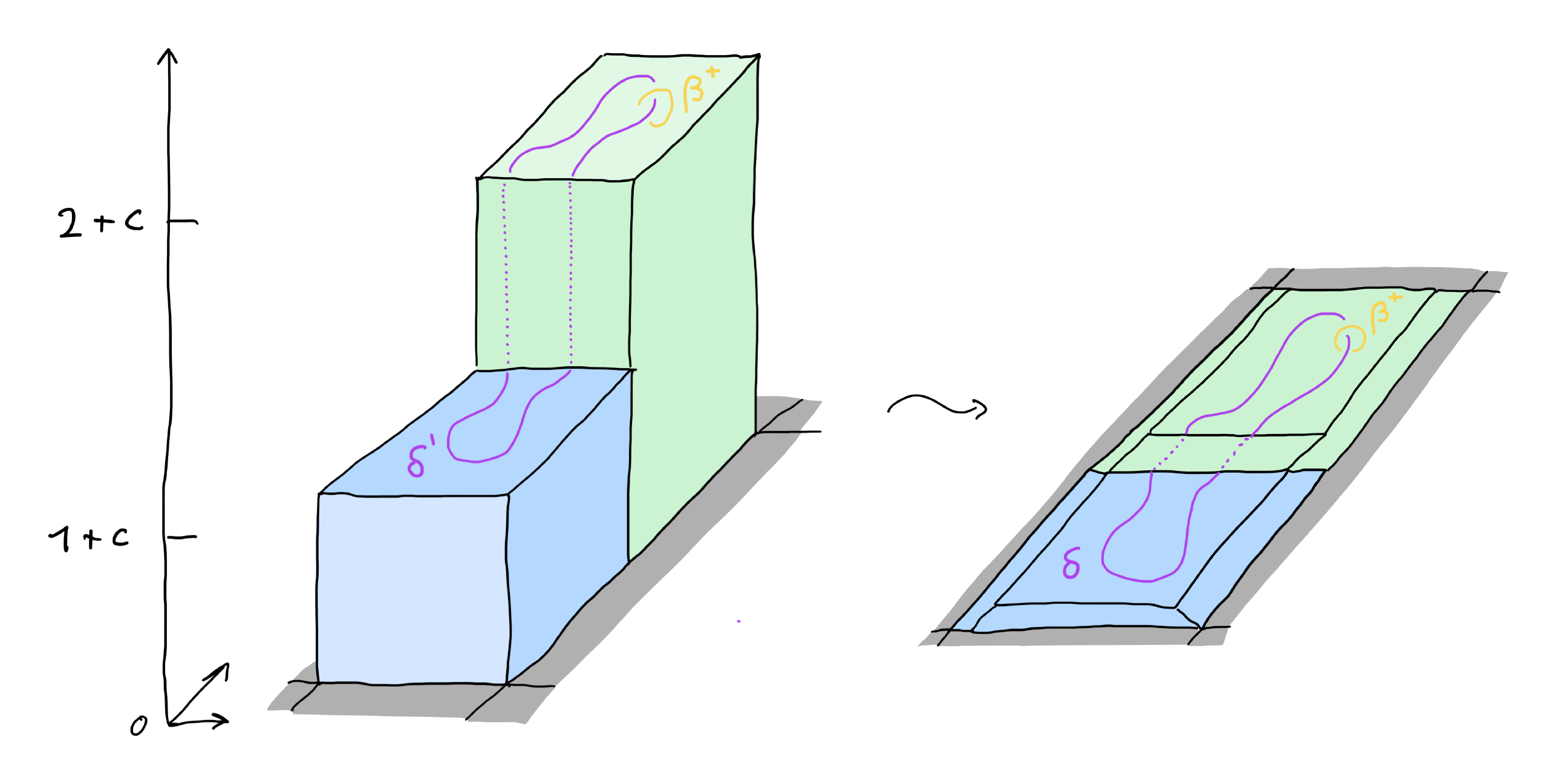}\end{overpic}
\begin{overpic}[width=0.49\textwidth]{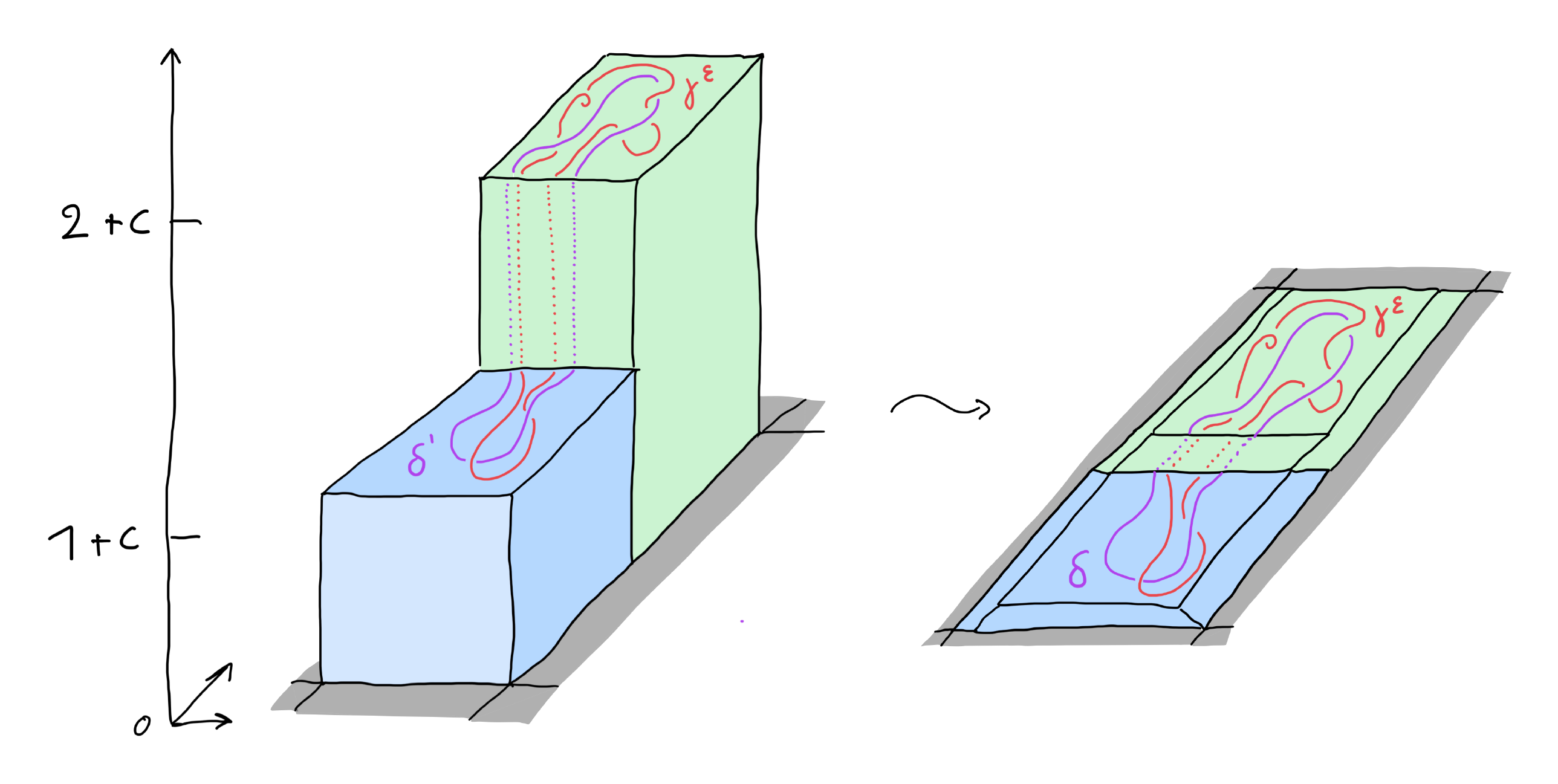}\end{overpic}
\caption{The triples~$(\partial B,\beta \times \{\pm c\} \star \{i + c\},\delta')$ and~$(S^3, \beta^\pm, \delta)$ are homeomorphic, as well as the triples~$(\partial B, \tilde \gamma^\varepsilon, \delta')$ and~$(S^3, \gamma^\varepsilon, \delta)$.}
\label{fig:HomeoBSR}
\end{figure}

\begin{lemma}
\label{lemma:IntUR}
For all~$\gamma\in\{\gamma_1,\dots,\gamma_l\}$ and~$\delta\in\{\delta_1,\dots,\delta_n\}$, we have
\[
\lambda_{W_F}(\tilde U_{\gamma},\tilde R_{\delta}) = \sum_{\varepsilon \in \{\pm1\}^2} \varepsilon_1\varepsilon_2 \, t_1^{\frac{\varepsilon_1 +1}{2}} t_2^{\frac{\varepsilon_2+1}{2}}\, \lk(\gamma^\varepsilon,\delta)= H_F(\gamma, \delta)\,.
\]
\end{lemma}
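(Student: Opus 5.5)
The plan is to follow the template of Lemmas~\ref{lemma:IntSR} and~\ref{lemma:IntTU}. First I will localise the intersection. The sphere $U_\gamma$ lies in $V_F$ and is made of the discs $D_{\gamma^\varepsilon}\subset B$ together with the bands $B_i^\pm$ and saddles $S_\gamma^1,S_\gamma^2$ in $X_L$. The sphere $R_\delta=D_\delta\cup D_k$ has $D_\delta\subset B$, while $D_k$ lives in $X_k\cup(Y_k^1\times S^1)\cup(Y_k^2\times S^1)$ and in a neighbourhood of $\partial\onu(\tilde F)$ at depth $\ge 1-c$.

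The bands and saddles sit at depth $0$ near $F$, away from the clasps traversed by $\delta$; note that $\gamma$ only crosses clasps of sign $\sgn(\ell)$ that are not cancelled. Hence they are disjoint from $R_\delta$, and
\[
U_\gamma\cap R_\delta=\bigcup_{\varepsilon}\bigl(D_{\gamma^\varepsilon}\cap D_\delta\bigr).
\]

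Second, I will compute the group elements. For $p\in D_{\gamma^\varepsilon}\cap D_\delta$, the loop $\xi_p$ of Proposition~\ref{prop:FL1} runs from $z$ into $U_\gamma$ up to $p$, then inside $D_\delta\subset B$ back to $z$. By Lemma~\ref{lemma:Pi1Term} the class of $\eta_p$ (which goes into $U_\gamma$ in the opposite direction) is $t_1^{-\frac{\varepsilon_1+1}{2}}t_2^{-\frac{\varepsilon_2+1}{2}}$. Since the $U_\gamma$ factor is traversed in reverse, $[\xi_p]$ is the inverse, namely $t_1^{\frac{\varepsilon_1+1}{2}}t_2^{\frac{\varepsilon_2+1}{2}}$. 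This accounts for the positive exponents in the statement.

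Third, I will compute the signed counts. By Lemma~\ref{lemma:AlgebraicIntersectionNumber} and the orientation convention for $U_\gamma$, which gives each $D_{\gamma^\varepsilon}$ the sign $\varepsilon_1\varepsilon_2$ relative to $\partial D_{\gamma^\varepsilon}=\gamma^\varepsilon$, the signed count is
\[
\sum_{p\in D_{\gamma^\varepsilon}\cap D_\delta}Q_p=\varepsilon_1\varepsilon_2\,\lk_{\partial B}(\gamma^\varepsilon,\tilde\delta).
\]

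The main step is to show $\lk_{\partial B}(\gamma^\varepsilon,\tilde\delta)=\lk_{S^3}(\gamma^\varepsilon,\delta)$. I would copy the sliding argument of Lemma~\ref{lemma:IntSR}. First slide the depth-$(j+c)$ arcs of $\tilde\delta$ from $F_j\times\{c\}$ to $F_j\times\{0\}$, producing $\delta'$. This takes place at depth $\ge 1+c$, so it does not cross $\gamma^\varepsilon$, which lies at depth $0$. Then push $\gamma^\varepsilon$ up to $\tilde\gamma^\varepsilon$ in a way compatible with the pushed-in complex, using cylinders of the form (arc of $\gamma^\varepsilon$ on the $F_j$ side)$\star[0,j+c]$ joined by vertical segments over the two clasps. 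Finally, invoke the homeomorphism of triples $(\partial B,\tilde\gamma^\varepsilon,\delta')\cong(S^3,\gamma^\varepsilon,\delta)$ depicted in Figure~\ref{fig:HomeoBSR} (right).

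The obstacle is checking that these cylinders avoid $\delta'$. Near the clasps, $\gamma^\varepsilon$ switches between $F_1$ and $F_2$ pushoffs. Since $\gamma$ and $\delta$ pass through disjoint sets of clasps and are disjoint curves, the vertical pieces can be taken away from $\delta'$. Should direct disjointness fail, I would instead compute $\lk(\gamma^\varepsilon,\tilde\delta)$ via a Seifert surface for $\tilde\delta$ built from the cylinder $C_\delta$ of~\eqref{eq:cylinder} capped by a surface for $\delta^{++}$, and count intersections with $\gamma^\varepsilon$ at depth $0$.

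Summing over $\varepsilon$ then gives the first equality. For the second, I will expand $H_F(\gamma,\delta)$ using Construction~\ref{cons:MatrixH}: the relevant $D'$ factor is $(t_1-1)(t_2-1)$ and the $D$ factor is $1/((t_1-1)(t_2-1))$, so they cancel. The middle factor is $t_1t_2\lk(\gamma,\delta^{--})-t_1\lk(\gamma,\delta^{-+})-t_2\lk(\gamma,\delta^{+-})+\lk(\gamma,\delta^{++})$. Here $\lk(\gamma,\delta^{\varepsilon})=\lk(\gamma^{-\varepsilon},\delta)$ by pushoff symmetry, so the coefficient of $t_1^{a}t_2^{b}$ matches the sign $\varepsilon_1\varepsilon_2$ term by term.
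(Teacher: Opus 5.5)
Your outline is the paper's proof essentially step for step: localisation to $\bigcup_\varepsilon D_{\gamma^\varepsilon}\cap D_\delta$, inverting Lemma~\ref{lemma:Pi1Term}, sliding $\tilde\delta$ to $\delta'$, lifting $\gamma^\varepsilon$ to $\tilde\gamma^\varepsilon$, and the homeomorphism of triples. Your check of the second equality (cancellation of $D'$ against $D$, then $\lk(\gamma,\delta^{\varepsilon})=\lk(\gamma^{-\varepsilon},\delta)$) is correct; the paper leaves this implicit.

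The gap is in the step you yourself identify as the obstacle. You justify it by saying that $\gamma$ and $\delta$ are disjoint and pass through disjoint sets of clasps. Earlier you also claim that $\gamma$ avoids cancelled clasps. None of this holds in general. A good basis only requires the $\delta_k$ to be disjoint from each other and from the genus curves, and the paper explicitly notes, just before Lemma~\ref{lemma:IntUU}, that $\gamma$ and $\delta$ may intersect. Moreover, whenever the lemma is not vacuous ($n,l\geq 1$), the $\gamma_k$ must join all $n+|\ell|$ clasps of sign $\sgn(\ell)$ into a tree, because their images have to span the corresponding edge differences in $H_1(\Gamma_F)$. In particular, some $\gamma_k$ pass through the clasps of sign $\sgn(\ell)$ that are traversed by the $\delta_k$. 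In your localisation step this does no harm, since depth alone separates the bands and saddles (depth $0$) from $R_\delta$. In the cylinder step, however, it is your only argument, so as written that step is unproved.

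The missing idea is that disjointness comes from the \emph{normal coordinate}, and this is precisely why one slides $\tilde\delta$ first. After the slide, $\delta'$ lies in $F_j\times\{0\}\star\{j+c\}$, together with vertical arcs in $(F_1\times\{0\})\star[1+c,2+c]$. The cylinder carrying $\gamma^\varepsilon$ to $\tilde\gamma^\varepsilon$ lies in the side faces $F_1\times\{\varepsilon_1 c\}\star[0,1+c]$ and $F_2\times\{\varepsilon_2 c\}\star[0,2+c]$, and in vertical segments over points of $F_1\times\{\varepsilon_1 c\}$. Since $\varepsilon_j c\neq 0$, the two never meet, however $\gamma$ and $\delta$ sit on $F$ and whichever clasps they share. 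Before the slide, by contrast, $\tilde\delta$ sits at normal coordinate $+c$, so the cylinders for $\gamma^\varepsilon$ with $\varepsilon_j=+1$ would hit it over points of $\gamma\cap\delta$ and at shared clasps.

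Your fallback does not avoid this. Capping $C_\delta$ only reduces the computation to comparing $\gamma^\varepsilon$ with $\delta^{++}$ at depth $0$, and these curves actually meet over points of $\gamma\cap\delta\cap F_j$ when $\varepsilon_j=+1$. After perturbing, $\lk(\gamma^\varepsilon,\delta^{++})$ differs from the desired $\lk(\gamma^\varepsilon,\delta)$ by exactly the local contributions at $\gamma\cap\delta$ that the slide is designed to eliminate, and you would have to track these.
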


\begin{proof}
By the usual arguments, all the intersection points lie inside
\[
U_{\gamma}\cap R_\delta=\Big( \bigcup_{\varepsilon \in \{\pm1\}^2} D_{\gamma^\varepsilon} \Big)\cap D_\delta\,.
\]
Since~$D_{\delta}$ is contained in~$B$, Lemma~\ref{lemma:Pi1Term} applied to~$\eta_p=\xi_p^{-1}$ shows that the loop~$\xi_p$ associated to~$p\in D_{\gamma^\varepsilon} \cap D_{\delta}$ has class~$[\xi_p] = t_1^{\frac{\varepsilon_1 +1}{2}} t_2^{\frac{\varepsilon_2+1}{2}}\in\pi_1(W,z)$. Thus, Proposition~\ref{prop:FL1} yields
\[
\lambda_{W_F}(\tilde U_{\gamma},\tilde R_{\delta}) = \sum_{\varepsilon \in \{\pm1\}^2} t_1^{\frac{\varepsilon_1 +1}{2}} t_2^{\frac{\varepsilon_2+1}{2}} \,\lk(\partial D_{\gamma^\varepsilon}, \partial D_{\delta}) = \sum_{\varepsilon \in \{\pm1\}^2} \varepsilon_1\varepsilon_2 \, t_1^{\frac{\varepsilon_1 +1}{2}} t_2^{\frac{\varepsilon_2+1}{2}}\, \lk(\gamma^\varepsilon, \tilde \delta)\,,
\]
and we are left with checking the equality~$\lk(\gamma^\varepsilon, \tilde \delta) = \lk(\gamma^\varepsilon, \delta)$.

We do so in three steps. First, as in the proof of Lemma~\ref{lemma:IntSR}, we move~$\tilde\delta$ to~$\delta'$ inside~$\partial B$ (recall Figure~\ref{fig:SlidingDeltaUR}) without crossing~$\gamma^\varepsilon$ which lies at depth~$0$.
Second, we bring the curve~$\gamma^\varepsilon$ to a curve~$\tilde\gamma^\varepsilon$ at the depth of~$\delta'$ via the cylinder 
\[
\big( (\gamma^\varepsilon \cap (F_1 \times \{\varepsilon_1 c\})) \star [0,1+c] \big) \cup\big( \partial (\gamma^\varepsilon \cap (F_1 \times \{\varepsilon_1 c\})) \star [0,2+c] \big) \cup \big( (\gamma^\varepsilon \cap \overline{(F_2 \times \{\varepsilon_2 c\}) \setminus (F_1 \times J)}) \star [0,2+c]
\]
contained in~$\partial B \setminus \delta'$, see the right of Figure~\ref{fig:SlidingDeltaUR}.
Note that the curves~$\tilde \gamma^\varepsilon$ and~$\delta'$ both decompose into three parts : an arc in~$F_1 \times J \star \{1+c\}$, two vertical arcs in~$(F_1 \cap F_2) \times J^2 \star [1+c, 2+c]$, and an arc in~$F_2 \times J \star \{2+c\}$.
Finally, as in the proof of Lemma~\ref{lemma:IntSR}, the triple~$(\partial B, \tilde \gamma^\varepsilon, \delta')$ is homeomorphic to~$(S^3, \gamma^\varepsilon, \delta)$, see Figure \ref{fig:HomeoBSR} (right). The claim follows.
\end{proof}

The last intersection, between two~$U_\gamma$ spheres, raises a new technical issue.
Indeed, two curves~$\gamma,\gamma' \in \{\gamma_1, \ldots, \gamma_l\}$ might intersect non trivially
away from the clasps, and each such intersection~$x$ yields two intersections at the level of pushoffs.
As these pushoffs are the boundaries of the discs making up the corresponding spheres, these discs do not have disjoint boundaries, making Lemma~\ref{lemma:AlgebraicIntersectionNumber} inapplicable.  (Note that even though curves~$\gamma$ and~$\delta$ can intersect, the relevant discs are bounding
disjoint curves~$\gamma^\varepsilon$ and~$\tilde\delta$, so this issue does not occur in Lemma~\ref{lemma:IntUR}.)

To circumvent this issue, we isotope the spheres as follows: for every~$x\in\gamma\cap\gamma'$ as above, we first shrink~$\gamma\times J$ along the arc~$x \times J$, and then push a small neighborhood of the interior of this arc into~$B^4$ (see Figure~\ref{fig:Shrinking}).
This deformation slightly perturbs the discs~$D_{\gamma^\varepsilon}$, whose boundary is now disjoint from~$\partial D_{\gamma'^\varepsilon}$. 
Via this move, the intersection~$x \times J$ between the bands~$\gamma\times J$ and~$\gamma'\times J$ is replaced by two intersection points between the boundary of the shrinked band and the interior of the other one. We will refer to this process as \emph{shrinking}.

\begin{figure}[h]
\centering
\begin{overpic}[width=13cm]{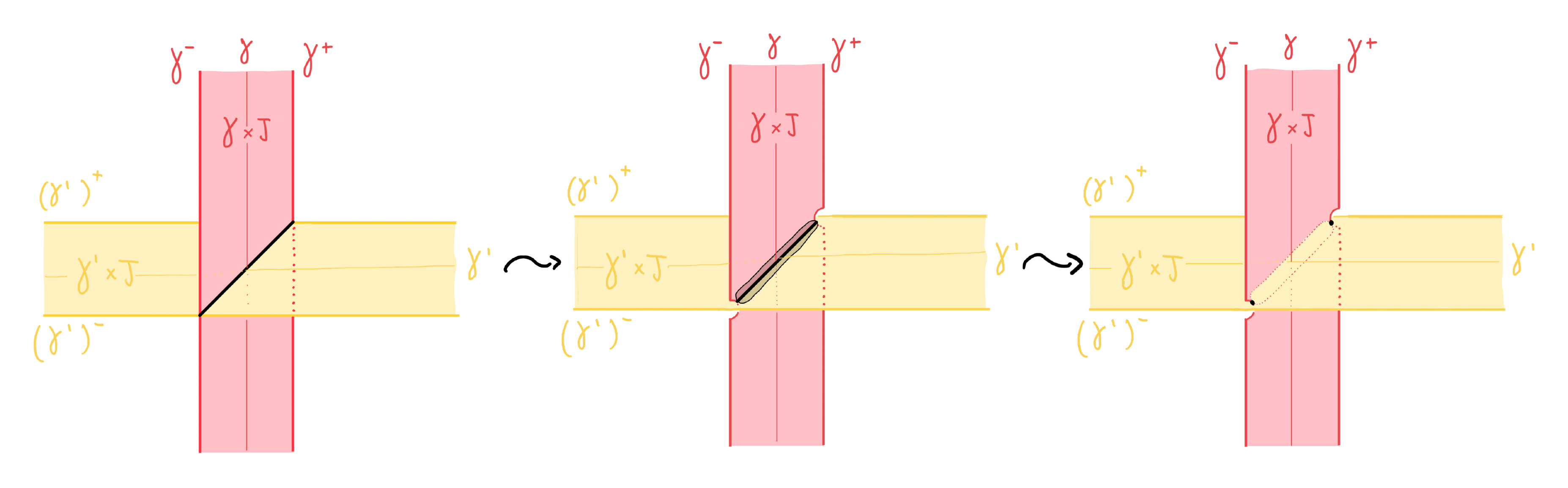}\end{overpic}
\caption{The process of \emph{shrinking}: an intersection arc between two bands (left) is first shrunk (middle),
and a neighborhood of the arc is pushed inside~$B^4$ resulting in two intersection points (right).}
\label{fig:Shrinking}
\end{figure}

\begin{lemma}
\label{lemma:IntUU}
For any~$\gamma,\gamma'\in\{\gamma_1,\dots,\gamma_l\}$, we have
\[
\lambda_{W_F}(\tilde U_{\gamma},\tilde U_{\gamma'})= \sum_{\varepsilon \in \{\pm1\}^2}{(t_1^{-\varepsilon_1}-1)(t_2^{-\varepsilon_2}-1) \lk(\gamma, \gamma'^\varepsilon)} = H_F(\gamma, \gamma')\,.
\]
\end{lemma}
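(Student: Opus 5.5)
The plan follows the template of Lemmas~\ref{lemma:IntSS} and~\ref{lemma:IntUR}, after first using shrinking to put the two spheres in general position. Concretely, I would replace $U_{\gamma'}$ by a parallel copy $U'_{\gamma'}$, built from pushoffs of a slightly displaced copy of $\gamma'$, with new bands and saddles. I would then apply shrinking at every point $x\in\gamma\cap\gamma'$ away from the clasps, so that all discs $D_{\gamma^\varepsilon}$ and $D_{\gamma'^{\varepsilon'}}$ have pairwise disjoint boundaries. The same move also handles $\gamma=\gamma'$, using a pushoff that meets $\gamma$ transversally. Near the clasps, the saddles $S^1_\gamma,S^2_\gamma$ and $S^1_{\gamma'},S^2_{\gamma'}$ can be made disjoint by a small displacement in $X_L$, since the two curves traverse the clasp region along parallel arcs. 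After these choices, the intersection $U_\gamma\cap U'_{\gamma'}$ consists of two kinds of points: points in $D_{\gamma^\varepsilon}\cap D_{\gamma'^{\varepsilon'}}\subset B$, and the pairs of points created by shrinking, which lie near the bands $B_i^\pm$ at the arcs $x\times J$.

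\textbf{Step 1: disc--disc intersections.} For $p\in D_{\gamma^\varepsilon}\cap D_{\gamma'^{\varepsilon'}}$, I would split $\xi_p$ as the inverse of an $\eta_p$-path for $U_\gamma$ concatenated with an $\eta_p$-path for $U'_{\gamma'}$. Lemma~\ref{lemma:Pi1Term} then gives
\[
[\xi_p]=t_1^{\frac{\varepsilon_1+1}{2}}t_2^{\frac{\varepsilon_2+1}{2}}\,t_1^{-\frac{\varepsilon'_1+1}{2}}t_2^{-\frac{\varepsilon'_2+1}{2}}.
\]
By Lemma~\ref{lemma:AlgebraicIntersectionNumber} and the orientation conventions, the signed count is $\varepsilon_1\varepsilon_2\varepsilon'_1\varepsilon'_2\lk(\gamma^\varepsilon,\gamma'^{\varepsilon'})$. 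Here I take the linking numbers after shrinking, which differ from the naive ones by local corrections at the points of $\gamma\cap\gamma'$.

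\textbf{Step 2: shrinking corrections.} Each $x\in\gamma\cap\gamma'$ contributes two extra intersection points on the bands. Their group elements differ by the meridian of the relevant $F_i$, and their sign is $\pm Q_F$-type local data. I would check that these contributions cancel exactly the discrepancy between the shrunken linking numbers and the genuine ones $\lk(\gamma^\varepsilon,\gamma'^{\varepsilon'})$. The expected mechanism is the standard identity $\lk(\gamma^{\varepsilon},\gamma'^{\varepsilon'})-\lk(\gamma^{\eta},\gamma'^{\varepsilon'})=$ (local intersections in $F_i$) when $\varepsilon,\eta$ differ in one coordinate.

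\textbf{Step 3: algebra.} Summing over $\varepsilon,\varepsilon'$ gives a double sum. I would rewrite it using $\lk(\gamma^\varepsilon,\gamma'^{\varepsilon'})=\lk(\gamma,\gamma'^{\varepsilon'\cdot(-\varepsilon)})$-type relations: moving both pushoffs simultaneously preserves linking numbers, so only the relative pushoff direction matters. Collecting terms by the relative direction should produce
\[
\sum_\varepsilon(t_1^{-\varepsilon_1}-1)(t_2^{-\varepsilon_2}-1)\lk(\gamma,\gamma'^\varepsilon).
\]
The second equality should then follow by expanding $H_F(\gamma,\gamma')=\frac{1}{t_1t_2}(t_1t_2A^{\T}-t_1B^{\T}-t_2B+A)(t_1-1)(t_2-1)$. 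One writes the four generalised Seifert forms as $\lk(\gamma^{\pm\pm},\gamma')$, uses $\lk(\gamma^\varepsilon,\gamma')=\lk(\gamma,\gamma'^{-\varepsilon})$, and compares coefficients of each $\lk(\gamma,\gamma'^\varepsilon)$.

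\textbf{Main obstacle.} I expect the main obstacle to be Step 2, together with the bookkeeping at the clasps. The group elements and signs of the shrinking points, and the possible contributions of saddle or band intersections near clasps, must combine so that exactly the genuine linking numbers appear. My first attempt would be a local model computation in a neighbourhood of one intersection point $x$, comparing the four pushoffs before and after shrinking.
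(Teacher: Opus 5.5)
Your Step~1 is correct and matches the paper: two applications of Lemma~\ref{lemma:Pi1Term} give $[\xi_p]=t_1^{(\varepsilon_1-\varepsilon'_1)/2}t_2^{(\varepsilon_2-\varepsilon'_2)/2}$. Your plan for the second equality, expanding $H_F$ and using $\lk(\gamma^\varepsilon,\gamma')=\lk(\gamma,\gamma'^{-\varepsilon})$, is also fine. The gap is in Steps~2 and~3: they rest on a wrong expectation and miss the observation that makes the computation collapse.

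First, the two points $p^\pm$ created by shrinking at an arc $x\times J$ do \emph{not} have group elements differing by a meridian. They are joined by a short arc across the shrunk band of $\gamma$ and by a short arc across the band of $\gamma'$, both near $x\times J$ at small depth. The resulting loop lies in a small ball, so $[\xi_{p^+}]=[\xi_{p^-}]$. The two points also have opposite signs, so each pair contributes zero and there is no $Q_F$-type correction to produce.

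Second, the ``genuine'' numbers $\lk(\gamma^\varepsilon,\gamma'^{\varepsilon'})$ that Step~2 tries to reconcile with are not defined. If $x\in\gamma\cap\gamma'$ lies on $F_i$ and $\varepsilon_i=\varepsilon'_i$, the two pushoffs meet over $x$, which is precisely why shrinking is needed. The same defect breaks Step~3: when $\varepsilon_i=\varepsilon'_i$ there is no relative pushoff direction. Moreover, the relation $\lk(\gamma^\varepsilon,\gamma'^{\varepsilon'})=\lk(\gamma,\gamma'^{-\varepsilon\varepsilon'})$ is false. Inserting it into your Step~1 sum gives coefficients $2$ and $-(t_i+t_i^{-1})$ instead of $t_i^{\pm 1}-1$.

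The missing idea is the following. After shrinking the band of $\gamma$, half of the shrunk band is an isotopy from $\partial D_{\gamma^\varepsilon}$ to $\gamma$ \emph{itself} in $S^3\setminus\gamma'^{\varepsilon'}$. Hence
\[
\lk(\partial D_{\gamma^\varepsilon},\partial D_{\gamma'^{\varepsilon'}})=\varepsilon_1\varepsilon_2\varepsilon'_1\varepsilon'_2\,\lk(\gamma,\gamma'^{\varepsilon'}),
\]
a well-defined quantity independent of $\varepsilon$. The sum over $\varepsilon$ then factors directly:
\[
\sum_{\varepsilon}\varepsilon_1\varepsilon_2\varepsilon'_1\varepsilon'_2\,t_1^{\frac{\varepsilon_1-\varepsilon'_1}{2}}t_2^{\frac{\varepsilon_2-\varepsilon'_2}{2}}=(t_1^{-\varepsilon'_1}-1)(t_2^{-\varepsilon'_2}-1).
\]
This yields the first equality with no re-indexing by relative directions and no correction terms.
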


\begin{proof}
By construction, the spheres~$U_{\gamma}$ and~$U_{\gamma'}$ are made of saddles, bands, and discs,
and one readily checks that intersections can only occur between pairs of elements of the same type, and at their boundary~$\gamma^\varepsilon,\gamma'^{\varepsilon'}$.

\begin{claim*}
The only non-trivial contributions come from intersections of disc interiors.
\end{claim*}
To prove this claim,  first note that since the two saddles (say, of~$\gamma$) are oriented, they can be slightly pushed in the direction of their
positive normal vector in~$S^3 \star \{0\}$, as shown in Figure~\ref{fig:DisjointSaddles}. This ensures that
the saddles of~$\gamma$ are disjoint from those of~$\gamma'$.
\begin{figure}[h]
\centering
\begin{overpic}[width=5cm]{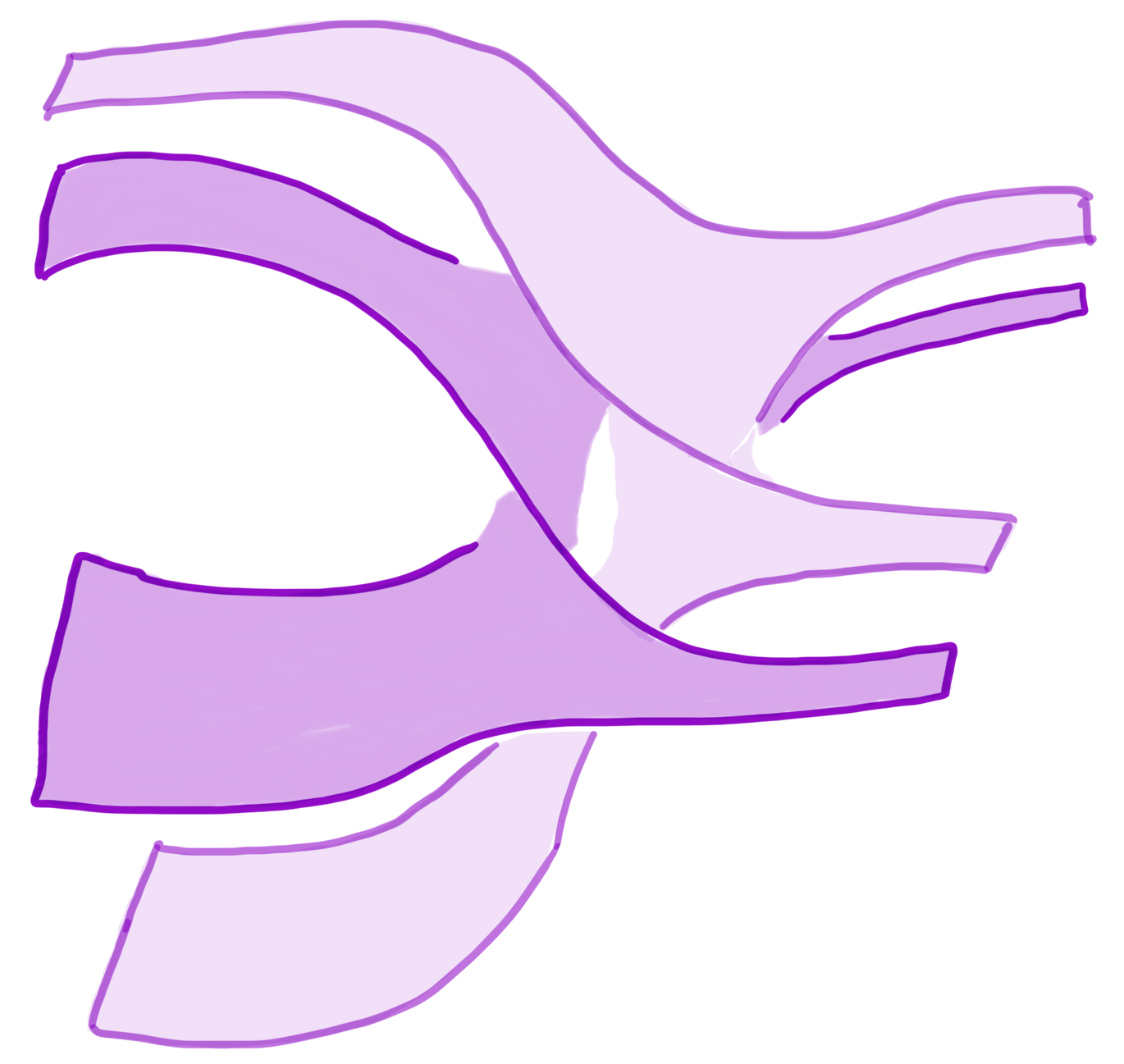}\end{overpic}
\caption{After a small isotopy, the saddles can be made disjoint.}
\label{fig:DisjointSaddles}
\end{figure}

As for the intersections of bands, they consist of arcs of the form~$x\times J$ with~$x\in\gamma\cap\gamma'$
away from the clasps.
Via shrinking, these can be traded for pairs of intersection points in the interior of~$\gamma'\times J$.
Note that shrinking also ensures that the boundary curves~$\gamma^\varepsilon$ and~$\gamma'^{\varepsilon'}$ are disjoint.

We now assert that the contribution of such a pair of points to the equivariant intersection is trivial.
As the two points~$p^+,p^-$ clearly contribute with opposite signs, we only need to check that~$[\xi_{p^+}]=[\xi_{p^-}]$. To do so, we assume without loss of generality~$x\in F_1$, fix arbitrary~$\varepsilon_2,\varepsilon'_2\in\{\pm\}$,
and consider
\[
p^+ \in \partial D_{\gamma^{+,\varepsilon_2}} \cap (\gamma'^{+, \varepsilon_2'} \times J), \quad p^- \in \partial D_{\gamma^{-,\varepsilon_2}} \cap (\gamma'^{-, \varepsilon_2'} \times J)\,.
\]
The only portion of the loop~$\xi_{p^\pm}$ which contributes non-trivially to its homotopy class is the part linking the basepoint of~$U_\gamma$ (which we can assume to be in~$\gamma^{--}$) to the basepoint of~$U_{\gamma'}$ (in~$\gamma'^{--}$) via the point~$p^{\pm}$. This readily yields~$[\xi_{p^\pm}] = t_2^{\frac{\varepsilon_2-\varepsilon_2'}{2}}$, proving the assertion and therefore the claim.

Two applications of Lemma~\ref{lemma:Pi1Term} show that for all~$p\in D_{\gamma^\varepsilon} \cap D_{\gamma'^{\varepsilon'}}$, we have~$[\xi_p] = t_1^{\frac{\varepsilon_1 - \varepsilon_1'}{2}} t_2^{\frac{\varepsilon_2 - \varepsilon_2'}{2}}$.
Proposition~\ref{prop:FL1} and Lemma~\ref{lemma:AlgebraicIntersectionNumber} then yield
\[
\lambda_{W_F}(\tilde U_{\gamma},\tilde U_{\gamma'}) = \sum_{\varepsilon,\varepsilon'} \sum_{p \in D_{\gamma^\varepsilon} \cap D_{\gamma'^{\varepsilon'}}} [\xi_p] \,Q_p(D_{\gamma}^\varepsilon, D_{\gamma'^{\varepsilon'}})
=\sum_{\varepsilon,\varepsilon'}t_1^{\frac{\varepsilon_1 - \varepsilon_1'}{2}} t_2^{\frac{\varepsilon_2 - \varepsilon_2'}{2}}\lk(\partial D_{\gamma^\varepsilon},\partial D_{\gamma'^{\varepsilon'}})\,.
\]
Finally, since~$\gamma \times J$ is shrunk at each intersection point, half of the resulting portion of the band gives an isotopy between~$\partial D_{\gamma^\varepsilon}$ and~$\gamma$ in~$S^3 \setminus \gamma'^{\varepsilon'}$. Together with our orientation conventions, this yields
\[
\lk(\partial D_{\gamma^\varepsilon},\partial D_{\gamma'^{\varepsilon'}}) = \varepsilon_1\varepsilon_2\varepsilon_1'\varepsilon_2' \, \lk(\gamma,\gamma'^{\varepsilon'})\,.
\]
The first equality in the statement now follows from the identity
\[
\sum_{\varepsilon} \varepsilon_1\varepsilon_2 \varepsilon_1'\varepsilon_2'\, t_1^{\frac{\varepsilon_1 - \varepsilon_1'}{2}} t_2^{\frac{\varepsilon_2 - \varepsilon_2'}{2}}=(t_1^{-\varepsilon'_1}-1)(t_2^{-\varepsilon'_2}-1)\,,
\]
and the second equality from the definition of~$H_F$.
\end{proof}

\subsection{Proof of Theorem~\ref{thm:ExistenceOfW}~(3)}
\label{sub:EquivIntForm}

In this section, we use the computations from Section~\ref{sub:CalculInt} to check that~$\mathcal{C}$ forms a basis of~$H_2(W_F;\Lambda)$, and to prove the third item of Theorem~\ref{thm:ExistenceOfW}.
This relies on the following elementary but crucial result; see e.g.\cite[Lemma 2.4]{JuhaszPowell} or~\cite{FL26}.
We refer to the upcoming article~\cite{FL26} for the easy proof.

\begin{lemma}
\label{lemma:FL2}
Let~$R$ be a commutative ring with involution, and let~$\lambda \colon R^n \times R^n \to R$ be a non-degenerate hermitian form represented by a matrix~$A$. Fix~$\{v_1, \ldots, v_n\} \subset R^n$ and define the matrix~$H$ by~$H_{ij} := \lambda(v_i,v_j)$. If~$\det(H) = u \cdot \det(A)$ for some unit~$u \in R^*$, then~$\{v_1, \ldots, v_n\}$ forms a basis of~$R^n$, in which~$H$ represents $\lambda$.\qed
\end{lemma}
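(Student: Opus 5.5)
The proof is pure linear algebra over $R$. Write $V$ for the $n\times n$ matrix whose $i$-th column is $v_i$, expressed in the standard basis of $R^n$ in which $\lambda$ is represented by $A$. The first step is to record the change-of-basis identity
\[
H = V^{\T} A \overline{V},
\]
up to the transposition and conjugation conventions for hermitian forms fixed in the paper (with $\lambda(x,y)=x^{\T}A\overline{y}$). This follows directly from sesquilinearity: $H_{ij}=\lambda(v_i,v_j)=v_i^{\T}A\overline{v_j}$.

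The second step is to take determinants. This gives $\det(H)=\det(V)\,\overline{\det(V)}\,\det(A)$. Comparing with the hypothesis $\det(H)=u\det(A)$ yields
\[
\big(\det(V)\overline{\det(V)}-u\big)\det(A)=0 .
\]
Nondegeneracy of $\lambda$ means the adjoint $R^n\to\overline{\Hom(R^n,R)}$ is injective, so $A$ is injective as a map $R^n\to R^n$. By McCoy's theorem, $\det(A)$ is then a non-zero-divisor in $R$, and we may cancel it. This cancellation is the one point requiring care: one needs nondegeneracy in this sense, not merely $\det(A)\neq 0$. In the paper's application $R=\Lambda$ is a domain, so only $\det(A)\neq0$ is needed there.

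After cancelling, $\det(V)\overline{\det(V)}=u$ is a unit, so $\det(V)$ is a unit. Hence $V$ is invertible over the commutative ring $R$ (its inverse is $\det(V)^{-1}\operatorname{adj}(V)$), and therefore $\{v_1,\dots,v_n\}$ is a basis of $R^n$. Finally, the matrix of $\lambda$ in this basis is by definition $(\lambda(v_i,v_j))_{ij}=H$, which completes the proof.
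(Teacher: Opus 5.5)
Your proof is correct. The change-of-basis identity $H=V^{\T}A\overline{V}$, the determinant comparison, cancelling $\det(A)$ as a non-zero-divisor (which is what nondegeneracy gives via McCoy's theorem, and is automatic from $\det(A)\neq 0$ over the domain $\Lambda$), and inverting $V$ via the adjugate is the standard argument. The paper does not prove the lemma itself but defers this ``easy proof'' to Feller--Lewark and Juh\'asz--Powell, and your argument is presumably the one intended there.
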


We now prove the third item of Theorem~\ref{thm:ExistenceOfW}.

\begin{proof}[Proof of Theorem~\ref{thm:ExistenceOfW}~(3)]
Let~$F$ be a nice~C-complex for a~$\mu$-component link~$L$ with~$\mu\le 2$ and~$\Delta_L\neq 0$, and let~$W_F$ be the~$4$-manifold constructed in the proof of the first item.
First note that the ranks of the~$\Lambda$-module~$H_2(V_F;\Lambda)$ and of the~$\Z$-module~$H_1(F)$ coincide:
this can be checked using an Euler characteristic argument, and is also a direct consequence of~\cite[Proposition~4.1]{CFT18}. 
By Lemma~\ref{lem:V-WoverQ}, the cardinality of~$\mathcal C$ hence coincides with the rank of~$H_2(W_F;\Lambda)$.
By Lemmas~\ref{lemma:IntTR}--\ref{lemma:IntUU} together with the fact that~$\lambda_{W_F}$ is hermitian, the matrix~$H_F$ satisfies~$H_F(\Sigma_1,\Sigma_2) = \lambda_{W_F}(\tilde \Sigma_1,\tilde \Sigma_2)$ for all~$\Sigma_1,\Sigma_2\in \mathcal C$.
Thus, by Lemma~\ref{lemma:FL2}, it suffices to verify that~$\det(H_F)$ equals the determinant of a matrix representing the equivariant intersection form up to multiplication by a unit of~$\Lambda$, a relation which we denote by~$\dot{=}$.

By Proposition~\ref{prop:BlanchfieldRepresentation}, any matrix representing the equivariant intersection form on~$H_2(W_F;\Lambda)$ is also a presentation matrix for the~$\Lambda$-module~$H_1(\partial W_F;\Lambda)$. 
Therefore, the determinant of such a matrix is the order of~$H_1(M_L;\Lambda)$. 
The result therefore reduces to proving that 
$$ \operatorname{Ord} H_1(M_L;\Lambda)
\doteq 
\det(H_F).
$$
If $\mu=1$,  this follows readily (for the first equality, see e.g.~\cite[Proposition~3.3]{CFP}):
$$
\operatorname{Ord} H_1(M_L;\Lambda)
\doteq  \Delta_L(t)
\doteq \det(tA-A^{\T})
\doteq \det(H_F).
$$
We therefore assume that $\mu=2$.
Combining~\cite[Proposition 3.3]{CFP} with~\cite[Corollary~2.2]{Cooper} and a tedious calculation using the definition of $H_F$, we obtain 
\begin{align*}
\operatorname{Ord} H_1(M_L;\Lambda)
&\doteq \left((t_1-1)(t_2-1)\right)^{|\lk(K_1,K_2)|-1} \Delta_L(t_1,t_2) \\
&\doteq \left((t_1-1)(t_2-1)\right)^{|\lk(K_1,K_2)|-1} (t_1-1)^{-2g_2}(t_2-1)^{-2g_1}\det(t_1 t_2 A - t_1 B - t_2 B^{\T} + A^{\T}) \\
&\doteq \left((t_1-1)(t_2-1)\right)^{l-n} (t_1-1)^{-2g_2}(t_2-1)^{-2g_1}\det(t_1 t_2 A -  t_1 B - t_2 B^{\T} + A^{\T}) \\
&\doteq \det(H_F).
\end{align*}
In the penultimate equality, we used the equality~$|\lk(K_1,K_2)| - 1 = l - n$ which follows from the fact that the total number~$N$ of clasps satisfies~$N-1=l+n$ (the rank of~$H_1(\Gamma_F)$) as well as~$N = 2n + |\lk(K_1,K_2)|$. 
This completes the proof.
\end{proof}

We conclude this section with the proof of Corollary~\ref{cor:IntroSeifert}.

\begin{proof}[Proof of Corollary~\ref{cor:IntroSeifert}]
Let~$L$ be a~$\mu$-component link with~$\mu\le 2$ and~$\Delta_L\neq 0$.
For any nice C-complex~$F$ for~$L$, the fact that~$[H_F]\in L^4(Q)$ is invariant by concordance follows from
Theorems~\ref{thm:Calculation},~\ref{thm:ConcordanceInvarianceGeneral} and~\ref{thm:ExistenceOfW}~(3).
Moreover, the extended multivariable signature~$\sigma_L\colon(S^1)^\mu\to\Z$ evaluated at~$\omega$ is defined as follows (for~$\mu\le 2$): it is the signature of the intersection form on~$H_2(W;\C^\omega)$, where~$(W,\Phi)$ is any~$4$-manifold bounding~$(M_L,\varphi)$ over~$\Z^\mu$ with~$\varphi$ meridional, and where the coefficient system is defined via the composition of the morphism~$\Z[H_1(W)]\to\Z[\Z^\mu]=\Lambda$ induced by~$\Phi$ with the evaluation map~$\Lambda\to\C$ given by~$t_i\mapsto\omega_i$.
By Theorem~\ref{thm:ExistenceOfW}, the matrix~$H_F$ represents the intersection form on~$H_2(W_F;\Lambda)$,
with~$(W_F,\Phi)$ bounding~$(M_L,\varphi)$ over~$\Z^\mu$.
By naturality of the intersection forms (see e.g.~\cite[Appendix~B]{CMP}), this implies that~$\sigma_L(\omega)$
is equal to the signature of the matrix~$H_F$ evaluated at~$t_i=\omega_i$.
\end{proof}

\appendix

\section{An exact sequence in~$L$-theory}
\label{sub:Ltheory}

This section reviews some $L$-theory as well as an associated exact sequence.
We also briefly mention Cappell and Shaneson's $\Gamma$-groups.
There are numerous references on these topics, but we mostly follow~\cite{Karoubi,CappellShaneson,CarlssonMilgram,Ranicki}.

\medbreak

Given a ring~$R$ with involution, we write~$L^{2n}(R)$ for the Witt group of nonsingular~$(-1)^n$-hermitian forms~$(P,\lambda)$, where~$P$ is a projective~$R$-module. 
More explicitly,  two hermitian forms are equivalent if they become isometric after some number of metabolic forms is added to each.
Here, a hermitian form~$(P,\lambda)$ is \emph{metabolic} if there is a direct summand~$F \subset P$ with~$F=F^\perp$.

\begin{remark}
When treating the long exact sequence in~$L$-theory, it is common to work with projective modules~\cite{CarlssonMilgram,Ranicki}.
In surgery theoretic applications,~$L$-groups are instead typically defined using stably free modules, and the notation~$L^n_p$ is used to emphasise the use of projective modules.
Following~\cite{Ranicki}, we will not use this convention. Note that since we will almost exclusively be using~$R=\Lambda$ and~$R=Q$, these distinctions do not matter: over these rings, a module is (stably) free if and only if it is projective. 
\end{remark}

Let~$S \subset R$ be a multiplicative subset.
We write~$L^{2n}(R,S)$ for the Witt group of nonsingular~$S^{-1}R/R$-valued~$(-1)^n$-hermitian even linking forms~$(T,b)$, where~$T$ is an~$R$-module of projective dimension~$1$ that is~$S$-torsion, i.e. satisfies~$S^{-1}R \otimes_R T=0$.
A linking form is \emph{even} if~$b(x,x)=\alpha$ mod~$R$ for some~$\alpha \in S^{-1}R$ with~$\overline{\alpha}=(-1)^n\alpha$.
In Carlsson-Milgram, all linking forms are assumed to be even~\cite[Definition 4 (c)]{CarlssonMilgram} but following~\cite[page~223]{Ranicki}, we prefer to isolate the condition as it plays an important role in the long exact sequence of~$L$-groups.
Again,  two even linking forms are equivalent in~$L^{2n}(R,S)$ if they become isometric after some number of even metabolic forms is added to each.
Here, a linking form~$(T,b)$ is \emph{metabolic} if there is a projective dimension one submodule~$F \subset T$ with~$F=F^\perp$.

\begin{remark}
In order to relate~$L^{2n}(S^{-1}R)$ to~$L^{2n}(R)$,  it is necessary to work with the subset~$L^{2n}_S(S^{-1}R)$ that consists of hermitian forms defined on modules of the form~$P=P' \otimes S^{-1}R$ with~$P'$ projective, see~\cite[page 172]{Ranicki} and~\cite{CarlssonMilgram}.
When~$S^{-1}R$ is the field of fractions,  this condition is automatic, yielding~$L_S^4(Q)=L^4(Q)$.
As we will explain below,~$L^{2n}_S(S^{-1}R)$ is isomorphic to the Witt group of hermitian forms over~$R$ that become nonsingular over~$S^{-1}R$; following~\cite[page 150]{Ranicki}, we call such forms~\emph{$S$-nonsingular}.
In particular, elements of~$L^{2n}_S(S^{-1}R)$ can be represented by~$R$-valued hermitian forms defined on~$R$-modules.
Note that when $S=R \setminus \{0\}$, being $S$-nonsingular is the same as being nondegenerate.
\end{remark}

The main point of this section is to recall the following more or less well known fact; see e.g.~\cite[Lemma~1]{CarlssonMilgram} or~\cite[page~172]{Ranicki}.

\begin{proposition}
\label{prop:LTheoryMap}
There is an exact sequence
$$
L^4(R) \xrightarrow{i} L^4_S(S^{-1}R) \xrightarrow{\partial} L^4(R,S)\,,
$$
where the map~$i \colon L^4(R) \to L^4_{S}(S^{-1}R)$ is induced by the inclusion~$R \to S^{-1}R$,
and the map $\partial \colon L^4_S(S^{-1}R) \to L^4(R,S)$ takes an $S$-nonsingular form to its boundary linking form.
%
\end{proposition}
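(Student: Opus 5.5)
The plan is to define the two maps explicitly, check that each is well defined on Witt classes, and then prove exactness at $L^4_S(S^{-1}R)$ in two steps: $\partial\circ i=0$, and then $\ker\partial\subset\operatorname{im} i$.

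\textbf{Step 1: the maps.}
\begin{itemize}
\item The map $i$ is $(P,\lambda)\mapsto (S^{-1}R\otimes_R P,\lambda)$. It visibly respects metabolic forms, since a direct summand $F=F^\perp$ localises to a direct summand.
\item For $\partial$, I represent a class of $L^4_S(S^{-1}R)$ by an $S$-nonsingular hermitian form $(P,\lambda)$ over $R$, as in the remark above. Its adjoint $\Ad_\lambda\colon P\to P^*$ is injective with $S$-torsion cokernel $T:=\coker(\Ad_\lambda)$.
\item By construction $T$ has projective dimension $1$, since $0\to P\to P^*\to T\to 0$ is a resolution by projectives.
\item I equip $T$ with $b([x],[y]):=\overline{y}(\Ad_\lambda^{-1}_{S}(x))\in S^{-1}R/R$, where $\Ad_\lambda^{-1}_S$ is the inverse over $S^{-1}R$. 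In matrix terms this is $[x^{\T}H^{-1}\overline y]$, matching Proposition~\ref{prop:BlanchfieldRepresentation} and Remark~\ref{rem:AlternativeDefinitionBoundaryLinking}.
\item Standard checks: $b$ is hermitian, and it is nonsingular by a five-lemma argument applying $\Hom(-,R)$ and the Bockstein for $0\to R\to S^{-1}R\to S^{-1}R/R\to 0$. It is even because $b([x],[x])=x(\Ad^{-1}_S x)$ is the reduction of an element of $S^{-1}R$ fixed by the involution.
\item To see that $\partial$ is well defined on Witt classes, it suffices to send a metabolic $S$-nonsingular form to a metabolic linking form. The relation $L^4_S(S^{-1}R)\cong$ Witt group of $S$-nonsingular forms over $R$ is taken from \cite{Ranicki}.
\item If $L\subset P$ is a lagrangian (over $S^{-1}R$, after clearing denominators, an $R$-summand with $L=L^\perp$), I take the image of $L^{\perp_{P^*}}$-type annihilator data in $T$ as candidate metaboliser.
\end{itemize}

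\textbf{Step 2: $\partial\circ i=0$.} If $\lambda$ is nonsingular over $R$, then $\Ad_\lambda$ is an isomorphism, so $T=0$.

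\textbf{Step 3: $\ker\partial\subset\operatorname{im} i$.} This is the substantive step. Suppose $\partial[P,\lambda]=0$. After adding metabolic linking forms, which I realise as boundaries of hyperbolic-type $S$-nonsingular forms, I may assume $(T,b)$ itself has a metaboliser $F$ of projective dimension one.
\begin{itemize}
\item Let $P^\#:=\pi^{-1}(F)\subset P^*$, where $\pi\colon P^*\to T$ is the projection. Then $P\subset P^\#\subset P^*$.
\item $P^\#$ is projective, since it is an extension of $F$ by $P$ and both have projective dimension at most $1$; a Schanuel/horseshoe argument makes it projective.
\item The form $\lambda$ extends to $P^\#$ over $S^{-1}R$, and is $R$-valued on $P^\#$ precisely because $F\subset F^\perp$.
\item It is nonsingular over $R$ precisely because $F^\perp\subset F$: the adjoint of the extended form identifies $P^\#$ with $(P^\#)^*$, as a diagram chase in the ladder of the two resolutions shows.
\item Since $P^\#\otimes S^{-1}R=P\otimes S^{-1}R$, we get $[P,\lambda]=i[P^\#,\lambda]$.
\end{itemize}

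\textbf{Main obstacle.} I expect the main difficulty to be the well-definedness of $\partial$ on metabolic forms, specifically producing a projective-dimension-one metaboliser of $(T,b)$ from an $S^{-1}R$-lagrangian. I would try first to use the same sublattice trick in reverse: intersect the lagrangian with $P$ (or with $P^*$) to get $L_0$, and take $F=\pi(L_0^{\mathrm{ann}})$. Then I would verify $F=F^\perp$ by the exactness of the $\Hom$ sequences, and obtain projective dimension one from the short exact sequence $0\to L_0\to L_0'\to F\to 0$.

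If this becomes technical, I would fall back on citing \cite[Lemma~1]{CarlssonMilgram} and \cite[page~172]{Ranicki}, where the sequence is proved in the algebraic surgery framework via the localisation exact sequence.
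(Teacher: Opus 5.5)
The paper does not prove this proposition; it quotes it from \cite[Lemma~1]{CarlssonMilgram} and \cite[page~172]{Ranicki}, which is your fallback. Your direct lattice argument is therefore a genuinely different route. Much of it is sound: the construction and properties of $\partial$, $\partial\circ i=0$, integrality and unimodularity of $\lambda$ on $\pi^{-1}(F)$, and $[P,\lambda]=i[\pi^{-1}(F),\lambda]$ \emph{once} $\pi^{-1}(F)$ is projective. The key step of Step~3, however, fails.

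Being an extension of $F$ by $P$ only gives $\operatorname{pd}P^\#\le 1$. Dimension shifting in $0\to P^\#\to P^*\to T/F\to 0$ shows that projectivity of $P^\#=\pi^{-1}(F)$ is equivalent to $\operatorname{pd}(T/F)\le 1$, i.e.\ to $b$ inducing $T/F\cong F^\wedge:=\Ext^1_R(F,R)\cong\Hom_R(F,S^{-1}R/R)$. This does not follow from ``$\operatorname{pd}F=1$ and $F=F^\perp$'' (the notion recalled in Appendix~\ref{sub:Ltheory}) over rings of global dimension $3$ such as $\Lambda=\Z[\Z^2]$. Here is a counterexample:
\begin{itemize}
\item For a prime $p$, let $N=\ker\bigl((p,t_1-1,t_2-1)\colon\Lambda^3\to\Lambda\bigr)$, the second syzygy of $\mathbb{F}_p$ in its Koszul resolution. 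It is reflexive, of projective dimension one, and not projective.
\item Let $M=N\oplus\overline{N^*}$ with its hyperbolic form, which is nonsingular and $\Lambda$-valued, and let $P\subset M$ be any free sublattice of full rank.
\item Then $F=M/P$ has projective dimension one and satisfies $F=F^\perp$ in $T=\coker(\Ad_{\lambda|_P})$, since $M$ is its own dual lattice.
\item Yet $\pi^{-1}(F)=M$ is not projective.
\end{itemize}
What you need is Ranicki's notion of a lagrangian of a linking form: $F$ of projective dimension one with $0\to F\to T\to F^\wedge\to 0$ exact. With it Step~3 goes through. Your annihilator construction also yields lagrangians in this stronger sense: for an $R$-summand $L=L^\perp$ of $P$ one gets $T/F\cong\coker(P/L\to L^*)$. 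So the obstacle you anticipated is harmless as long as you stay with $\Gamma$-lagrangians, which are $R$-summands. Intersecting an $S^{-1}R$-lagrangian with $P$ would not in general produce a summand over $\Z[\Z^2]$.

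A second, smaller gap is the reduction at the start of Step~3. You need every even metabolic linking form to be the boundary of an $S$-nonsingular form, but metabolic linking forms need not be hyperbolic (already $\Z/4$ with $b(1,1)=1/4$ over $\Z$). So forms $\left(\begin{smallmatrix}0&d^*\\ d&0\end{smallmatrix}\right)$ built from a resolution $d$ of $F$ do not suffice. One needs $\left(\begin{smallmatrix}0&d^*\\ d&\chi\end{smallmatrix}\right)$, with $\chi$ encoding the extension $0\to F\to T\to F^\wedge\to 0$ and the values of $b$ on lifts of $F^\wedge$. Arranging $\chi$ to be hermitian is exactly where evenness has to be used.
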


We now recall the definition of the boundary linking form of an $S$-nonsingular hermitian form following~\cite[page 243]{Ranicki}, but note that the definition can also be expressed in terms of the ``dual lattice"; see e.g.~\cite[pages 244-245]{Ranicki} and~\cite[Section 4]{CarlssonMilgram}.

\begin{definition}
\label{def:BoundaryLinking}
Let~$(P,\lambda)$ be an~$S$-nonsingular hermitian form, and let~$\Ad_\lambda\colon P\to\overline{\Hom_R(P,R)}$ be its adjoint.
 The \emph{boundary linking form} of~$(P,\lambda)$ is the even linking form
\begin{align*}
 \partial \lambda \colon \coker(\Ad_\lambda) \times \coker(\Ad_\lambda) &\to S^{-1}R/R  \\
 ([x],[y]) &\mapsto  \frac{y(z)}{s},
\end{align*}
where since~$\coker(\Ad_{\lambda})$ is~$S$-torsion, there exists~$s \in S$ and~$z \in P$ such that~$sx=\Ad_\lambda(z)$.
\end{definition}

It is not difficult to show that~$\partial \lambda$ is independent of the choices involved, sesquilinear and hermitian.
We emphasise that the boundary linking form is necessarily even.

The next remark gives a description of the boundary linking form in terms of matrices in the case~$R=\Lambda$ and~$S^{-1}R=Q$.
This is somewhat well known, see e.g.~\cite[Remark 2.4]{ConwayPowell},  and is used in the proof of Theorem~\ref{thm:RelatingInvariantsIntro}.

\begin{remark}
\label{rem:AlternativeDefinitionBoundaryLinking}
Set~$S=\Lambda\setminus\{0\}\subset\Lambda=R$, and let~$(P,\lambda)$ be an~$S$-nonsingular hermitian form.
By hypothesis, the adjoint
\[
\Ad_{\lambda_Q}:=\Ad_\lambda\otimes \id_Q\colon P\otimes_\Lambda Q\to \overline{\Hom_\Lambda(P,\Lambda)}\otimes_\Lambda Q=\overline{\Hom_Q(P\otimes_\Lambda Q,Q)}
\]
 is an isomorphism, where the identification follows from the fact that~$P$ is free (recall that projective $\Lambda$-modules are free).
 By definition, the boundary linking form of~$(P,\lambda)$ can be described as~$\partial \lambda([x],[y])=y (\Ad_{\lambda_Q}^{-1}(x))$ for~$[x],[y] \in \coker(\Ad_\lambda)$.
Choose a basis~$(e_i)_{i=1}^n$ for the free module~$P$ and endow~$\overline{\Hom_\Lambda(P,\Lambda)}$ with the dual basis.
If~$H_{ij}=\lambda(e_i,e_j)$ is a matrix representing~$\lambda$, then~$\overline{H}$ is a matrix for~$\Ad_\lambda$ and
\[
\partial \lambda([x],[y])
=y (\Ad_{\lambda_Q}^{-1}(x))
=(\overline{H}^{-1}x)^T\overline{y}
=x^TH^{-1}\overline{y}\,.
\]
 \end{remark}
 
 We conclude by discussing~$\Gamma$-groups following~\cite{CappellShaneson} and~\cite[page 150]{Ranicki}, as they provide a conceptually helpful way of thinking of~$L_S^{2n}(S^{-1}R)$ and the long exact sequence in~$L$-theory.
 The\emph{~$\Gamma$-group}~$\Gamma^{2n}(A \to B)$ of a ring homomorphisms~$f \colon A \to B$ is the Witt group of~$(-1)^n$-hermitian forms~$(P,\lambda)$ over~$A$ such that~$(P,\lambda) \otimes B$ is nonsingular as a hermitian form over~$B$.
 Again, we assume that $P$ is projective.
As mentioned on~\cite[page 209]{Ranicki},  by ``clearing denominators",  the map~$(P,\lambda) \mapsto (P,\lambda) \otimes_R S^{-1}R$ gives rise to an isomorphism
$$\Gamma^{2n}(R \to S^{-1}R) \cong L_S^{2n}(S^{-1}R).$$
With this identification in mind, the long exact sequence in~$L$-theory can be viewed as a special case of the long exact sequence of a pair for~$\Gamma$-groups~\cite{CappellShaneson,Ranicki}.

\begin{remark}
As we mentioned in the introduction, our torsion modules do not always have projective dimension $1$ and our linking forms are sometimes only nondegenerate.
For this reason we write $L^4_\textit{nd}(R,S)$ for the Witt group of nondegenerate linking forms.
This group has already been considered in~\cite{Hillman,BorodzikFriedlPowell}.
\end{remark}

\section{The maximal pseudonull module}
\label{sub:PseudoNull}

In this section, we recall the notion of maximal pseudonull submodule of a given module following~\cite{hillman2012}, and give a characterisation of its generators.
This submodule plays a crucial role in our work, since it appears as the kernel of the adjoint of the Blanchfield pairing.

\medskip

Throughout this section,~$M$ denotes a finitely generated module over a noetherian ring~$R$ which is assumed to be a unique factorisation domain (or UFD).

Following~\cite[$\S$4.4]{Bourbaki} and~\cite[p.32]{HillmanAlexanderIdeals}, such a module~$M$ is said to be {\em pseudonull\/} if~$M_{\wp}=0$ for all prime ideal~$\wp$ of height at most~$1$.
We will neither employ this classical definition nor recall the ingredients involved in it, but rather make use of an equivalent definition. To state it, recall that for any subset~$S\subset M$, the ideal
\[
\Ann(S)=\{\alpha\in R\mid \alpha m=0\text{ for all~$m\in S$}\}
\]
of~$R$ is called the \emph{annihilator} of~$S$.

\begin{remark}
\label{rem:pseudonull}
A module~$M$ is pseudonull if and only if~$\Ann(M)$ is not contained in a proper principal ideal of~$R$.
Indeed, by~\cite[p.33]{HillmanAlexanderIdeals} (see also~\cite[Proposition~9]{Bourbaki}), an~$R$-module~$M$
is pseudonull if and only if the intersection of the principal ideals of~$R$ which contain~$\Ann(M)$ is equal to~$R$.
The claim follows readily.
\end{remark}

Let~$M$ be an~$R$-module, and let~$TM$ be its torsion submodule.
The following terminology can be found in~\cite[Section~2.5]{hillman2012}.

\begin{terminology}[\cite{hillman2012}]
\label{def:MaximalPseudonull}
The \emph{maximal pseudonull submodule}~$zM$ of~$M$ is the submodule of~$TM$ generated by all elements whose annihilator ideal is not contained in a proper principal ideal of~$R$.
\end{terminology}

This terminology is justified by the following fact.

\begin{remark}
\label{rem:N-in-zM}
The maximal pseudonull submodule~$zM$ contains every pseudonull submodule of~$M$.
Indeed, if a submodule~$N$ of~$M$ is pseudonull, then~$\Ann(N)=\bigcap_{m\in N}\Ann(m)$ is not 
contained in a proper principal ideal of~$R$ by Remark~\ref{rem:pseudonull}, so the same holds for
each~$\Ann(m)$ with~$m\in N$, leading to the inclusion~$N\subset zM$. 
It is however not clear that~$zM$ is pseudonull, but we do not need this fact.
\end{remark}

\medskip

We now proceed towards an alternative characterisation of~$zM$.
Recall that elements in a UFD~$R$ are said to be {\em relatively prime\/} (or {\em coprime\/})
if their common divisors are the units of~$R$.
The following result is probably known, but we have not been able to find it in the literature.

\begin{proposition}
\label{prop:pseudonull}
Let~$R$ be a UFD. Given an ideal~$I$ in~$R$, the following are equivalent:
\begin{enumerate}[(i)]
\item The ideal~$I$ is not contained in any proper principal ideal.
\item There exist~$\alpha,\beta\in I$ which are coprime.
\end{enumerate}
\end{proposition}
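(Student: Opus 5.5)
We prove the two implications separately; the easy one first.

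For (ii)$\Rightarrow$(i), suppose $\alpha,\beta\in I$ are coprime and $I\subset (d)$ for some $d\in R$. Then $d$ divides both $\alpha$ and $\beta$. Being a common divisor of coprime elements, $d$ is a unit, so $(d)=R$ is not proper. Hence $I$ lies in no proper principal ideal.

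For (i)$\Rightarrow$(ii), we argue by contraposition using unique factorisation. First dispose of $I=0$: it is contained in the proper principal ideal $(0)$ (or in $(p)$ for any prime $p$), so (i) fails. Now assume $I\neq 0$ and that no two elements of $I$ are coprime. Fix a nonzero $\alpha\in I$ and factor $\alpha=u\,p_1^{a_1}\cdots p_r^{a_r}$ into pairwise non-associate primes $p_1,\dots,p_r$. (If $\alpha$ is a unit, then $I=R$ and any $\beta$ is coprime to $\alpha$.) Every $\beta\in I$ fails to be coprime to $\alpha$, so it has a non-unit common divisor with $\alpha$, hence a prime common divisor, which must be one of the $p_j$. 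Therefore
\[
I\subset (p_1)\cup\cdots\cup(p_r).
\]
Each $(p_j)$ is a prime ideal, so prime avoidance gives $I\subset (p_j)$ for some $j$. Since $(p_j)$ is a proper principal ideal, (i) fails.

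The only genuine step is this prime avoidance. If a self-contained argument is preferred, we would go by induction on $r$. Alternatively, suppose $I\not\subset(p_j)$ for every $j$, and pick $x_j\in I\setminus(p_j)$. Then the element
\[
\sum_j x_j\prod_{k\neq j}p_k\in I
\]
lies in no $(p_j)$, after the standard reduction to the case where the $x_j$ avoid the other primes appropriately. Alternatively still, for each $j$ choose $\beta_j\in I$ with $p_j\nmid\beta_j$, and set
\[
\beta:=\sum_j \beta_j\prod_{k\neq j}p_k^{N}.
\]
Modulo $p_j$ only the $j$-th term survives (up to the unit issues handled by choosing $p_k\nmid\beta_j$ as needed), and $\beta$ is then coprime to $\alpha$. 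We expect this bookkeeping to be the main obstacle, although it is routine.
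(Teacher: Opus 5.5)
Your proof is correct. The direction (ii)$\Rightarrow$(i) is the same as the paper's, and slightly more direct, since $d$ itself is a common divisor of $\alpha$ and $\beta$.

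For (i)$\Rightarrow$(ii) you take a genuinely different route. The paper starts as you do: it factors a nonzero nonunit $\alpha\in I$ as $\prod_i p_i^{m_i}$ and chooses $\beta_i\in I$ with $p_i\nmid\beta_i$. It then only observes that $\alpha,\beta_1,\dots,\beta_n$ are jointly relatively prime. To get down to two elements it invokes a separate inductive lemma (Lemma~\ref{lemma:gcd}, valid in any GCD domain), which replaces two of the $\beta$'s by $\beta_{n-1}+\beta_n\delta''$ for a carefully chosen gcd factor $\delta''$, and repeats until two elements remain.

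You instead note $I\subset\bigcup_j(p_j)$ with each $(p_j)$ prime and cite prime avoidance, which is standard and fully legitimate. Your explicit element $\beta=\sum_j\beta_j\prod_{k\neq j}p_k$ makes this self-contained in one line. Modulo $p_j$ only the $j$-th term survives, and $p_j\nmid\beta_j\prod_{k\neq j}p_k$ because $(p_j)$ is prime and the $p_k$ are pairwise non-associate. Hence no $p_j$ divides $\beta$, so $\beta$ is coprime to $\alpha$.

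The hedges in your last paragraph are therefore unnecessary: the ``standard reduction'', the ``unit issues'' and the exponent $N$ can all be dropped, since the construction works verbatim with $N=1$. It also proves (i)$\Rightarrow$(ii) directly, without passing to the contrapositive.

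Your route buys brevity and an explicit second element adapted to the factorisation of $\alpha$. The paper's lemma is more general: it merges any finite relatively prime family in an ideal of a GCD domain into two coprime elements, without needing a prime factorisation. That generality is not needed for this proposition.
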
 

For the proof of Proposition~\ref{prop:pseudonull}, we will need the following lemma.

\begin{lemma}
\label{lemma:gcd}
Let~$I$ be an ideal in a GCD domain~$R$. If there exists~$\alpha,\beta_1,\dots,\beta_n \in I$ that are relatively prime for some~$n\ge 1$, then there exists~$\alpha, \beta \in I$ that are relatively prime.
\end{lemma}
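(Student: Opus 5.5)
The plan is to reduce the $n+1$ elements to two by induction on $n$, merging $\beta_1,\dots,\beta_n$ into a single element of $I$ by a generic linear combination. The case $n=1$ is exactly the conclusion, so assume $n\ge 2$ and that the statement holds for $n-1$.

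Set $d:=\gcd(\alpha,\beta_n)$, which exists in a GCD domain. If $\alpha=0$, then the relative primality hypothesis concerns only the $\beta_i$, and we may replace $\alpha$ by some nonzero $\beta_j$ and reindex; so assume $\alpha\neq 0$. The goal is to find $r\in R$ with $\beta':=\beta_{n-1}+r\beta_n\in I$ such that $\alpha,\beta_1,\dots,\beta_{n-2},\beta'$ are still relatively prime. The induction hypothesis then applies to this family of $n$ elements and finishes the proof.

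A cleaner route avoids needing finitely many prime factors, since $R$ is only a GCD domain. Choose $r:=\alpha$, i.e.\ set $\beta':=\beta_{n-1}+\alpha\beta_n\in I$. Any common divisor $e$ of $\alpha,\beta_1,\dots,\beta_{n-2},\beta'$ divides $\alpha$ and hence $\alpha\beta_n$, so it divides $\beta_{n-1}=\beta'-\alpha\beta_n$. Thus $e$ divides $\alpha,\beta_1,\dots,\beta_{n-1}$, but it need not divide $\beta_n$, so this choice loses $\beta_n$.

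To repair this, first combine pairs differently. Let $g:=\gcd(\alpha,\beta_1,\dots,\beta_{n-2})$, computed via iterated gcds. The hypothesis says $\gcd(g,\beta_{n-1},\beta_n)=1$. It suffices to find $r$ with $\gcd(g,\beta_{n-1}+r\beta_n)=1$; the common divisors of the new family are exactly the divisors of $g$ and $\beta'$. Here I would use the classical trick of taking $r$ to be the part of $g$ coprime to $\beta_{n-1}$. Concretely, write $g=uv$ where $u$ collects the factors shared with $\beta_{n-1}$ and $v$ is coprime to $\beta_{n-1}$, and take $r=v$. Then a prime-like divisor $p$ of $g$ dividing $\beta_{n-1}+v\beta_n$ either divides $\beta_{n-1}$, which forces $p\mid v\beta_n$, where $p\nmid v$ forces $p\mid\beta_n$, a contradiction; or it divides $v$, which forces $p\mid\beta_{n-1}$, again a contradiction.

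The main obstacle is making this decomposition $g=uv$ rigorous without primes in a general GCD domain. Since Proposition~\ref{prop:pseudonull} is applied only to UFDs, I would be willing to assume unique factorisation, or at least that $g$ is a finite product of primes. In that case $v$ is simply the product of the prime powers of $g$ not dividing $\beta_{n-1}$, and the argument above is a routine prime-by-prime check. The edge case $g=0$ (all of $\alpha,\beta_1,\dots,\beta_{n-2}$ zero) reduces directly to the pair $\beta_{n-1},\beta_n$.
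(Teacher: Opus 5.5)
Your argument is correct in the UFD setting you adopt, and it follows the same plan as the paper: induct on $n$, replace $\beta_{n-1}$ by $\beta_{n-1}+m\beta_n\in I$, where $m$ is meant to be the part of $g=\gcd(\alpha,\beta_1,\dots,\beta_{n-2})$ coprime to $\beta_{n-1}$, and then rule out a common prime divisor case by case.

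The only real difference is how $m$ is constructed, and your construction is the one that works. The paper sets $\gamma=\gcd(\alpha,\beta_1,\dots,\beta_{n-1})=\gcd(g,\beta_{n-1})$ and $\delta=g/\gamma$. It then takes $m=\delta''=\delta/\gcd(\delta,\gamma)$ and asserts that $\delta''$ is coprime to $\gamma$. A single division does not guarantee this: it fails whenever a prime dividing $\gamma$ occurs in $\delta$ with larger multiplicity than in $\gamma$. For example, in $\Z$ with $(\alpha,\beta_1,\beta_2)=(8,2,1)$ one gets $\gamma=2$, $\delta=4$, $\delta''=2$. This gives $\beta_1^*=4$, which is not coprime to $\alpha=8$. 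Your $v$ removes every prime power of $g$ whose prime divides $\beta_{n-1}$, so here it gives $v=1$ and $\beta'=3$. Iterating the paper's division until it stabilises would recover exactly your $v$.

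On the generality you flag: the paper's proof also reasons with prime divisors, so it too only covers atomic GCD domains, i.e.\ UFDs. That is the only case needed, since the lemma is used solely in Proposition~\ref{prop:pseudonull}, where $R$ is a UFD.

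In the write-up you can drop three things: the unused $d=\gcd(\alpha,\beta_n)$, the abandoned choice $r=\alpha$, and the reduction to $\alpha\neq 0$. The last is unnecessary because your treatment of the case $g=0$ already covers it.
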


\begin{proof}[Proof of Lemma~\ref{lemma:gcd}]
We proceed by induction on~$n\ge 1$. The case~$n=1$ being trivial, let us fix~$n>1$, assume that the statement holds up to~$n-1$, and consider relatively prime elements~$\alpha,\beta_1,\dots,\beta_n\in I$.
The strategy of the proof is to construct an explicit element~$\beta^*_{n-1}$ of~$I$ such that~$\alpha,\beta_1, \ldots, \beta_{n-2},\beta^*_{n-1}$ are relatively prime, and to conclude by induction.

Let~$\gamma$ be a greatest common divisor of~$\alpha,\beta_1,\ldots, \beta_{n-1}$. We write~$\alpha = \gamma\alpha'$ and~$\beta_i = \gamma\beta'_i$. We fix a greatest common divisor~$\delta$ of~$\alpha',\beta'_1, \ldots, \beta'_{n-2}$  and decompose it as~$\delta = \delta'\delta''$, with~$\delta'$ a greatest common divisor of~$\delta$ and~$\gamma$. By construction,~$\delta''$ and~$\gamma$ are coprime, while~$\delta'$ divides~$\gamma$. Note that~$\beta^*_{n-1}:=\beta_{n-1}+\beta_n\delta''$ belongs to the ideal~$I$.
It remains to check that the elements~$\alpha,\beta_1, \ldots, \beta_{n-2},\beta^*_{n-1}$ of~$I$ are relatively prime.

Let us assume by contradiction that some prime~$p\in R$ divides~$\alpha,\beta_1, \ldots, \beta_{n-2},\beta^*_{n-1}$.
First, we check that~$p$ divides~$\beta_{n-1}$. If~$p$ divides~$\gamma$, then we are done, as~$\gamma$ divides~$\beta_{n-1}$,
so we can assume that~$p$ does not divide~$\gamma$. Since the prime~$p$ divides~$\alpha=\gamma\alpha',\beta_1=\gamma\beta'_{1}, \ldots, \beta_{n-2}=\gamma\beta'_{n-2}$ but not~$\gamma$, it divides~$\alpha',\beta'_{1}, \ldots, \beta'_{n-2}$ and their greatest common divisor~$\delta = \delta' \delta''$.
Since~$p$ is prime, it divides~$\delta'$ or~$\delta''$. If~$p$ divides~$\delta'$, then~$p$ divides~$\gamma$, so~$p$ divides~$\beta_{n-1}$. If~$p$ divides~$\delta''$, since~$p$ also divides~$\beta^*_{n-1}=\beta_{n-1}+\beta_n \delta''$, it divides~$\beta_{n-1}$.
We have checked that~$p$ divides~$\beta_{n-1}$.
Since~$p$ divides~$\beta_{n-1}$ and~$\beta_{n-1}+\beta_n \delta''$, it divides~$\beta_n \delta''$, so  it divides~$\beta_n$ or~$\delta''$. If~$p$ divides~$\beta_n$, it would divide the relatively prime elements~$\alpha,\beta_1, \ldots, \beta_{n}$, a contradiction.
If~$p$ divides~$\delta''$, then it would also divides~$\delta=\delta'\delta''$, a (greatest) common divisor of~$\alpha',\beta'_1, \ldots, \beta'_{n-2}$. Thus,~$p$ divides~$\alpha',\beta'_1, \ldots, \beta'_{n-2}$, so it divides their multiples by~$\gamma$,
namely~$\alpha,\beta_1, \ldots, \beta_{n-2}$. Since~$p$ also divides~$\beta_{n-1}$, it divides their greatest common divisor~$\gamma$. Thus,~$p$ divides the coprime elements~$\gamma$ and~$\delta''$, a contradiction.
\end{proof}

\begin{proof}[Proof of Proposition~\ref{prop:pseudonull}]
Let us first assume that~$I$ contains coprime elements~$\alpha,\beta$. Let~$x\in R$ be such that~$I\subset (x)$. If~$r\in R$ divides~$x$, then~$r$ divides~$\alpha$ and~$\beta$, so it is a unit.
Since~$x\neq 0$ and~$R$ is a UFD, it follows that~$x$ is a unit. Hence, we have~$(x)=R$, showing the first implication.

Let us now assume that~$I$ is not contained in any principal ideal~$(x)\neq R$, and show that~$I$
contains two coprime elements~$\alpha,\beta$. By assumption, the ideal~$I$ is not trivial (recall that a UFD is nontrivial). If~$I$ contains an invertible element, then~$I=R$ and we can set~$\alpha=0_R$ and~$\beta=1_R$. Therefore, we can assume that~$I$ contains an element~$\alpha$ which is neither~$0$ nor a unit.
Since~$R$ is a UFD, we have a decomposition~$\alpha=\prod_{i=1}^n p_i^{m_i}$ with~$p_1,\dots,p_n$ distinct irreducible
elements of~$R$ and~$m_i\ge 1$ for all~$i$. For all~$i$, there exists~$\beta_i\in I$ such that~$p_i$ does not divide~$\beta_i$: otherwise, we would have~$I\subset (p_i)\neq R$ contradicting the assumption.
By construction, no prime that divides~$\alpha$ divides all the~$\beta_i$s, so the elements~$\alpha,\beta_1,\dots,\beta_n$ of~$I$ are relatively prime. By Lemma~\ref{lemma:gcd},
there exists~$\alpha,\beta\in I$ coprime. This concludes the proof.
\end{proof}

Proposition~\ref{prop:pseudonull} immediately implies the following result, which is used in Section~\ref{sec:Blanchfield}.

\begin{corollary}
\label{cor:max-pn}
If~$R$ is a UFD, then the maximal pseudonull submodule~$zM$ of an~$R$-module~$M$ is generated by the
elements of~$TM$ whose annihilator ideal contains two coprime elements.\qed
\end{corollary}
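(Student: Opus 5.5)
The claim to prove is Corollary~\ref{cor:max-pn}: for a UFD~$R$, the maximal pseudonull submodule~$zM$ is generated by the elements of~$TM$ whose annihilator ideal contains two coprime elements. I would deduce it directly from Terminology~\ref{def:MaximalPseudonull} and Proposition~\ref{prop:pseudonull}, without any further input.

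By definition,~$zM$ is the submodule of~$TM$ generated by the set
\[
G:=\{m\in TM\mid \Ann(m)\ \text{is not contained in a proper principal ideal of } R\}\,.
\]
Let~$G'$ denote the set of~$m\in TM$ such that~$\Ann(m)$ contains two coprime elements. The whole argument is to check that~$G=G'$ as sets, so that they generate the same submodule. For each~$m\in TM$, apply Proposition~\ref{prop:pseudonull} to the ideal~$I=\Ann(m)$. That proposition says that~$I$ is not contained in any proper principal ideal if and only if~$I$ contains two coprime elements. Hence~$m\in G$ if and only if~$m\in G'$, and therefore~$zM=\langle G\rangle=\langle G'\rangle$.

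There is essentially no obstacle here. The only point worth checking is that Proposition~\ref{prop:pseudonull} applies to arbitrary ideals, including~$\Ann(m)=R$ (the case~$m=0$), where the coprime pair~$0,1$ lies in the ideal. The proposition's proof treats this case, so the argument covers every element of~$TM$.
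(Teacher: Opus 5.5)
Your proposal is correct and matches the paper's route: the corollary is stated as an immediate consequence of Proposition~\ref{prop:pseudonull}, applied to each annihilator ideal $\Ann(m)$ with $m\in TM$, which is exactly your identification of the two generating sets. Your remark that the case $\Ann(m)=R$ (e.g.\ $m=0$) is covered by the coprime pair $0,1$ is also consistent with the proof of that proposition.
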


The proof of the following fact is usually assumed in the literature. We include a detailed argument
for the reader's convenience, and as an illustration of the above corollary.

\begin{example}
\label{ex:NoPN}
If~$R$ is a UFD, then the~$R$-module~$R_0/R$ has trivial maximal pseudonull submodule.
To show this, consider~$m\in R_0/R$ admitting two coprime elements~$\alpha,\beta\in\Ann(m)$. We write~$m\in R_0/R$ as the class of a fraction~$\frac{r}{s}\in R_0$ with~$r,s \in R$ coprime and~$s\neq 0$. By assumption, both~$\frac{\alpha r}{s}$ and~$\frac{\beta r}{s}$ are elements of~$R$, so~$s$ divides~$\alpha r$ and~$\beta r$ in~$R$.
Hence, any prime factor~$p$ of~$s$ divides both~$\alpha r$ and~$\beta r$ in~$R$; since~$r$ and~$s$ are coprime,~$p$ divides the coprime elements~$\alpha$ and~$\beta$, a contradiction.
Therefore, the element~$s\in R$ is a unit, showing that~$m=0\in R_0/R$.
By Corollary~\ref{cor:max-pn}, this implies that the maximal pseudonull submodule of the~$R$-module~$R/R_0$ is trivial.
\end{example}

We also record the following example.

\begin{example}
\label{ex:Z-in-zM}
Let~$M$ be a module over~$\Lambda=\Z[\Z^\mu]$ with~$\mu\ge 2$, and let~$N$ be a submodule of~$M$
isomorphic to~$\Lambda/(t_1-1,\dots,t_\mu-1)\cong\Z$. Then, we have the inclusion~$N\subset zM$.
Indeed, the annihilator~$\Ann(N)$ contains the coprime elements~$t_1-1$ and~$t_2-1$ of~$\Lambda$. By Proposition~\ref{prop:pseudonull} and Remark~\ref{rem:pseudonull}, it follows that~$N$ is pseudonull, and therefore contained in~$zM$ by Remark~\ref{rem:N-in-zM}.
\end{example}

For the remainder of this section, we only assume that~$R$ is an integral domain.
Following~\cite{hillman2012}, we denote the quotient module~$TM/zM$ by~$\hat{t}M$.
We also need to consider the full quotient~$M/zM$, which we denote by~$\hat{s}M$.
Finally, we write~$\shat \colon M \to \shat M$ for the canonical projection. The following elementary lemma addresses the functoriality of this construction.

\begin{lemma}
\label{lemma:PnMapInduction}
Any~$R$-linear map~$f\colon M \to N$ satisfies~$f(zM) \subset zN$, and therefore induces a unique~$R$-linear map~$\hat{f} \colon \shat M \to \shat N$ such that~$\hat f(\shat(x)) = \shat (f(x))$ for~$x\in M$. Moreover, for any two composable~$R$-linear map~$f$ and~$g$, we have~$\widehat{g\circ f} = \hat g \circ \hat f$.
\end{lemma}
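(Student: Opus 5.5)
The plan is to work straight from the definitions. Recall that $zM$ is the submodule of $TM$ generated by the torsion elements whose annihilator is not contained in any proper principal ideal; by Corollary~\ref{cor:max-pn} (valid when $R$ is a UFD), these are exactly the elements whose annihilator contains two coprime elements. Since the lemma is stated for an integral domain, I will use the definition in the form of Terminology~\ref{def:MaximalPseudonull}. For the inclusion $f(zM)\subset zN$ it suffices to check generators, because $f$ is linear and $zN$ is a submodule.

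\emph{Step 1 (generators map to generators).} Let $m\in TM$ be a generator of $zM$, so that $\Ann(m)$ is not contained in any proper principal ideal. Linearity gives $\Ann(m)\subset\Ann(f(m))$. In particular $\Ann(f(m))\neq 0$, so $f(m)\in TN$ (torsion is preserved). If $\Ann(f(m))$ were contained in a proper principal ideal $(x)$, then so would be $\Ann(m)$, which is a contradiction. Hence $f(m)$ is one of the generators of $zN$.

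\emph{Step 2 (passing to quotients).} Every element of $zM$ is a finite $R$-linear combination of generators, so Step 1 gives $f(zM)\subset zN$. The composite $M\xrightarrow{f}N\to\shat N$ therefore vanishes on $zM$. The universal property of the quotient then yields a unique linear map $\hat f\colon\shat M\to\shat N$ with $\hat f(\shat(x))=\shat(f(x))$; uniqueness holds because $\shat\colon M\to\shat M$ is surjective.

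\emph{Step 3 (functoriality).} Both $\widehat{g\circ f}$ and $\hat g\circ\hat f$ send $\shat(x)$ to $\shat(g(f(x)))$. Since $\shat$ is surjective, the two maps agree.

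There is no real obstacle here. The only point to watch is that the notion "generators of $zM$" requires torsion, and in an integral domain torsion follows from the annihilator being nonzero, as used in Step 1.
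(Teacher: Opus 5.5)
Your proof is correct and follows the paper's argument. The inclusion $\Ann(m)\subset\Ann(f(m))$ sends generators of $zM$ to generators of $zN$, and the remaining steps (the quotient's universal property, uniqueness via surjectivity of $\shat$, and functoriality) are the routine checks the paper leaves to the reader, including your remark that $f(m)$ is torsion since $\Ann(f(m))\supset\Ann(m)\neq 0$.
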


\begin{proof}
Observe that for any~$m\in M$, the linearity of~$f$ implies the inclusion~$\Ann(m)\subset\Ann(f(m))$. It follows that
if~$\Ann(m)$ is not contained in a proper principal ideal of~$R$, then neither is~$\Ann(f(m))$.
This shows that~$f$ maps the generators of~$zM$ to generators of~$zN$, implying~$f(zM)\subset zN$.
The rest of the proof is left to the reader.
\end{proof}

The following two lemmas will be used in Section~\ref{sec:Blanchfield}.

\begin{lemma}
\label{lemma:ExactSequencePseudonull}
Let~$M_1 \stackrel{\varphi}{\to} M_2 \stackrel{\psi}{\to} M_3$ be an exact sequence of~$R$-modules. If the restriction
$\psi \vert_{zM_2} \colon zM_2 \to zM_3$ is surjective, then the sequence $\shat M_1 \stackrel{\hat \varphi}{\to} \shat M_2 \stackrel{\hat \psi}{\to} \shat M_3$ is exact.
\end{lemma}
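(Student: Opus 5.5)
This is a diagram chase in $\shat M_i = M_i/zM_i$. By Lemma~\ref{lemma:PnMapInduction} we have $\varphi(zM_1)\subset zM_2$ and $\psi(zM_2)\subset zM_3$, so $\hat\varphi$ and $\hat\psi$ are well defined. Functoriality then gives $\hat\psi\circ\hat\varphi=\widehat{\psi\circ\varphi}=\hat 0=0$. This proves the inclusion $\im(\hat\varphi)\subset\ker(\hat\psi)$.

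For the reverse inclusion, take $x\in M_2$ with $\hat\psi(\shat(x))=0$. Then $\psi(x)\in zM_3$. By the surjectivity hypothesis on $\psi|_{zM_2}$ there is $y\in zM_2$ with $\psi(y)=\psi(x)$. Hence $x-y\in\ker\psi=\im\varphi$, so $x-y=\varphi(w)$ for some $w\in M_1$. Since $y\in zM_2$, we get $\shat(x)=\shat(x-y)=\shat(\varphi(w))=\hat\varphi(\shat(w))$. Therefore $\shat(x)\in\im(\hat\varphi)$, and the sequence is exact at $\shat M_2$.

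The only point needing care is that $\hat\varphi,\hat\psi$ are well defined, and this is exactly Lemma~\ref{lemma:PnMapInduction}. The surjectivity hypothesis is used only once, to correct $x$ by an element of $zM_2$. I expect no genuine obstacle.
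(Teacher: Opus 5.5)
Your argument is correct and is essentially the paper's proof: functoriality from Lemma~\ref{lemma:PnMapInduction} gives $\hat\psi\circ\hat\varphi=0$. For the reverse inclusion you correct $x$ by some $y\in zM_2$ with $\psi(y)=\psi(x)$ and lift $x-y$ through $\varphi$, which is exactly how the paper proceeds.
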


\begin{proof}
We need to check the equality~$\ker ( \hat \psi) = \operatorname{Im}(\hat \varphi)$. By Lemma~\ref{lemma:PnMapInduction} and the exactness of the first sequence, we have~$\hat \psi \circ \hat \varphi = \widehat{\psi \circ \varphi}=\widehat{0}=0$, implying the inclusion~$\operatorname{Im} (\hat \varphi ) \subset \ker(\hat \psi)$.
For the other inclusion, fix an arbitrary element~$\shat(x)\in \ker(\hat \psi)\subset\shat M_2=M_2/zM_2$ given by the class of~$x\in M_2$. By assumption, we have~$\shat (\psi(x)) = \hat\psi(\shat(x)) = 0$, so~$\psi(x) \in zM_3$. Since~$\psi\vert_{zM_2}\colon zM_2 \to zM_3$ is surjective, there exists~$y\in zM_2$ such that~$\psi(y) = \psi(x)$. Therefore, 
the difference~$x-y$ belongs to~$\ker(\psi)=\operatorname{Im}(\varphi)$: there exists~$w\in M_1$ such that~$\varphi(w) = x-y$. It follows that
\[
\hat \varphi( \shat(w)) =  \shat(\varphi(w)) = \shat(x-y)= \shat(x)-\shat(y)= \shat(x)\,,
\]
so~$\shat(x)$ belongs to~$\operatorname{Im}(\hat\varphi)$ as claimed.
\end{proof}

Here is the last lemma.

\begin{lemma}
\label{lemma:HomPN}
If~$M,H$ are~$R$-modules and~$N$ is an~$(R,R)$-bimodule with~$zN=0$, then:
\begin{enumerate}[(i)]
\item The module~$\overline{\Hom_R(M,N)}$ has trivial maximal pseudonull submodule.
\item Every~$R$-linear map~$f\colon H\to \overline{\Hom_R(M,N)}$ induces an~$R$-linear map~$\hat f \colon \shat H \to \overline{\Hom_R(M,N)}$.
\item The projection~$M\to \shat M$ induces an isomorphism~$\overline{\Hom_R(\shat M,N)} \cong \overline{\Hom_R(M,N)}$.
\end{enumerate}
\end{lemma}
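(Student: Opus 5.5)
For (i), I will show directly that no nonzero element of $\overline{\Hom_R(M,N)}$ can lie in $zM$. By Corollary~\ref{cor:max-pn} (applied with $R$ a UFD, or directly via Proposition~\ref{prop:pseudonull}), it suffices to take $f\in\overline{\Hom_R(M,N)}$ whose annihilator contains two coprime elements $\alpha,\beta$, and prove $f=0$. Under the conjugated module structure, $\alpha\cdot f=0$ means $\overline{\alpha}f(m)=0$ for all $m\in M$, or $f(m)\overline\alpha=0$, depending on conventions. The involution $t_i\mapsto t_i^{-1}$ is a ring automorphism, so $\overline\alpha,\overline\beta$ remain coprime. Hence for every $m\in M$ the element $f(m)\in N$ is annihilated by two coprime elements, so $f(m)\in zN=0$. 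Thus $f=0$, and the maximal pseudonull submodule is generated by zero elements only.

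For (ii), Lemma~\ref{lemma:PnMapInduction} gives $f(zH)\subset z\overline{\Hom_R(M,N)}$, which vanishes by (i). Therefore $f$ factors uniquely through $\shat H=H/zH$, and the factored map $\hat f$ is clearly $R$-linear.

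For (iii), precomposition with the projection $\shat\colon M\to\shat M$ gives an injective map $\overline{\Hom_R(\shat M,N)}\to\overline{\Hom_R(M,N)}$, since $\shat$ is surjective. For surjectivity, take $g\colon M\to N$. By Lemma~\ref{lemma:PnMapInduction}, $g(zM)\subset zN=0$, so $g$ factors through $\shat M$. This yields the inverse map, and compatibility with the conjugate module structures is immediate because both structures are defined pointwise.

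The only delicate point is bookkeeping of the left versus right and conjugate actions in (i). The key fact is that the involution preserves coprimality, so the argument goes through whichever convention is used. In the generality where $R$ is only an integral domain, I would use the definition of $zM$ through annihilators not contained in proper principal ideals instead. The image of such an ideal under the involution is again not contained in a proper principal ideal, and $\Ann(f)\subset\Ann(f(m))$ after conjugation, so the same argument applies.
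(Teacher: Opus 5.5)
Your proposal is correct and follows the paper's proof essentially step by step. In (i) each value $f(m)$ inherits (the conjugate of) the annihilator of $f$ and so lies in $zN=0$; (ii) follows from Lemma~\ref{lemma:PnMapInduction} together with (i); and in (iii) every map $M\to N$ kills $zM$ and so factors through $\shat M$, which the paper phrases via left exactness of $\overline{\Hom_R(-,N)}$ and $\Hom_R(zM,N)=0$. Your remark that the involution is a ring automorphism, and hence preserves coprimality and non-containment in proper principal ideals, supplies a precision the paper leaves implicit when it writes $\Ann(g)\subset\Ann(g(m))$. Note also the typo $zM$ for $z\overline{\Hom_R(M,N)}$ at the start of (i).
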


\begin{proof}
First note that since~$R$ is commutative, we need not worry about left vs right-module structures.
To show the first point, consider~$g\in\overline{\Hom_R(M,N)}$ with~$\Ann(g)$ not contained in any proper principal ideal of~$R$. For any fixed~$m\in M$, we have~$\Ann(g)\subset\Ann(g(m))$, so~$\Ann(g(m))$ is not contained in any proper principal ideal of~$R$ either. Hence, we have~$g(m)\in zN=0$ for all~$m\in~M$, yielding~$g=0$.
The second point is a direct consequence of Lemma~\ref{lemma:PnMapInduction} together with the first point.
For the third point, apply the left-exact contravariant functor~$\overline{\Hom_R(-,N)}$ to the
exact sequence~$0\to zM\to M\to \shat M\to 0$ and note that~$\overline{\Hom_R(zM,N)}=\overline{\Hom_R(zM,zN)}=~0$.
\end{proof}

\bibliographystyle{alpha}
\bibliography{bibliothesis}

\begin{thebibliography}{AADG20}

\bibitem[AADG20]{Davis}
Jonah Amundsen, Eric Anderson, Christopher~William Davis, and Daniel Guyer.
\newblock {The C-complex clasp number of links}.
\newblock {\em Rocky Mountain Journal of Mathematics}, 50(3):839 -- 850, 2020.

\bibitem[BF15]{BorodzikFriedl}
Maciej Borodzik and Stefan Friedl.
\newblock The unknotting number and classical invariants, {I}.
\newblock {\em Algebr. Geom. Topol.}, 15(1):85--135, 2015.

\bibitem[BFP16a]{BorodzikFriedlPowell}
Maciej Borodzik, Stefan Friedl, and Mark Powell.
\newblock Blanchfield forms and {G}ordian distance.
\newblock {\em J. Math. Soc. Japan}, 68(3):1047--1080, 2016.

\bibitem[BFP16b]{BFP}
Maciej Borodzik, Stefan Friedl, and Mark Powell.
\newblock Blanchfield forms and gordian distance.
\newblock {\em Journal of the Mathematical Society of Japan.}, 68:1047--1080,
  2016.
\newblock EPrint Processing Status: Full text deposited in DRO.

\bibitem[Bla57]{Blanchfield}
Richard~C. Blanchfield.
\newblock Intersection theory of manifolds with operators with applications to
  knot theory.
\newblock {\em Ann. of Math. (2)}, 65:340--356, 1957.

\bibitem[BLLV74]{LambdaSpheres}
Jean Barge, Jean Lannes, Fran\c{c}ois Latour, and Pierre Vogel.
\newblock $\lambda $-sph\`eres.
\newblock {\em Annales scientifiques de l'\'Ecole Normale Sup\'erieure}, 4e
  s{\'e}rie, 7(4):463--505, 1974.

\bibitem[Bou65]{Bourbaki}
N.~Bourbaki.
\newblock {\em \'El\'ements de math\'ematique. {F}asc. {XXXI}. {A}lg\`ebre
  commutative. {C}hapitre 7: {D}iviseurs}, volume No. 1314 of {\em Actualit\'es
  Scientifiques et Industrielles [Current Scientific and Industrial Topics]}.
\newblock Hermann, Paris, 1965.

\bibitem[BPR21]{DET}
Stefan Behrens, Mark Powell, and Arunima Ray.
\newblock Context for the disc embedding theorem.
\newblock In {\em The disc embedding theorem}, pages 1--26. Oxford Univ. Press,
  Oxford, 2021.

\bibitem[CF08]{CimasoniFlorens}
David Cimasoni and Vincent Florens.
\newblock Generalized {S}eifert surfaces and signatures of colored links.
\newblock {\em Trans. Amer. Math. Soc.}, 360(3):1223--1264, 2008.

\bibitem[CFP25]{CFP}
David Cimasoni, Livio Ferretti, and Iuliia Popova.
\newblock Extended signatures and link concordance.
\newblock {\em Proceedings of the Edinburgh Mathematical Society}, page 1–35,
  2025.

\bibitem[CFT18]{CFT18}
Anthony Conway, Stefan Friedl, and Enrico Toffoli.
\newblock The {B}lanchfield pairing of colored links.
\newblock {\em Indiana Univ. Math. J.}, 67(6):2151--2180, 2018.

\bibitem[CG86]{CassonGordonCobordism}
A.~J. Casson and C.~McA. Gordon.
\newblock Cobordism of classical knots.
\newblock In {\em \`{A} la recherche de la topologie perdue}, volume~62 of {\em
  Progr. Math.}, pages 181--199. Birkh\"auser Boston, Boston, MA, 1986.
\newblock With an appendix by P. M. Gilmer.

\bibitem[Cha10]{ChaHirzebruch}
Jae~Choon Cha.
\newblock Link concordance, homology cobordism, and {H}irzebruch-type defects
  from iterated {$p$}-covers.
\newblock {\em J. Eur. Math. Soc. (JEMS)}, 12(3):555--610, 2010.

\bibitem[Cha14]{ChaSymmetric}
Jae~Choon Cha.
\newblock Symmetric {W}hitney tower cobordism for bordered 3-manifolds and
  links.
\newblock {\em Trans. Amer. Math. Soc.}, 366(6):3241--3273, 2014.

\bibitem[Cim04]{Cim04}
David Cimasoni.
\newblock A geometric construction of the {C}onway potential function.
\newblock {\em Comment. Math. Helv.}, 79(1):124--146, 2004.

\bibitem[CK25]{CK}
Anthony Conway and Daniel Kasprowski.
\newblock 4-manifolds with a given boundary, November 2025.

\bibitem[CM80]{CarlssonMilgram}
Gunnar~E. Carlsson and R.~James Milgram.
\newblock Some exact sequences in the theory of hermitian forms.
\newblock {\em Journal of Pure and Applied Algebra}, 18:233--252, 1980.

\bibitem[CMP25]{CMP}
David Cimasoni, Maciej Markiewicz, and Wojciech Politarczyk.
\newblock {Torres-Type Formulas for Link Signatures}.
\newblock {\em Michigan Mathematical Journal}, pages 1 -- 76, 2025.

\bibitem[CNT20]{ConwayNagelToffoli}
Anthony Conway, Matthias Nagel, and Enrico Toffoli.
\newblock Multivariable signatures, genus bounds, and 0.5-solvable cobordisms.
\newblock {\em Michigan Math. J.}, 69(2):381--427, 2020.

\bibitem[CO22]{ConwayOrson}
Anthony Conway and Patrick Orson.
\newblock Abelian invariants of doubly slice links.
\newblock {\em Enseign. Math. (2)}, 68(3-4):243--290, 2022.

\bibitem[Con18]{ConwayBlanchfield}
Anthony Conway.
\newblock An explicit computation of the {B}lanchfield pairing for arbitrary
  links.
\newblock {\em Canad. J. Math.}, 70(5):983--1007, 2018.

\bibitem[Coo82a]{Cooper}
D.~Cooper.
\newblock The universal abelian cover of a link.
\newblock In {\em Low-dimensional topology ({B}angor, 1979)}, volume~48 of {\em
  London Math. Soc. Lecture Note Ser.}, pages 51--66. Cambridge Univ. Press,
  Cambridge-New York, 1982.

\bibitem[Coo82b]{CooperThesis}
Daryl Cooper.
\newblock Signatures of surfaces with applications to knot and link cobordism.
\newblock 1982.
\newblock University of Warwick.

\bibitem[COT03]{CochranOrrTeichner}
Tim~D. Cochran, Kent~E. Orr, and Peter Teichner.
\newblock Knot concordance, {W}hitney towers and {$L^2$}-signatures.
\newblock {\em Ann. of Math. (2)}, 157(2):433--519, 2003.

\bibitem[CP23]{ConwayPowell}
Anthony Conway and Mark Powell.
\newblock Embedded surfaces with infinite cyclic knot group.
\newblock {\em Geom. Topol.}, 27(2):739--821, 2023.

\bibitem[CS69]{CrowellStrauss}
R.~H. Crowell and D.~Strauss.
\newblock On the elementary ideals of link modules.
\newblock {\em Trans. Amer. Math. Soc.}, 142:93--109, 1969.

\bibitem[CS74]{CappellShaneson}
Sylvain~E. Cappell and Julius~L. Shaneson.
\newblock The codimension two placement problem and homology equivalent
  manifolds.
\newblock {\em Ann. of Math. (2)}, 99:277--348, 1974.

\bibitem[CS80]{CappellShanesonLinkCobordism}
Sylvain~E. Cappell and Julius~L. Shaneson.
\newblock Link cobordism.
\newblock {\em Comment. Math. Helv.}, 55(1):20--49, 1980.

\bibitem[Dav06]{DavisConcordant}
James~F. Davis.
\newblock A two component link with {A}lexander polynomial one is concordant to
  the {H}opf link.
\newblock {\em Math. Proc. Cambridge Philos. Soc.}, 140(2):265--268, 2006.

\bibitem[DNOP20]{DavisNagelOrsonPowell}
Christopher~W. Davis, Matthias Nagel, Patrick Orson, and Mark Powell.
\newblock Surface systems and triple linking numbers.
\newblock {\em Indiana Univ. Math. J.}, 69(7):2505--2547, 2020.

\bibitem[FL26]{FL26}
Peter Feller and Lukas Lewark.
\newblock Balanced gordian distance, bilinear forms, and cobordisms.
\newblock {\em upcoming paper}, 2026.

\bibitem[FLNP17]{FriedlLeidyNagelPowell}
Stefan Friedl, Constance Leidy, Matthias Nagel, and Mark Powell.
\newblock Twisted {B}lanchfield pairings and decompositions of 3-manifolds.
\newblock {\em Homology Homotopy Appl.}, 19(2):275--287, 2017.

\bibitem[FP14]{FriedlPowellHopf}
Stefan Friedl and Mark Powell.
\newblock Links not concordant to the {H}opf link.
\newblock {\em Math. Proc. Cambridge Philos. Soc.}, 156(3):425--459, 2014.

\bibitem[FQ90]{FQ90}
Michael~H. Freedman and Frank Quinn.
\newblock {\em Topology of 4-manifolds}, volume~39 of {\em Princeton
  Mathematical Series}.
\newblock Princeton University Press, Princeton, NJ, 1990.

\bibitem[Fre82]{Freedman}
Michael~Hartley Freedman.
\newblock The topology of four-dimensional manifolds.
\newblock {\em J. Differential Geometry}, 17(3):357--453, 1982.

\bibitem[Har08]{HarveyHomology}
Shelly~L. Harvey.
\newblock Homology cobordism invariants and the {C}ochran-{O}rr-{T}eichner
  filtration of the link concordance group.
\newblock {\em Geom. Topol.}, 12(1):387--430, 2008.

\bibitem[Hil81]{HillmanAlexanderIdeals}
Jonathan~A. Hillman.
\newblock {\em Alexander ideals of links}, volume 895 of {\em Lecture Notes in
  Mathematics}.
\newblock Springer-Verlag, Berlin-New York, 1981.

\bibitem[Hil12a]{hillman2012}
J.~Hillman.
\newblock {\em Algebraic Invariants Of Links (2nd Edition)}.
\newblock Series On Knots And Everything. World Scientific Publishing Company,
  2012.

\bibitem[Hil12b]{Hillman}
Jonathan Hillman.
\newblock {\em Algebraic invariants of links}, volume~52 of {\em Series on
  Knots and Everything}.
\newblock World Scientific Publishing Co. Pte. Ltd., Hackensack, NJ, second
  edition, 2012.

\bibitem[HKT09]{HambletonKreckTeichner}
Ian Hambleton, Matthias Kreck, and Peter Teichner.
\newblock Topological 4-manifolds with geometrically two-dimensional
  fundamental groups.
\newblock {\em J. Topol. Anal.}, 1(2):123--151, 2009.

\bibitem[Hom17]{HomSurvey}
Jennifer Hom.
\newblock A survey on {H}eegaard {F}loer homology and concordance.
\newblock {\em J. Knot Theory Ramifications}, 26(2):1740015, 24, 2017.

\bibitem[JP24]{JuhaszPowell}
Andr\'{a}s Juh\'{a}sz and Mark Powell.
\newblock Examples of topologically unknotted tori.
\newblock {\em Trans. Amer. Math. Soc. Ser. B}, 11:1266--1293, 2024.

\bibitem[Kar74]{Karoubi}
Max Karoubi.
\newblock Localisation de formes quadratiques. {I}.
\newblock {\em Ann. Sci. \'Ecole Norm. Sup. (4)}, 7:359--403 (1975); ibid. (4)
  8 (1975), 99--155, 1974.

\bibitem[Kea75]{KeartonCobordism}
C.~Kearton.
\newblock Cobordism of knots and {B}lanchfield duality.
\newblock {\em J. London Math. Soc. (2)}, 10(4):406--408, 1975.

\bibitem[Kim15]{KimWhitney}
Min~Hoon Kim.
\newblock Whitney towers, gropes and {C}asson-{G}ordon style invariants of
  links.
\newblock {\em Algebr. Geom. Topol.}, 15(3):1813--1845, 2015.

\bibitem[KM13]{KronheimerMrowkaGaugeS}
P.~B. Kronheimer and T.~S. Mrowka.
\newblock Gauge theory and {R}asmussen's invariant.
\newblock {\em J. Topol.}, 6(3):659--674, 2013.

\bibitem[{Ko}89]{Ko2}
Ki~Hyoung {Ko}.
\newblock {A Seifert-matrix interpretation of Cappell and Shaneson's approach
  to link cobordisms.}
\newblock {\em {Math. Proc. Camb. Philos. Soc.}}, 106(3):531--545, 1989.

\bibitem[KPRT24]{KPRT24}
Daniel Kasprowski, Mark Powell, Arunima Ray, and Peter Teichner.
\newblock Embedding surfaces in 4-manifolds.
\newblock {\em Geom. Topol.}, 28(5):2399--2482, 2024.

\bibitem[Lam06]{Lam06}
T.~Y. Lam.
\newblock {\em Serre's problem on projective modules}.
\newblock Springer Monographs in Mathematics. Springer-Verlag, Berlin, 2006.

\bibitem[LD88]{LeDimetThesis}
Jean-Yves Le~Dimet.
\newblock Cobordisme d'enlacements de disques.
\newblock {\em M\'{e}m. Soc. Math. France (N.S.)}, (32):ii+92, 1988.

\bibitem[Let00]{Letsche}
Carl~F. Letsche.
\newblock An obstruction to slicing knots using the eta invariant.
\newblock {\em Math. Proc. Cambridge Philos. Soc.}, 128(2):301--319, 2000.

\bibitem[Lev69]{LevineKnotCob}
J.~Levine.
\newblock Knot cobordism groups in codimension two.
\newblock {\em Comment. Math. Helv.}, 44:229--244, 1969.

\bibitem[Lev82]{LevineModule2}
Jerome~P. Levine.
\newblock The module of a 2-component link.
\newblock {\em Commentarii Mathematici Helvetici}, 57:377--399, 1982.

\bibitem[Lit84]{LitherlandCobordism}
Richard Litherland.
\newblock {Cobordism of satellite knots.}
\newblock {Four-manifold theory, Proc. AMS-IMS-SIAM Joint Summer Res. Conf.,
  Durham/N.H. 1982, Contemp. Math. 35, 327-362 (1984).}, 1984.

\bibitem[LO00]{LevineOrrSurvey}
Jerome Levine and Kent~E. Orr.
\newblock A survey of applications of surgery to knot and link theory.
\newblock In {\em Surveys on surgery theory, {V}ol. 1}, volume 145 of {\em Ann.
  of Math. Stud.}, pages 345--364. Princeton Univ. Press, Princeton, NJ, 2000.

\bibitem[MM03]{M-M}
Blake Mellor and Paul Melvin.
\newblock A geometric interpretation of {M}ilnor's triple linking numbers.
\newblock {\em Algebr. Geom. Topol.}, 3:557--568 (electronic), 2003.

\bibitem[MR90]{MilgramRanicki}
R.~J. Milgram and A.~A. Ranicki.
\newblock The {$L$}-theory of {L}aurent extensions and genus {$0$} function
  fields.
\newblock {\em J. Reine Angew. Math.}, 406:121--166, 1990.

\bibitem[Nic03]{Nicolaescu}
Liviu~I. Nicolaescu.
\newblock {\em The {R}eidemeister torsion of 3-manifolds}, volume~30 of {\em de
  Gruyter Studies in Mathematics}.
\newblock Walter de Gruyter \& Co., Berlin, 2003.

\bibitem[NP17]{NagelPowell}
Matthias Nagel and Mark Powell.
\newblock Concordance invariance of {L}evine-{T}ristram signatures of links.
\newblock {\em Doc. Math.}, 22:25--43, 2017.

\bibitem[OS03]{OzsvathSzaboTau}
Peter Ozsv\'{a}th and Zolt\'{a}n Szab\'{o}.
\newblock Knot {F}loer homology and the four-ball genus.
\newblock {\em Geom. Topol.}, 7:615--639, 2003.

\bibitem[Pow16]{Powell}
Mark Powell.
\newblock Twisted {B}lanchfield pairings and symmetric chain complexes.
\newblock {\em Q. J. Math.}, 67(4):715--742, 2016.

\bibitem[PP22]{PP22}
JungHwan Park and Mark Powell.
\newblock A ribbon obstruction and derivatives of knots.
\newblock {\em Israel J. Math.}, 250(1):265--305, 2022.

\bibitem[Ran81]{Ranicki}
Andrew Ranicki.
\newblock {\em Exact sequences in the algebraic theory of surgery}, volume~26
  of {\em Mathematical Notes}.
\newblock Princeton University Press, Princeton, NJ; University of Tokyo Press,
  Tokyo, 1981.

\bibitem[Ran02]{RanickiAlgebraicAndGeometric}
Andrew Ranicki.
\newblock {\em Algebraic and geometric surgery}.
\newblock Oxford Mathematical Monographs. The Clarendon Press, Oxford
  University Press, Oxford, 2002.
\newblock Oxford Science Publications.

\bibitem[Ras10]{Rasmussen}
Jacob Rasmussen.
\newblock Khovanov homology and the slice genus.
\newblock {\em Invent. Math.}, 182(2):419--447, 2010.

\bibitem[Ser80]{Serre}
Jean-Pierre Serre.
\newblock {\em Trees}.
\newblock Springer-Verlag, Berlin-New York, 1980.
\newblock Translated from the French by John Stillwell.

\bibitem[She03]{SheihamThesis}
Desmond Sheiham.
\newblock Invariants of boundary link cobordism.
\newblock {\em Mem. Amer. Math. Soc.}, 165(784):x+110, 2003.

\bibitem[Sim26a]{Simian}
Ga\"etan Simian.
\newblock An algebraic concordance group for links.
\newblock {\em upcoming paper}, 2026.

\bibitem[Sim26b]{GaetanPhD}
Ga\"etan Simian.
\newblock {\em On algebraic concordance of links}.
\newblock PhD thesis, Universit{\'e} de Genève, 2026.

\bibitem[Sti22]{StirlingThesis}
Scott Stirling.
\newblock Applications of noncommutative intersection forms to linking.
\newblock 2022.
\newblock
  \url{https://www.maths.gla.ac.uk/~mpowell/Scott_Stirling_PhD_thesis.pdf}.

\bibitem[Tof19]{ToffoliThesis}
Enrico Toffoli.
\newblock Invariants for manifolds with boundary and low-dimensional topology.
\newblock 2019.
\newblock University of Regensburg.

\bibitem[Tof22]{Toffoli}
Enrico Toffoli.
\newblock The {Atiyah}-{Patodi}-{Singer} rho invariant and signatures of links.
\newblock {\em Proc. Edinb. Math. Soc., II. Ser.}, 65(2):404--440, 2022.

\bibitem[Wal69]{Wall}
C.~T.~C. Wall.
\newblock Non-additivity of the signature.
\newblock {\em Invent. Math.}, 7:269--274, 1969.

\bibitem[Wei94]{Weibel}
Charles~A. Weibel.
\newblock {\em An introduction to homological algebra}, volume~38 of {\em
  Cambridge Studies in Advanced Mathematics}.
\newblock Cambridge University Press, Cambridge, 1994.

\end{thebibliography}

\end{document}